\numberwithin{equation}{section}
\theoremstyle{plain}
\newtheorem{proposition}{Proposition}[section]
\newaliascnt{lemma}{proposition} 
\newtheorem{lemma}[lemma]{Lemma}
\Crefname{lemma}{Lemma}{Lemmas}
\newaliascnt{theorem}{proposition} 
\newtheorem{theorem}[theorem]{Theorem}
\newaliascnt{corollary}{proposition} 
\newtheorem{corollary}[corollary]{Corollary}
\newaliascnt{hypothesis}{proposition} 
\newtheorem{hypothesis}[lemma]{Hypothesis}
\theoremstyle{definition}
\newaliascnt{definition}{proposition} 
\newtheorem{definition}[definition]{Definition}
\Crefname{definition}{Definition}{Definitions}
\newaliascnt{problem}{proposition} 
\newaliascnt{example}{proposition} 
\newtheorem{example}[example]{Example}
\theoremstyle{remark}
\newaliascnt{remark}{proposition} 
\newtheorem{remark}[remark]{Remark}
\newtheorem{claim}{Claim}
\def\equationautorefname~#1\null{%
	(#1)\null
}
\newcommand{\R}{\mathbb{R}}
\newcommand{\C}{\mathbb{C}}
\newcommand{\Z}{\mathbb{Z}}
\newcommand{\N}{\mathbb{N}}
\newcommand{\E}{\mathcal{E}}
\newcommand{\curv}{\vec{\kappa}}
\newcommand{\scurv}{\kappa}
\renewcommand{\S}{\mathbb{S}}
\newcommand{\A}{\mathcal{A}}
\newcommand{\T}{\mathbb{T}^2}
\renewcommand{\H}{\mathbb{H}}
\newcommand{\W}{\mathcal{W}}
\newcommand{\Ll}{\mathcal{L}}
\newcommand{\diam}{\mathrm{diam}}
\newcommand{\measurerestr}{%
  \,\raisebox{-.127ex}{\reflectbox{\rotatebox[origin=br]{-90}{$\lnot$}}}\,%
}
\newcommand*{\dd}{\mathop{}\!\mathrm{d}}
\def\nicefrac#1#2{%
    \raise.5ex\hbox{$#1$}%
    \kern-.15em/\kern-.05em%
    \lower.25ex\hbox{$#2$}}
\title[\textsc{Dimension reduction for Willmore flows of tori}]{\Large Dimension reduction for Willmore flows of tori: \\ fixed conformal class and analysis of singularities}
\author[A.~Dall'Acqua]{Anna Dall'Acqua}
\address[A.~Dall'Acqua]{Institute for Applied Analysis, Ulm University, Helmholtzstraße 18, 89081 Ulm, Germany.}
\email{anna.dallacqua@uni-ulm.de}
\author[M.~Müller]{Marius Müller}
\address[M.~Müller]{University of Augsburg, Institute for Mathematics, 86135 Augsburg, Germany.}
\email{marius1.mueller@uni-a.de}
\author[F.~Rupp]{Fabian Rupp}
\address[F.~Rupp]{Faculty of Mathematics, University of Vienna, Oskar-Morgenstern-Platz 1, 1090 Vienna, Austria.}
\email{fabian.rupp@univie.ac.at}
\author[M.~Schlierf]{Manuel Schlierf}
\address[M.~Schlierf]{Institute for Applied Analysis, Ulm University, Helmholtzstraße 18, 89081 Ulm, Germany.}
\email{manuel.schlierf@uni-ulm.de}
\begin{document}

\begin{abstract}
       This work studies Willmore flows of tori and their singularities via a dimension reduction approach. We introduce a Willmore flow that preserves the degenerate constraint of prescribed conformal class and, for rotationally symmetric initial data, we establish a strong relation with the length-preserving elastic flow in the hyperbolic plane. We provide a necessary condition for singularities and a criterion for the initial datum that allows to exclude them. Our results allow for initial data with arbitrarily large energy, in particular exceeding the usual Li--Yau threshold of $8\pi$. As an application, we obtain existence of a new class of conformally constrained Willmore tori. Moreover, we investigate singularities of the classical Willmore flow. For a class of tori, we identify a non-smooth object, the inverted catenoid, as the limit shape and we show that the flow can be restarted at this singular surface and converges to a round sphere.
\end{abstract}

\begingroup
\def\uppercasenonmath#1{\scshape} 
\maketitle
\endgroup

\noindent {\small\textbf{Keywords and phrases:} 
Willmore flow, conformal class, rotational symmetry, local well-posedness, elastic flow, hyperbolic length, inverted catenoid.}

\noindent {\small\textbf{MSC(2020)}: 53E40 (primary), 35B40, 49Q10 (secondary).
}

\section{Introduction}

The \emph{Willmore energy} of a closed immersed surface $f\colon\Sigma\to\R^3$ is given by
\begin{equation}\label{eq:def_Willmore_energy}
    \W(f) \vcentcolon= \int_{\Sigma} H^2\dd\mu
\end{equation}
where $\mu$ is the Riemannian measure on $\Sigma$ induced by the pull-back $f^*\langle\cdot,\cdot\rangle$ of the Euclidean metric
and $H=\frac{\kappa_1+\kappa_2}{2}$ is the mean curvature. 
The energy $\mathcal{W}$ was already 
studied by Blaschke and Thomsen \cite{blaschke} who observed that, for $\Sigma$ closed, i.e., compact and without boundary, the Willmore energy is invariant with respect to smooth \emph{Möbius transformations} of the ambient space, making it an important concept in conformal geometry. It became more popular after the work of Willmore \cite{MR0202066} who proved $\mathcal{W}(f)\geq 4\pi$ with equality only for round spheres.

Another crucial energy value is the threshold of $8\pi$ which, by an inequality of Li and Yau \cite{liyau1982}, guarantees embeddedness.
This multiplicity control makes $\mathcal{W}(f)<8\pi$ an ubiquitous assumption to rule out \emph{bubbling singularities} that are typical for functionals of critical growth, see \cite{simon1993,kuwertschaetzle2004,kuwertli2012,MR3276154}. 

\subsection{Conformally constrained minimization and Willmore flow}
The minimization of the Willmore energy among surfaces in a fixed \emph{conformal class} is a particularly natural geometric framework. In \cite{kuwertschaetzle2013}, existence of a smooth minimizer was shown, provided the infimum is strictly below $8\pi$, ruling out \emph{branch point singularities}, see also \cite{MR3276154} and \cite{kuwertli2012}. For  tori, the conformal class of $f\colon \T\to\R^3$ can be identified with some $\vec\omega\in \R^2$, see \Cref{sec:conform_constr}. The Clifford torus, the global minimizer of $\mathcal
W$ among tori \cite{MR3152944}, has conformal class $\vec\omega=(0,1)$. Together with \cite{kuwertschaetzle2013}, the construction in \cite{MR3210758} implies existence of Willmore minimizing tori in all \emph{rectangular} conformal classes, i.e., where $\vec\omega\in \R\cdot(0,1)$,
see also \cite[Theorem 4.5]{dallacquamullerschatzlespener2020}.
In a suitable neighborhood of  $\vec\omega =(0,1)$, conformally constrained Willmore minimizers have been identified in \cite{MR3403429,MR4270047}.
The Euler--Lagrange equation for the conformally constrained Willmore functional is of the form
\begin{align}\label{eq:intro:conf_Willmore_eq}
\Delta H + |A^0|^2 H = \langle q,A^0\rangle.
\end{align}
Here $A^0$ denotes the trace-free part of the (scalar) second fundamental form of $f$ and $q$ is a transverse traceless symmetric $2$-covariant tensor with respect to $f^*\langle\cdot,\cdot\rangle$. Solutions to \eqref{eq:intro:conf_Willmore_eq} are called \emph{conformally constrained Willmore immersions,} see \cite{bohlepeterspinkall2008,MR3199732,MR3276154}.

A dynamic minimization approach for the Willmore energy is given by the \emph{Willmore flow}, the $L^2(\dd \mu)$-gradient flow of $\mathcal{W}$, i.e., the geometric evolution equation
\begin{align}\label{eq:intro:WF}
\partial_tf = - \nabla \mathcal{W}(f) \vcentcolon= - (\Delta H + |A^0|^2H)N,
\end{align}
where $N$ is a local unit normal field. This is a quasilinear parabolic PDE of fourth order, thus no maximum principles are available.
The mathematical analysis of the Willmore flow started with the work of Simonett \cite{simonett2001} and the seminal papers by Kuwert--Schätzle \cite{kuwertschaetzle2002,kuwertschaetzle2001}, who established a \emph{curvature-concentration}-based blow-up procedure, eventually leading to a global existence and convergence result for topological spheres with initial energy below $8\pi$ \cite{kuwertschaetzle2004}. This threshold is sharp 
\cite{blatt2009},  but whether singularities occur in finite or infinite time is still open. Similar global existence and convergence results have been obtained for Willmore flows constrained to \emph{level set manifolds} of geometric functionals \cite{rupp2023,rupp2021} and the \emph{Möbius invariant Willmore flow} \cite{jakob2024singularitiesconvergencemobiusinvariantwillmore}.
In \cite{dallacquamullerschatzlespener2020}, the first global existence and convergence result in the higher genus case has been obtained for tori, provided the initial datum has a \emph{rotational symmetry}. Nevertheless, in all these works, the initial energy threshold of $8\pi$ persists and it is sharp for spheres \cite{blatt2009} and tori \cite{dallacquamullerschatzlespener2020}, even assuming rotational symmetry.

Another approach to the Willmore flow has been introduced in \cite{MR4385140}, based on the \emph{parametric framework} of weak immersions in \cite{kuwertli2012,MR3276154}.
This flow yields immersions $f(t)\colon \Sigma\to\R^3$ which, at any given time $t$, are conformal with respect to the canonical \emph{Poincar\'e metric} \cite[Theorem 4.4.1]{jost2006}
$g_{\mathrm{Poin}}(t)$ on $\Sigma$ if $\operatorname{genus}\Sigma\geq 1$. 
However, in general, the conformal class varies along this flow.

\subsection{The Willmore flow with fixed conformal class}
One contribution of this paper is a new notion of Willmore flow for tori which preserves the conformal class. Denoting by $\vec\omega \in \R^2$ the conformal class of an immersed torus $f\colon\T\to\R^3$, we find that \eqref{eq:intro:conf_Willmore_eq} is equivalent to 
\begin{align}
\Pi^{\overline{\mathfrak{C}_f(\vec\omega)}}\nabla \mathcal{W}(f) = 0.
\end{align}
Here $\Pi^{\overline{\mathfrak{C}_f(\vec\omega)}}$ denotes the $L^2(\dd \mu)$-orthogonal projection onto the space of infinitesimal variations preserving the conformal class, see \Cref{sec:conform_constr} for a detailed discussion. We thus introduce the \emph{conformally constrained Willmore flow} as the geometric evolution equation
\begin{align}\label{eq:intro:conf_WF}
\partial_t f = -\Pi^{\overline{\mathfrak{C}_f(\vec\omega)}}\nabla \mathcal{W}(f).
\end{align}
While, as the projected Willmore-type flows in \cite{rupp2023,rupp2021}, \eqref{eq:intro:conf_WF} formally is a quasilinear nonlocal parabolic system of fourth order, its analysis is significantly more complicated by the intricate nature of the constraint of fixed conformal class and its potential degeneracy, a major issue also in the static problem \cite{kuwertschaetzle2013,MR3276154,MR3199732}. Indeed, this constraint degenerates at all \emph{isothermal immersions}, including, in particular, constant mean curvature immersions or surfaces of revolution, see \cite{bohlepeterspinkall2008}.

Despite this degeneracy, our first main result provides a characterization of the long-time behavior of \eqref{eq:intro:conf_WF} for rotationally symmetric initial data.

\begin{theorem}\label{intro:thm:main-conf-constr-wf}
    Let $f_0\colon\T\to\R^3$ be a rotationally symmetric torus. Then there exists a maximal solution $f\colon[0,T)\times\T\to\R^3$ to the conformally constrained Willmore flow such that at least one of the following is true.
    \begin{enumerate}[(i)]
        \item\label{item:intro:thm_conformal_case_1} 
        There exists $t_j\nearrow T$ along which
        the hyperbolic total curvature of the profile curves converges to zero.
        \item\label{item:intro:thm_conformal_case_2} We have $T=\infty$ and, after conformal reparametrization, the flow smoothly converges to a conformally constrained Willmore torus.
    \end{enumerate}
    Moreover, if $\mathcal{W}(f_0)\leq 8\pi$ or if the initial profile curve has hyperbolic length $L$ and turning number $m\in\Z$ such that 
    $L< 2\pi|m|$, then case \eqref{item:intro:thm_conformal_case_2} occurs.
\end{theorem}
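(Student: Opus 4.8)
The plan is to reduce \eqref{eq:intro:conf_WF} for rotationally symmetric initial data to the length-preserving elastic flow in the hyperbolic plane $\H^2$, as announced in the introduction, and then to exploit known structure of that flow. First I would establish the equivariance of \eqref{eq:intro:conf_WF}: if $f_0$ is rotationally symmetric, the generating profile curve $\gamma_0$ in the upper half-plane model of $\H^2$ evolves by a geometric fourth-order flow; the constraint of fixed conformal class, which degenerates precisely at surfaces of revolution, forces the evolution to preserve the hyperbolic length $L$ of the profile curve. Concretely, the $L^2(\dd\mu)$-projection $\Pi^{\overline{\mathfrak{C}_f(\vec\omega)}}$, restricted to equivariant variations, becomes the $L^2$-projection onto length-preserving variations of $\gamma$, so the reduced flow is exactly the length-constrained elastic flow $\partial_t\gamma = -\nabla_{\H^2}\left(\mathcal{E}_{\H^2}(\gamma) + \lambda(t) L_{\H^2}(\gamma)\right)$ with a nonlocal Lagrange multiplier $\lambda(t)$. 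Local well-posedness of \eqref{eq:intro:conf_WF} in the equivariant class and short-time existence of the reduced elastic flow would be invoked here (these are the earlier parts of the paper); this yields the maximal solution on $[0,T)$.

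Next I would set up the dichotomy. Along the flow, $\mathcal{W}(f(t))$ is nonincreasing (it is a constrained gradient flow), hence bounded; equivalently the hyperbolic elastic energy $\mathcal{E}_{\H^2}(\gamma(t))$ is bounded, and the hyperbolic length is exactly conserved, $L_{\H^2}(\gamma(t)) \equiv L$. The standard blow-up analysis for fourth-order curve flows then applies: either curvature concentrates, in which case after rescaling one extracts a limit and the relevant geometric quantity — the hyperbolic total curvature $\int |\kappa_{\H^2}|\,\dd s$ of the profile curves — degenerates to zero along a sequence $t_j \nearrow T$, giving case \eqref{item:intro:thm_conformal_case_1}; or no concentration occurs, uniform curvature bounds propagate to all orders by parabolic estimates, the flow exists for all time $T=\infty$, subconverges, and by a Łojasiewicz--Simon argument (or the gradient-flow structure plus analyticity of $\mathcal{W}$ and the constraint) converges smoothly, after conformal reparametrization, to a critical point, i.e.\ a conformally constrained Willmore torus — case \eqref{item:intro:thm_conformal_case_2}. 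The conformal reparametrization is needed because the tangential/reparametrization degrees of freedom are not controlled by the normal flow.

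Finally, for the sufficient conditions excluding case \eqref{item:intro:thm_conformal_case_1}, the key is a lower bound on the hyperbolic total curvature that survives the flow. Under $\mathcal{W}(f_0)\le 8\pi$ this is the mechanism already present in \cite{dallacquamullerschatzlespener2020}: the energy threshold prevents the profile curve from degenerating and keeps the hyperbolic total curvature bounded below, so alternative \eqref{item:intro:thm_conformal_case_1} cannot happen and \eqref{item:intro:thm_conformal_case_2} must hold. Under the alternative hypothesis $L < 2\pi|m|$ on the initial profile, I would use a Gauss--Bonnet/Fenchel-type inequality in $\H^2$: for a closed curve of turning number $m$ one has $\int |\kappa_{\H^2}|\,\dd s \ge 2\pi|m| - L_{\H^2}$ (the length contributes via the $-1$ curvature of $\H^2$), so the total curvature is bounded below by the strictly positive constant $2\pi|m| - L > 0$; since $L$ is conserved and the turning number $m$ is a topological invariant preserved along the smooth flow, this lower bound persists for all $t\in[0,T)$, again ruling out \eqref{item:intro:thm_conformal_case_1}.

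The main obstacle I expect is the rigorous reduction step together with the blow-up analysis near $T$: proving that the only way the reduced length-constrained hyperbolic elastic flow can fail to exist globally is through the vanishing of the hyperbolic total curvature, and controlling the nonlocal Lagrange multiplier $\lambda(t)$ uniformly so that the parabolic smoothing estimates close. Handling $\lambda(t)$ — showing it stays bounded as long as the curve does not degenerate, and that its blow-up is itself tied to the degeneration of the total curvature — is the technical heart; the topological/geometric lower bounds in the last paragraph are comparatively soft once the reduction and the conservation of $L$ and $m$ are in place.
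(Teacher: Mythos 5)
Your high-level blueprint (reduce to a scalar-constrained flow on the profile curve, deduce convergence via \L ojasiewicz--Simon, and use a Gauss--Bonnet/Fenchel inequality plus conservation of length and turning number for the sufficient conditions) matches the paper's architecture, and the last paragraph on the sufficient conditions is essentially the paper's argument, cf.\ \Cref{rem:hyp-tot-curv-contro-by-energy} and \Cref{lem:hyp-tot-curv-control-by-length}. However, there is a decisive gap at the very first step: you claim the reduced evolution is \emph{exactly} the length-constrained hyperbolic elastic flow $\partial_t\gamma=-\bigl(\nabla\E(\gamma)-\lambda(t)\curv\bigr)$, i.e.\ the $L^2(\dd s)$-gradient flow. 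This is false. The correct reduction (cf.\ \Cref{prop:conf-will-grad-axisym} and \Cref{prop:Schl2}) carries a degenerate weight:
\begin{equation}
\partial_t\gamma = -\frac{1}{4(\gamma^{(2)})^4}\bigl(\nabla\E(\gamma)-\lambda(\gamma)\curv\bigr),
\end{equation}
because the $L^2(\dd\mu)$-structure on the surface differs from the $L^2(\dd s)$-structure on the curve by a power of the distance $\gamma^{(2)}$ to the rotation axis. The paper explicitly warns that ``this correspondence does not yield the $L^2(\dd s)$-gradient flow, but a gradient flow with respect to a metric that degenerates for curves approaching the axis of revolution,'' and that classical elastic-flow energy estimates and interpolation are not directly applicable.

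This degeneracy is the technical core of the result, and your proposal never engages it. Concretely, the ``standard blow-up analysis for fourth-order curve flows'' you invoke does not apply; the paper instead develops \emph{weighted} curvature estimates (\Cref{lem:int-est-general-lambda}, \Cref{prop:il}, \Cref{prop:il-conf-constr-wf}), a life-span bound expressed via the distance to the rotation axis (\Cref{lem:BD}), and an \emph{iterative} parabolic-rescaling scheme (\Cref{sec:iterative_blow_up}) to convert these into uniform bounds. Moreover, the dichotomy you describe (``curvature concentrates'' vs.\ ``no concentration'') is not the mechanism that distinguishes cases \eqref{item:intro:thm_conformal_case_1} and \eqref{item:intro:thm_conformal_case_2}. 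What actually happens is that the critical, scale-invariant Lagrange multiplier $\lambda(\gamma)$ can only be controlled when the total hyperbolic curvature $\int_{\S^1}\scurv\,\dd s$ is bounded away from zero (\Cref{lem:lagr-control-by-testing} and \Cref{prop:GronwallReplacement}); vanishing of this quantity is precisely what obstructs the estimates and yields alternative \eqref{item:intro:thm_conformal_case_1}. So the ``total curvature goes to zero'' conclusion is not a byproduct of a blow-up limit, but rather the contrapositive of the a-priori estimates. Until your reduction identifies the degenerate factor $(\gamma^{(2)})^{-4}$ and you explain how to close the parabolic estimates in its presence (including the near-criticality of $\lambda$), the proof does not go through.
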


In contrast to its Euclidean counterpart, the total hyperbolic curvature arising in \Cref{intro:thm:main-conf-constr-wf} is not a regular homotopy invariant and thus not necessarily preserved under the flow.

We highlight that \Cref{intro:thm:main-conf-constr-wf}\eqref{item:intro:thm_conformal_case_1} gives a necessary criterion for singularities of the conformally constrained Willmore flow \eqref{eq:intro:conf_WF}. However, this criterion is not sufficient, see \Cref{rem:lambdafigure8}.  Moreover, the last part of the statement not only enables us to go beyond the usual energy threshold of $8\pi$ for the initial datum, but it in fact allows us to find a class of globally existent and convergent solutions to \eqref{eq:intro:conf_WF} with arbitrarily large initial energy, see \Cref{rem:circular-elastica}. 

We emphasize that the methods used in the non-constrained Willmore flow of tori \cite{dallacquamullerschatzlespener2020} are not directly applicable to prove \Cref{intro:thm:main-conf-constr-wf}. 
Indeed, these techniques 
are based on the concentration-compactness alternative of Kuwert--Schätzle \cite{kuwertschaetzle2002,kuwertschaetzle2001,kuwertschaetzle2004} which is not available in this setup.

Instead, our method exploits the inherent one-dimensional structure of the problem due to its rotational symmetry to reduce the dimension. Since the flow preserves symmetry of the initial datum, we may describe the conformally constrained Willmore flow entirely by the dynamics of the associated \emph{profile curves}. The Willmore energy of the surface of revolution $f_{\gamma}$ is related to the  \emph{(hyperbolic) elastic energy} of the profile curve $\gamma\colon\S^1\to\mathbb{H}^2$ given by
\begin{align}\label{def:elastenergyhyp}
    \E(\gamma) \vcentcolon= \int_{\S^1} |\vec\kappa|^2_g \dd s.
\end{align}
Here $\vec\kappa$ is the curvature of $\gamma$ in the hyperbolic plane $(\H^2,g)$. As a consequence of the Bryant--Griffiths-formula \cite{bryantgriffiths1986}, i.e.,
\begin{align}\label{eq:BG}
    \frac{2}{\pi} \mathcal{W}(f_\gamma) = \E (\gamma),
\end{align}
it is not too surprising that \eqref{eq:intro:conf_WF} is somehow related to the \emph{elastic flow} of curves in the hyperbolic plane \cite{dallacquaspener2017}, see \eqref{eq:elasflowhyp} below. However, this correspondence does not yield the $L^2(\dd s)$-gradient flow, but a gradient flow with respect to a metric that degenerates for curves approaching the axis of revolution.

The key observation for our analysis is the following. Fixing the conformal class is, after our dimension reduction, equivalent to fixing the hyperbolic length. Due to this fact the constrained gradient flow can be obtained with a \emph{scalar} time-dependent Lagrange multiplier.
Nevertheless, due to the degenerate metric,
we cannot directly rely on classical energy estimates and interpolation inequalities that are commonly used for elastic flows \cite{dziukkuwertschaetzle2002,dallacquaspener2017}. We overcome this issue by deriving suitable \emph{weighted energy estimates} to compensate the degeneracy. If the total hyperbolic curvature does not vanish, cf.\ case \eqref{item:intro:thm_conformal_case_1} in \Cref{intro:thm:main-conf-constr-wf}, we may reduce the order of the scale-invariant and thus critical Lagrange multiplier by appropriately testing the evolution. The proof of \Cref{intro:thm:main-conf-constr-wf} is then based on a \emph{lifespan bound} for the flow in terms of the distance to the rotation axis and a new \emph{iterative blow-up scheme} which yields control over all derivatives of the curvature and thus sub-convergence as $t\to\infty$. Lastly, since the fixed hyperbolic length constraint induces a Hilbert manifold structure we may employ a \emph{constrained {\L}ojasiewicz--Simon gradient inequality} \cite{rupp2020} to conclude full convergence.

As a byproduct of our analysis, we obtain a new proof of the convergence result in \cite{dallacquamullerschatzlespener2020} for the (non-constrained) Willmore flow of tori of revolution in a purely one-dimensional way, leading to further insights also about the non-constrained Willmore flow. In particular, we can  explicitly describe the reparametrizations needed to obtain full convergence as $t\to\infty$ in \cite[Theorems 1.2 and 1.3]{dallacquamullerschatzlespener2020}, see \Cref{cor:better-sub-convergence} and \Cref{sec:conv-unconstr-wf}.

\Cref{intro:thm:main-conf-constr-wf} can be used to prove existence of infinitely many new critical points with energy above the $8\pi$-threshold assumed in \cite{kuwertschaetzle2013} to avoid singularities.
Recall that the tori $f_r\colon \R^2/2\pi\Z^2\to\S^3, f_r(u,v)\vcentcolon= (r e^{i u}, \sqrt{1-r^2}e^{i v})\in \S^3\subset \C^2= \R^4$, $r\in (0,1)$, are conformally constrained Willmore tori, which, for $r$ close to $1/\sqrt{2}$, minimize the Willmore functional in their conformal classes \cite{MR3403429} or are at least stable \cite{MR3055802}. 
\begin{corollary}\label{cor:intro_new_conf_willmore_tori}
    Let $m\in\N, m\geq 3$. Then there exists a conformally constrained Willmore torus $f\colon \T\to\S^3$ regularly homotopic to $f_{m,r}\colon \R^2/2\pi\Z^2\to\S^3, f_{m,r}(u,v)=f_r(u,mv)$, for some $r\in (0,1)$, and in the same conformal class with $8\pi \leq \mathcal{W}(f)<\mathcal{W}(f_{m,r})$.
\end{corollary}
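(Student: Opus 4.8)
The plan is to realise the torus of \Cref{cor:intro_new_conf_willmore_tori} as the long-time limit of the conformally constrained Willmore flow, started at a suitable rotationally symmetric torus, and then to invoke \Cref{intro:thm:main-conf-constr-wf}.

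First I would pass from $\S^3$ to $\R^3$ via a stereographic projection $\sigma$ whose centre lies on the great circle of fixed points of the isometry $(z_1,z_2)\mapsto(e^{i\theta}z_1,z_2)$ of $\S^3$; since $r>0$, this centre is not in the image of $f_{m,r}$. Being conformal, $\sigma$ preserves the conformal class, the regular homotopy class, and the Willmore energy, and $g_{m,r}\defeq\sigma\circ f_{m,r}\colon\T\to\R^3$ is a torus of revolution. A direct computation identifies its profile curve with the $m$-fold cover of the hyperbolic circle $c_r\subset\H^2$ which, in the upper half-plane model, is the Euclidean circle of radius $\sqrt{1-r^2}/r$ centred at height $1/r$; in particular $c_r$ has hyperbolic length $\ell(c_r)=2\pi\sqrt{1-r^2}/r$, consistently with \eqref{eq:BG}. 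Thus the profile curve $\gamma_{m,r}$ of $g_{m,r}$ has turning number $\pm m$ and hyperbolic length $m\,\ell(c_r)$, and $m\,\ell(c_r)<2\pi m$ holds if and only if $r\in(1/\sqrt2,1)$; fix any such $r$, and $m\geq 3$, from now on.

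The key step is to exhibit, in the same conformal class and regular homotopy class, a torus of strictly smaller Willmore energy. By the dimension reduction in \Cref{sec:conform_constr}, the conformal class of a rotationally symmetric torus is determined by the hyperbolic length of its profile curve, so it suffices to find a smooth closed immersed curve $\widetilde\gamma\subset\H^2$ of turning number $\pm m$, of hyperbolic length exactly $m\,\ell(c_r)$, and with $\E(\widetilde\gamma)<\E(\gamma_{m,r})$: then $\widetilde g\defeq f_{\widetilde\gamma}$ is a smooth rotationally symmetric immersed torus in the conformal class and regular homotopy class of $g_{m,r}$ with $\W(\widetilde g)=\tfrac{\pi}{2}\E(\widetilde\gamma)<\tfrac{\pi}{2}\E(\gamma_{m,r})=\W(g_{m,r})=\W(f_{m,r})$ by \eqref{eq:BG}. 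To construct $\widetilde\gamma$ I would argue that $\gamma_{m,r}$, which has constant geodesic curvature and is therefore a constrained elastica --- a critical point of $\E$ on the Hilbert submanifold of closed curves of fixed turning number $\pm m$ and fixed hyperbolic length $m\,\ell(c_r)$ --- is not a local minimizer there. Its Jacobi operator has constant coefficients and hence is diagonalised by the Fourier modes $e^{2\pi i k s/(m\ell(c_r))}$, $k\in\Z$; the modes with $m\mid k$ merely reproduce the (nonnegative) second variation of the once-covered circle $c_r$, whereas the lowest modes with $m\nmid k$, which break the $m$-fold symmetry, become destabilising precisely for $m\geq 3$. A normal variation along such a mode, corrected to first order so as to preserve the hyperbolic length and the turning number, then yields the required curve $\widetilde\gamma$.

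Now I would apply \Cref{intro:thm:main-conf-constr-wf} to the initial datum $\widetilde g$. Since its profile curve has turning number $\pm m$ and hyperbolic length $m\,\ell(c_r)<2\pi m=2\pi|\pm m|$, case~\eqref{item:intro:thm_conformal_case_2} occurs: the flow exists for all times and, after conformal reparametrisation, converges smoothly to a conformally constrained Willmore torus $g_\infty\colon\T\to\R^3$. As rotational symmetry is preserved along the flow and $\W$ is non-increasing, $g_\infty$ is a torus of revolution lying in the conformal class and regular homotopy class of $\widetilde g$, hence of $f_{m,r}$, with $\W(g_\infty)\leq\W(\widetilde g)<\W(f_{m,r})$. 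Its profile curve is a smooth closed immersed curve of turning number $\pm m$; as $|m|\geq 2$, Hopf's Umlaufsatz forces a self-intersection, and since this profile curve is compactly contained in the open half-plane (for $g_\infty$ is a genuine torus, not meeting its rotation axis), the intersection produces a point of multiplicity $\geq 2$ on $g_\infty$. The Li--Yau inequality \cite{liyau1982} then gives $\W(g_\infty)\geq 8\pi$. Consequently $f\defeq\sigma^{-1}\circ g_\infty\colon\T\to\S^3$ is a conformally constrained Willmore torus, regularly homotopic to $f_{m,r}$, in the same conformal class, and with $8\pi\leq\W(f)=\W(g_\infty)<\W(f_{m,r})$, which is the assertion.

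The main obstacle I foresee is the energy gap in the third paragraph: one has to verify that the $m$-fold cover of $c_r$ fails to be a local minimizer of $\E$ under the constraints of fixed hyperbolic length and fixed turning number exactly when $m\geq 3$ (and not for $m=1,2$, which is why those cases are excluded), and that the destabilising variation can be chosen so as to keep the hyperbolic length precisely fixed, so that $\widetilde g$ does lie in the conformal class of $f_{m,r}$. The remaining ingredients --- the stereographic reduction, the explicit identification of the profile curve, and the lower bound via Li--Yau --- are comparatively routine given \Cref{intro:thm:main-conf-constr-wf}.
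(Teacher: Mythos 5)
Your argument follows the paper's proof essentially step for step: stereographic projection to a torus of revolution with $m$-fold circular profile in $\H^2$, instability of that profile as a length-constrained critical point of $\E$ (the paper does this in \Cref{lem:circle_stability} via the Langer--Singer second variation formula, producing the explicit admissible window of radii for $m\geq 3$), a perturbation of strictly smaller elastic energy with the same turning number and hyperbolic length, global existence and smooth convergence of the conformally constrained Willmore flow via \Cref{intro:thm:main-conf-constr-wf}\eqref{item:intro:thm_conformal_case_2}, and finally the Li--Yau bound from the self-intersection forced by turning number $m\geq 2$. The one cosmetic difference worth flagging: you retain the $\S^3$ parameter $r$ so that the profile is the Euclidean circle of radius $\sqrt{1-r^2}/r$ at height $1/r$ and the length condition reads $r\in(1/\sqrt 2,1)$, whereas the paper's \Cref{cor:new_conf_willmore_tori} silently reuses $r$ for the $\H^2$-circle radius $\rho=\sqrt{1-r^2}$, so its condition reads $\rho<1/\sqrt 2$; these agree under $r\leftrightarrow\sqrt{1-r^2}$, and neither is a genuine discrepancy, though the exact instability window must then be restricted to $r\in(1/\sqrt 2,m/(m+1))$ rather than all of $(1/\sqrt 2,1)$, precisely the verification you correctly flag as the remaining task.
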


This is a consequence of the observation that after stereographic projection into $\R^3$, the tori $f_{m,r}$ yield surfaces of revolution, corresponding to an $m$-fold circle of appropriate radius. For $m\geq 3$, these tori are unstable for $\mathcal{E}$ as a consequence of the computations in \cite{langersinger1984}.
Starting the conformally constrained Willmore flow with a suitable perturbation of these $m$-fold circles, we may verify the assumptions in \Cref{intro:thm:main-conf-constr-wf} and deduce \Cref{cor:intro_new_conf_willmore_tori}. The range of admissible $m,r$ in the proof of \Cref{cor:intro_new_conf_willmore_tori} and the resulting rectangular conformal classes
can be made explicit, see \Cref{rem:admissible_conf_class}.

\subsection{Analysis of singularities}
As our second main contribution, we further investigate singularities of the non-constrained Willmore flow of tori of revolution. Our preceding dimension reduction allows us to identify the \emph{inverted catenoid} as the limit of a singular Willmore flow for a class of figure-eight-shaped profile curves 
assuming global existence and non-degeneracy of the area,
see \Cref{prop:sing}. While we cannot verify these assumptions analytically, they are supported by numerical experiments \cite{barrettgarckenuernberg2021}, see also \Cref{rem:numerics} below. This suggests that a \emph{loss of topology} occurs in the Willmore flow, since the inverted catenoid is no longer an immersed torus, but rather a $C^{1,\alpha}$-immersed sphere with Willmore energy exactly $8\pi$. As the inverted catenoid is not smooth enough to apply Kuwert--Schätzle's results \cite{kuwertschaetzle2002,kuwertschaetzle2004}, we construct a suitable smooth approximation and use a recent \emph{local $C^{1,\alpha}$-well-posedness result} by LeCrone--Shao--Simonett \cite{lecroneshaosimonett2020} to find a spherical Willmore flow that starts at the inverted catenoid and converges to a round sphere. 

\begin{theorem}\label{thm:intro:wf-starting-in-ic}
    There exists a parametrization $f_0$ of an inverted catenoid with $f_0\in C^{1,\alpha}(\mathbb{S}^2,\mathbb{R}^3)$ for all $\alpha\in(0,1)$ and a family of immersions $f\in C^{\infty}((0,\infty)\times\S^2,\R^3)$ with
    \begin{align}
        \|f(t)-f_0\|_{C^{1,\alpha}(\S^2)}\to0&\quad\text{ for $t\searrow 0$, and}\\
        \W(f(t))<8\pi \text{ for $t>0$},&\quad \W(f(t))\nearrow 8\pi \quad\text{ for $t\searrow 0$},\label{eq:wf-starting-in-ic-en-cont}
    \end{align}
    satisfying 
    \begin{equation}
        \begin{cases}
            \partial_t^{\bot} f = -\nabla\W(f)&\text{in $(0,\infty)\times\S^2$},\\
            f(0)=f_0&\text{in $\S^2$}
        \end{cases}
    \end{equation}
    such that $f(t)$ smoothly converges to a round sphere for $t\to\infty$ up to reparametrization.
\end{theorem}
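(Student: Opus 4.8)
The plan is to realize the inverted catenoid as a limit of a smooth Willmore flow run backwards in a suitable sense, exploiting the fact that the inverted catenoid is a conformal (Möbius) image of a catenoid and sits at the borderline energy $8\pi$. First I would fix a parametrization: take a catenoid $C\subset\R^3$, which is a complete minimal surface hence $\W$-critical (with $H\equiv 0$), and apply an inversion $I$ centered at a point of $C$ itself; by Möbius invariance of $\W$ on closed surfaces and a standard removable-singularity/compactification argument, the image $I(C)$ closes up to a $C^{1,\alpha}$ (for every $\alpha\in(0,1)$) immersed sphere $f_0\colon\S^2\to\R^3$ with $\W(f_0)=\W(C)=8\pi$ (the two ends of the catenoid contribute $4\pi$ each, cf.\ the Li--Yau/Bryant analysis of complete minimal surfaces with finite total curvature), and the two ends map to a single point where the surface has a $C^{1,\alpha}$ but not $C^2$ double point — this is precisely the inverted catenoid. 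This establishes the regularity claim on $f_0$.

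Next I would invoke the local $C^{1,\alpha}$-well-posedness result of LeCrone--Shao--Simonett \cite{lecroneshaosimonett2020}: the Willmore flow \eqref{eq:intro:WF} admits, for initial data in $C^{1,\alpha}$ (equivalently in the relevant little-Hölder or Besov space), a unique maximal smooth solution $f\in C^\infty((0,T_{\max})\times\S^2,\R^3)$ with $f(t)\to f_0$ in $C^{1,\alpha}$ as $t\searrow 0$, and this solution is instantaneously smoothing. Running this flow from $f_0$ gives the family $f(t)$, smooth for $t>0$, satisfying $\partial_t^\bot f=-\nabla\W(f)$ and $f(t)\to f_0$ in $C^{1,\alpha}$. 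The energy $t\mapsto\W(f(t))$ is then continuous on $[0,T_{\max})$ with $\W(f(0))=8\pi$, and for $t>0$ smooth, so along the flow $\frac{\dd}{\dd t}\W(f(t))=-\int_{\S^2}|\nabla\W(f)|^2\,\dd\mu\le 0$. The point is to show this inequality is \emph{strict} just after $t=0$, i.e.\ $f_0$ is not a Willmore surface, so that $\W(f(t))<8\pi$ for all $t\in(0,T_{\max})$. This follows because a smooth Willmore sphere has $\W\ge 4\pi$ with the only one of energy $8\pi$ being — by Bryant's classification of Willmore spheres as Möbius images of minimal surfaces with $\W\in 4\pi\N$ — a genuine smooth immersion, not the singular inverted catenoid; alternatively, $f(t)$ is instantaneously smooth, so if $\W(f(t))\equiv 8\pi$ the flow would be stationary, forcing $f_0$ to be smooth, a contradiction. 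This gives \eqref{eq:wf-starting-in-ic-en-cont}.

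For the long-time behaviour I would use the strict drop: pick $t_0>0$ small so that $\W(f(t_0))<8\pi$; then $f(t_0)$ is a smooth immersed sphere with energy below the $8\pi$ Li--Yau threshold, which is exactly the hypothesis of the Kuwert--Schätzle global existence and convergence theorem for the Willmore flow of spheres \cite{kuwertschaetzle2004}. Hence the flow starting at $f(t_0)$ — which by uniqueness agrees with $f(\cdot)$ shifted in time — exists for all time, $T_{\max}=\infty$, and converges (up to Möbius transformations and reparametrization) to a round sphere as $t\to\infty$; moreover, since $\W$ is strictly decreasing unless stationary and $4\pi$ is the unique global minimum attained only at round spheres, the Möbius factors can be controlled (one concludes convergence to a fixed round sphere, or at least to the round sphere up to reparametrization, as stated). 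Assembling the three parts — regularity of $f_0$, short-time existence and the strict energy drop, and Kuwert--Schätzle long-time convergence — yields the theorem.

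The main obstacle I anticipate is \emph{matching the two analytic frameworks at $t=0$}: the LeCrone--Shao--Simonett theory is phrased for graphical/parametric perturbations of a fixed reference immersion in little-Hölder spaces, and one must check that the inverted catenoid — whose only deficiency is a single $C^{1,\alpha}\setminus C^2$ double point where two catenoid ends meet — is an admissible initial datum in their sense and that the normal velocity formulation $\partial_t^\bot f=-\nabla\W(f)$ is the one their solution satisfies, so that $\W(f(t))\nearrow 8\pi$ genuinely (rather than, say, jumping). A related subtlety is ensuring the energy is continuous \emph{down to} $t=0$: this needs $C^{1,\alpha}$-convergence to upgrade to convergence of $\int H^2\dd\mu$, which requires either a uniform curvature bound near $t=0$ coming from the smoothing estimates of \cite{lecroneshaosimonett2020} or a separate lower-semicontinuity plus energy-monotonicity squeeze ($\liminf_{t\to0}\W(f(t))\ge\W(f_0)=8\pi$ by lower semicontinuity under $C^{1,\alpha}$, $\le$ by monotonicity and $\W(f_0)=8\pi$). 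Everything else — the Möbius computation for $f_0$, the non-criticality of $f_0$, and the invocation of \cite{kuwertschaetzle2004} — is essentially bookkeeping.
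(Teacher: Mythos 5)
Your overall architecture matches the paper's: realize the inverted catenoid as a $C^{1,\alpha}$ limit datum, invoke LeCrone--Shao--Simonett for instant smoothing, obtain the strict energy drop, and then hand off to Kuwert--Schätzle. You correctly identify the crux and correctly locate it — but the fix you sketch does not close the gap. To get $\W(f(t))\leq 8\pi$ for $t>0$ you propose either (a) a uniform curvature bound near $t=0$ from the smoothing estimates, or (b) a "monotonicity squeeze": $\limsup_{t\searrow 0}\W(f(t))\leq 8\pi$ "by monotonicity and $\W(f_0)=8\pi$." Option (a) cannot work: $f_0$ has unbounded second fundamental form at the double point of the inverted catenoid, so no uniform curvature bound as $t\searrow 0$ is available — the energy is not a priori finite as a limit. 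Option (b) is circular: monotonicity of $t\mapsto\W(f(t))$ on $(0,T)$ shows the limit $E_0:=\lim_{t\searrow 0}\W(f(t))$ exists in $[0,\infty]$, and lower semicontinuity gives $E_0\geq 8\pi$, but nothing so far prevents a downward jump at $t=0$, i.e.\ $E_0>8\pi$. The paper closes this by a third route (Lemma 5.7): approximate $\rho_0$ by smooth $\rho_0^j\to\rho_0$ in $W^{2,p}(\Sigma)$ (so $\W(f_j(0))\to 8\pi$), run the smooth flows $f_j(t)$ from these and use monotonicity there, invoke the \emph{continuous-dependence} part of the LeCrone--Shao--Simonett theorem to get $f_j(t)\to f(t)$ in $C^{1,\alpha}$ at each fixed $t>0$, and then apply a varifold lower-semicontinuity lemma (Lemma 5.6) to the approximating sequence $f_j(t)$ to conclude $\W(f(t))\leq\liminf_j\W(f_j(0))=8\pi$. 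The continuous-dependence step is what your proposal is missing; it is the essential mechanism that transfers the energy bound from smooth data through to $f_0$.

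Two smaller points. First, the inversion must be centered at a point \emph{off} the catenoid $C$ (on its axis); an inversion centered at a point of $C$ itself sends that point to infinity and does not produce a compact surface. Second, "$\W(f_0)=\W(C)=8\pi$" is not right as written, since $C$ is minimal so $\W(C)=0$; the value $8\pi$ comes from the compactification of the two ends, not from Möbius invariance of $\int H^2\,\dd\mu$ applied to the non-compact surface. Relatedly, the $C^{1,\alpha}$ (in fact $W^{2,p}$ for all $p<\infty$, but not $C^2$) regularity of $f_0$ at the double point is not a "standard removable-singularity argument" one can wave at; it is a concrete estimate for the height function near the singular point that the paper imports from the literature and then needs in the $W^{2,p}$ form to run the approximation of Lemma 5.7 and to verify the graphical admissibility over a smooth reference sphere $\Sigma$ required by the well-posedness theorem.
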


Combined with the numerical experiments in \cite{barrettgarckenuernberg2021}, our result suggests the existence of a Willmore-energy-reducing evolution of surfaces, starting at a rotationally symmetric smooth torus with figure-eight-shaped profile curve, \emph{changing topology} by passing through the inverted catenoid, and continuing to flow towards a round sphere. We believe that this provides an essential example to keep in mind for further investigations of singularities of the Willmore flow, particularly in the context of developing \emph{surgery theory} or concepts of \emph{weak solutions}.

\subsection{Structure of this article}
In \Cref{sec:preliminaries}, we review some geometric connections between tori of revolution and their profile curves. In particular, we give a precise definition of the conformally constrained Willmore flow of tori \eqref{eq:intro:conf_WF} in \Cref{sec:conform_constr}. In \Cref{sec:2}, we present the essential ingredient of our one-dimensional analysis, the weighted curvature estimates, in a concise unified approach treating both the Willmore flow and the conformally constrained Willmore flow simultaneously. Special attention is given in \Cref{sec:a-priori-lambda-control} to reduce the order of the non-local Lagrange multiplier in the case of the conformally constrained Willmore flow. In \Cref{sec:convergence}, we then use these tools to set up an iterative blow-up scheme, see \Cref{sec:iterative_blow_up}, which allows us to prove \Cref{intro:thm:main-conf-constr-wf} and \Cref{cor:intro_new_conf_willmore_tori}. Lastly, in \Cref{sec:catenoid}, we examine the singularities of the (non-constrained) Willmore flow of tori of revolution and prove \Cref{thm:intro:wf-starting-in-ic}. 
The paper is complemented by an appendix that includes the local $C^{1,\alpha}$-well-posedness for the Willmore flow (\Cref{sec:app:well-posedness}).

\section{Geometric preliminaries}\label{sec:preliminaries}

We first recall the geometric quantities induced by a general immersion $f\colon \Sigma\to\R^3$ of an oriented closed surface $\Sigma$. Unless specified otherwise, we assume all geometric objects to be smooth, i.e., of class $C^\infty$.
We endow $\Sigma$ with the pull-back $g_f \vcentcolon= f^*\langle\cdot,\cdot\rangle$ of the Euclidean metric. In local coordinates with respect to the orientation, the \emph{unit normal} along $f$ is given by
\begin{align}\label{eq:defNormal}
N=N_f \vcentcolon= \frac{\partial_1 f\times \partial_2f}{|\partial_1 f\times \partial_2 f|}.
\end{align}
Here and in the following, $| \cdot|$ denotes the Euclidean norm in any $\R^n$, $n \geq 1$.
The \emph{(scalar) second fundamental form} is $A_{ij} \vcentcolon= \langle \partial_{ij}^2 f,N\rangle$, $i,j=1,2$. With $g_f^{ij}$ denoting the components of the inverse of the metric tensor $g_f$, the \emph{mean curvature} can be computed as $H\vcentcolon=\frac{1}{2}g_f^{ij}A_{ij}$ and the tensor $A^0 \vcentcolon= A - Hg_f$ is called \emph{trace-free second fundamental form.} We denote by $\mu$ the \emph{area measure} induced by $g_f$. 
For a smooth family of immersions $f\colon[0,T)\times \Sigma\to\R^3$, with the Laplace--Beltrami operator $\Delta$ with respect to $g$, the first variation of the Willmore energy \eqref{eq:def_Willmore_energy} is
\begin{equation}
    \frac{\dd}{\dd t} \W(f) = \int_{\Sigma} \langle \nabla\W(f),\partial_t f\rangle\dd\mu,
\end{equation}
see \cite[pp. 9 -- 12]{kuwertschaetzle2012}.

\subsection{Tori of revolution and hyperbolic geometry}
A particular focus in this article are \emph{tori of revolution} in $\R^3$. Let $\T=\S^1\times\S^1$ be the standard torus with orientation $\{\frac{\partial}{\partial x_1},\frac{\partial}{\partial x_2}\}$, where we usually identify $\S^1=\R /(2\pi\Z)$ in the sequel. Given an immersion $\gamma\colon\S^1\to\H^2$ into the \emph{hyperbolic plane} $\H^2=\R\times(0,\infty)$, we denote by $f_{\gamma}\colon \T\to\R^3$ the associated surface of revolution given by
\begin{align}
    f_{\gamma}(x_1,x_2) \vcentcolon=(\gamma^{(1)}(x_1),\gamma^{(2)}(x_1)\cos(x_2),\gamma^{(2)}(x_1)\sin(x_2)).\label{eq:def-surf-rev}
\end{align}
More generally, for $\varphi\colon \S^1\to\R^2$, we define $\mathcal R\varphi\colon\T\to\R^3$,
\begin{equation}\label{eq:def-R}
    (\mathcal R\varphi)(x_1,x_2)\vcentcolon=(\varphi^{(1)}(x_1),\varphi^{(2)}(x_1)\cos(x_2),\varphi^{(2)}(x_1)\sin(x_2)),
\end{equation}
so that $f_\gamma=\mathcal R\gamma$.
For a surface of revolution parametrized as in \eqref{eq:def-surf-rev}, we have
\begin{align}\label{eq:metric_torus_revolution}
g_{f_\gamma}(x_1,x_2) = \begin{pmatrix}
|\gamma'(x_1)|^2 & 0 \\
0 & (\gamma^{(2)}(x_1))^2
\end{pmatrix},
\end{align}
so that the area measure is given by
\begin{equation}\label{eq:dmu-axisymm}
    \dd\mu_{f_\gamma} = |\partial_x\gamma(x_1)|\gamma^{(2)}(x_1)\cdot  \dd x_1\otimes \dd x_2.
\end{equation}

Many geometric properties of surfaces of revolution as in \eqref{eq:def-surf-rev} may be described by their profile curves in the hyperbolic plane $\H^2=\{(y^{(1)}, y^{(2)}) \in \R^2 : y^{(2)}  > 0\}$  endowed 
with 
the Riemannian metric $g_{(y^{(1)},y^{(2)})} \vcentcolon= \frac{1}{(y^{(2)})^2} \langle \cdot,\cdot\rangle$.
It is well-known that $(\H^2,g)$ has constant sectional curvature equal to $-1 $ and  the covariant derivative of a vector field $X=X(x)$ along a curve $\gamma\colon \S^1\to\H^2, \gamma=\gamma(x)$, is given by
\begin{equation}
 \label{eq:NablaLocalH}
 \nabla _{\partial_x\gamma} X  = \begin{pmatrix}
\partial_x X^{(1)} - \frac{1}{\gamma^{(2)}} (X^{(1)} \partial_x \gamma^{(2)}  + X^{(2)} \partial_x \gamma^{(1)})\\
\partial_x X^{(2)} + \frac{1}{\gamma^{(2)}} (X^{(1)} \partial_x \gamma^{(1)}  - X^{(2)} \partial_x \gamma^{(2)}) \end{pmatrix}.
\end{equation}
With $\partial_s \gamma \vcentcolon= \frac{1}{|\partial_x \gamma|_g} \partial_x \gamma$, the curvature of an immersion $\gamma \colon \mathbb{S}^1 \to \H^2$ is given by
\begin{equation}
 \label{eq:curvature}
 \vec{\kappa} \vcentcolon= \nabla_{\partial_s \gamma} \partial_s \gamma= \begin{pmatrix}
\partial_s^2 \gamma^{(1)} - \frac{2}{\gamma^{(2)}} \partial_s \gamma^{(1)} \partial_s \gamma^{(2)}\\
\partial_s^2 \gamma^{(2)} + \frac{1}{\gamma^{(2)}} ( (\partial_s \gamma^{(1)})^2  -(\partial_s \gamma^{(2)})^2) \end{pmatrix},
\end{equation}
see also \cite[(12)]{dallacquaspener2017}. Writing $\dd s\vcentcolon=|\partial_x\gamma|_g\dd x$ for the arc-length element, the \emph{(hyperbolic) length} is $\mathcal{L}(\gamma)\vcentcolon=\int_{\S^1}\dd s$ and the (hyperbolic) elastic energy is defined as in \eqref{def:elastenergyhyp}.
We sometimes also work with the \emph{Euclidean} length of $\gamma$ given by $\Ll_{\R^2}(\gamma)\vcentcolon=\int_{\S^1}|\partial_x\gamma|\dd x$.
\begin{remark}\label{rem:min-e}
    Going back to \cite{langersinger1984}, we have $\E(\gamma)\geq 4\pi$ for all $\gamma\in C^{\infty}_{\mathrm{imm}}(\S^1,\H^2)$ with equality if and only if $\gamma$ is the profile curve of a so-called \emph{Clifford torus}.
\end{remark}
For the sake of simplicity, we write $\nabla_s$ resp.\ $\nabla_x$ for $\nabla_{\partial_s \gamma}$ resp.\ $\nabla_{\partial_x\gamma}$ in the following.
We also write $\nabla_s$ instead of $\partial_s$ when differentiating functions. 
For a vector field $V$ along $\gamma$, $V^\perp$ denotes the projection onto the subspace orthogonal to $\partial_s \gamma$. In particular,
\begin{equation}
 \label{eq:defNabla}
  \quad \nabla_{s}^\bot \vcentcolon= \nabla_{s } - \ \langle \nabla_{s} \ \cdot \ , \partial_s \gamma \rangle_g \partial_s \gamma.
\end{equation}

\subsection{Evolution equations and the Willmore flow}
If $\gamma \colon [0,T) \times \mathbb{S}^1 \to \H^2$ is a smooth map for some $T>0$, we endow $[0,T)$ with the coordinate $t$ and set
\begin{equation}
    \nabla_{t} \vcentcolon= \nabla_{\partial_t \gamma},\quad \nabla_{t}^\bot  \vcentcolon= \nabla_{t} - \ \langle \nabla_{t}\ \cdot \ , \partial_s \gamma \rangle_g \partial_s \gamma .
\end{equation}

The following can be computed as in \cite[Lemma 2.4]{dallacquaspener2017}.

\begin{lemma}\label{lemma:evo}
Let $T>0$ and $\gamma \colon  [0,T) \times \mathbb{S}^1  \to \H^2$ be a smooth family of immersions satisfying $\partial_t \gamma = V$ where $V=V^{\perp}$. Then we have
\begin{align}
 \partial_t (\dd s) = \partial_t (|\partial_x \gamma|_g \dd x)&= - \langle V,\vec{\kappa} \rangle_g \dd s, \label{eq:evolutionoflineelement2}\\
 \nabla_{t}\partial_s \gamma - \nabla_{s}\partial_t \gamma &= \langle V,\vec{\kappa} \rangle_g \partial_s \gamma
 \label{eq:commutator1}.
\end{align}
For any   normal vector field $\Phi \colon  [0,T) \times I \to \R^2$ we have
\begin{align}
\nabla_{s} \Phi &= \nabla_{s}^\bot \Phi - \langle \Phi , \vec{\kappa}\rangle _g \partial_s \gamma,\label{eq:NormalDerOfNormal}\\
\nabla_{t}^\bot \nabla_{s}^\bot \Phi - \nabla_{s}^\bot \nabla_{t}^\bot \Phi
& =
\langle V,\vec{\kappa}\rangle_g  \nabla_{s}^\bot \Phi\, .\label{eq:Commutator_Normal_Normal_S}
\end{align}
Moreover,
\begin{align}
\nabla_{t}^\bot \vec{\kappa} &= (\nabla_{s}^\bot)^2 V   + \langle V,\vec{\kappa}\rangle_g \vec{\kappa} - V.
\label{eq:EvoKappa}
\end{align}
\end{lemma}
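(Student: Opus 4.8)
The plan is to derive each identity by direct computation using the local formulas for the hyperbolic covariant derivative \eqref{eq:NablaLocalH} and the curvature \eqref{eq:curvature}, following the template of \cite[Lemma 2.4]{dallacquaspener2017} but keeping careful track of the hyperbolic metric factors $1/(\gamma^{(2)})^2$ throughout. The key technical tools are the compatibility of $\nabla$ with the metric $g$ (so that $\partial_t\langle X,Y\rangle_g = \langle\nabla_t X,Y\rangle_g + \langle X,\nabla_t Y\rangle_g$ for vector fields along $\gamma$, and similarly with $\partial_s$) and the torsion-freeness of the Levi-Civita connection, which in the present coordinate-free setting manifests as the commutator relation $\nabla_t\partial_x\gamma = \nabla_x\partial_t\gamma$.

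\textbf{Step 1: the line element and the first commutator.} First I would establish \eqref{eq:commutator1}. Since $\partial_t$ and $\partial_x$ commute as coordinate vector fields on $[0,T)\times\S^1$ and the hyperbolic connection is torsion-free, $\nabla_t\partial_x\gamma = \nabla_x\partial_t\gamma = \nabla_x V$. Writing $\partial_s\gamma = |\partial_x\gamma|_g^{-1}\partial_x\gamma$ and differentiating in $t$, one needs $\partial_t|\partial_x\gamma|_g$; from $|\partial_x\gamma|_g^2 = \langle\partial_x\gamma,\partial_x\gamma\rangle_g$ and metric compatibility, $\partial_t|\partial_x\gamma|_g = |\partial_x\gamma|_g^{-1}\langle\nabla_x V,\partial_x\gamma\rangle_g = |\partial_x\gamma|_g\langle\nabla_s V,\partial_s\gamma\rangle_g$. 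Using $V = V^\perp$ so that $\langle V,\partial_s\gamma\rangle_g = 0$, differentiating this relation in $s$ gives $\langle\nabla_s V,\partial_s\gamma\rangle_g = -\langle V,\nabla_s\partial_s\gamma\rangle_g = -\langle V,\vec\kappa\rangle_g$. Combining these yields $\partial_t|\partial_x\gamma|_g = -|\partial_x\gamma|_g\langle V,\vec\kappa\rangle_g$, which is \eqref{eq:evolutionoflineelement2}, and then a short computation gives \eqref{eq:commutator1}.

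\textbf{Step 2: the normal-projection identities.} Identity \eqref{eq:NormalDerOfNormal} is immediate from the definition \eqref{eq:defNabla} of $\nabla_s^\perp$ together with $\langle\nabla_s\Phi,\partial_s\gamma\rangle_g = -\langle\Phi,\vec\kappa\rangle_g$ for a normal field $\Phi$ (differentiate $\langle\Phi,\partial_s\gamma\rangle_g = 0$). For the commutator \eqref{eq:Commutator_Normal_Normal_S}, I would start from the full-derivative commutator $\nabla_t\nabla_s\Phi - \nabla_s\nabla_t\Phi = R(\partial_t\gamma,\partial_s\gamma)\Phi$, where $R$ is the Riemann curvature tensor of $(\H^2,g)$; since $\H^2$ has constant curvature $-1$, $R(X,Y)Z = -(\langle Y,Z\rangle_g X - \langle X,Z\rangle_g Y)$. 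Here the extra term from using $\partial_s\gamma$ rather than $\partial_x\gamma$ contributes the $\langle V,\vec\kappa\rangle_g\nabla_s^\perp\Phi$ piece via \eqref{eq:commutator1}, and the remaining curvature and tangential-projection terms must be shown to cancel when one projects onto the normal bundle (which in the $1$-dimensional normal bundle of a curve in a surface is just projection orthogonal to $\partial_s\gamma$). This bookkeeping, separating tangential and normal parts, is where I expect the main obstacle: one must handle simultaneously the curvature term of $\H^2$, the correction from $\partial_s$ versus $\partial_x$, and the tangential components dropped by the $\perp$-projection, and verify they combine exactly into the stated right-hand side.

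\textbf{Step 3: the evolution of curvature.} Finally, for \eqref{eq:EvoKappa} I would compute $\nabla_t\vec\kappa = \nabla_t\nabla_s\partial_s\gamma$ and commute $\nabla_t$ past the two $\nabla_s$'s. Using \eqref{eq:commutator1} once to replace $\nabla_t\partial_s\gamma$ by $\nabla_s\partial_t\gamma + \langle V,\vec\kappa\rangle_g\partial_s\gamma = \nabla_s V + \langle V,\vec\kappa\rangle_g\partial_s\gamma$, then applying the curvature commutator again together with the constant-curvature formula $R(\partial_t\gamma,\partial_s\gamma)\partial_s\gamma = -(\langle\partial_s\gamma,\partial_s\gamma\rangle_g V - \langle V,\partial_s\gamma\rangle_g\partial_s\gamma) = -V$ (since $V$ is normal and $|\partial_s\gamma|_g = 1$), one collects the terms. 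After projecting normally via \eqref{eq:NormalDerOfNormal} and \eqref{eq:Commutator_Normal_Normal_S}, the $\nabla_t$-derivative of the tangential part $\langle V,\vec\kappa\rangle_g\partial_s\gamma$ and the terms $\langle V,\vec\kappa\rangle_g\nabla_s^\perp V$ reorganize, and one arrives at $\nabla_t^\perp\vec\kappa = (\nabla_s^\perp)^2 V + \langle V,\vec\kappa\rangle_g\vec\kappa - V$. Throughout, the only genuinely hyperbolic input is the curvature value $-1$ entering through $R$; all the rest is formal manipulation of connections that mirrors the Euclidean elastic-flow computations, so I would organize the proof to isolate that one place where $\H^2$ differs from $\R^2$.
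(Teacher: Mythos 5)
Your proposal is correct, and it takes essentially the same direct-computation route that the paper invokes by citing \cite[Lemma 2.4]{dallacquaspener2017}: use torsion-freeness/metric compatibility for \eqref{eq:evolutionoflineelement2}--\eqref{eq:commutator1}, the curvature commutator with the constant-curvature formula $R(X,Y)Z = -(\langle Y,Z\rangle_g X - \langle X,Z\rangle_g Y)$ together with the one-dimensionality of the normal bundle for \eqref{eq:Commutator_Normal_Normal_S}, and finally the combination $R(V,\partial_s\gamma)\partial_s\gamma=-V$ producing the $-V$ term in \eqref{eq:EvoKappa}. You correctly isolate that the only genuinely hyperbolic input is the sectional curvature $-1$, which is exactly the structural point the cited lemma exploits.
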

With \Cref{lemma:evo}, geometric invariance implies the following.
\begin{lemma}
    For a smooth family of immersions $\gamma\colon[0,T)\times\S^1\to\H^2$, we have
    \begin{align}
    \label{eq:length-dt}
        \partial_t\Ll(\gamma(t)) &= - \int_{\S^1}\langle \curv,\partial_t\gamma\rangle_g\dd s, \\
        \label{eq:1-var-elen}
         \partial_t\E(\gamma(t)) &= \int_{\S^1} \bigr\langle \nabla\E(\gamma), \partial_t\gamma \bigl\rangle_g\dd s,
    \end{align}
    where 
    \begin{equation}\label{eq:nabla-L2}
    \nabla\E (\gamma)\vcentcolon= 2(\nabla_s^{\bot})^2\curv + |\curv|_g^2\curv - 2 \curv.
    \end{equation}
\end{lemma}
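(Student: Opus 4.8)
\textbf{Proof proposal for the first-variation formulas \eqref{eq:length-dt}--\eqref{eq:nabla-L2}.}

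The plan is to derive both formulas directly from \Cref{lemma:evo} by the standard splitting argument: it suffices to treat the case of a purely normal variation $\partial_t\gamma=V=V^\perp$, because both the hyperbolic length $\Ll$ and the elastic energy $\E$ are geometric (invariant under reparametrization of $\S^1$), so the tangential component of $\partial_t\gamma$ contributes nothing to the derivative; one makes this rigorous by composing with a time-dependent family of diffeomorphisms of $\S^1$ that removes the tangential part, exactly as in \cite[Lemma 2.4]{dallacquaspener2017}. Once this reduction is in place, both identities are integrations over $\S^1$ of pointwise evolution equations.

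For \eqref{eq:length-dt}, assuming $\partial_t\gamma=V=V^\perp$, integrate \eqref{eq:evolutionoflineelement2}: $\partial_t\Ll(\gamma(t))=\int_{\S^1}\partial_t(\dd s)=-\int_{\S^1}\langle V,\curv\rangle_g\dd s$. Then one observes that $\langle V,\curv\rangle_g=\langle\partial_t\gamma,\curv\rangle_g$ still holds even after adding back a tangential component, since $\curv=\curv^\perp$ is normal; this gives \eqref{eq:length-dt} for general $\partial_t\gamma$.

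For \eqref{eq:1-var-elen}--\eqref{eq:nabla-L2}, again reduce to $\partial_t\gamma=V=V^\perp$ and differentiate under the integral sign:
\begin{align*}
\partial_t\E(\gamma(t))
&=\partial_t\int_{\S^1}|\curv|_g^2\dd s
=\int_{\S^1}\Bigl(2\langle\nabla_t^\bot\curv,\curv\rangle_g-|\curv|_g^2\langle V,\curv\rangle_g\Bigr)\dd s,
\end{align*}
where $\partial_t|\curv|_g^2=2\langle\nabla_t\curv,\curv\rangle_g=2\langle\nabla_t^\bot\curv,\curv\rangle_g$ since $\curv$ is normal, and $\partial_t(\dd s)$ was substituted from \eqref{eq:evolutionoflineelement2}. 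Now insert \eqref{eq:EvoKappa}, $\nabla_t^\bot\curv=(\nabla_s^\bot)^2V+\langle V,\curv\rangle_g\curv-V$, to get
\begin{align*}
\partial_t\E(\gamma(t))
&=\int_{\S^1}\Bigl(2\langle(\nabla_s^\bot)^2V,\curv\rangle_g+2\langle V,\curv\rangle_g|\curv|_g^2-2\langle V,\curv\rangle_g-|\curv|_g^2\langle V,\curv\rangle_g\Bigr)\dd s\\
&=\int_{\S^1}\Bigl(2\langle(\nabla_s^\bot)^2V,\curv\rangle_g+\langle V,\curv\rangle_g|\curv|_g^2-2\langle V,\curv\rangle_g\Bigr)\dd s.
\end{align*}
Finally integrate the first term by parts twice in arc length. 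Here one uses that $\nabla_s^\bot$ is (formally) skew-adjoint on normal fields with respect to $\langle\cdot,\cdot\rangle_g\dd s$ on the closed curve $\S^1$ — equivalently, $\partial_s\langle\Phi,\Psi\rangle_g=\langle\nabla_s^\bot\Phi,\Psi\rangle_g+\langle\Phi,\nabla_s^\bot\Psi\rangle_g$ for normal fields $\Phi,\Psi$ together with $\int_{\S^1}\partial_s(\cdot)\dd s=0$ — to move both derivatives onto $\curv$:
\begin{align*}
\int_{\S^1}\langle(\nabla_s^\bot)^2V,\curv\rangle_g\dd s=\int_{\S^1}\langle V,(\nabla_s^\bot)^2\curv\rangle_g\dd s.
\end{align*}
This yields $\partial_t\E(\gamma(t))=\int_{\S^1}\langle 2(\nabla_s^\bot)^2\curv+|\curv|_g^2\curv-2\curv,V\rangle_g\dd s$, which is \eqref{eq:1-var-elen} with $\nabla\E(\gamma)$ as in \eqref{eq:nabla-L2} for normal $V$; the passage to general $\partial_t\gamma$ follows once more because $\nabla\E(\gamma)$ is a normal field, so pairing it against $\partial_t\gamma$ only sees the normal component.

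The only genuinely delicate point is the reduction to normal variations: one must justify that a tangential reparametrization can be subtracted off without affecting $\partial_t\E$ and $\partial_t\Ll$, which requires knowing that $\E$ and $\Ll$ are reparametrization-invariant and that the family of reparametrizing diffeomorphisms depends smoothly on $t$; this is routine and identical to \cite[Lemma 2.4]{dallacquaspener2017}, so I would simply cite it. Everything else is bookkeeping with \Cref{lemma:evo} and integration by parts on the closed curve.
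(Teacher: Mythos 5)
Your argument is correct and is exactly what the paper's one-line justification (``With \Cref{lemma:evo}, geometric invariance implies the following'') is gesturing at: reduce to a normal variation by reparametrization invariance, integrate \eqref{eq:evolutionoflineelement2} and \eqref{eq:EvoKappa} against the evolving line element, and integrate by parts twice using that $\nabla_s^\bot$ is skew on normal fields along a closed curve. Nothing to add.
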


The relation \eqref{eq:BG} between Willmore energy and elastic energy is also available on the level of the $L^2$-gradients. Indeed, for a surface of revolution $f_\gamma$ as in \eqref{eq:def-surf-rev}, in the notation of \eqref{eq:def-R}, the computations in
\cite[Theorem 4.1]{dallacquaspener2018} yield
\begin{equation}\label{eq:will-grad-for-fgamma}
    \nabla\W(f_{\gamma}) =  \mathcal R \big(\frac{1}{4(\gamma^{(2)})^4}\nabla\E(\gamma)\big).
\end{equation}
This immediately yields a one-to-one correspondence between Willmore critical tori of revolution and \emph{elastica,} i.e., critical points of the elastic energy. 

\begin{definition}\label{rem:constrelastica}
Critical points of the $\lambda$-penalized elastic energy $\E(\gamma)+ \lambda \Ll(\gamma)$, $\lambda \in \R$, satisfy
\begin{equation}\label{eq:constrelast}
\nabla \E (\gamma) - \lambda \vec{\kappa}=0.
\end{equation}
We call such curves
\emph{$\lambda$-constrained elastica} and \emph{(free) elastica} if $\lambda=0$. 
\end{definition}
Solutions to \eqref{eq:constrelast} have been classified in \cite{langersinger1984}, see also \cite{muellerspener2020}.
The next proposition characterizes surfaces of revolution evolving by Willmore flow.
\begin{proposition}\label{prop:Schl1}
    Let $\gamma\colon [0,T)\times \S^1\to\H^2$ be a smooth family of curves. Then $f_\gamma\colon [0,T)\times \T\to\R^3$ solves the Willmore flow equation $\partial_tf_{\gamma} = - \nabla\W(f_{\gamma})$
     on $[0,T)\times\T$ if and only if
    \begin{equation}\label{eq:wf-ev-eq}
        \partial_t \gamma = -\frac{1}{2(\gamma^{(2)})^4} \bigl( (\nabla_s^{\bot})^2\vec{\kappa} + \frac{1}{2}|\vec{\kappa}|_g^2\vec{\kappa} -\vec{\kappa} \bigr) = -\frac{1}{4(\gamma^{(2)})^4}\nabla\E(\gamma).
    \end{equation}
\end{proposition}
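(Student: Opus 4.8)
The plan is to show that the Willmore flow equation for the surface of revolution $f_\gamma$ is equivalent to the displayed evolution equation for $\gamma$ by reducing the $L^2(\dd\mu)$-gradient flow on $\T$ to an evolution on $\S^1$. The starting point is the identity \eqref{eq:will-grad-for-fgamma}, which already computes $\nabla\W(f_\gamma)=\mathcal{R}\bigl(\tfrac{1}{4(\gamma^{(2)})^4}\nabla\E(\gamma)\bigr)$, together with the formula \eqref{eq:nabla-L2} for $\nabla\E(\gamma)$, so that the right-hand side of \eqref{eq:wf-ev-eq} is exactly $-\nabla\W(f_\gamma)$ expressed through the profile curve. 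The content of the proposition is therefore the following two-part equivalence: (a) if $\gamma$ solves \eqref{eq:wf-ev-eq}, then $f_\gamma$ solves $\partial_t f_\gamma=-\nabla\W(f_\gamma)$; and (b) conversely, if $f_\gamma$ solves the Willmore flow, then $\gamma$ satisfies \eqref{eq:wf-ev-eq}.

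For direction (a), I would argue as follows. Assume $\partial_t\gamma = -\tfrac{1}{4(\gamma^{(2)})^4}\nabla\E(\gamma)=:W$. Applying $\mathcal{R}$ and differentiating \eqref{eq:def-R} in $t$ gives $\partial_t f_\gamma = \mathcal{R}(\partial_t\gamma)=\mathcal{R}(W)$ pointwise on $\T$, because $\mathcal{R}$ is linear in the curve and $t$-independent. By \eqref{eq:will-grad-for-fgamma} this is precisely $-\nabla\W(f_\gamma)$, provided one checks that $W$, being a tangent vector to $\H^2$ along $\gamma$, transforms under $\mathcal{R}$ the way \eqref{eq:will-grad-for-fgamma} requires — i.e. that $\mathcal{R}$ intertwines the hyperbolic $L^2$-gradient on $\S^1$ with the Euclidean one on $\T$ in the normal direction. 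Here the only subtlety is that $\nabla\W(f_\gamma)$ in \eqref{eq:intro:WF} is a priori a normal vector field $-(\Delta H+|A^0|^2H)N$, so I should verify (or cite from \cite[Theorem 4.1]{dallacquaspener2018}) that $\mathcal{R}$ of a vector field along $\gamma$ that is $g$-orthogonal to $\partial_s\gamma$ lands in the normal bundle of $f_\gamma$; this follows from \eqref{eq:metric_torus_revolution} and a direct computation of $N_{f_\gamma}$ using \eqref{eq:defNormal}.

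For direction (b), suppose $\partial_t f_\gamma=-\nabla\W(f_\gamma)$ holds as a family of maps $\T\to\R^3$. Rotational symmetry is preserved: both sides are equivariant under the $\S^1$-action $(x_1,x_2)\mapsto(x_1,x_2+\theta)$, so $f_\gamma(t,\cdot)$ remains a surface of revolution, hence of the form $\mathcal{R}\gamma(t)$ for a (unique, up to the obvious ambiguity) curve $\gamma(t)\colon\S^1\to\H^2$; one must note that $\partial_t f_\gamma$ has no $x_2$-component precisely because $-\nabla\W(f_\gamma)=\mathcal{R}(\text{something})$ by \eqref{eq:will-grad-for-fgamma}, so $\partial_t f_\gamma = \mathcal{R}(\partial_t\gamma)$. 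Comparing the first two components of $\mathcal{R}(\partial_t\gamma)=-\nabla\W(f_\gamma)=\mathcal{R}\bigl(-\tfrac{1}{4(\gamma^{(2)})^4}\nabla\E(\gamma)\bigr)$ and using injectivity of $\mathcal{R}$ on curves (which is clear from \eqref{eq:def-R}: the first two components of $\mathcal{R}\varphi$ at $x_2=0$ recover $\varphi$) yields \eqref{eq:wf-ev-eq}. The second equality in \eqref{eq:wf-ev-eq} is then just \eqref{eq:nabla-L2} together with the identity $(\nabla_s^\bot)^2\curv$ appearing with the factor $2$, i.e. $\tfrac14\nabla\E(\gamma)=\tfrac12(\nabla_s^\bot)^2\curv+\tfrac14|\curv|_g^2\curv-\tfrac12\curv$.

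The main obstacle — and really the only non-bookkeeping point — is justifying that one may pass freely between the flow of $f_\gamma$ on $\T$ and the flow of $\gamma$ on $\S^1$, i.e. that $\mathcal{R}$ respects both the normal projections and the $L^2$-gradient structures on the two sides. This is essentially the content of the Bryant--Griffiths formula at the level of gradients, \eqref{eq:will-grad-for-fgamma}, whose proof is in \cite[Theorem 4.1]{dallacquaspener2018}; I would invoke that identity as a black box and devote the remaining care to the equivariance argument showing that a Willmore flow starting from a surface of revolution stays a surface of revolution, so that the reduction to \eqref{eq:wf-ev-eq} is genuinely an equivalence and not merely an implication. Since the Willmore flow is (for smooth data) unique, equivariance can alternatively be deduced from uniqueness: $R_\theta\circ f_\gamma(t,\cdot)\circ R_{-\theta}$ solves the same flow with the same initial datum, hence equals $f_\gamma(t,\cdot)$.
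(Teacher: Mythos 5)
Your core argument is the same as the paper's one-line proof: $\partial_t f_\gamma = \mathcal{R}\,\partial_t\gamma$ by differentiating \eqref{eq:def-surf-rev} in $t$, then \eqref{eq:will-grad-for-fgamma} and injectivity of $\mathcal{R}$ (which you correctly extract by reading off the first two components) give the equivalence. That is all that is needed, and it is what the paper does.

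However, the extended discussion in your direction (b) about rotational symmetry being preserved --- the equivariance argument and the final appeal to uniqueness of the Willmore flow --- is based on a misreading of the hypothesis and is entirely superfluous here. The proposition \emph{gives} you a smooth family $\gamma\colon[0,T)\times\S^1\to\H^2$ and asks when $f_\gamma\vcentcolon=\mathcal{R}\gamma$ satisfies $\partial_t f_\gamma = -\nabla\W(f_\gamma)$; since $f_\gamma(t,\cdot)=\mathcal{R}\gamma(t,\cdot)$ is a surface of revolution for \emph{every} $t$ by construction, there is nothing to preserve and no curve to extract. Proposition~\ref{prop:Schl1} is a pointwise-in-$t$ equivalence of two PDEs for the prescribed family $\gamma$, not the (separate, also true, but uniqueness-dependent) claim that a Willmore flow started at a rotationally symmetric initial datum stays rotationally symmetric. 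Likewise, the ``subtlety'' you raise in direction (a) about $\mathcal{R}$ respecting normal bundles is a red herring for this argument: $\partial_t f_\gamma = \mathcal{R}\,\partial_t\gamma$ is a trivial consequence of \eqref{eq:def-R} regardless of the normality of $\partial_t\gamma$, and \eqref{eq:will-grad-for-fgamma} is cited as a black box. So the proof is right, but identifying the ``main obstacle'' incorrectly led to a much longer argument than the two lines the statement actually requires.
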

\begin{proof}
    By a direct computation, $\partial_tf_{\gamma}=\mathcal R\partial_t\gamma$. 
    The claim then follows from \eqref{eq:will-grad-for-fgamma}.
\end{proof}

This observation motivates the following.
\begin{definition}\label{def:will-flow}
We call a smooth family of immersions $\gamma\colon[0,T)\times\S^1\to\H^2$ solving
\begin{equation}\label{eq:wf-eq}
    \begin{cases}
        \partial_t\gamma = -\frac{1}{4(\gamma^{(2)})^4} \nabla\E(\gamma)&\text{on $[0,T)\times\S^1$},\\
        \gamma(0)=\gamma_0&\text{on $\S^1$}
    \end{cases}
\end{equation}
a \emph{Willmore flow} starting at $\gamma_0$.
\end{definition}

By \eqref{eq:1-var-elen}, \eqref{eq:nabla-L2}, and \eqref{eq:BG} the PDE \eqref{eq:wf-eq} has the following gradient flow structure.
\begin{lemma}\label{rem:endecr}
    If $\gamma \colon[0,T)\times\S^1\to\H^2$ is a Willmore flow, then
    \begin{align}
        \frac{\dd}{\dd t} \E(\gamma) & = \int_{\S^1} \bigr\langle \nabla \E(\gamma), \partial_t \gamma \bigl\rangle_g\dd s  = - \int_{\S^1}  \frac{1}{4 (\gamma^{(2)})^4}| \nabla \E(\gamma)|^2_g\dd s \leq 0\quad\text{and}\\
        \frac{\dd}{\dd t} \W(f_{\gamma}) &= \int_{\T} \langle\nabla\W(f_{\gamma}),\partial_tf_{\gamma}\rangle\dd\mu = - \int_{\T} |\nabla\W(f_{\gamma})|^2\dd\mu \leq 0.
    \end{align}
\end{lemma}

The elastic flow in the hyperbolic plane considered in \cite{dallacquaspener2017,muellerspener2020} is   
\begin{equation}\label{eq:elasflowhyp}
    \begin{cases}
        \partial_t\gamma = - \nabla\E(\gamma)&\text{on $[0,T)\times\S^1$},\\
        \gamma(0)=\gamma_0&\text{on $\S^1$} 
    \end{cases}
\end{equation}
   and differs from  the evolution equation \eqref{eq:wf-eq}
   by the factor $\frac{1}{4(\gamma^{(2)})^4}$. Controlling this factor is a major challenge throughout this article.
   A key ingredient is the following estimate, see \cite[Lemma~2.3]{dallacquamullerschatzlespener2020}. 
   
   \begin{lemma}\label{rem:hyplengthestimatesrange(optimal)}
   For an immersion $\gamma\colon\S^1\to\H^2$ and $a,b\in\S^1$, we have
    \begin{equation}\label{eq:est-gamma2-length}
        \gamma^{(2)}(b)\leq \gamma^{(2)}(a) e^{\frac12\Ll(\gamma)}.
    \end{equation}  
\end{lemma}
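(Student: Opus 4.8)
The plan is to prove the estimate $\gamma^{(2)}(b)\leq \gamma^{(2)}(a)e^{\frac12\Ll(\gamma)}$ by a one-dimensional differential inequality argument for the function $x\mapsto \log\gamma^{(2)}(x)$ along an arc-length parametrization of $\gamma$. First I would reparametrize $\gamma$ by hyperbolic arc-length, so that $|\partial_s\gamma|_g=1$, i.e., $(\partial_s\gamma^{(1)})^2+(\partial_s\gamma^{(2)})^2=(\gamma^{(2)})^2$; in particular $|\partial_s\gamma^{(2)}|\leq \gamma^{(2)}$ pointwise. This is the only ingredient really needed: it is just the statement that the hyperbolic speed bounds the rate of change of the height coordinate.

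Next, set $h(s)\vcentcolon=\log\gamma^{(2)}(s)$, which is well-defined and smooth since $\gamma^{(2)}>0$. Then $|h'(s)|=\frac{|\partial_s\gamma^{(2)}|}{\gamma^{(2)}}\leq 1$ for every $s$. Fixing arc-length parameters $s_a,s_b\in\R/\Ll(\gamma)\Z$ corresponding to $a,b$, I would pick the shorter of the two arcs of $\S^1$ joining them, whose hyperbolic length is at most $\tfrac12\Ll(\gamma)$, and integrate $h'$ along it: $h(s_b)-h(s_a)=\int_{s_a}^{s_b}h'(s)\dd s$, so that $|h(s_b)-h(s_a)|\leq \int_{s_a}^{s_b}|h'(s)|\dd s\leq \tfrac12\Ll(\gamma)$. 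Exponentiating gives $\gamma^{(2)}(b)=e^{h(s_b)}\leq e^{h(s_a)}e^{\frac12\Ll(\gamma)}=\gamma^{(2)}(a)e^{\frac12\Ll(\gamma)}$, as claimed. Since $a,b$ were arbitrary, the roles can be interchanged, which is consistent with the symmetric-looking bound.

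There is no genuine obstacle here; the only point requiring a little care is the bookkeeping with the closed curve. Because $\gamma$ is defined on $\S^1$, the two points $a$ and $b$ are joined by two arcs whose lengths sum to $\Ll(\gamma)$, hence at least one of them has length $\leq\tfrac12\Ll(\gamma)$, and that is the arc over which the integration of $h'$ must be performed to obtain the stated constant. One should also note that the estimate is invariant under reparametrization, so passing to arc-length at the start costs nothing. Everything else is the elementary fundamental-theorem-of-calculus estimate above.
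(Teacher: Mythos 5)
Your proof is correct and takes essentially the same route as the paper's: both rely on the pointwise bound $|\partial_x\log\gamma^{(2)}|\leq|\partial_x\gamma|/\gamma^{(2)}$ (which you phrase via arc-length normalization as $|h'|\leq 1$), integrate over the shorter of the two arcs joining $a$ and $b$, and exponentiate. The only cosmetic difference is that you reparametrize by hyperbolic arc-length at the outset, while the paper runs the same estimate in an arbitrary parametrization.
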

\begin{proof}
Note that $\Ll(\gamma|_{[a,b]})\leq\frac12\Ll(\gamma)$ or $\Ll(\gamma|_{\S^1\setminus (a,b)})\leq\frac12\Ll(\gamma)$. In the first case, we have
    \begin{equation}
        \tfrac{1}{2}\mathcal{L}(\gamma) \geq \int_a^b \tfrac{|\partial_x \gamma|}{\gamma^{(2)}} \; \mathrm{d}x \geq  \int_a^b \tfrac{|\partial_x \gamma^{(2)}|}{\gamma^{(2)}} \; \dd x = \int_a^b |\partial_x \log(\gamma^{(2)}) | \; \dd x  \geq \log(\gamma^{(2)}(b)) - \log(\gamma^{(2)}(a)). 
    \end{equation}
    Taking exponentials, the desired inequality follows. The other case is analogous.    
\end{proof}

\subsection{The conformally constrained Willmore gradient and flow}\label{sec:conform_constr}

\begin{definition}
    Let $(M_i,g_i)$, $i=1,2$ be Riemannian manifolds. A smooth immersion $f\colon M_1\to M_2$ is called \emph{conformal} if there exists $u\in C^\infty(M_1,\R)$ with
    \begin{equation}
        f^*g_2 = e^{2u} g_1.
    \end{equation}
    Moreover, $u$ is called the \emph{conformal factor} of $f$.
\end{definition}
Letting
\begin{align}
    \mathcal M&\vcentcolon=\{\vec\omega=(\omega_1,\omega_2)\in \R^2\mid 0\leq \omega_1\leq \frac12,\ |\vec\omega|\geq 1\text{ and } \omega_2>0\}, \\
    \T_{\vec\omega}&\vcentcolon=\R^2 /(\Z \cdot (1,0)+\Z\cdot \vec\omega) \quad\text{ for }\vec\omega \in \mathcal M,
\end{align}
it is well-known that, for any torus 
$(\mathbb{T}^2,g)$, there exists a unique $\vec\omega\in\mathcal M$ such that
a conformal diffeomorphism $\phi\colon (\T_{\vec\omega},\delta_{ij})\to (\T,g)$ exists, 
see \cite[Theorem~2.7.1 and Theorem~4.4.1, additionally dividing out the orientations]{jost2006}. We write $\mathfrak c(g)\vcentcolon=\vec\omega$, the \emph{conformal class} of $g$. 

For an immersion $f\colon\T\to\R^3$, with a slight abuse of notation, write $\mathfrak c(f)\vcentcolon=\mathfrak c(f^*\langle \cdot,\cdot\rangle)$. Note that the conformal class is invariant with respect to reparametrizations. With $\vec\omega=\mathfrak c(f)$, we define the space of $C^\infty$-variations respecting the conformal class
\begin{align}
    \mathfrak C_f(\vec\omega) \vcentcolon= \{\partial_{t,0}F(t,\cdot)\mid \ & F\in C^{1,\infty}((-\varepsilon,\varepsilon)\times\T,\R^3)\text{ family of immersions with $F(0)=f$} \\
    & \text{and $\mathfrak c(F(t,\cdot))=\vec\omega$ for all $-\varepsilon<t<\varepsilon$} \}.
\end{align}
Here, as in \cite{MR3199732} for manifolds $M,N$, we write $F\in C^{1,\infty}((-\varepsilon,\varepsilon)\times M,N)$ if 
\begin{equation}
    \partial_{x}^s F,\ \partial_{x}^s\partial_t F\quad\text{ are continuous on $(-\varepsilon,\varepsilon)\times M$ for all $s\in\N_0$}
\end{equation}
where $\partial_{x}^s$ denotes any spatial derivative of order $s$.
Moreover, with a slight abuse of notation, we denote by $\overline{\mathfrak C_f(\vec\omega)}$ the \emph{closure of the linear span} of $\mathfrak C_f(\vec\omega)$ in $L^2(\dd \mu_f)$.

\begin{definition}
    For $\vec\omega\in\mathcal M$ and a smooth immersion $f\colon\T\to\R^3$ with $\mathfrak c(f)=\vec\omega$, we define its \emph{conformally constrained Willmore gradient} to be
    \begin{equation}
        \nabla^{\vec\omega}\W(f) \vcentcolon= \Pi^{\overline{\mathfrak C_f(\vec\omega)}} \nabla\W(f)
    \end{equation}
    where $\Pi^H$ denotes the $L^2(\dd\mu_{f})$-orthogonal projection onto a closed subspace $H\subseteq L^2(\dd\mu_{f})$. Furthermore, $f$ is said to be a \emph{conformally constrained Willmore torus} if $\nabla^{\vec\omega}\W(f)=0$.
\end{definition}

This definition is consistent with the literature \cite{kuwertschaetzle2013,MR3199732} in the following way.
\begin{proposition}
    Let $f\colon\T\to\R^3$ be a smooth immersion with $\mathfrak c(f)=\vec\omega$. Then $\nabla^{\vec\omega}\W(f)=0$ if and only if $f$ solves 
    \begin{equation}
        \Delta H + |A^0|^2 H = \langle q,A^0\rangle
    \end{equation}
    for some transverse traceless symmetric $2$-covariant tensor $q$ with respect to $g=f^*\langle\cdot,\cdot\rangle$.
\end{proposition}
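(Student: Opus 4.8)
The plan is to show that vanishing of the projected Willmore gradient $\nabla^{\vec\omega}\W(f)=0$ is equivalent to $\nabla\W(f)$ being $L^2(\dd\mu_f)$-orthogonal to the subspace $\overline{\mathfrak C_f(\vec\omega)}$, and then to identify this orthogonality condition with the classical conformally constrained Euler--Lagrange equation. Since $\nabla^{\vec\omega}\W(f)=\Pi^{\overline{\mathfrak C_f(\vec\omega)}}\nabla\W(f)$, the condition $\nabla^{\vec\omega}\W(f)=0$ holds if and only if $\int_{\T}\langle\nabla\W(f),\psi\rangle\dd\mu_f=0$ for every $\psi\in\overline{\mathfrak C_f(\vec\omega)}$, hence (by density and continuity of the pairing) for every $\psi\in\mathfrak C_f(\vec\omega)$. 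So the first step is purely formal. The substance is in the second step: translating ``$\nabla\W(f)\perp\mathfrak C_f(\vec\omega)$'' into the PDE with a transverse traceless tensor $q$.

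For the second step I would invoke the structure of the constraint already developed in the literature \cite{kuwertschaetzle2013,MR3199732,MR3276154}. The conformal class map $\mathfrak c$ is (after suitable normalization) a smooth submersion on the space of immersions near $f$, so its differential $D\mathfrak c(f)$ at $f$ has as kernel precisely (the normal parts of) the admissible variations, and $\mathfrak C_f(\vec\omega)=\ker D\mathfrak c(f)$. The key algebraic fact is that the $L^2(\dd\mu_f)$-orthogonal complement of $\ker D\mathfrak c(f)$ among normal variations is spanned by the divergence-type expressions $\langle q,A^0\rangle$ as $q$ ranges over transverse traceless symmetric $2$-tensors with respect to $g=f^*\langle\cdot,\cdot\rangle$; this is exactly the content of the first-variation computation of the conformal class, going back to \cite{bohlepeterspinkall2008} and made precise in \cite{MR3199732,MR3276154} — the tangent space to the Teichmüller-type quotient is identified with transverse traceless tensors, and the derivative of $\mathfrak c$ in a normal direction $\varphi N$ pairs with $q$ via $\int_{\T}\varphi\langle q,A^0\rangle\dd\mu_f$. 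Therefore $\nabla\W(f)=(\Delta H+|A^0|^2H)N$ is orthogonal to $\mathfrak C_f(\vec\omega)$ if and only if its normal density $\Delta H+|A^0|^2H$ lies in the span of $\{\langle q,A^0\rangle : q \text{ transverse traceless}\}$, i.e.\ if and only if there exists such a $q$ with $\Delta H+|A^0|^2H=\langle q,A^0\rangle$.

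Concretely I would organize it as: (i) record that $\mathfrak C_f(\vec\omega)$ agrees with the normal variations annihilating the first variation of the conformal class, citing the relevant statements in \cite{MR3199732,MR3276154}; (ii) recall the formula for the first variation of $\mathfrak c$, which exhibits the pairing with transverse traceless tensors $q$; (iii) conclude by the Hilbert-space projection argument and the fact that the transverse traceless tensors form a finite-dimensional space, so the span of $\{\langle q,A^0\rangle\}$ is closed in $L^2(\dd\mu_f)$ and one can pass freely between orthogonality and the existence of $q$. One should also note the degenerate case: if $f$ is isothermal then $A^0$ may fail to generate enough directions and $D\mathfrak c(f)$ is not surjective; but in that situation every transverse traceless $q$ still pairs trivially in the relevant directions, and the equivalence persists with the understanding that $q$ need not be unique.

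The main obstacle I anticipate is the precise handling of the degeneracy of the constraint at isothermal immersions — surfaces of revolution among them — where $D\mathfrak c(f)$ is not a submersion, so $\overline{\mathfrak C_f(\vec\omega)}$ need not be a closed complement of a nice finite-codimension subspace and the identification of its orthogonal complement with $\{\langle q,A^0\rangle\}$ needs care. The cleanest way around this is to not rely on surjectivity of $D\mathfrak c(f)$ at all: one shows directly that $\psi\mapsto D\mathfrak c(f)[\psi N]$ has closed range (being continuous into a finite-dimensional space) and that its $L^2$-adjoint sends the dual of that range onto exactly the transverse traceless expressions $\langle q,A^0\rangle$; then $(\ker D\mathfrak c(f))^{\perp}=\operatorname{ran}(D\mathfrak c(f))^{*}$ is automatically this finite-dimensional space, closed, and the projection argument goes through unconditionally. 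This is the argument already implicit in \cite{MR3199732}, and I would simply cite it for the explicit pairing formula and reproduce the short Hilbert-space step here.
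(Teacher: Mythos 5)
Your proposal is correct and follows essentially the same route as the paper: the paper's proof consists exactly of the "standard argument" reducing $\nabla^{\vec\omega}\W(f)=0$ to $\W'(f)V=0$ for all $V\in\mathfrak C_f(\vec\omega)$, followed by a direct citation of Theorem~2.4 in \cite{MR3199732}, which is precisely the pairing/closed-range content you lay out in step~(ii)--(iii). Your extra discussion of the degeneracy at isothermal immersions is sound but is already handled inside the cited theorem, so the paper does not re-derive it.
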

\begin{proof}
    A standard argument yields that $\nabla^{\vec\omega}\W(f)=0$ if and only if 
    $\W'(f)V = 0$  for all $V\in\mathfrak C_f(\vec\omega)$.
    The claim then follows from \cite[Theorem~2.4]{MR3199732}.
\end{proof}

For $\vec\omega\in\mathcal M$ and an immersion $f_0\colon\T\to\R^3$ with $\mathfrak c(f_0)=\vec\omega$, a smooth family of immersions $f\colon[0,T)\times\T\to\R^3$ satisfying $\mathfrak c(f(t,\cdot))=\vec\omega$ for all $0\leq t<T$ and
\begin{equation}\label{eq:2d-conf-constr-wf-eq}
    \begin{cases}
        \partial_t f = -\nabla^{\vec\omega} \W(f)&\text{on $[0,T)\times\T$},\\
        f(0,\cdot)=f_0&\text{on $\T$}
    \end{cases}
\end{equation}
is called a $\emph{conformally constrained Willmore flow}$ starting at $f_0$. 
By the properties of the $L^2(\dd\mu)$ inner product
we have
\begin{equation}\label{eq:dtW-for-conf-constr-WF}
    \partial_t\W(f(t)) = \int_{\T} \langle\nabla\W(f(t)),-\Pi^{\overline{\mathfrak C_f(\vec\omega)}}\nabla\W(f(t))\rangle\dd\mu = - \int_{\T} |\partial_tf|^2\dd\mu,
\end{equation}
that is, the energy identity
\begin{equation}
    \W(f(s))-\W(f(t)) = \int_s^t \int_{\T} |\partial_tf(\tau,x)|^2\dd\mu_f(x) \dd\tau
\end{equation}
holds for all $0\leq s<t<T$. In the case of tori of revolution, the conformal Willmore gradient can be explicitly computed and we get the following analog to \eqref{eq:will-grad-for-fgamma}. 

\begin{proposition}[Conformal Willmore gradient for tori of revolution]\label{prop:conf-will-grad-axisym}
    Let $f_{\gamma}$ be a torus of revolution induced by an immersion $\gamma\colon\S^1\to\H^2$ via \eqref{eq:def-surf-rev} with $\vec\omega=\mathfrak c(f_\gamma)$. Then, using the notation \eqref{eq:def-R}, 
    \begin{equation}\label{eq:conf-constr-wf-eqI}
        \nabla^{\vec\omega}\W(f_{\gamma}) = \mathcal{R}\Big(\frac{1}{4(\gamma^{(2)})^4}(\nabla\E(\gamma)-\lambda(\gamma)\curv)\Big)
    \end{equation}
    where
    \begin{equation}\label{eq:lambda-conf-constr-will}
        \lambda(\gamma)\vcentcolon=\frac{\int_{\S^1} \frac{1}{(\gamma^{(2)})^4} \langle \nabla\E(\gamma), \curv\rangle_g \dd s}{\int_{\S^1} \frac{1}{(\gamma^{(2)})^4} |\curv|_g^2 \dd s}.
    \end{equation}
    In particular, $f_{\gamma}$ is a conformally constrained Willmore torus if and only if $\gamma$ is a ($\lambda$-constrained) elastica with $\lambda=\lambda(\gamma)$.
\end{proposition}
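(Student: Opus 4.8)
The plan is to combine the identity \eqref{eq:will-grad-for-fgamma}, which writes $\nabla\W(f_\gamma)$ as a weighted hyperbolic elastic gradient, with the fact that under $\gamma\mapsto f_\gamma$ the constraint ``$\mathfrak c(f)=\vec\omega$ fixed'' becomes ``$\Ll(\gamma)$ fixed'', and with a symmetry reduction so that the $L^2(\dd\mu)$-projection defining $\nabla^{\vec\omega}\W(f_\gamma)$ can be computed entirely on the level of profile curves.

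First I would record the basic facts about surfaces of revolution: differentiating \eqref{eq:def-surf-rev} gives $\partial_t f_{\gamma_t}=\mathcal R(\partial_t\gamma_t)$, and a one-line computation with \eqref{eq:metric_torus_revolution} and \eqref{eq:defNormal} shows that $\mathcal R(V)$ is normal to $f_\gamma$ precisely when $V$ is normal to $\gamma$ in $\H^2$. Reparametrising $\gamma$ by hyperbolic arc length turns $g_{f_\gamma}$ into $(\gamma^{(2)})^2(\dd s^2+\dd x_2^2)$, which is conformal to the flat product metric on $\R/\Ll(\gamma)\Z\times\R/2\pi\Z$; hence $\mathfrak c(f_\gamma)$ is a function of $\Ll(\gamma)$ with nowhere-vanishing derivative in $\Ll$ (cf.\ \cite{dallacquamullerschatzlespener2020}). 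Combined with \eqref{eq:length-dt} this yields the two facts I will use: (a) any rotationally symmetric element $\mathcal R(\psi)\in\overline{\mathfrak C_{f_\gamma}(\vec\omega)}$ satisfies $\int_{\S^1}\langle\curv,\psi\rangle_g\dd s=0$, because the differential of the conformal class vanishes on $\overline{\mathfrak C_{f_\gamma}(\vec\omega)}$ and, on rotationally symmetric variations $\mathcal R(\psi)$, equals a non-zero multiple of $\partial_t\Ll(\gamma(t))|_{t=0}=-\int_{\S^1}\langle\curv,\psi\rangle_g\dd s$ by \eqref{eq:length-dt}; and (b) conversely, for every smooth normal field $V$ along $\gamma$ with $\int_{\S^1}\langle\curv,V\rangle_g\dd s=0$, an implicit function theorem argument — correcting the hyperbolic length to second order by adding a suitable multiple of $\curv$, admissible since $\curv\not\equiv0$ ($\H^2$ has no closed geodesics) — produces a smooth family $\gamma_t$ with $\Ll(\gamma_t)$ constant and $\partial_t\gamma_t|_{t=0}=V$, so that $\mathcal R(V)\in\mathfrak C_{f_\gamma}(\vec\omega)$.

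Next comes the symmetry reduction and the computation. The rotations about the axis of revolution act unitarily on $L^2(\dd\mu_{f_\gamma};\R^3)$; this action fixes $\nabla\W(f_\gamma)$ (because $\W$ is invariant under Euclidean isometries and $f_\gamma$ is equivariant) and preserves $\overline{\mathfrak C_{f_\gamma}(\vec\omega)}$ (pre- and post-composing a conformal-class-preserving family with a fixed diffeomorphism and isometry does not change its conformal class), hence commutes with $\Pi^{\overline{\mathfrak C_{f_\gamma}(\vec\omega)}}$. Therefore $\nabla^{\vec\omega}\W(f_\gamma)$ is rotationally symmetric, i.e.\ $=\mathcal R(\psi)$ for a normal field $\psi$ along $\gamma$, which I now determine. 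By (a), $\int_{\S^1}\langle\curv,\psi\rangle_g\dd s=0$. By definition of the orthogonal projection, $\nabla\W(f_\gamma)-\mathcal R(\psi)$ is $L^2(\dd\mu)$-orthogonal to $\overline{\mathfrak C_{f_\gamma}(\vec\omega)}$, in particular — by (b) — to $\mathcal R(V)$ for every smooth normal field $V$ along $\gamma$ with $\int_{\S^1}\langle\curv,V\rangle_g\dd s=0$. Using that $\langle\mathcal R(\phi_1),\mathcal R(\phi_2)\rangle_{L^2(\dd\mu)}=2\pi\int_{\S^1}(\gamma^{(2)})^4\langle\phi_1,\phi_2\rangle_g\dd s$ (a direct consequence of \eqref{eq:dmu-axisymm}) and $\nabla\W(f_\gamma)=\mathcal R\big(\tfrac1{4(\gamma^{(2)})^4}\nabla\E(\gamma)\big)$ (from \eqref{eq:will-grad-for-fgamma}; note $\nabla\E(\gamma)$ is normal by \eqref{eq:nabla-L2}), this orthogonality forces $(\gamma^{(2)})^4\big(\tfrac1{4(\gamma^{(2)})^4}\nabla\E(\gamma)-\psi\big)$ to be a constant multiple of $\curv$; pinning down that constant from $\int_{\S^1}\langle\curv,\psi\rangle_g\dd s=0$ gives $\psi=\tfrac1{4(\gamma^{(2)})^4}(\nabla\E(\gamma)-\lambda(\gamma)\curv)$ with $\lambda(\gamma)$ exactly \eqref{eq:lambda-conf-constr-will}, which is well-defined because $\int_{\S^1}(\gamma^{(2)})^{-4}|\curv|_g^2\dd s>0$. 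This is \eqref{eq:conf-constr-wf-eqI}. Finally, since $\mathcal R$ is injective and $\gamma^{(2)}>0$, \eqref{eq:conf-constr-wf-eqI} gives $\nabla^{\vec\omega}\W(f_\gamma)=0$ if and only if $\nabla\E(\gamma)=\lambda(\gamma)\curv$, i.e.\ $\gamma$ is a $\lambda(\gamma)$-constrained elastica in the sense of \Cref{rem:constrelastica}; and any solution of \eqref{eq:constrelast} automatically has $\lambda=\lambda(\gamma)$, by testing with $(\gamma^{(2)})^{-4}\curv$ and integrating.

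The main obstacle is not the computation but the input (a)--(b), i.e.\ identifying $\overline{\mathfrak C_{f_\gamma}(\vec\omega)}$ on the rotationally symmetric slice with the hyperbolic-length-preserving curve variations; this rests on the conformal-class/hyperbolic-length correspondence for surfaces of revolution together with an implicit function theorem upgrading the infinitesimal length constraint to a genuine one. Once this is in place, the remaining steps reduce to \eqref{eq:will-grad-for-fgamma}, \eqref{eq:dmu-axisymm} and a one-dimensional orthogonal projection.
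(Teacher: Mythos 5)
Your overall strategy is sound and close to the paper's: you reduce the constraint ``fixed conformal class'' to ``fixed hyperbolic length'' on the slice of rotationally symmetric variations, and then determine the projection by an orthogonality argument. Two features are genuinely different from the paper's route. First, you argue that $\nabla^{\vec\omega}\W(f_\gamma)$ is of the form $\mathcal R(\psi)$ a priori by averaging over the rotation group (the group preserves $\nabla\W(f_\gamma)$, preserves $\overline{\mathfrak C_{f_\gamma}(\vec\omega)}$, and hence commutes with the projection); the paper skips this and simply produces the candidate $\mathcal R\bigl(\tfrac1{4(\gamma^{(2)})^4}(\nabla\E(\gamma)-\lambda(\gamma)\curv)\bigr)$ and checks the two defining properties of an orthogonal projection. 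Both are valid, though you should also note (as you implicitly need) that the projection of a normal field is normal because the subspace $\overline{\mathfrak C_{f_\gamma}(\vec\omega)}$ contains all tangential fields $Df_\gamma\cdot W$ and hence decomposes as (normal part) $\oplus$ (tangential fields). Your fact (b) is essentially the paper's ``$\supseteq$'' inclusion in \eqref{eq:2.12_claim_1_eq}, proved there by an ODE argument; your IFT version is fine.

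There is, however, a real gap in your fact (a), and it sits precisely at the crux of the proposition. You assert that ``the differential of the conformal class vanishes on $\overline{\mathfrak C_{f_\gamma}(\vec\omega)}$'' and then evaluate it on rotationally symmetric variations via \eqref{eq:length-dt}. But the differential of $\mathfrak c$ is a priori only defined on smooth variations, and its evaluation as a multiple of $\partial_t\Ll$ is only available for rotationally symmetric families $t\mapsto f_{\gamma_t}$; a general $V\in\mathfrak C_{f_\gamma}(\vec\omega)$ comes from a family of immersions that is neither rotationally symmetric nor conformally parametrized (the pullback metrics merely have the prescribed conformal class, up to a time-dependent diffeomorphism). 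Passing from ``$\mathcal R(\psi)$ is an $L^2$-limit of spans of such $V$'' to ``$\int_{\S^1}\langle\curv,\psi\rangle_g\dd s=0$'' therefore requires showing that the constraint is captured by $L^2(\dd\mu_{f_\gamma})$-orthogonality against a fixed $L^2$ element. This is exactly what the paper proves: Lemma~\ref{lem:Rcurv-orth-Cfomega} shows $(\gamma^{(2)})^{-4}\mathcal R\curv_\gamma\perp\mathfrak C_{f_\gamma}(\vec\omega)$, using (i) Schätzle's reparametrization (Proposition~\ref{prop:conf-var-ex-of-rep}) to replace an arbitrary conformal-class-preserving family by a genuinely conformal one, (ii) the integration-by-parts identity \eqref{eq:conf-l2-char} for conformal families, and (iii) the computation \eqref{eq:magic} identifying the resulting ``conformal Laplacian'' expression with $\mathcal R\curv_\gamma$; orthogonality then extends to the $L^2$-closure by density. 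Without this, your (a) — and hence the determination of the constant $c=\tfrac14\lambda(\gamma)$ — is not justified. This step is not a formality: the constraint of fixed conformal class is degenerate at surfaces of revolution, and the content of the proposition is precisely that the degeneracy can be resolved by the explicit $L^2$-representative $(\gamma^{(2)})^{-4}\mathcal R\curv_\gamma$.
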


This correspondence allows us to discuss the sufficiency of \Cref{intro:thm:main-conf-constr-wf}\eqref{item:intro:thm_conformal_case_1}.
\begin{remark}\label{rem:lambdafigure8}
By \cite[Proposition 6.2]{muellerspener2020}, there exists a $\lambda$-constrained elastica $\gamma_0$ with vanishing total hyperbolic curvature, a so-called \emph{$\lambda$-figure eight,} see \Cref{fig:lambda_fig_8}. By the last part of \Cref{prop:conf-will-grad-axisym}, the surface of revolution $f_{\gamma_0}$ gives a stationary solution to the conformally constrained Willmore flow \eqref{eq:2d-conf-constr-wf-eq}. In particular, we have global existence and convergence, hence condition \eqref{item:intro:thm_conformal_case_1} in \Cref{intro:thm:main-conf-constr-wf} is not sufficient for the existence of a singularity.
\end{remark}

\begin{figure}[htb]
    \centering
    \begin{subfigure}{4cm}
        \includegraphics[width=\linewidth]{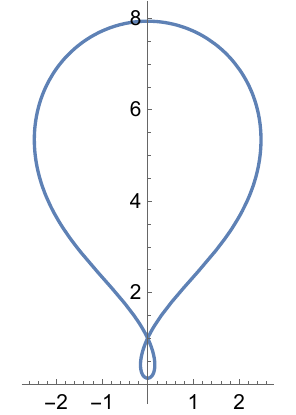}
        \caption*{$\lambda=0.5$, $\E_\lambda\approx 17.4365$}
    \end{subfigure}
    \qquad
    \begin{subfigure}{4cm}
        \includegraphics[width=\linewidth]{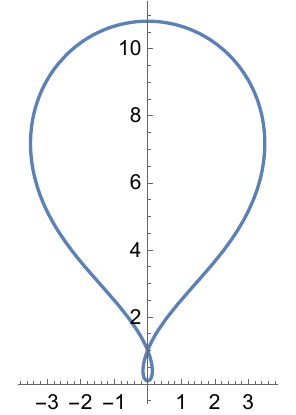}
        \caption*{$\lambda=0.3$, $\E_\lambda\approx 16.7833$}
    \end{subfigure}
    \qquad
    \begin{subfigure}{4cm}
        \includegraphics[width=\linewidth]{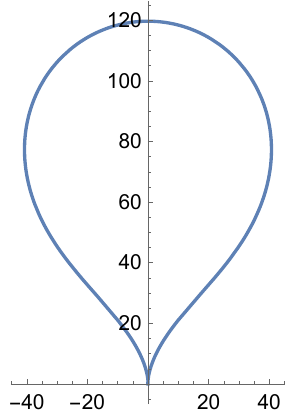}
        \caption*{$\lambda=0.005$, $\E_\lambda\approx 16.0112$}
    \end{subfigure}
    \caption{Plots of $\lambda$-figure eights for $\lambda\in\{0.5,0.3,0.005\}$ and associated elastic energy $\E_{\lambda}$ close to $16$, all having a self-intersection at $(0,1)$.}
    \label{fig:lambda_fig_8}
\end{figure}

The proof of \Cref{prop:conf-will-grad-axisym} relies on the key fact that $(\gamma^{(2)})^{-4}\mathcal R\curv_\gamma$ is orthogonal to $\overline{\mathfrak C_{f_\gamma}(\vec\omega)}$ in $L^2(\dd\mu_{f_\gamma})$. This is proven in the following two lemmas.

\begin{lemma}
    Consider a family of conformal immersions $F\in C^{1,\infty}( (-\varepsilon,\varepsilon)\times \T_{\vec\omega},\R^3)$, that is
    \begin{equation}
        g_{F(t,\cdot)} = e^{2U(t,\cdot)} (\delta_{ij})_{i,j=1,2} \quad\text{for all $t\in(-\varepsilon,\varepsilon)$}.
    \end{equation}
    Writing $V=\partial_{t,0}F(t,\cdot)$, $u=U(0,\cdot)$ and $f=F(0,\cdot)$, we have
    \begin{equation}\label{eq:conf-l2-char}
        0 = \int_{\T_{\vec\omega}} e^{-4u} \langle  \partial_{1,1}^2f-2\partial_1 u\partial_1f-\partial_{2,2}^2f+2\partial_2u\partial_2f,V\rangle \dd\mu_f.
    \end{equation}
\end{lemma}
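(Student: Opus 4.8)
The plan is to exploit the conformal invariance of the Willmore energy together with the fact that the family $F$ moves \emph{within} a fixed conformal class, so the conformal type of the domain does not change. More precisely, since each $F(t,\cdot)$ is conformal from $(\T_{\vec\omega},\delta_{ij})$ into $\R^3$, the surface $F(t,\cdot)$ has conformal class exactly $\vec\omega$ for all $t$, and the variation vector field $V = \partial_{t,0}F$ lies in $\mathfrak C_f(\vec\omega)$. The strategy is to directly differentiate the conformality condition $g_{F(t,\cdot)} = e^{2U(t,\cdot)}(\delta_{ij})$ at $t=0$ and read off the pointwise constraint on $V$, then combine it with the Gauss–Weingarten relations for a conformal immersion to recognize the integrand in \eqref{eq:conf-l2-char} as (a multiple of) the curvature vector paired with $V$, integrated against a total derivative that vanishes by periodicity.

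First I would compute $\partial_{t,0} g_{F(t,\cdot)}$. Writing $g_{ij} = \langle \partial_i F,\partial_j F\rangle$, we get $\partial_{t,0} g_{ij} = \langle \partial_i V,\partial_j f\rangle + \langle \partial_i f,\partial_j V\rangle$. Conformality forces this to equal $2(\partial_{t,0}U) e^{2u}\delta_{ij}$, i.e. the variation of the metric is pure trace in these coordinates. Equivalently, the trace-free part of $(\langle \partial_i V,\partial_j f\rangle + \langle \partial_i f,\partial_j V\rangle)$ vanishes; concretely, $\langle \partial_1 V,\partial_1 f\rangle + \langle \partial_1 f,\partial_1 V\rangle = \langle \partial_2 V,\partial_2 f\rangle + \langle \partial_2 f,\partial_2 V\rangle$ and $\langle \partial_1 V,\partial_2 f\rangle + \langle \partial_1 f,\partial_2 V\rangle = 0$. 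These are the only constraints the conformality hypothesis imposes on $V$ to first order. The remaining task is to show that the specific normal-type vector field $e^{-4u}(\partial_{1,1}^2 f - 2\partial_1 u\,\partial_1 f - \partial_{2,2}^2 f + 2\partial_2 u\,\partial_2 f)$ is $L^2(\dd\mu_f)$-orthogonal to every $V$ satisfying these constraints.

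Next I would integrate by parts. Since $\dd\mu_f = e^{2u}\dd x_1\dd x_2$ on $\T_{\vec\omega}$, the integral in \eqref{eq:conf-l2-char} becomes $\int_{\T_{\vec\omega}} e^{-2u}\langle \partial_{1,1}^2 f - 2\partial_1 u\,\partial_1 f - \partial_{2,2}^2 f + 2\partial_2 u\,\partial_2 f, V\rangle\, \dd x_1\dd x_2$. I would integrate the second-derivative terms by parts, moving one derivative onto $V$ (no boundary terms, as everything is periodic on the torus $\T_{\vec\omega}$), turning $\langle \partial_{1,1}^2 f, V\rangle$ into $-\langle \partial_1 f,\partial_1 V\rangle$ up to a term involving $\partial_1(e^{-2u})V$, and similarly for the others. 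The key point is that for a conformal immersion the combination $\partial_{1,1}^2 f - \partial_{2,2}^2 f$ has a controlled expansion in the Gauss–Weingarten frame: its normal part is $e^{2u}$ times (a multiple of) the trace-free second fundamental form written in conformal coordinates, while the tangential parts are exactly cancelled by the $-2\partial_1 u\,\partial_1 f + 2\partial_2 u\,\partial_2 f$ correction (this is precisely how the Christoffel symbols of a conformal metric look). After the integration by parts, the integrand should reorganize into a linear combination of the trace-free quantities $\langle \partial_1 V,\partial_1 f\rangle - \langle \partial_2 V,\partial_2 f\rangle$ and $\langle \partial_1 V,\partial_2 f\rangle + \langle \partial_2 V,\partial_1 f\rangle$, each multiplied by a function of $u$; by the first-order conformality constraints derived above, both vanish pointwise, giving $0$.

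The main obstacle I anticipate is the bookkeeping in the integration by parts: keeping track of the conformal-factor weights $e^{-4u}$, $e^{-2u}$ and their derivatives, and correctly identifying that the tangential components produced both by the second derivatives of $f$ and by the explicit $\partial u$-terms assemble exactly into the trace-free part of the first variation of the metric. A clean way to organize this is to work in the complex coordinate $z = x_1 + i x_2$ and observe that $\partial_{1,1}^2 f - \partial_{2,2}^2 f - 2i\,\partial_{1,2}^2 f = 4\,\partial_z^2 f$ (up to constants), so that the claimed orthogonality becomes a statement about $\int e^{-4u}\langle \partial_z^2 f - 2(\partial_z u)(\partial_z f), V\rangle$; the combination $\partial_z^2 f - 2(\partial_z u)\partial_z f$ is, for a conformal immersion, purely normal and proportional to the Hopf differential times $e^{2u}$, after which orthogonality to conformal-class-preserving $V$ is exactly the statement that the variation of the Hopf differential against such $V$ integrates to zero — which is the infinitesimal version of the fixed-conformal-class condition. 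I would carry out the real-coordinate computation directly, as it is elementary, but flag the complex reformulation as the conceptual reason the identity holds.
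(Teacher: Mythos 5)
Your core strategy --- differentiate the conformality condition at $t=0$ to obtain the pointwise identity $\langle \partial_1 f,\partial_1 V\rangle - \langle \partial_2 f,\partial_2 V\rangle = 0$, then integrate by parts on the torus against the weight $e^{-2u}$ to produce the second derivatives of $f$ together with the $\partial u$-corrections --- is exactly the paper's proof, just read in the opposite direction. The excursions through the Gauss--Weingarten frame and the Hopf differential are unnecessary, and your prediction that \emph{both} trace-free quantities would reappear after integration by parts is slightly off (only $\langle\partial_1 f,\partial_1 V\rangle-\langle\partial_2 f,\partial_2 V\rangle$ survives, since the integrand has no $\partial_{1,2}^2 f$ term), but this is harmless for the conclusion.
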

\begin{proof}
    From the conformality, we especially have
    \begin{align}
        0 &=\partial_{t,0} \frac12 \big( |\partial_1 F|^2-|\partial_2 F|^2 \big) = \langle \partial_1 f,\partial_1V\rangle - \langle \partial_2 f,\partial_2V\rangle. \label{eq:conf-comp-1}
    \end{align}
    With $x=(x^1,x^2)$, using $\dd\mu_f = e^{2u}\dd x$ and integrating by parts, we have for $i=1,2$
    \begin{align}
        \int_{\T_{\vec\omega}} \langle \partial_i f,\partial_iV\rangle e^{-2u} \dd x &= - \int_{\T_{\vec\omega}} \langle \partial_{i,i}^2f - 2\partial_iu\partial_if,V\rangle e^{-2u}\dd x =  - \int_{\T_{\vec\omega}} e^{-4u} \langle \partial_{i,i}^2f - 2\partial_iu\partial_if,V\rangle \dd\mu_f
    \end{align}
    and thus, the claim follows after multiplying \eqref{eq:conf-comp-1} by $e^{-2u}$ and integrating with respect to $\dd x$.
\end{proof}

In the previous lemma we work with a $C^{1,\infty}$ family of conformal immersions. This is a particular choice for the variations considerd in $\mathfrak C_f(\vec\omega)$. The following observation by Schätzle \cite{MR3199732} however guarantees that we can always choose such a family. \begin{proposition}\label{prop:conf-var-ex-of-rep}
    For every family of immersions $F\in C^{1,\infty}((-\varepsilon,\varepsilon)\times\T,\R^3)$ with $F(0)=f$ and $\mathfrak c(F(t,\cdot))=\vec\omega$ for all $-\varepsilon<t<\varepsilon$, there exists a family $\phi\in C^{1,\infty}((-\varepsilon,\varepsilon)\times\T_{\vec\omega},\T)$ of diffeomorphisms $\T_{\vec\omega}\to\T$ such that $F(t,\phi(t,\cdot))\colon (\T_{\vec{\omega}},\delta_{ij}) \to (\R^3,\langle\cdot,\cdot\rangle) $ are conformal for all $-\varepsilon<t<\varepsilon$.
\end{proposition}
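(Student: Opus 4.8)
The plan is to construct the family of diffeomorphisms $\phi(t,\cdot)$ by combining a fixed conformal diffeomorphism at time $0$ with a parametrized version of the uniformization theorem, applied to the varying pull-back metrics $g_{F(t,\cdot)}$. First I would fix, using \cite[Theorem~2.7.1 and Theorem~4.4.1]{jost2006}, a conformal diffeomorphism $\phi_0\colon(\T_{\vec\omega},\delta_{ij})\to(\T,g_{f})$, which exists precisely because $\mathfrak c(f)=\vec\omega$. Then I would consider, for each $t$, the smooth metric $h(t)\vcentcolon=(F(t,\cdot)\circ\phi_0)^*\langle\cdot,\cdot\rangle$ on $\T_{\vec\omega}$; by assumption $\mathfrak c(h(t))=\vec\omega$ for all $t$, and $h(0)$ is conformal to $\delta_{ij}$ by construction of $\phi_0$. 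The task reduces to finding, for each $t$, a diffeomorphism $\psi(t,\cdot)\colon\T_{\vec\omega}\to\T_{\vec\omega}$, depending suitably regularly (i.e.\ $C^{1,\infty}$) on $t$ with $\psi(0,\cdot)=\mathrm{id}$, such that $\psi(t,\cdot)^*h(t)$ is pointwise conformal to $\delta_{ij}$; then $\phi(t,\cdot)\vcentcolon=\phi_0\circ\psi(t,\cdot)$ is the desired family.

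The existence of $\psi(t,\cdot)$ for each fixed $t$ is the uniformization theorem on the torus (in the form that any metric in the conformal class $\vec\omega$ can be pulled back to a flat conformal representative by a diffeomorphism homotopic to the identity); the real content is the smooth dependence on the parameter $t$, both in $t$ and jointly in the spatial variable. For this I would appeal to the standard device of writing $\psi(t,\cdot)$ as the solution of the Beltrami equation associated with $h(t)$: the metric $h(t)=e^{2\sigma(t)}|dz+\nu(t)\,d\bar z|^2$ in a conformal coordinate $z$ on $\T_{\vec\omega}$ determines a Beltrami coefficient $\nu(t)$ with $\|\nu(t)\|_\infty<1$ depending smoothly on $t$ (and vanishing for $t=0$ after precomposing so that $h(0)$ is already flat-conformal), and the Ahlfors--Bers / measurable Riemann mapping theorem gives a quasiconformal self-map $\psi(t,\cdot)$ solving $\partial_{\bar z}\psi=\nu(t)\,\partial_z\psi$, depending real-analytically on $\nu(t)$ in the appropriate norms, hence $C^{1,\infty}$ in $t$; elliptic regularity for the $\bar\partial$-equation upgrades this to joint smoothness in $(t,x)$ since $\nu(t,\cdot)$ is smooth. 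Alternatively, and perhaps cleaner in this PDE setting, one can follow Schätzle's argument in \cite[Section~2]{MR3199732} directly: linearize the "conformal factor exists" condition, observe that the relevant operator (essentially $\bar\partial$ on the torus modulo the finite-dimensional cokernel, which is killed precisely by the hypothesis that $\mathfrak c$ is constant) is surjective with a bounded right inverse on suitable Hölder or Sobolev spaces, and solve by the implicit function theorem with $t$ as a parameter to obtain the $C^{1,\infty}$ family $\psi(t,\cdot)$ with $\psi(0,\cdot)=\mathrm{id}$.

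I expect the main obstacle to be purely the regularity bookkeeping: verifying that the uniformizing diffeomorphism can be chosen to depend on $t$ in the $C^{1,\infty}$ sense of the excerpt (continuity of $\partial_x^s\psi$ and $\partial_x^s\partial_t\psi$ for all $s$), rather than merely measurably or continuously. This requires care because $F$ is only assumed $C^{1,\infty}$ in $t$ (one time derivative), so one cannot naively differentiate the implicit-function construction more than once in $t$; the point is that each \emph{spatial} derivative is fine (elliptic regularity in $x$ at fixed $t$), and the single $t$-derivative is produced by differentiating the defining equation once, which is exactly what the implicit function theorem delivers. Since the statement itself already appears as a cited result of Schätzle, the honest proof is essentially a pointer: I would state the reduction above and then write "the existence of the family $\psi(t,\cdot)$, and hence of $\phi(t,\cdot)$, with the asserted $C^{1,\infty}$-regularity follows from the parametric uniformization argument in \cite[proof of Theorem~2.4]{MR3199732}", filling in only the reduction from $\T$ to $\T_{\vec\omega}$ via the fixed diffeomorphism $\phi_0$, which is the one genuinely new bookkeeping step here.
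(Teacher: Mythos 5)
Your proposal is correct and takes essentially the same route as the paper, which also defers the heavy lifting to Schätzle's parametric uniformization result from \cite{MR3199732} --- though the paper points to Proposition~2.1 there rather than the proof of Theorem~2.4. Your reduction via the fixed conformal diffeomorphism $\phi_0$ and your sketch of the Beltrami/implicit-function-theorem mechanism soundly flesh out what the paper leaves as a one-line citation.
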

\begin{proof}
    This is a direct consequence of \cite[Proposition~2.1]{MR3199732}.
\end{proof}

\begin{lemma}\label{lem:Rcurv-orth-Cfomega}
    If $f=f_{\gamma}\colon\T\to\R^3$ is a smooth torus of revolution as in \eqref{eq:def-surf-rev}, then 
    \begin{equation} \label{eq:conf-class-of-torus-of-rev}
        \vec\omega=\mathfrak c(f)=\max\{\Ll(\gamma)/2\pi,2\pi/\Ll(\gamma)\}\cdot (0,1)
    \end{equation}
    and if $V\in \mathfrak C_{f}(\vec\omega)$, then
    $\int_{\T} \langle \mathcal (\gamma^{(2)})^{-4} \mathcal R\curv_{\gamma},V\rangle \dd\mu_f=0.$
\end{lemma}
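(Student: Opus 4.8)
\textbf{Plan for the proof of \Cref{lem:Rcurv-orth-Cfomega}.}
The first assertion, formula \eqref{eq:conf-class-of-torus-of-rev}, is a standard computation. The plan is to recall that $\T=\S^1\times\S^1$ with the metric \eqref{eq:metric_torus_revolution} pulled back by $f_\gamma$; after the arc-length reparametrization $x_1\mapsto s$ of the profile curve, the metric becomes $\dd s^2+(\gamma^{(2)})^2\dd x_2^2$, which is already diagonal, hence isothermal up to rescaling each factor. A further change of variables rescaling $s$ by $2\pi/\Ll(\gamma)$ and $x_2$ by a matching factor exhibits $f_\gamma^*\langle\cdot,\cdot\rangle$ as conformal to a rectangular flat torus $\T_{\vec\omega}$ with $\vec\omega\in\R\cdot(0,1)$; normalizing into the fundamental domain $\M$ (i.e.\ taking $\omega_2\geq 1$) forces the stated $\max\{\Ll/2\pi,2\pi/\Ll\}$. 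I would only sketch this, since it is essentially \eqref{eq:conf-class-of-torus-of-rev} and appears in \cite{dallacquamullerschatzlespener2020}.

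For the orthogonality statement, let $V\in\mathfrak C_f(\vec\omega)$, realized as $\partial_{t,0}F(t,\cdot)$ for a family of immersions with fixed conformal class. By \Cref{prop:conf-var-ex-of-rep}, after composing with a $C^{1,\infty}$ family of diffeomorphisms $\phi(t,\cdot)\colon\T_{\vec\omega}\to\T$, we obtain a family $\widetilde F(t,\cdot)\vcentcolon=F(t,\phi(t,\cdot))$ of \emph{conformal} immersions $(\T_{\vec\omega},\delta_{ij})\to\R^3$ with $\widetilde F(0,\cdot)=f\circ\phi(0,\cdot)$. Since $\int_{\T}\langle(\gamma^{(2)})^{-4}\mathcal R\curv_\gamma,V\rangle\dd\mu_f$ is invariant under reparametrization and the tangential component of $\partial_{t,0}\widetilde F$ contributes nothing (as $(\gamma^{(2)})^{-4}\mathcal R\curv_\gamma$ is a normal field along $f_\gamma$, being a multiple of the Willmore gradient by \eqref{eq:will-grad-for-fgamma} restricted to the elastica-direction — more precisely $\mathcal R\curv_\gamma$ is normal by a direct check), it suffices to prove the identity for the conformal family $\widetilde F$ with velocity $V=\partial_{t,0}\widetilde F$. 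The previous lemma then applies: with $u$ the conformal factor of $f$ (as a map on $\T_{\vec\omega}$), we have
\begin{equation}
    \int_{\T_{\vec\omega}} e^{-4u}\langle \partial_{1,1}^2f-2\partial_1u\,\partial_1f-\partial_{2,2}^2f+2\partial_2u\,\partial_2f,\,V\rangle\dd\mu_f=0.
\end{equation}
So the remaining task is purely computational: identify the vector field $W\vcentcolon=\partial_{1,1}^2f-2\partial_1u\,\partial_1f-\partial_{2,2}^2f+2\partial_2u\,\partial_2f$ with a constant multiple of $(\gamma^{(2)})^{-4}\mathcal R\curv_\gamma$ (after accounting for the reparametrization to isothermal coordinates, where the constant absorbs the Jacobian factors $e^{2u}=(2\pi/\Ll)^2$ etc.), and conclude.

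The computation of $W$ is where I expect the real work to be, and it is the main obstacle. One should write $f=f_\gamma$ in the isothermal coordinates $(p,q)$ where $p$ is a rescaled hyperbolic arc-length of $\gamma$ and $q$ a rescaled angle, so that $g_f=e^{2u}(\dd p^2+\dd q^2)$ with $e^{u}$ an explicit function of $p$ alone (namely $e^u=\mathrm{const}\cdot\gamma^{(2)}$, since in hyperbolic arc-length $|\partial_s\gamma|_g=1$ means $|\partial_s\gamma|_{\euc}=\gamma^{(2)}$, and the Euclidean arc-length element of $f_\gamma$ in the $x_1$-direction is $|\partial_{x_1}\gamma|_{\euc}=\gamma^{(2)}$ as well, matching the $x_2$-coefficient $\gamma^{(2)}$). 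Then $\partial_2u=0$, the $q$-derivatives of $f$ are explicit rotations, and $\partial_{1,1}^2 f-2\partial_1u\,\partial_1f-\partial_{2,2}^2f$ reduces to an expression one can match against \eqref{eq:curvature} transported to $\R^3$ via \eqref{eq:def-R}; the hyperbolic Christoffel terms $-\frac{2}{\gamma^{(2)}}\partial_s\gamma^{(1)}\partial_s\gamma^{(2)}$ and $\frac{1}{\gamma^{(2)}}((\partial_s\gamma^{(1)})^2-(\partial_s\gamma^{(2)})^2)$ in $\curv_\gamma$ should emerge exactly from the $-2\partial_1u\,\partial_1f$ correction and the $-\partial_{2,2}^2f$ term (which produces the $-\gamma^{(2)}$-centripetal contribution responsible for the second component of $\curv$). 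Keeping careful track of the factor $e^{-4u}\sim(\gamma^{(2)})^{-4}$ and the constant from the rescaling of arc-length yields the claim. I would organize this as: (1) set up isothermal coordinates and express $u$; (2) compute $\partial_1 f,\partial_2 f,\partial_{11}^2f,\partial_{22}^2f$; (3) simplify $W$ using $\partial_2 u=0$ and compare with $\mathcal R\curv_\gamma$; (4) invoke the previous lemma and reparametrization invariance to finish.
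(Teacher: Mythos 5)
Your plan coincides with the paper's proof: normalize $\gamma$ to hyperbolic arc-length so that $f_\gamma$ is conformal with factor $u=\log\gamma^{(2)}$, compute that $\partial_{1,1}^2 f-2\partial_1u\,\partial_1f-\partial_{2,2}^2f+2\partial_2u\,\partial_2f=\mathcal R\curv_\gamma$, and combine \Cref{prop:conf-var-ex-of-rep} with the normality of $\mathcal R\curv_\gamma$ to dispose of the tangential contribution coming from the reparametrizing diffeomorphisms. The only cosmetic difference is that the paper precomposes with $\phi_0^{-1}$ so the conformal family starts exactly at $f$, whereas you invoke reparametrization invariance and normality directly; both routes are fine.
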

\begin{proof}
    Without loss of generality, we may assume that $f_\gamma\colon \R^2 / (\Ll(\gamma)\Z \cdot (1,0) + 2\pi\Z\cdot (0,1)) \to\R^3$ is conformal by arranging that the profile curve $\gamma\colon \R/(\Ll(\gamma) \Z)\to\H^2$ satisfies $|\partial_x\gamma|_g \equiv 1$, i.e., is parametrized by arc-length in $\H^2$. 
    In fact, in this situation, \eqref{eq:metric_torus_revolution} implies that $f_\gamma$ is conformal and $u(x_1,x_2)=\log \gamma^{(2)}(x_1)$
    is the conformal factor. 
    In particular, \eqref{eq:conf-class-of-torus-of-rev} follows as in \cite[Proposition~4.2]{dallacquamullerschatzlespener2020}. By \eqref{eq:def-surf-rev} and \eqref{eq:def-R},
    \begin{align}
         \partial_{1,1}^2f-2&\partial_1 u\partial_1f-\partial_{2,2}^2f+2\partial_2u\partial_2f = \mathcal R \partial_x^2\gamma - 2\frac{\partial_x\gamma^{(2)}}{\gamma^{(2)}} \mathcal R\partial_x\gamma + \mathcal R\begin{pmatrix} 0\\\gamma^{(2)} \end{pmatrix} \\
         &= \mathcal R\Big( \partial_x^2\gamma + \frac{1}{\gamma^{(2)}} \begin{pmatrix} -2\partial_x\gamma^{(1)}\partial_x\gamma^{(2)} \\ (\gamma^{(2)})^2-2(\partial_x\gamma^{(2)})^2 \end{pmatrix} \Big)\\
         &= \mathcal R\Big( \partial_x^2\gamma + \frac{1}{\gamma^{(2)}} \begin{pmatrix} -2\partial_x\gamma^{(1)}\partial_x\gamma^{(2)} \\ (\partial_x\gamma^{(1)})^2-(\partial_x\gamma^{(2)})^2 \end{pmatrix} \Big) = \mathcal R \curv_{\gamma}\label{eq:magic}
    \end{align}
    using $|\partial_x\gamma|^2 = (\gamma^{(2)})^2$ in the second to last step and $\partial_x=\partial_s$ as well as \eqref{eq:curvature} in the last step. 
    
    Consider any variation $F\colon(-\varepsilon,\varepsilon)\times \R^2 / (\Ll(\gamma)\Z \cdot (1,0) + 2\pi\Z\cdot (0,1)) \to\R^3$ with $F(0)=f$ as in the definition of $\mathfrak C_f(\vec\omega)$ and write $V\vcentcolon=\partial_{t,0}F(t,\cdot)$. Using \Cref{prop:conf-var-ex-of-rep} and composing with a conformal diffeomorphism $\T_{\vec\omega}\to \R^2 / (\Ll(\gamma)\Z \cdot (1,0) + 2\pi\Z\cdot (0,1))$, there exists a $C^{1,\infty}$ family of diffeomorphisms $\phi\colon (-\varepsilon,\varepsilon)\times \R^2 / (\Ll(\gamma)\Z \cdot (1,0) + 2\pi\Z\cdot (0,1)) \to \R^2 / (\Ll(\gamma)\Z \cdot (1,0) + 2\pi\Z\cdot (0,1))$ such that $F(t,\phi(t,\cdot))$ is conformal for all $t$.

    Writing $\phi_0\vcentcolon=\phi(0,\cdot)$ and using $g_{f\circ \phi_0}=\phi_0^*g_f$, as $F(0)=f$ is conformal, $\phi_0$ is necessarily a conformal diffeomorphism of $\R^2 / (\Ll(\gamma)\Z \cdot (1,0) + 2\pi\Z\cdot (0,1))$. In particular, also the family $t\mapsto G(t,\cdot)\vcentcolon=F(t,\phi(t,\cdot)\circ\phi_0^{-1})\colon \R^2 / (\Ll(\gamma)\Z \cdot (1,0) + 2\pi\Z\cdot (0,1))\to\R^3$ is a family of conformal immersions with $G(0)=f$. Moreover,
    \begin{equation}
        \partial_{t,0}G(t,\cdot)= \partial_{t,0} F(t,\cdot)  + Df\cdot \big((\partial_{t,0}\phi(t,\cdot))\circ \phi_0^{-1}\big) = V + Df\cdot \big((\partial_{t,0}\phi(t,\cdot))\circ \phi_0^{-1}\big).
    \end{equation}
    By \eqref{eq:magic} and \eqref{eq:conf-l2-char}, we have
    \begin{align}
        0 &= \int_{\T}  \langle (\gamma^{(2)})^{-4}\mathcal R\curv_\gamma, \partial_{t,0} G(t,\cdot) \rangle \dd\mu_{f} \\
        &= \int_{\T}  \langle \mathcal (\gamma^{(2)})^{-4}\mathcal R\curv_\gamma, V + Df\cdot \big((\partial_{t,0}\phi(t,\cdot))\circ \phi_0^{-1}\big) \rangle \dd\mu_{f} = \int_{\T}  \langle  (\gamma^{(2)})^{-4}\mathcal R\curv_\gamma, V \rangle \dd\mu_f,
    \end{align}
    using that $\mathcal{R}\curv_\gamma(x_1,x_2)$ is orthogonal to $\mathrm{Im} \ Df_\gamma(x_1,x_2)$ in $\R^3$.
\end{proof}

\begin{proof}[Proof of \Cref{prop:conf-will-grad-axisym}]
    Consider the $L^2(\dd\mu_{f_\gamma})$-closed subspace
    $
        H\vcentcolon=\{\mathcal R\varphi\mid \varphi\in L^2(\S^1,\R^2)\}.
    $
    Using \eqref{eq:dmu-axisymm}, by direct computation, we have
    \begin{align}\label{eq:banana}
        \int_{\T}\langle \mathcal{R}\varphi, (\gamma^{(2)})^{-4} \mathcal{R}\curv_\gamma\rangle \dd\mu_{f_\gamma} = \int_{\S^1}\langle\varphi, \curv_\gamma\rangle_g\dd s_\gamma.
    \end{align}
    We will now show that
    \begin{align}\label{eq:2.12_claim_1_eq}
        H\cap \overline{\mathfrak C_{f_\gamma}(\vec\omega)} = \{\mathcal R\varphi\mid \varphi\in L^2(\S^1,\R^2),\ \int_{\T}\langle\mathcal R\varphi,(\gamma^{(2)})^{-4}\mathcal R\curv_{\gamma}\rangle \dd \mu_{f_\gamma} = 0\}.
    \end{align}
    Notice that the inclusion ``$\subseteq$'' follows from \Cref{lem:Rcurv-orth-Cfomega}. We next argue ``$\supseteq$''.
    Let $\varphi\in C^{\infty}(\S^1,\R^2)$ with $\int_{\T}\langle\mathcal R\varphi,(\gamma^{(2)})^{-4}\mathcal R\curv_{\gamma}\rangle \dd \mu_{f_\gamma} = 0$. By \eqref{eq:banana}, we have
    \begin{equation}\label{eq:ode-variation-1}
        \int_{\S^1}\langle\curv_\gamma,\varphi\rangle_g\dd s_\gamma=0.
    \end{equation}
    We first show that there exists a smooth family of immersions $\Gamma\colon(-\varepsilon,\varepsilon)\times\S^1\to\H^2$ with $\Gamma(0,\cdot)=\gamma$, $\partial_{t,0}\Gamma(t,\cdot)=\varphi$ and $\Ll(\Gamma(t,\cdot))=\Ll(\gamma)$ for all $t\in(-\varepsilon,\varepsilon)$.\\
    Fix $\tilde\varepsilon>0$ such that, for all $|t|,|\xi|<\tilde\varepsilon$, $\Gamma(t,\xi,\cdot)\vcentcolon=\gamma+t\varphi+\xi\curv_\gamma\in C^{\infty}_{\mathrm{imm}}(\S^1,\H^2)$ and such that  
    \begin{equation}
        G\colon (-\tilde\varepsilon,\tilde\varepsilon)\times (-\tilde\varepsilon,\tilde\varepsilon) \to \R,\qquad G(t,\xi)\vcentcolon=-\frac{\int_{\S^1}\langle \curv_{\Gamma(t,\xi,\cdot)},\varphi\rangle_g\dd s_{\Gamma(t,\xi,\cdot)}}{\int_{\S^1}\langle \curv_{\Gamma(t,\xi,\cdot)},\curv_\gamma\rangle_g\dd s_{\Gamma(t,\xi,\cdot)}},
    \end{equation}
   is well-defined.
    Then there exists $0<\varepsilon<\tilde\varepsilon$ and a smooth solution $\xi\colon(-\varepsilon,\varepsilon)\to(-\tilde\varepsilon,\tilde\varepsilon)$ of the ODE
    \begin{equation}\label{eq:ode-variation-2}
        \begin{cases}
            \xi'(t)=G(t,\xi(t))&\text{for $-\varepsilon<t<\varepsilon$}\\
            \xi(0)=0.
        \end{cases}
    \end{equation}
    Setting $\Gamma(t,\cdot)\vcentcolon=\Gamma(t,\xi(t),\cdot)$, $|t|<\varepsilon$, we have that $\Gamma$ is a smooth family of immersions with $\Gamma(0,\cdot)=\gamma$ and, using \eqref{eq:ode-variation-2} and that $G(0,0)=0$ by \eqref{eq:ode-variation-1},
    \begin{equation}
        \partial_{t,0}\Gamma(t,\cdot)=\varphi + \xi'(0) \curv_\gamma = \varphi.
    \end{equation}
    Moreover, using $\partial_t\Gamma(t,\cdot)=\varphi + \xi'(t)\curv_\gamma$,
    \begin{equation}
        \partial_t\Ll(\Gamma(t,\cdot)) = -\int_{\S^1} \langle \curv_{\Gamma(t,\cdot)},\varphi\rangle_g\dd s_{\Gamma(t,\cdot)} - \xi'(t) \int_{\S^1} \langle \curv_{\Gamma(t,\cdot)},\curv_\gamma\rangle_g\dd s_{\Gamma(t,\cdot)} = 0
    \end{equation}
    by \eqref{eq:ode-variation-2}. Using \eqref{eq:conf-class-of-torus-of-rev}, $F(t,\cdot)\vcentcolon=\mathcal R{\Gamma(t,\cdot)}$ is a variation as in the definition of $\mathfrak C_{f}(\vec\omega)$ and $\partial_{t,0} F(t,\cdot)=\mathcal R\varphi$. 
    Taking closures in $L^2(\dd\mu_{f_\gamma})$ and using \eqref{eq:banana}, we conclude ``$\supseteq$'' in \eqref{eq:2.12_claim_1_eq}.

    By the choice of $\lambda(\gamma)$ in \eqref{eq:lambda-conf-constr-will} and using \eqref{eq:dmu-axisymm},
    \begin{align}
        \int_{\T} \langle \mathcal{R}\Big(\frac{1}{4(\gamma^{(2)})^4}(\nabla\E(\gamma)-\lambda(\gamma)\curv_\gamma)\Big),(\gamma^{(2)})^{-4}\mathcal R\curv_\gamma\rangle\dd\mu = 0.
    \end{align}
    Hence, \eqref{eq:2.12_claim_1_eq} yields $\mathcal{R}\Big(\frac{1}{4(\gamma^{(2)})^4}(\nabla\E(\gamma)-\lambda(\gamma)\curv_\gamma)\Big)\in \overline{\mathfrak C_{f_\gamma}(\vec\omega)}$. Moreover, using \eqref{eq:will-grad-for-fgamma},
    \begin{equation}
        \nabla\W(f_\gamma)-\mathcal{R}\Big(\frac{1}{4(\gamma^{(2)})^4}(\nabla\E(\gamma)-\lambda(\gamma)\curv_\gamma)\Big) = \frac14\lambda(\gamma) \cdot (\gamma^{(2)})^{-4} \mathcal{R}\curv_\gamma .
    \end{equation}
    The right hand side in the equation above
    is orthogonal to $\overline{\mathfrak C_{f_\gamma}(\vec\omega)}$ in $L^2(\dd\mu)$ by \Cref{lem:Rcurv-orth-Cfomega}. Hence $$\nabla^{\vec\omega}\W(f_\gamma)=\Pi^{\overline{\mathfrak C_{f_\gamma}(\vec\omega)}}\nabla\W(f_\gamma)=\mathcal{R}\Big(\frac{1}{4(\gamma^{(2)})^4}(\nabla\E(\gamma)-\lambda(\gamma)\curv_\gamma)\Big).$$
    
    Finally, the equality above yields that if $f_{\gamma}$ is a constrained Willmore torus, then $\gamma$ is a $\lambda$-constrained elastica. The reverse follows using that for such $\gamma$, $\nabla \E(\gamma)- \lambda \vec{\kappa}=0$ implying that $\lambda(\gamma)$, defined in \eqref{eq:lambda-conf-constr-will}, is equal to $\lambda$ so that the right hand side of \eqref{eq:conf-constr-wf-eqI} is trivially zero.
\end{proof}

In analogy to \Cref{prop:Schl1}, \Cref{prop:conf-will-grad-axisym} yields the following.
\begin{proposition}\label{prop:Schl2}
   If $\gamma\colon[0,T)\times \S^1\to \H^2$ is a smooth solution of
    \begin{equation}\label{eq:ccwf-ev-eq}
        \partial_t \gamma = -\frac{1}{4(\gamma^{(2)})^4}(\nabla\E(\gamma)-\lambda(\gamma) \vec{\kappa}),
    \end{equation}
    with $\lambda$ as in \eqref{eq:lambda-conf-constr-will}, then the associated family of surfaces of revolution $f_{\gamma}\colon[0,T)\times\T\to \R^3$ solves \eqref{eq:2d-conf-constr-wf-eq} and moreover $\mathfrak c(f_{\gamma(t,\cdot)})=\vec\omega$ for all $0\leq t<T$.
\end{proposition}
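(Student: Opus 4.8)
The plan is to combine \Cref{prop:conf-will-grad-axisym} with the elementary observation that the rotation map $\mathcal{R}$ intertwines the evolution of a profile curve with the evolution of the associated surface of revolution, exactly as in the proof of \Cref{prop:Schl1}. First I would note that if $\gamma\colon[0,T)\times\S^1\to\H^2$ solves \eqref{eq:ccwf-ev-eq}, then differentiating \eqref{eq:def-surf-rev} in $t$ and using linearity of $\mathcal{R}$ in \eqref{eq:def-R} gives $\partial_t f_\gamma=\mathcal{R}\partial_t\gamma$. Substituting the right-hand side of \eqref{eq:ccwf-ev-eq} and invoking the identity \eqref{eq:conf-constr-wf-eqI} from \Cref{prop:conf-will-grad-axisym} yields
\begin{equation*}
    \partial_t f_\gamma=\mathcal{R}\Big(-\frac{1}{4(\gamma^{(2)})^4}\big(\nabla\E(\gamma)-\lambda(\gamma)\curv\big)\Big)=-\nabla^{\vec\omega}\W(f_\gamma),
\end{equation*}
which is precisely the evolution equation in \eqref{eq:2d-conf-constr-wf-eq}. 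Smoothness of $f_\gamma$ follows from smoothness of $\gamma$ and the fact that $\gamma^{(2)}>0$, since $\gamma$ maps into $\H^2$.

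Second, I would address the conformal class assertion $\mathfrak c(f_{\gamma(t,\cdot)})=\vec\omega$ for all $t$. By \Cref{lem:Rcurv-orth-Cfomega}, specifically formula \eqref{eq:conf-class-of-torus-of-rev}, the conformal class of $f_{\gamma(t,\cdot)}$ is determined entirely by the hyperbolic length $\Ll(\gamma(t,\cdot))$ via $\mathfrak c(f_{\gamma(t,\cdot)})=\max\{\Ll(\gamma(t,\cdot))/2\pi,\,2\pi/\Ll(\gamma(t,\cdot))\}\cdot(0,1)$. Hence it suffices to show that $t\mapsto\Ll(\gamma(t,\cdot))$ is constant. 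Using the first variation of length \eqref{eq:length-dt}, the fact that $\partial_t\gamma=V=V^\perp$ is normal (which must be checked: the right-hand side of \eqref{eq:ccwf-ev-eq} is a multiple of $\nabla\E(\gamma)-\lambda\curv$, and both $\nabla\E(\gamma)$ from \eqref{eq:nabla-L2} and $\curv$ are normal along $\gamma$, so $\partial_t\gamma$ is indeed normal), and the definition \eqref{eq:lambda-conf-constr-will} of $\lambda(\gamma)$, one computes
\begin{align*}
    \partial_t\Ll(\gamma(t,\cdot))&=-\int_{\S^1}\langle\curv,\partial_t\gamma\rangle_g\dd s=\int_{\S^1}\frac{1}{4(\gamma^{(2)})^4}\langle\curv,\nabla\E(\gamma)-\lambda(\gamma)\curv\rangle_g\dd s\\
    &=\frac14\Big(\int_{\S^1}\frac{\langle\nabla\E(\gamma),\curv\rangle_g}{(\gamma^{(2)})^4}\dd s-\lambda(\gamma)\int_{\S^1}\frac{|\curv|_g^2}{(\gamma^{(2)})^4}\dd s\Big)=0
\end{align*}
by the very definition of $\lambda(\gamma)$. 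Therefore $\Ll(\gamma(t,\cdot))=\Ll(\gamma_0)$ for all $t$, so $\mathfrak c(f_{\gamma(t,\cdot)})$ is constant and equals $\vec\omega=\mathfrak c(f_{\gamma_0})$.

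Finally, I would conclude that, together, these two facts show $f_\gamma$ is a smooth family of immersions of fixed conformal class $\vec\omega$ solving $\partial_t f_\gamma=-\nabla^{\vec\omega}\W(f_\gamma)$, i.e., a conformally constrained Willmore flow in the sense of \eqref{eq:2d-conf-constr-wf-eq}. I do not anticipate any serious obstacle here: the statement is essentially a bookkeeping consequence of \Cref{prop:conf-will-grad-axisym} and the length-rigidity of the conformal class for tori of revolution. The only point requiring a moment of care is confirming that the evolution vector field is genuinely normal so that the first-variation formula \eqref{eq:length-dt} applies in the stated form; this is immediate from the structure of $\nabla\E(\gamma)$ and $\curv$, both of which are orthogonal to $\partial_s\gamma$ — the former by the $\nabla_s^\bot$ appearing in \eqref{eq:nabla-L2} combined with the standard fact that $\langle\curv,\partial_s\gamma\rangle_g=0$ and $\langle|\curv|_g^2\curv-2\curv,\partial_s\gamma\rangle_g=0$.
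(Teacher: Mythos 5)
Your argument is correct and matches the paper's own (very terse) proof: it combines \Cref{prop:conf-will-grad-axisym} with the observation that, by the definition of $\lambda(\gamma)$ in \eqref{eq:lambda-conf-constr-will}, the first variation of length \eqref{eq:length-dt} vanishes, so $\Ll(\gamma(t))$ is constant and the conformal class is fixed via \eqref{eq:conf-class-of-torus-of-rev}. You merely spell out the intermediate computations the paper leaves implicit, including the (harmless but not strictly necessary) check that the velocity field is normal.
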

\begin{proof}
This is a direct consequence of \Cref{prop:conf-will-grad-axisym} and \Cref{lem:Rcurv-orth-Cfomega} together with the fact that, by the choice of $\lambda$, \eqref{eq:length-dt} yields $\partial_t\Ll(\gamma(t))=0$.
\end{proof}
The previous result motivates the following.
\begin{definition}\label{def:conf-wf}
    A smooth family of immersions $\gamma\colon[0,T)\times\S^1\to\H^2$ solving
\begin{equation}\label{eq:conf-constr-wf-eq}
    \begin{cases}
        \partial_t\gamma = -\frac{1}{4(\gamma^{(2)})^4}\big(\nabla\E(\gamma)-\lambda(\gamma)\curv\big)&\text{on $[0,T)\times\S^1$}\\
        \gamma(0)=\gamma_0&\text{on $\S^1$}
    \end{cases}
\end{equation}
with $\lambda$ as in \eqref{eq:lambda-conf-constr-will} is called \emph{conformally constrained Willmore flow} starting at $\gamma_0$. 
\end{definition}

If $\gamma_0$ is sufficiently regular, a smooth solution to \eqref{eq:conf-constr-wf-eq} exists in some maximal time interval.

\begin{proposition}[Well-posedness]\label{prop:ste}
    Let $\gamma_0\colon\S^1\to\H^2$ be a smooth immersion. Then there exist a maximal time $T>0$ and a unique smooth family of immersions $\gamma\colon[0,T)\times\S^1\to\H^2$ solving the conformally constrained Willmore flow equation \eqref{eq:conf-constr-wf-eq} (respectively the Willmore flow equation \eqref{eq:wf-eq}).
\end{proposition}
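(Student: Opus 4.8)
The plan is to treat the two evolution equations \eqref{eq:conf-constr-wf-eq} and \eqref{eq:wf-eq} as quasilinear parabolic systems of fourth order for the unknown curve $\gamma$ and invoke the standard theory for short-time existence. First I would fix a suitable functional-analytic setting: parametrize admissible curves near $\gamma_0$ as normal (or, more conveniently, vertical/graph-type) perturbations of $\gamma_0$ in a little-H\"older or Sobolev space scale, say $h^{4+\alpha}(\S^1,\H^2)$ or $W^{4+s,2}$. In such coordinates the leading order operator is $\gamma\mapsto -\frac{1}{2(\gamma^{(2)})^4}(\nabla_s^\bot)^2\curv$, which to highest order is $-\frac{1}{2(\gamma^{(2)})^4}\partial_x^4$ acting on the normal component; since $\gamma^{(2)}>0$ on the compact set $\S^1$ at $t=0$, the coefficient is bounded and bounded away from zero, so the linearization at $\gamma_0$ is a uniformly elliptic fourth-order operator generating an analytic semigroup on the chosen space. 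The lower-order terms, including the curvature nonlinearities $|\curv|_g^2\curv-2\curv$ and the Christoffel corrections from \eqref{eq:curvature}, depend smoothly on $\gamma$ and its derivatives up to order three, hence define a locally Lipschitz (indeed smooth) map between the relevant interpolation spaces.

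For the Willmore flow \eqref{eq:wf-eq} this is then a direct citation of the quasilinear parabolic existence theory — e.g.\ the results of Amann or the maximal-regularity approach of Da Prato--Grisvard / Cl\'ement--Li, or the presentation in the elastic-flow literature \cite{dziukkuwertschaetzle2002,dallacquaspener2017,dallacquamullerschatzlespener2020,muellerspener2020}: one obtains a unique maximal smooth solution $\gamma\colon[0,T)\times\S^1\to\H^2$, with smoothness in space and time for $t>0$ following from parabolic bootstrapping and, since everything is real-analytic in $\gamma$, smoothness up to $t=0$ from the smoothness of $\gamma_0$. The only structural point worth checking is that the solution stays in $\H^2$, i.e.\ $\gamma^{(2)}(t,\cdot)>0$; this holds on a possibly shorter interval by continuity and is exactly what the maximal time $T$ records (either $T=\infty$ or the solution degenerates).

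The genuinely new point, and the step I expect to be the main obstacle, is that the conformally constrained flow \eqref{eq:conf-constr-wf-eq} is \emph{nonlocal}: the Lagrange multiplier $\lambda(\gamma)$ from \eqref{eq:lambda-conf-constr-will} is a ratio of two integrals over $\S^1$, both involving $\curv$ and hence three spatial derivatives of $\gamma$. One must check that $\gamma\mapsto\lambda(\gamma)$ is well-defined and smooth on a neighborhood of $\gamma_0$ in the solution space: the numerator and denominator are smooth functionals since integration recovers one derivative, and the denominator $\int_{\S^1}(\gamma^{(2)})^{-4}|\curv|_g^2\,\dd s$ is strictly positive provided $\curv\not\equiv0$, which holds near $\gamma_0$ unless $\gamma_0$ is a geodesic circle — a case one handles separately or excludes, noting that then $\nabla\E(\gamma_0)$ and $\curv$ need not be parallel so the formula still makes sense, or one simply observes $\lambda$ drops out. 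Moreover, because $\lambda(\gamma)\curv$ contributes only at order two (one order below the $\nabla\E$ term, which is order four), this nonlocal term is a \emph{subordinate} perturbation of the local fourth-order part and does not affect the generation of the analytic semigroup; it can be absorbed into the locally Lipschitz right-hand side. Thus the same abstract quasilinear existence theorem applies verbatim. Uniqueness and maximality follow from the standard continuation argument: if $T<\infty$ and the solution remained in a compact subset of the space of immersions into $\H^2$ with a uniform lower bound on $\gamma^{(2)}$ and uniform higher-order bounds, one could restart, contradicting maximality. Finally, \Cref{prop:Schl2} and \Cref{prop:Schl1} translate this back to the statement for $f_\gamma$ if desired, although the proposition as stated is purely about the curve flow.
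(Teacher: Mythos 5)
Your proposal follows essentially the same strategy the paper outlines: rewrite the flow in a suitable gauge (graph/normal perturbation over $\gamma_0$), note that the coefficient $(\gamma^{(2)})^{-4}$ is bounded and bounded away from zero on a compact time interval so the leading part is a uniformly parabolic fourth-order quasilinear operator, and invoke standard maximal regularity results, absorbing the nonlocal Lagrange multiplier as a lower-order locally Lipschitz perturbation. The paper (which explicitly declares a full proof beyond its scope) gives exactly this sketch, citing the transformation of \cite{dallacquaspener2017}, the quasilinear theory of \cite{mantegazzamartinazzi2012}, and \cite[Section~2]{ruppspener2020} for the nonlocal term; your reasoning about why each step works is compatible with these references.

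One small correction to your remarks on the denominator of $\lambda(\gamma)$: there is no exceptional case to worry about. A closed immersed curve in $\H^2$ can never have $\curv\equiv 0$ (the hyperbolic plane has no closed geodesics), and in fact the paper records the quantitative lower bound $\E(\gamma)\geq 4\pi$ from \cite{langersinger1984}, which immediately gives $\int_{\S^1}(\gamma^{(2)})^{-4}|\curv|_g^2\,\dd s \geq 4\pi/(\max_{\S^1}\gamma^{(2)})^4 >0$. Your tentative caveat about a ``geodesic circle'' and the proposed workaround are thus unnecessary — the formula \eqref{eq:lambda-conf-constr-will} is well-defined and the map $\gamma\mapsto\lambda(\gamma)$ is smooth on a full $h^{4+\alpha}$-neighborhood of $\gamma_0$ without any further assumption. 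Otherwise the argument is sound.
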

A detailed proof of well-posedness is beyond the scope of this article. We briefly outline a possible proof-strategy. For both equations \eqref{eq:wf-eq} and \eqref{eq:conf-constr-wf-eq}, the same transformation used in \cite[Section~3.1]{dallacquaspener2017} for the elastic flow is applicable. The additional non-constant factor $\frac{1}{4(\gamma^{(2)})^4}$ does not introduce any additional challenges in proving \emph{local} well-posedness as it is uniformly controlled on compact time-intervals. Then, in case of \eqref{eq:wf-eq}, \cite{mantegazzamartinazzi2012} applies to the analog of \cite[Equation~(36)]{dallacquaspener2017}. For \eqref{eq:conf-constr-wf-eq}, the non-local term $\lambda(\gamma)$ can be controlled as in \cite[Section~2]{ruppspener2020}.

\begin{example}\label{ex:circle}
    Suppose that $\gamma_0\colon\S^1\to\H^2$ parametrizes a round circle, i.e., $\gamma_0$ is given by some $\gamma_r$ as in \Cref{app:cmc-tori}. 
    As it turns out, the three flows introduced in \eqref{eq:elasflowhyp}, \Cref{def:will-flow,def:conf-wf} differ in their qualitative behavior when starting at $\gamma_0$, respectively. In \Cref{lem:wf-and-round-circles}, we show that the Willmore flow starting at $\gamma_0$ immediately loses the shape of a round circle unless $\gamma_0$ is the profile curve of a Clifford torus in which case the Willmore flow is stationary. However, \cite{dallacquaspener2018} yields that the elastic flow starting at $\gamma_0$ is a family of round circles converging to the profile curve of a Clifford torus. Finally, round circles are the profile curves of conformally constrained Willmore tori, so the conformally constrained Willmore flow starting at a circle is always stationary, see \cite[Lemma~3.1]{dallacquaspener2018} and \Cref{prop:conf-will-grad-axisym}.
\end{example}

The following scaling behavior of Willmore flows is used repeatedly throughout this article.
\begin{lemma}\label{lem:par-scaling}
    Let $\gamma\colon[0,T)\times\S^1\to\H^2$ be a Willmore flow (resp.\ a conformally constrained Willmore flow) and let $\rho>0$, $p\in \R\times\{0\}$. Then also $\tilde\gamma\colon[0,T/\rho^4)\times\S^1\to\H^2$,
    \begin{equation}
        \tilde\gamma(t,x)\vcentcolon=\frac{\gamma(t\rho^4,x)-p}{\rho}
    \end{equation}
    is a Willmore flow (resp.\ a conformally constrained Willmore flow).
\end{lemma}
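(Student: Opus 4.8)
\textbf{Proof plan for \Cref{lem:par-scaling}.}
The plan is to verify directly that the rescaled family $\tilde\gamma$ satisfies the defining PDE \eqref{eq:wf-eq} (resp.\ \eqref{eq:conf-constr-wf-eq}), which amounts to tracking how each geometric quantity appearing in the evolution equation transforms under the hyperbolic dilation $y\mapsto (y-p)/\rho$ composed with the time rescaling $t\mapsto t\rho^4$. The key point is that translation by $p\in\R\times\{0\}$ and scaling by $\rho>0$ are both \emph{isometries up to a conformal factor} of $\H^2$; more precisely, the map $\Psi\colon\H^2\to\H^2$, $\Psi(y)=(y-p)/\rho$, satisfies $\Psi^*g = \rho^{-2}\,(\,\cdot\,)$ in Euclidean terms but is in fact a \emph{hyperbolic isometry}, since $g_{(y^{(1)},y^{(2)})}=(y^{(2)})^{-2}\langle\cdot,\cdot\rangle$ and the factor $\rho^{-2}$ from scaling the Euclidean metric is exactly cancelled by the factor $\rho^2$ coming from $(\Psi(y))^{(2)}=y^{(2)}/\rho$ in the denominator. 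Consequently, arc length $\dd s$, the curvature vector $\curv$ (transported via $D\Psi$), the normal projection $\nabla_s^\bot$, the elastic energy $\E$, and hence $\nabla\E(\gamma)$ and the Lagrange multiplier $\lambda(\gamma)$ are all invariant under $\Psi$ in the appropriate tensorial sense.

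First I would record the elementary transformation laws. Writing $\hat\gamma(t,x)\vcentcolon=\Psi(\gamma(t,x))=(\gamma(t,x)-p)/\rho$, one has $\hat\gamma^{(2)}=\gamma^{(2)}/\rho$, $\partial_x\hat\gamma=\partial_x\gamma/\rho$, so $|\partial_x\hat\gamma|_g = |\partial_x\gamma|_g$ and therefore $\dd s_{\hat\gamma}=\dd s_\gamma$, $\partial_s\hat\gamma = D\Psi(\partial_s\gamma)$. From the explicit formula \eqref{eq:curvature} (or from the fact that $\Psi$ is a hyperbolic isometry) one gets $\curv_{\hat\gamma} = D\Psi(\curv_\gamma)$, hence $|\curv_{\hat\gamma}|_g=|\curv_\gamma|_g$ and $\E(\hat\gamma)=\E(\gamma)$; likewise $\nabla\E(\hat\gamma)=D\Psi(\nabla\E(\gamma))$ by \eqref{eq:nabla-L2}, since every term there is built from $\curv$ and $\nabla_s^\bot$. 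Feeding these into \eqref{eq:lambda-conf-constr-will} and using $\hat\gamma^{(2)}=\gamma^{(2)}/\rho$ shows $\lambda(\hat\gamma)=\rho^4\lambda(\gamma)$ (the factor $\rho^{-4}$ appears identically in numerator and denominator of the integrand, but the denominator carries an extra $|\curv|_g^2\dd s$ while the numerator carries $\langle\nabla\E,\curv\rangle_g\dd s$, both invariant, so actually $\lambda$ scales like the square of the length-scale pairing — I will compute this carefully, the correct power is such that $\frac{1}{4(\hat\gamma^{(2)})^4}(\nabla\E(\hat\gamma)-\lambda(\hat\gamma)\curv_{\hat\gamma})$ scales consistently with the left-hand side).

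Next I would handle the time rescaling. Set $\tilde\gamma(t,x)=\hat\gamma(t\rho^4,x)$, so $\partial_t\tilde\gamma(t,x) = \rho^4(\partial_t\hat\gamma)(t\rho^4,x) = \rho^4 D\Psi\big((\partial_t\gamma)(t\rho^4,x)\big) = \rho^3 (\partial_t\gamma)(t\rho^4,x)$ componentwise (since $D\Psi$ is multiplication by $\rho^{-1}$ on each component). On the other hand, the right-hand side of \eqref{eq:wf-eq} evaluated at $\tilde\gamma$ is $-\tfrac{1}{4(\tilde\gamma^{(2)})^4}\nabla\E(\tilde\gamma) = -\tfrac{\rho^4}{4(\gamma^{(2)})^4}\,D\Psi\big(\nabla\E(\gamma)(t\rho^4,\cdot)\big)$, all evaluated at the rescaled time. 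Since $D\Psi$ is again just componentwise multiplication by $\rho^{-1}$, this equals $-\tfrac{\rho^3}{4(\gamma^{(2)})^4}\nabla\E(\gamma)(t\rho^4,\cdot)$, and using that $\gamma$ solves \eqref{eq:wf-eq} this is exactly $\rho^3(\partial_t\gamma)(t\rho^4,x)=\partial_t\tilde\gamma(t,x)$. The same bookkeeping with the extra term $-\lambda(\gamma)\curv$, together with the scaling $\lambda(\hat\gamma)=\rho^4\lambda(\gamma)$ and $\curv_{\hat\gamma}=D\Psi\curv_\gamma$, closes the conformally constrained case. Finally $\tilde\gamma(0,\cdot)=\Psi(\gamma(0,\cdot))$ is a smooth immersion into $\H^2$ (as $\Psi$ maps $\H^2$ to $\H^2$ diffeomorphically), and the maximal existence interval of $\tilde\gamma$ is $[0,T/\rho^4)$ by construction.

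The only mildly delicate point — and the one I would be most careful about — is getting the power of $\rho$ in the scaling of $\lambda(\gamma)$ exactly right and checking that it is consistent with the $\rho^4$ in the time rescaling; everything else is a routine application of the fact that the dilation-plus-translation $\Psi$ is a hyperbolic isometry, so that all the intrinsic hyperbolic quantities ($\dd s$, $\curv$, $\E$, $\nabla\E$) are preserved while the single extrinsic factor $(\gamma^{(2)})^{-4}$ contributes the $\rho^4$ that matches the parabolic scaling of a fourth-order flow.
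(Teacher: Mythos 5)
Your overall approach is the same as the paper's: use that $\Psi(y)=(y-p)/\rho$ is a hyperbolic isometry (not merely a ``conformal'' map) so that $\dd s$, $\curv$, $\E$ and $\nabla\E$ transform by $D\Psi$ without any extra conformal factor, and then let the single non-intrinsic factor $(\gamma^{(2)})^{-4}$ supply the $\rho^4$ matching the fourth-order time rescaling. This part is correct and matches the paper's argument exactly.

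However, there is a concrete error in your treatment of the Lagrange multiplier. You write that $\lambda(\hat\gamma)=\rho^4\lambda(\gamma)$. This is wrong: the correct statement, and the one the paper uses, is $\lambda(\hat\gamma)=\lambda(\gamma)$. Indeed, in the definition
\begin{equation}
\lambda(\gamma)=\frac{\int_{\S^1} (\gamma^{(2)})^{-4}\langle\nabla\E(\gamma),\curv\rangle_g\,\dd s}{\int_{\S^1}(\gamma^{(2)})^{-4}|\curv|_g^2\,\dd s}
\end{equation}
the replacement $\gamma\mapsto\hat\gamma$ multiplies the integrand factor $(\gamma^{(2)})^{-4}$ by $\rho^4$ in \emph{both} numerator and denominator, while all intrinsic quantities ($\dd s$, $\langle\nabla\E,\curv\rangle_g$, $|\curv|_g^2$) are invariant --- so the $\rho^4$ cancels and $\lambda$ is scale-invariant. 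With your claimed scaling $\lambda(\hat\gamma)=\rho^4\lambda(\gamma)$, the right-hand side of the constrained flow equation for $\tilde\gamma$ would become
\begin{equation}
-\frac{\rho^3}{4(\gamma^{(2)})^4}\bigl(\nabla\E(\gamma)-\rho^4\lambda(\gamma)\curv_\gamma\bigr),
\end{equation}
which does \emph{not} equal $\rho^3\partial_t\gamma$ unless $\rho=1$, so the constrained case would fail to close. You hedge in a parenthesis and say you would ``compute this carefully,'' but the stated scaling law is an error and would have derailed the verification. Once you correct it to $\lambda(\hat\gamma)=\lambda(\gamma)$, the rest of your computation goes through exactly as in the paper's proof.
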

\begin{proof}
    Using \eqref{eq:curvature}, $\curv_{\tilde\gamma}(t,x)=\frac{1}{\rho}\curv_\gamma(t\rho^4,x)$ and thus $\nabla\E(\tilde\gamma(t,\cdot))=\frac{1}{\rho}\nabla\E(\gamma(t\rho^4,\cdot))$ since $z\mapsto \frac{z-p}{\rho}$ is an isometry of $\H^2$. Moreover, for the same reason, $\lambda(\tilde{\gamma}(t,\cdot))=\lambda(\gamma(t\rho^4,\cdot))$ by \eqref{eq:lambda-conf-constr-will}. So the claim follows from \eqref{eq:wf-eq} resp. \eqref{eq:conf-constr-wf-eq}, using $\partial_t\tilde\gamma(t,x)=\rho^3(\partial_t\gamma)(t\rho^4,x)$.
\end{proof}

\section{Weighted curvature estimates}\label{sec:curvature_evolutions}
\label{sec:2}

The goal of this section is to bound the curvature and its derivatives along the (conformally constrained) Willmore flow in a suitable weighted norm.
To this end, we adapt the notation of \cite{dziukkuwertschaetzle2002,lin2012,dallacquapozzi2014}. Compared to previous studies, the notation is more involved due to factors of $(\gamma^{(2)})^{-1}$ and their derivatives that naturally appear. These factors contribute only with \emph{components} of the curvature and not the entire curvature vector. In the following for $a,b,c \in \N_0$, $b\geq 1$, we use $P_b^{a,c}$
for any linear combination (with universal, constant coefficients) of terms of type
$$\frac{ \partial_s \gamma}{\gamma^{(2)}} * \cdots * \frac{\partial_s \gamma}{\gamma^{(2)}} * \frac{(\nabla_{s}^\bot)^{i_1} \vec{\kappa}}{\gamma^{(2)}}  * \cdots * \frac{(\nabla_{s}^\bot)^{i_b}  \vec{\kappa}} {\gamma^{(2)}}
$$
where $(v_1,\cdots, v_z) \mapsto v_1 * \cdots * v_z$, $z \in \N$, denotes a multilinear map from $(\R^2)^z$ with values in $\R^2$ or $\R$ with $z \geq b$, and $i_1 + \ldots + i_b = a$, $\max \ i_j \ \leq c$. If the multilinear map is vector-valued, the vector field is assumed to be \emph{normal} i.e., orthogonal to $\partial_s \gamma$. In the notation we do not take into account the number of factors of $\partial_s \gamma$. Notice that 
\begin{equation}\label{eq:CSPP}
 \Big|\frac{ \partial_s \gamma}{\gamma^{(2)}} * \cdots * \frac{\partial_s \gamma}{\gamma^{(2)}} * \frac{(\nabla_{s}^\bot)^{i_1} \vec{\kappa}}{\gamma^{(2)}}  * \cdots * \frac{(\nabla_{s}^\bot)^{i_b}  \vec{\kappa}} {\gamma^{(2)}}\Big| \leq C \prod_{j=1}^b |(\nabla_{s}^\bot)^{i_j}  \vec{\kappa} |_g, 
 \end{equation}
 for some constant $C$ depending only on the coefficients of the multilinear map.  Note further that $a$ gives the total number of derivatives, $b$ denotes the number of factors of the curvature (not necessarily of the whole vector) and $c$ gives a bound on the highest number of derivatives falling on one factor. 

We gather terms $P^{a,c}_b$ according to their ``order'', motivated by Gagliardo--Nirenberg type interpolation inequalities, see \Cref{cor:inter} below.  
More precisely, we denote by
$ \mathbb{P}^{A,C}_B$
finite sums of $P^{a,c}_b$ such that
\begin{equation}
    \label{eq:algebra}
    a+\frac12 b \leq A +\frac12 B \mbox{ and }c \leq C \, .
\end{equation}

From the defintion, a term of type $\mathbb{P}_{B_1}^{A_1,C_1} * \mathbb{P}_{B_2}^{A_2,C_2}$ also is of type $\mathbb{P}_{B_1+B_2}^{A_1+A_2,\max\{C_1,C_2\}}$. The following lemma illustrates that $\mathbb{P}_B^{A,C}$ behaves well with the weights $(\gamma^{(2)})^\beta$. Its proof is computational and thus moved to \Cref{app:techproofs}.

\begin{lemma}\label{lem:allesOK}
    For $m \in \N_0$ and $\beta \in \Z$, we have
    $$ (\nabla_s^{\bot})^m ( (\gamma^{(2)})^{\beta}\, \mathbb{P}^{A,C}_B ) = (\gamma^{(2)})^{\beta}\, \mathbb{P}^{A+m,C+m}_B . $$
\end{lemma}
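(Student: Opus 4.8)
The statement is an algebraic bookkeeping identity about how the operator $(\nabla_s^\bot)^m$ interacts with the weight $(\gamma^{(2)})^\beta$ and with the schematic classes $\mathbb{P}^{A,C}_B$. The plan is to induct on $m$, so it suffices to treat the case $m=1$, i.e.\ to show $\nabla_s^\bot\big((\gamma^{(2)})^\beta\,\mathbb{P}^{A,C}_B\big) = (\gamma^{(2)})^\beta\,\mathbb{P}^{A+1,C+1}_B$; the general $m$ follows by applying this $m$ times and using that $A+1$ increments both the order-counter $A$ and the derivative-bound $C$ in lockstep, so that after $m$ steps we land in $\mathbb{P}^{A+m,C+m}_B$. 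Since $\mathbb{P}^{A,C}_B$ is a finite sum of terms $P^{a,c}_b$ with $a+\tfrac12 b\le A+\tfrac12 B$ and $c\le C$, and since $\nabla_s^\bot$ is linear and the class $\mathbb{P}^{A+1,C+1}_B$ is closed under addition, it is enough to differentiate a single monomial $P^{a,c}_b$ of the displayed type, namely a $*$-product of factors $\frac{\partial_s\gamma}{\gamma^{(2)}}$ and $\frac{(\nabla_s^\bot)^{i_j}\vec\kappa}{\gamma^{(2)}}$ (with $i_1+\dots+i_b=a$, $\max i_j\le c$), multiplied by the scalar weight $(\gamma^{(2)})^\beta$, and check that the result lies in $(\gamma^{(2)})^\beta\,\mathbb{P}^{a+1,c+1}_b$.

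The computation is a Leibniz-rule expansion. First I would record the two elementary derivative formulas I need: $\nabla_s\partial_s\gamma=\vec\kappa$ (definition of curvature) and $\partial_s\gamma^{(2)} = \langle\partial_s\gamma, e_2\rangle$-type control; more precisely, from \eqref{eq:NablaLocalH}/\eqref{eq:curvature} one gets that $\tfrac{\partial_s\gamma^{(2)}}{\gamma^{(2)}}$ is a bounded expression and $\partial_s\big((\gamma^{(2)})^\beta\big) = \beta\,(\gamma^{(2)})^\beta\cdot\tfrac{\partial_s\gamma^{(2)}}{\gamma^{(2)}}$, where $\tfrac{\partial_s\gamma^{(2)}}{\gamma^{(2)}}$ is (up to a component-taking multilinear map applied to $\partial_s\gamma/\gamma^{(2)}$, times a further $\gamma^{(2)}$ that cancels) harmless — i.e.\ it is of type $\mathbb{P}^{0,0}_0$ in the degenerate sense of a product of $\partial_s\gamma/\gamma^{(2)}$ factors with no curvature. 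A cleaner way: note $\gamma^{(2)}\,\mathbb{P}^{0,0}_0$ absorbs all the ``no-curvature'' terms, and whenever $\nabla_s^\bot$ or $\partial_s$ hits a $\frac{1}{\gamma^{(2)}}$ it produces a $\frac{\partial_s\gamma^{(2)}}{(\gamma^{(2)})^2}$, which is $\frac{1}{\gamma^{(2)}}$ times a bounded multilinear expression in $\frac{\partial_s\gamma}{\gamma^{(2)}}$ — so it does not spoil the weight structure and contributes a term with $a$ unchanged, hence order $a+\tfrac12 b$ unchanged, which is $\le (a+1)+\tfrac12 b$. When $\nabla_s^\bot$ hits a factor $\frac{(\nabla_s^\bot)^{i_j}\vec\kappa}{\gamma^{(2)}}$ it produces $\frac{(\nabla_s^\bot)^{i_j+1}\vec\kappa}{\gamma^{(2)}}$ (modulo lower-order $*$-terms coming from the difference between $\nabla_s$ and $\nabla_s^\bot$, cf.\ \eqref{eq:NormalDerOfNormal}, which only improve the order), raising $a\mapsto a+1$ and $c\mapsto\max(c,i_j+1)\le c+1$; when it hits a factor $\frac{\partial_s\gamma}{\gamma^{(2)}}$ it produces $\frac{\nabla_s\partial_s\gamma}{\gamma^{(2)}} = \frac{\vec\kappa}{\gamma^{(2)}}$ (again using $\nabla_s^\bot$ vs $\nabla_s$ only adds tangential junk of lower order), which turns one ``$\partial_s\gamma$'' slot into a new curvature factor: this raises $b\mapsto b+1$ and $a\mapsto a$, so the order becomes $a+\tfrac12(b+1) = a+\tfrac12 b+\tfrac12 \le (a+1)+\tfrac12 b$, and the new factor has zero derivatives so $c$ is unaffected. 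In every case the overall power of $\gamma^{(2)}$ remains $\beta$ and the result is a finite sum of admissible $P$-terms with parameters bounded by $(a+1, b$ or $b+1, c+1)$ as required.

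The genuinely delicate point — and the one I would spell out carefully rather than wave at — is the treatment of $\nabla_s$ versus $\nabla_s^\bot$ inside the $*$-notation, together with the fact that the $*$-factors are ``components'' (normal projections or tangential projections) of vectors rather than the vectors themselves. Differentiating a normal vector field with $\nabla_s$ rather than $\nabla_s^\bot$ produces, by \eqref{eq:NormalDerOfNormal}, a tangential correction $-\langle\Phi,\vec\kappa\rangle_g\,\partial_s\gamma$; this is a term with one extra curvature factor and no extra derivative, i.e.\ it fits into $\mathbb{P}^{a,c}_{b+1}\subseteq \mathbb{P}^{a+1,c+1}_b$ after the homogeneity count $a+\tfrac12(b+1)\le (a+1)+\tfrac12 b$, and crucially it still carries the right $\gamma^{(2)}$-weights because $\partial_s\gamma/\gamma^{(2)}$ is exactly one of our allowed building blocks. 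So the ``order'' convention in \eqref{eq:algebra} was designed precisely so that these exchanges are free. I would therefore organize the $m=1$ step as: (1) Leibniz over all $*$-factors and over the scalar weight; (2) for each resulting summand identify which building block was differentiated and invoke the three sub-cases above; (3) observe the $\gamma^{(2)}$-power is preserved throughout because every stray $\partial_s\gamma^{(2)}/\gamma^{(2)}$ or $\nabla_s(1/\gamma^{(2)})\cdot\gamma^{(2)}$ is a bounded $*$-expression in $\partial_s\gamma/\gamma^{(2)}$; (4) collect and read off that the total is of type $(\gamma^{(2)})^\beta\,\mathbb{P}^{a+1,c+1}_b$, and sum over the finitely many monomials and over $m$ by induction. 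I expect no real obstacle beyond careful bookkeeping; the main risk is simply miscounting the homogeneity weight $a+\tfrac12 b$ when a $\partial_s\gamma$ slot is converted into a $\vec\kappa$ slot, so I would double-check that inequality $a+\tfrac12(b+1)\le (a+1)+\tfrac12 b$ explicitly at that step.
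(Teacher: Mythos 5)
Your proposal is correct and takes essentially the same route as the paper's proof: both reduce to the $m=1$ step, Leibniz-differentiate the $*$-product monomial by monomial, use the identities $\partial_s\bigl(\tfrac{\partial_s\gamma}{\gamma^{(2)}}\bigr)\in\mathbb{P}^{0,0}_1$ and $\partial_s\bigl(\tfrac{(\nabla_s^\bot)^{i_j}\vec\kappa}{\gamma^{(2)}}\bigr)\in\mathbb{P}^{i_j+1,i_j+1}_1$, and verify the homogeneity count $a+\tfrac12(b+1)\le (a+1)+\tfrac12 b$. The only cosmetic difference is that the paper first writes a vector-valued $P^{a,c}_b$ as a scalar factor of type $P^{a-i_b,c}_{b-1}$ times a single normal factor $\tfrac{(\nabla_s^\bot)^{i_b}\vec\kappa}{\gamma^{(2)}}$ before applying $\nabla_s^\bot$, whereas you apply Leibniz across all slots at once.
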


\subsection{Interpolation estimates of Gagliardo--Nirenberg type}

\begin{proposition}[Multiplicative estimates]\label{prop:inter-multi}
    Let $\gamma\colon\mathbb{S}^1 \to \H^2 $ be a smooth immersion such that $\mathcal{L}(\gamma)\geq L>0$. Then for any $k \in\N$, $a,c \in\N_0$, $b \in \N$, $b\geq 2$ such that $ c\leq k-1$ and $a+\frac{b}{2} -1 < 2k,$ 
    there exists a constant $C$  depending on $a$, $b$, $k$, $L$ and the coefficients of the underlying multilinear map, satisfying
    \begin{equation}\label{eq:mult-inter-ineq-with-lower-length-bound}
        \int_{\S^1} |P_b^{a,c}| \dd s\leq C  \||\curv|_g\|_{L^2(\dd s)}^{b-\gamma} \big( \||(\nabla_s^{\bot})^k\curv|_g\|_{L^2(\dd s)}^{\gamma} + \||\curv|_g\|_{L^2(\dd s)}^{\gamma} \big)
    \end{equation}
    where $\gamma=(a+b/2-1)/k$. In particular, if $\E(\gamma)\leq M$, then there exists a constant $C$ depending on $a$, $b$, $k$, $M$ and the coefficients of the underlying multilinear map,  with
    \begin{equation}\label{eq:mult-inter-ineq}
        \int_{\S^1} |P_b^{a,c}| \dd s\leq C \big( 1+ \||(\nabla_s^{\bot})^k\curv|_g\|_{L^2(\dd s)}^{\gamma} \big).
    \end{equation}
\end{proposition}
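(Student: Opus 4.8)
The plan is to reduce the estimate to a statement purely about the scalar function $|\curv|_g$ and its covariant derivatives and then invoke a classical Gagliardo--Nirenberg interpolation inequality on $\S^1$. First I would use the pointwise bound \eqref{eq:CSPP} to estimate
\[
\int_{\S^1} |P_b^{a,c}| \dd s \leq C \int_{\S^1} \prod_{j=1}^b |(\nabla_s^{\bot})^{i_j}\curv|_g \dd s,
\]
with $i_1+\dots+i_b=a$ and $\max_j i_j \le c \le k-1$. Since the weight factors $(\gamma^{(2)})^{-1}$ that appear in $P_b^{a,c}$ are all bounded in absolute value by $1$ after the $*$-contraction (they only ever contribute a unit vector $\partial_s\gamma/\gamma^{(2)}$ or combine as in \eqref{eq:CSPP}), no further control on $\gamma^{(2)}$ is needed here --- this is the point of the normalisation built into the definition of $P_b^{a,c}$, and it is why the weights do not show up in the final inequality.

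Next I would apply Hölder's inequality with exponents $p_j = \tfrac{a + b/2 - 1}{i_j + 1/2 - 1/b} \cdot$ (chosen so that $\sum_j 1/p_j = 1$; one checks $\sum_j (i_j + 1/2) - b/2 \cdot \tfrac{1}{\cdots}$... more precisely $\sum_j (i_j/(a+b/2-1)) + \sum_j (1/2 - 1/b)/(a+b/2-1) = (a + b/2 - 1)/(a+b/2-1) = 1$) to obtain
\[
\int_{\S^1} \prod_{j=1}^b |(\nabla_s^{\bot})^{i_j}\curv|_g \dd s \leq C \prod_{j=1}^b \big\| |(\nabla_s^{\bot})^{i_j}\curv|_g \big\|_{L^{p_j}(\dd s)}.
\]
Then for each $j$ I would invoke the one-dimensional Gagliardo--Nirenberg--Sobolev inequality on the closed curve of length $\mathcal{L}(\gamma) \ge L$ to bound $\| |(\nabla_s^{\bot})^{i_j}\curv|_g \|_{L^{p_j}}$ by a product $\| |(\nabla_s^{\bot})^k \curv|_g \|_{L^2}^{\theta_j} \| |\curv|_g \|_{L^2}^{1-\theta_j}$ plus a lower-order term $\| |\curv|_g \|_{L^2}$, with the interpolation exponent $\theta_j = (i_j + 1/2 - 1/p_j)/k$. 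The scaling bookkeeping then gives $\sum_j \theta_j = (a + b/2 - 1)/k = \gamma$; combining the factors and using the elementary inequality $\prod (x_j^{\theta_j} + y^{\theta_j}) \le C(\prod x_j^{\theta_j} + y^\gamma)$ (finitely many cross terms, each bounded using $y \le $ const by the energy bound, or reabsorbed) yields \eqref{eq:mult-inter-ineq-with-lower-length-bound}. The hypotheses $c \le k-1$ and $a + b/2 - 1 < 2k$ are exactly what is needed to keep each $\theta_j \in [0,1)$ and each exponent $p_j \in [2,\infty)$ so that the one-dimensional interpolation inequalities apply; this constraint-checking is the only genuinely delicate bookkeeping.

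For the ``in particular'' statement \eqref{eq:mult-inter-ineq}, I would note that \Cref{rem:min-e} gives the lower bound $\mathcal{L}(\gamma) \ge L$ implicitly --- actually one uses $\E(\gamma)\le M$ together with the bound $\E(\gamma) = \int |\curv|_g^2 \dd s$ to control $\||\curv|_g\|_{L^2(\dd s)} \le \sqrt{M}$, while the lower length bound needed in the first part follows since $\E(\gamma) \ge 4\pi$ alone does not bound $\Ll$ from below, so I would instead observe that along the relevant flows we always have such a bound available, or more directly that the statement as written presupposes the hypothesis of the first part. Substituting $\||\curv|_g\|_{L^2} \le \sqrt M$ into \eqref{eq:mult-inter-ineq-with-lower-length-bound} and absorbing all powers of $\sqrt M$ into the constant gives \eqref{eq:mult-inter-ineq}. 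The main obstacle I anticipate is purely organisational: verifying that the Hölder exponents and interpolation parameters produced by the scaling count genuinely lie in the admissible ranges for every admissible tuple $(i_1,\dots,i_b)$, and handling the borderline/degenerate cases (some $i_j = 0$, or $b$ large) uniformly; the analytic content beyond that is just the standard $L^2$-based Gagliardo--Nirenberg inequality on $\S^1$.
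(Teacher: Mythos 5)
Your argument for the first inequality is essentially the same approach as the paper's cited references (the paper simply invokes \cite[Proposition~4.3 and Corollary~4.2]{dallacquaspener2017}, which are proved exactly by the Hölder plus one-dimensional Gagliardo--Nirenberg scheme you sketch, starting from \eqref{eq:CSPP}). The exponent bookkeeping you describe is the standard one and is correct.

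However, your treatment of the ``in particular'' part \eqref{eq:mult-inter-ineq} has a genuine gap. You write that \Cref{rem:min-e} gives a lower length bound ``implicitly'' (it does not: it gives a lower energy bound), then correctly note that $\E(\gamma)\ge 4\pi$ alone does not bound $\Ll(\gamma)$ from below, and finally resort to an unjustified workaround (``along the relevant flows such a bound is available'' or ``the statement presupposes it''). Neither is acceptable: the proposition is a statement about arbitrary immersions with $\E(\gamma)\le M$, not about flows, and it does not presuppose a length bound in its second part. The missing ingredient is Fenchel's theorem in $\H^2$ (cf.\ \cite[Theorem 2.3]{dallacquaspener2017}): $\int_{\S^1}|\curv|_g\dd s\ge 2\pi$, hence by Cauchy--Schwarz $(2\pi)^2\le\Ll(\gamma)\,\E(\gamma)$, so the \emph{upper} bound $\E(\gamma)\le M$ yields the lower bound $\Ll(\gamma)\ge(2\pi)^2/M$ needed to apply \eqref{eq:mult-inter-ineq-with-lower-length-bound}. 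Combining this with $\||\curv|_g\|_{L^2(\dd s)}^2=\E(\gamma)\le M$ then gives \eqref{eq:mult-inter-ineq}, exactly as in the paper's two-line argument via \eqref{eq:len-low-bound}.
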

\begin{proof}
    Using \eqref{eq:CSPP}, the estimate in \eqref{eq:mult-inter-ineq-with-lower-length-bound} is a combination of \cite[Proposition~4.3 and Corollary~4.2]{dallacquaspener2017}. By Fenchel's Theorem in $\H^2$, cf.\ \cite[Theorem 2.3]{dallacquaspener2017},
    \begin{equation}\label{eq:len-low-bound}
        \frac{1}{{\mathcal{L}(\gamma)}} \leq \frac{1}{(2 \pi)^2} \E (\gamma) \leq \frac{1}{(2 \pi)^2} M \, .
    \end{equation}
    Hence,  $\mathcal{L}(\gamma)\geq \ell=\ell(M)\vcentcolon= (2\pi)^2/M$. Equation \eqref{eq:mult-inter-ineq} thus follows from \eqref{eq:mult-inter-ineq-with-lower-length-bound}.
\end{proof}

\begin{proposition}
    \label{cor:inter}
    Let $\gamma\colon\mathbb{S}^1 \to \H^2 $ be a smooth immersion such that $\E(\gamma) \leq M$. Then for any $k \in\N$, $A,C \in\N_0$, $B \in \N$, $B\geq 2$ such that $ C\leq k$ and $A+\frac{B}{2} -1 < 2k,$ for any $\varepsilon \in(0,1)$,
    there exists a constant $\tilde{c}_\varepsilon$ depending on $k$, $A$, $B$, $C$, $M$, $\varepsilon$, and the coefficients of the underlying multilinear maps such that
    $$\int_{\mathbb{S}^1} |\mathbb{P}^{A,C}_B | \dd  s \leq \varepsilon  \int_{\mathbb{S}^1} | (\nabla_{s}^\bot)^k\vec{\kappa}  |^2_{g} \dd s  + \tilde{c}_\varepsilon .$$ 
\end{proposition}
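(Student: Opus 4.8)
The plan is to reduce \Cref{cor:inter} to the multiplicative estimate in \Cref{prop:inter-multi} by absorbing the high-order term via Young's inequality, after decomposing $\mathbb{P}^{A,C}_B$ into its constituent monomials $P^{a,c}_b$. First I would observe that, by definition of $\mathbb{P}^{A,C}_B$ as a finite sum of terms $P^{a,c}_b$ satisfying $a+\tfrac12 b \le A+\tfrac12 B$ and $c\le C\le k$, it suffices to prove the claimed bound for a single $P^{a,c}_b$ with $a+\tfrac12 b-1 < 2k$ (which holds since $a+\tfrac12 b -1 \le A+\tfrac12 B - 1 < 2k$) and $c \le k$. One subtlety: \Cref{prop:inter-multi} requires $c\le k-1$, whereas here $c\le k$ is allowed; I would handle the boundary case $c=k$ separately, noting that a factor $(\nabla_s^\bot)^k\curv$ then appears explicitly, and the estimate proceeds by Cauchy--Schwarz pulling out $\||(\nabla_s^\bot)^k\curv|_g\|_{L^2}$ against a lower-order $P$-term to which the $c\le k-1$ version applies — alternatively, one integrates by parts once to move a derivative off the top-order factor, which is the standard trick and is already implicit in the cited \cite[Proposition~4.3]{dallacquaspener2017}. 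The other degenerate case is $b \in \{0,1\}$ inside a summand of $\mathbb{P}^{A,C}_B$; but the hypothesis $B\ge 2$ together with the structure of the $*$-products and the fact that each $P$-term carries $b\ge 1$ curvature factors, plus the convention that vector-valued terms are normal, means the relevant estimates are either trivial ($b$ large) or reduce to $b=1$ which is controlled by $\||\curv|_g\|_{L^2}$ and hence by $M$ via \eqref{eq:len-low-bound}; I would remark that $b\ge 2$ is what we actually use so this is not an issue.

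Next, with $\gamma = (a+b/2-1)/k \in [0,2)$, \Cref{prop:inter-multi} gives
\begin{equation*}
    \int_{\S^1} |P_b^{a,c}| \dd s \le C\big(1 + \||(\nabla_s^\bot)^k\curv|_g\|_{L^2(\dd s)}^{\gamma}\big),
\end{equation*}
where $C = C(a,b,k,M)$ and I used $\E(\gamma)\le M$ to get the lower length bound $\Ll(\gamma) \ge (2\pi)^2/M$, exactly as in the proof of \Cref{prop:inter-multi}. Since $0 \le \gamma < 2$, Young's inequality $C x^\gamma \le \varepsilon x^2 + c_\varepsilon$ (with $x = \||(\nabla_s^\bot)^k\curv|_g\|_{L^2(\dd s)}$, valid because $\gamma/2 < 1$, and $c_\varepsilon$ depending on $C,\gamma,\varepsilon$ hence on $k,A,B,C,M,\varepsilon$) yields
\begin{equation*}
    \int_{\S^1}|P^{a,c}_b|\dd s \le \varepsilon \int_{\S^1} |(\nabla_s^\bot)^k\curv|_g^2 \dd s + \tilde c_\varepsilon.
\end{equation*}
Summing over the finitely many monomials comprising $\mathbb{P}^{A,C}_B$ and relabeling $\varepsilon \mapsto \varepsilon/(\#\text{terms})$ gives the assertion, with $\tilde c_\varepsilon$ additionally depending on the combinatorial data $A,B,C$ and the coefficients of the underlying multilinear maps.

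I do not expect any genuinely hard step here; the statement is essentially a packaging of \Cref{prop:inter-multi} plus Young's inequality. The only points requiring care are bookkeeping ones: verifying that the hypotheses of \Cref{prop:inter-multi} ($b\ge 2$, $c\le k-1$, $a+b/2-1<2k$) are met — or suitably circumvented in the borderline case $c=k$ — for every monomial allowed in $\mathbb{P}^{A,C}_B$, and tracking the dependence of constants so that the final $\tilde c_\varepsilon$ depends only on the quantities listed in the statement. The $c=k$ borderline is the one place a reader might object, so I would spell out there that an integration by parts (or a single application of Cauchy--Schwarz combined with the $c = k-1$ estimate on the remaining factors) reduces it to the range covered by \Cref{prop:inter-multi}; this is routine and standard in the elastic-flow literature, and I would cite \cite[Proposition~4.3]{dallacquaspener2017} and \cite{dziukkuwertschaetzle2002} for the precise form.
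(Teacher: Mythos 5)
Your proposal is correct and follows essentially the same route as the paper's proof: reduce $\mathbb{P}^{A,C}_B$ to its constituent monomials $P^{a,c}_b$, invoke the multiplicative Gagliardo--Nirenberg estimate (\Cref{prop:inter-multi}, which rests on \cite[Proposition~4.3]{dallacquaspener2017}), treat the borderline $c=k$ separately, and then absorb via Young's inequality. The paper compresses the last two steps by directly citing \cite[Lemma~3.4]{dallacqualinpozzi2017} for the $c=k$ case and writing down the $\varepsilon$-weighted estimate with its explicit $\varepsilon^{-\mu/(2-\mu)}\E(\gamma)^{(b-\mu)/(2-\mu)}$ coefficients, whereas you unpack the Young step; both the integration-by-parts and the Cauchy--Schwarz device you describe for $c=k$ are standard and equivalent to what the cited lemma does (your remark that $a+\tfrac12 b-1<2k$ forces at most one factor to carry $k$ derivatives is the right observation). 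One small caveat: your parenthetical claim that the $b=1$ case is ``controlled by $\||\curv|_g\|_{L^2}$'' is not literally correct — $\int|\curv|_g\,\dd s\le\Ll^{1/2}\E^{1/2}$, and $\Ll$ is only bounded below, not above, by $M$ — but this is tangential since, as you note, the estimate only engages monomials with $b\ge 2$, which is also the regime \Cref{prop:inter-multi} is stated for, and it is consistent with the paper's treatment.
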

\begin{proof}
    Using \eqref{eq:CSPP} and the same arguments as in \cite[Proposition~4.3]{dallacquaspener2017}, taking care of the case $c=k$ as in \cite[Lemma~3.4]{dallacqualinpozzi2017}, we obtain for any $\varepsilon \in(0,1)$, $a+\frac{b}{2}\leq A+\frac{B}{2}$, $c\leq C$,
$$\int_{\mathbb{S}^1} |P^{a,c}_b | \dd  s \leq \varepsilon  \int_{\mathbb{S}^1} | (\nabla_{s}^\bot)^k\vec{\kappa}  |^2_{g} \dd s  + \tilde{c} \varepsilon^{-\frac{\mu}{2-\mu}} \E(\gamma)^{\frac{b-\mu}{2-\mu}}+\tilde{c} \E(\gamma)^{\frac{b}{2}} \,,$$
with $\mu=(a+b/2-1)/k$, where $\tilde{c}$ depends on $a$, $b$, $k$, $L$, and the coefficients of the underlying multilinear map.
    Using \eqref{eq:len-low-bound}, the claim follows from the definition of $\mathbb{P}^{A,C}_B$, cf.\ \eqref{eq:algebra}.
\end{proof}

\subsection{Energy estimates}

Since maximum principle techniques are not available for Willmore-type flows, we rely on energy methods instead. We present a unified approach to treat both the Willmore flow and a class of constrained Willmore flows simultaneously. Consider a smooth map $\lambda\colon [0,T)\to\R$ and a smooth family of immersions $\gamma\colon[0,T)\times\S^1\to\H^2$ satisfying
\begin{equation}\label{eq:evol-eq-with-general-lambda}
    \partial_t\gamma = -\frac{1}{4(\gamma^{(2)})^4} (\nabla\E(\gamma)-\lambda \curv)\quad\text{on $[0,T)\times\S^1$}.
\end{equation}
Note that $\gamma$ is a Willmore flow for the choice $\lambda\equiv 0$ and a conformally constrained Willmore flow if $\lambda(t)$ is given by $\lambda(\gamma(t))$ as in \eqref{eq:lambda-conf-constr-will}. 

\begin{lemma}\label{lem:int-est-general-lambda}
    Let $\gamma\colon [0,T) \times \S^1 \to \H^2$ satisfy \eqref{eq:evol-eq-with-general-lambda} and suppose that $\gamma^{(2)}(t,\cdot) \leq r$ for all $t\in[0,T)$ and $\E(\gamma(0)) \leq M$. Fix any $\mu>0$. Then there exists $C=C(m,M,\mu)$ such that for all $0\leq t<T$,
    \begin{align}
        \frac{\dd}{\dd t} &\int_{\S^1} (\gamma^{(2)})^4 | (\nabla_s^{\perp})^m \vec{\kappa}|^2_g \dd s + \frac{\mu}{r^4}\int_{\S^1} (\gamma^{(2)})^4 | (\nabla_s^{\perp})^m \vec{\kappa}|^2_g \dd s  + \frac12\int_{\S^1}  | (\nabla_s^{\perp})^{m+2} \vec{\kappa}|^2_g \dd s \\
        &\leq C(m,M,\mu) + |\lambda| \Big( \int_{\S^1}|(\nabla_s^{\bot})^{m+1}\curv|_g^2\dd s + \int_{\S^1} |\mathbb{P}_4^{2m,m}|\dd s \Big).\label{eq:int-est-general-lambda}
    \end{align}
\end{lemma}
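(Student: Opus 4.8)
The plan is to compute $\frac{\dd}{\dd t}\int_{\S^1}(\gamma^{(2)})^4|(\nabla_s^\perp)^m\curv|_g^2\dd s$ directly, using the evolution equation \eqref{eq:evol-eq-with-general-lambda} and the commutator identities of \Cref{lemma:evo}. Writing $V=\partial_t\gamma=-\frac{1}{4(\gamma^{(2)})^4}(\nabla\E(\gamma)-\lambda\curv)$, which is normal, the term $\partial_t(\dd s)=-\langle V,\curv\rangle_g\dd s$ contributes a lower-order term of type $\mathbb{P}$ once we substitute $V$. The main contribution comes from $2\int_{\S^1}(\gamma^{(2)})^4\langle(\nabla_t^\perp)(\nabla_s^\perp)^m\curv,(\nabla_s^\perp)^m\curv\rangle_g\dd s$; commuting $\nabla_t^\perp$ past the $(\nabla_s^\perp)^m$ using \eqref{eq:Commutator_Normal_Normal_S} produces extra terms each carrying a factor $\langle V,\curv\rangle_g$, hence again of type $\frac{1}{(\gamma^{(2)})^4}\mathbb{P}$ after inserting $V$, and one principal term involving $(\nabla_s^\perp)^m\nabla_t^\perp\curv$. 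Then \eqref{eq:EvoKappa} gives $\nabla_t^\perp\curv=(\nabla_s^\perp)^2V+\langle V,\curv\rangle_g\curv-V$; the principal part, after inserting $V=-\frac{1}{4}(\gamma^{(2)})^{-4}(\nabla\E(\gamma)-\lambda\curv)$ and recalling $\nabla\E(\gamma)=2(\nabla_s^\perp)^2\curv+|\curv|_g^2\curv-2\curv$, is $-\frac{1}{2}(\gamma^{(2)})^{-4}(\nabla_s^\perp)^4\curv$ plus lower-order $\mathbb{P}$-terms and the $\lambda$-term $\frac{\lambda}{4}(\gamma^{(2)})^{-4}(\nabla_s^\perp)^2\curv$.

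The next step is bookkeeping: the leading term, after multiplying by $(\gamma^{(2)})^4$ and integrating by parts twice (moving two $\nabla_s^\perp$'s off), yields $-\frac12\int_{\S^1}|(\nabla_s^\perp)^{m+2}\curv|_g^2\dd s$ — but the integration by parts generates commutator terms and, crucially, terms where derivatives fall on the weight $(\gamma^{(2)})^4$. Here is where \Cref{lem:allesOK} is essential: derivatives of $(\gamma^{(2)})^\beta\mathbb{P}^{A,C}_B$ stay of the form $(\gamma^{(2)})^\beta\mathbb{P}^{A+m,C+m}_B$, and since $(\gamma^{(2)})^{-4}\cdot(\gamma^{(2)})^4=1$, all these correction terms collapse to integrals of $\mathbb{P}^{A,C}_B$-type with the right homogeneity, i.e.\ $A+\frac12B\leq 2m+2$, $B\geq 2$. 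One also must handle the Christoffel-type terms coming from $(\nabla_s^\perp)^2V$ acting on the non-constant weight inside $V$ — these are the terms where $\nabla_s$ hits $(\gamma^{(2)})^{-4}$; again \Cref{lem:allesOK} controls them. Using $\gamma^{(2)}\le r$ everywhere that a weight appears in the ``wrong'' direction, and $\E(\gamma(t))\le\E(\gamma(0))\le M$ along the flow (which holds for Willmore flow by \Cref{rem:endecr} and for the conformally constrained flow since $\E=\frac2\pi\W$ is still monotone, or at least stays bounded), \Cref{cor:inter} absorbs all the $\mathbb{P}^{A,C}_B$-integrals with $C\le m+1\le k:=m+2$ into $\varepsilon\int|(\nabla_s^\perp)^{m+2}\curv|_g^2+\tilde c_\varepsilon$. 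Choosing $\varepsilon$ small enough — accounting both for the $\frac12$ we want to keep and for the extra $\frac{\mu}{r^4}\int(\gamma^{(2)})^4|(\nabla_s^\perp)^m\curv|_g^2$ term, which is itself $\le\frac{\mu}{r^4}\cdot r^4\int|(\nabla_s^\perp)^m\curv|_g^2\le C(m,M,\mu)+\varepsilon'\int|(\nabla_s^\perp)^{m+2}\curv|_g^2$ by interpolation — gives the $C(m,M,\mu)$ on the right.

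Finally the $\lambda$-dependent terms: when $\lambda\not\equiv0$, two sources of $\lambda$ appear. One is $V$ itself containing $\frac{\lambda}{4}(\gamma^{(2)})^{-4}\curv$, which through $\partial_t(\dd s)$ and through the commutators $\langle V,\curv\rangle_g$ produces $|\lambda|$ times $\mathbb{P}$-terms of type $\mathbb{P}_4^{2m,m}$ (four curvature factors, $2m$ derivatives total, at most $m$ on each) — exactly the second term on the right of \eqref{eq:int-est-general-lambda}. The other is the principal $\lambda$-contribution $\frac{\lambda}{4}(\gamma^{(2)})^{-4}(\nabla_s^\perp)^2\curv$ inside $\nabla_t^\perp\curv$; multiplied by $(\gamma^{(2)})^4(\nabla_s^\perp)^{2m}\curv$-type weights and integrated by parts $m$ times it gives, up to $\mathbb{P}$-corrections, $|\lambda|\int|(\nabla_s^\perp)^{m+1}\curv|_g^2\dd s$ — the first term on the right of \eqref{eq:int-est-general-lambda}; the $\mathbb{P}$-corrections here carry a factor $|\lambda|$ and are again of type $\mathbb{P}_4^{2m,m}$ after bounding $\gamma^{(2)}\le r$, or are absorbed into $C(m,M,\mu)$ and $\frac12\int|(\nabla_s^\perp)^{m+2}\curv|_g^2$ if they involve fewer curvature factors. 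The main obstacle is the careful integration-by-parts bookkeeping on the degenerate weight: one must verify that every term produced by moving $\nabla_s$ across $(\gamma^{(2)})^{\pm4}$ genuinely lands in the algebra $\mathbb{P}^{A,C}_B$ with the homogeneity constraint \eqref{eq:algebra} satisfied and $C\le m+1$ (so that \Cref{cor:inter} applies with $k=m+2$), rather than leaking a top-order term with the wrong sign or an uncontrollable power of $(\gamma^{(2)})^{-1}$; \Cref{lem:allesOK} is the tool that makes this work, but applying it consistently through a long computation is the delicate part.
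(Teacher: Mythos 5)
Your proposal is correct and follows essentially the same route as the paper's proof (which is carried out in Appendix A via \Cref{lem:evolkappa}): differentiate the weighted $L^2$-norm, commute $\nabla_t^\bot$ past $(\nabla_s^\bot)^m$ with \eqref{eq:Commutator_Normal_Normal_S}, substitute \eqref{eq:EvoKappa} and the evolution equation, integrate by parts, use \Cref{lem:allesOK} to keep the weight-derivative bookkeeping inside the $\mathbb{P}$-algebra, and absorb via \Cref{cor:inter} with $k=m+2$. Two small points worth tidying: the raw double integration by parts of the leading term yields $-\int|(\nabla_s^\bot)^{m+2}\curv|_g^2\dd s$ (coefficient $-1$, not $-\tfrac12$; the $\tfrac12$ in \eqref{eq:int-est-general-lambda} appears only after half of this dissipative term has been spent absorbing the $\mathbb{P}$-errors), and the application of \Cref{cor:inter} actually needs $\E(\gamma(t))\le M$ for each $t$, which your parenthetical correctly notes is supplied by the energy-dissipation structure in the two cases where the lemma is used, matching the paper's implicit usage.
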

The proofs of \Cref{lem:int-est-general-lambda} and \Cref{prop:il-conf-constr-wf} below
are rather technical and postponed to \Cref{app:techproofs} for the sake of readability.

In the special case of the Willmore flow (i.e., $\lambda \equiv 0$) we obtain a control for curvature derivatives provided that $\gamma^{(2)}$ lies in a certain range.

\begin{proposition}\label{prop:il}
Let $\gamma\colon [0,T) \times \S^1 \to \H^2$ be a Willmore flow such that $\gamma^{(2)}(t,\cdot) \leq r$ for all $t$ and  $\E(\gamma(0)) \leq M$. Fix any $\mu>0$. Then there exists $C=C(m,M,\mu)$ such that for all $0\leq t<T$,
\begin{equation}\label{eq:il-1}
    \int_{\S^1} (\gamma^{(2)}/r)^4 | (\nabla_s^{\perp})^m \vec{\kappa}|^2_g \dd s \leq C(m,M,\mu) + \int_{\S^1} (\gamma^{(2)}/r)^4 | (\nabla_s^{\perp})^m \vec{\kappa}|^2_g \dd s \Big|_{t=0} \ e^{- \frac{\mu}{r^4}t}.
\end{equation}
\end{proposition}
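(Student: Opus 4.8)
The plan is to deduce \Cref{prop:il} directly from the differential inequality in \Cref{lem:int-est-general-lambda} specialized to the Willmore flow, where $\lambda\equiv 0$. First I would observe that, for the Willmore flow, the right-hand side of \eqref{eq:int-est-general-lambda} collapses to just the constant $C(m,M,\mu)$, since the entire $|\lambda|(\cdots)$ term vanishes. Thus, setting $\varphi(t)\vcentcolon=\int_{\S^1}(\gamma^{(2)})^4|(\nabla_s^{\perp})^m\curv|_g^2\dd s$, \Cref{lem:int-est-general-lambda} yields
\begin{equation}
    \varphi'(t) + \frac{\mu}{r^4}\varphi(t) + \frac12\int_{\S^1}|(\nabla_s^{\perp})^{m+2}\curv|_g^2\dd s \leq C(m,M,\mu)
\end{equation}
for all $0\leq t<T$. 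Dropping the nonnegative third term on the left (it plays no role for this particular estimate) gives the scalar differential inequality $\varphi'(t)+\frac{\mu}{r^4}\varphi(t)\leq C(m,M,\mu)$.

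Next I would integrate this linear differential inequality by the standard integrating-factor argument: multiplying by $e^{\frac{\mu}{r^4}t}$ turns the left-hand side into $\frac{\dd}{\dd t}\big(e^{\frac{\mu}{r^4}t}\varphi(t)\big)$, and integrating from $0$ to $t$ yields
\begin{equation}
    e^{\frac{\mu}{r^4}t}\varphi(t) - \varphi(0) \leq C(m,M,\mu)\int_0^t e^{\frac{\mu}{r^4}\tau}\dd\tau = C(m,M,\mu)\cdot\frac{r^4}{\mu}\big(e^{\frac{\mu}{r^4}t}-1\big).
\end{equation}
Rearranging and using $e^{\frac{\mu}{r^4}t}-1\leq e^{\frac{\mu}{r^4}t}$, this gives $\varphi(t)\leq C(m,M,\mu)\frac{r^4}{\mu} + \varphi(0)e^{-\frac{\mu}{r^4}t}$. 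Finally, dividing through by $r^4$ — noting $\varphi(t)/r^4 = \int_{\S^1}(\gamma^{(2)}/r)^4|(\nabla_s^{\perp})^m\curv|_g^2\dd s$ — and absorbing the factor $1/\mu$ into the constant (renaming $C(m,M,\mu)\frac{1}{\mu}$ again as $C(m,M,\mu)$, which is legitimate since $\mu$ is among the parameters the constant already depends on) produces exactly \eqref{eq:il-1}.

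There is essentially no obstacle here: the proposition is a routine corollary of \Cref{lem:int-est-general-lambda} via a Gronwall-type integration, and the only point requiring a word of care is the bookkeeping of the $r$-dependence — making sure the $1/r^4$ weight is inserted in the right places so that the decay rate $e^{-\frac{\mu}{r^4}t}$ and the scale-invariant quantity $(\gamma^{(2)}/r)^4|(\nabla_s^{\perp})^m\curv|_g^2$ both appear as claimed. All the analytic difficulty (the weighted energy estimate, the interpolation inequalities of \Cref{cor:inter}, the control of the weights via \Cref{lem:allesOK}) is already packaged inside \Cref{lem:int-est-general-lambda}, whose proof is deferred to the appendix.
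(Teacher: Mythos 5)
Your proof is correct and is precisely what the paper's proof (a one-line "follows from Lemma \ref{lem:int-est-general-lambda} with a Gronwall argument, using $\lambda\equiv 0$") is meant to encode: set $\lambda\equiv 0$, drop the nonnegative $(\nabla_s^\perp)^{m+2}\curv$ term, integrate via the integrating factor, and divide by $r^4$ to see the constant remain $r$-independent.
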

\begin{proof}
    The claim follows from \Cref{lem:int-est-general-lambda} with a Gronwall argument, using that $\lambda\equiv 0$ in case of a Willmore flow.
\end{proof}

For the conformally constrained Willmore flow, i.e.,  $\lambda$ is given by \eqref{eq:lambda-conf-constr-will}, the estimates in \Cref{lem:int-est-general-lambda} are not yet sufficient for an analogous Gronwall estimate. But we do obtain the following.

\begin{lemma}\label{prop:il-conf-constr-wf}
Let $\gamma\colon [0,T) \times \S^1 \to \H^2$ be a conformally constrained Willmore flow, let $L_0=\Ll(\gamma(0))$ and $M,r>0$ such that $\gamma^{(2)}(t,\cdot) \leq r$ for all $t$ and $\E(\gamma(0)) \leq M$. Fix any $\mu>0$. Then there exists $C=C(m,L_0,M,\mu)$ such that for all $0\leq t<T$,
\begin{align}
        \frac{\dd}{\dd t} \int_{\S^1} (\gamma^{(2)})^4 | (\nabla_s^{\perp})^m \vec{\kappa}|^2_g \dd s& + \frac{\mu}{r^4}\int_{\S^1} (\gamma^{(2)})^4 | (\nabla_s^{\perp})^m \vec{\kappa}|^2_g \dd s  + \frac14\int_{\S^1}  | (\nabla_s^{\perp})^{m+2} \vec{\kappa}|^2_g \dd s \\
        &\leq C(m,L_0,M,\mu) + 2 (\lambda(\gamma))^2 \int_{\S^1}|(\nabla_s^{\bot})^{m}\curv|_g^2\dd s . \label{eq:conf-constr-wf-lambda-estimate}
    \end{align}
\end{lemma}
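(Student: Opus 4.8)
The plan is to start from the general energy estimate \eqref{eq:int-est-general-lambda} provided by \Cref{lem:int-est-general-lambda}, specialized to the conformally constrained Willmore flow (so $\lambda = \lambda(\gamma)$), and then to absorb the two $\lambda$-dependent error terms on its right-hand side. The inequality \eqref{eq:int-est-general-lambda} already controls $\frac{\dd}{\dd t}\int (\gamma^{(2)})^4 |(\nabla_s^\perp)^m\curv|_g^2\dd s$ plus the damping term plus $\frac12\int|(\nabla_s^\perp)^{m+2}\curv|_g^2\dd s$ by $C(m,M,\mu) + |\lambda|\big(\int|(\nabla_s^\perp)^{m+1}\curv|_g^2\dd s + \int|\mathbb{P}_4^{2m,m}|\dd s\big)$. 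The target \eqref{eq:conf-constr-wf-lambda-estimate} is the same but with $\frac14$ instead of $\frac12$ on the top-order term and with the error replaced by $C(m,L_0,M,\mu) + 2\lambda(\gamma)^2 \int|(\nabla_s^\perp)^m\curv|_g^2\dd s$. So the whole proof is: bound $|\lambda|\int|(\nabla_s^\perp)^{m+1}\curv|_g^2\dd s$ and $|\lambda|\int|\mathbb{P}_4^{2m,m}|\dd s$ by (a fraction of the top-order term we can steal from the $\frac12$) $+$ ($C$) $+$ ($2\lambda^2\int|(\nabla_s^\perp)^m\curv|_g^2\dd s$).

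\textbf{Key steps.} First I would treat $|\lambda|\int_{\S^1}|(\nabla_s^\perp)^{m+1}\curv|_g^2\dd s$. The factor $(\nabla_s^\perp)^{m+1}\curv$ sits one derivative below the top order $m+2$ that we have available as a ``good'' term, so I interpolate: by a Gagliardo--Nirenberg inequality in the form of \Cref{prop:inter-multi}/\Cref{cor:inter} (with $k=m+2$), together with the length lower bound $\Ll(\gamma)\ge L_0$ coming from \eqref{eq:conf-class-of-torus-of-rev} (the conformally constrained flow preserves hyperbolic length, hence $\Ll(\gamma(t))=L_0$ for all $t$), one has
\begin{equation*}
\int_{\S^1}|(\nabla_s^\perp)^{m+1}\curv|_g^2\dd s \le \delta \int_{\S^1}|(\nabla_s^\perp)^{m+2}\curv|_g^2\dd s + \eta\int_{\S^1}|(\nabla_s^\perp)^{m}\curv|_g^2\dd s + C(\delta,\eta,m,L_0,M)
\end{equation*}
for any $\delta,\eta>0$; in fact the cleanest route is integration by parts, $\int|(\nabla_s^\perp)^{m+1}\curv|_g^2\dd s = -\int\langle(\nabla_s^\perp)^{m+2}\curv,(\nabla_s^\perp)^m\curv\rangle_g\dd s + (\text{lower-order }\mathbb{P}\text{-terms from commutators and }\partial_s(\dd s))$, and then Young's inequality on the first term plus \Cref{cor:inter} on the $\mathbb{P}$-terms. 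Multiplying through by $|\lambda|$ and applying Young's inequality in the form $|\lambda|\cdot X \le \tfrac14\lambda^2 \cdot \tfrac{X^2}{\text{(something)}} + \dots$ — more precisely, writing $|\lambda|\,\eta\int|(\nabla_s^\perp)^m\curv|_g^2 \le \lambda^2\int|(\nabla_s^\perp)^m\curv|_g^2 + \tfrac{\eta^2}{4}(\int|(\nabla_s^\perp)^m\curv|_g^2)$ and then bounding the leftover $\int|(\nabla_s^\perp)^m\curv|_g^2$ by $\varepsilon\int|(\nabla_s^\perp)^{m+2}\curv|_g^2 + C$ via \Cref{cor:inter} again — and being careful with the bookkeeping, the $|\lambda|\delta\int|(\nabla_s^\perp)^{m+2}\curv|_g^2$ contribution has a stray $|\lambda|$ which must itself be absorbed; this is where one uses that $\lambda(\gamma)$ is a priori bounded. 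Indeed, from \eqref{eq:lambda-conf-constr-will}, $|\lambda(\gamma)| \le$ (Cauchy--Schwarz) $\le \|\nabla\E(\gamma)\|_{L^2(\frac{1}{(\gamma^{(2)})^4}\curv\,\dd s\text{-weighted})}/\dots$ — but more simply, one bounds $|\lambda(\gamma)|$ by a constant times $\int|(\nabla_s^\perp)^2\curv|_g^2\dd s + C$ using the explicit formula for $\nabla\E$ in \eqref{eq:nabla-L2}, the weight bounds $0<\gamma^{(2)}\le r$ and (for the lower bound on the denominator) a positivity/non-degeneracy argument; since $m+2\ge 2$ this is controlled. Then $|\lambda|\delta\int|(\nabla_s^\perp)^{m+2}\curv|_g^2 \le \lambda^2\delta\int|(\nabla_s^\perp)^{m+2}\curv|_g^2 + \delta(\int|(\nabla_s^\perp)^{m+2}\curv|_g^2)$ — hmm, this reintroduces $\lambda^2$ times the top-order term, which is \emph{not} allowed in \eqref{eq:conf-constr-wf-lambda-estimate}. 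So the honest approach must be more delicate: one should interpolate $\int|(\nabla_s^\perp)^{m+1}\curv|_g^2$ so that the top-order term appears with coefficient $\varepsilon/|\lambda|$ after a scaling, which is possible precisely because the interpolation exponent for $(\nabla_s^\perp)^{m+1}$ relative to the pair $((\nabla_s^\perp)^{m+2},(\nabla_s^\perp)^0)$ is strictly between $0$ and $1$ — so by the sharp form of Young (splitting the interpolation estimate with a free parameter $\varepsilon$ that we later choose $\sim |\lambda|^{-1}$), one gets $|\lambda|\int|(\nabla_s^\perp)^{m+1}\curv|_g^2 \le \tfrac18\int|(\nabla_s^\perp)^{m+2}\curv|_g^2 + C\,\lambda^2\int|(\nabla_s^\perp)^m\curv|_g^2 + C(m,L_0,M)$, exactly matching the shape of the target.

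\textbf{The $\mathbb{P}_4^{2m,m}$ term and conclusion.} For $|\lambda|\int_{\S^1}|\mathbb{P}_4^{2m,m}|\dd s$: here $B=4$, $A=2m$, $C=m$, so $A+\tfrac{B}{2}-1 = 2m+1 < 2(m+2) = 2k$ with $k=m+2$, and $C = m \le k$, so \Cref{cor:inter} applies directly and gives $\int|\mathbb{P}_4^{2m,m}|\dd s \le \varepsilon\int|(\nabla_s^\perp)^{m+2}\curv|_g^2\dd s + \tilde c_\varepsilon$. Multiplying by $|\lambda|$ and using the a priori bound $|\lambda(\gamma)|\le C(1+\int|(\nabla_s^\perp)^{m+2}\curv|_g^2\dd s)^{1/2}$ is again dangerous; the safe play is: $\int|\mathbb{P}_4^{2m,m}|\dd s \le \varepsilon\int|(\nabla_s^\perp)^{m+2}\curv|_g^2 + \eta\int|(\nabla_s^\perp)^m\curv|_g^2 + C$ (the sharper interpolation keeping the $(\nabla_s^\perp)^m$ term, valid since the relevant exponent is $<1$ when $2m+1 < 2(m+2)$), then $|\lambda|\cdot(\eta\int|(\nabla_s^\perp)^m\curv|_g^2) \le \tfrac{\eta^2}{2}\lambda^2\int|(\nabla_s^\perp)^m\curv|_g^2 \cdot(\dots)$ — no, better: $|\lambda|\,\varepsilon\int|(\nabla_s^\perp)^{m+2}\curv|_g^2 \le \tfrac18\int|(\nabla_s^\perp)^{m+2}\curv|_g^2$ by choosing $\varepsilon = \varepsilon(|\lambda|) = \tfrac{1}{8|\lambda|}$ (legal, since $\varepsilon$ in \Cref{cor:inter} is arbitrary and may depend on the immersion — one just needs $\varepsilon\in(0,1)$, which holds once $|\lambda|\ge 1/8$; the regime $|\lambda|<1/8$ is trivial), and $|\lambda|\,\eta\int|(\nabla_s^\perp)^m\curv|_g^2 \le \tfrac14 \lambda^2\eta^2 + \int|(\nabla_s^\perp)^m\curv|_g^2 \cdot(\dots)$ — at the end, after all such splittings, the coefficient in front of $\int|(\nabla_s^\perp)^m\curv|_g^2$ can be arranged to be $\le 2\lambda^2$ and the coefficient in front of $\int|(\nabla_s^\perp)^{m+2}\curv|_g^2$ that we subtract off is $\le \tfrac12 - \tfrac14 = \tfrac14$, while all remaining constants depend only on $m,L_0,M,\mu$ (the $L_0$-dependence entering through the length lower bound needed for the interpolation inequalities on the constrained flow, replacing the $\E$-bound-based length bound \eqref{eq:len-low-bound}). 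Adding this to \eqref{eq:int-est-general-lambda} and moving the absorbed pieces to the left yields \eqref{eq:conf-constr-wf-lambda-estimate}.

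\textbf{Main obstacle.} The delicate point is the $|\lambda|\int|(\nabla_s^\perp)^{m+1}\curv|_g^2$ term: $(\nabla_s^\perp)^{m+1}\curv$ is only \emph{one} order below the available top-order coercive term $(\nabla_s^\perp)^{m+2}\curv$, so the naive interpolation gives an exponent close to $1$ and the extra factor $|\lambda|$ in front threatens to overwhelm the $\tfrac12$-coefficient we can steal, unless one exploits the freedom to let the interpolation parameter depend on $|\lambda|$. Getting the bookkeeping right so that (i) the subtracted top-order coefficient is exactly $\tfrac14$ (not more), (ii) the surviving $\int|(\nabla_s^\perp)^m\curv|_g^2$-coefficient is exactly $2\lambda^2$ (not $C\lambda^2$ with $C>2$), and (iii) no $\lambda^2$-times-top-order term survives, is the only genuinely non-routine part; everything else is the interpolation machinery of \Cref{prop:inter-multi}/\Cref{cor:inter} plus Young's inequality, which is why the authors defer it to \Cref{app:techproofs} alongside \Cref{lem:int-est-general-lambda}.
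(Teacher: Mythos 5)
Your handling of the first $\lambda$-term essentially matches the paper: integrate by parts and apply Young, obtaining
\begin{equation*}
|\lambda(\gamma)|\int_{\S^1}|(\nabla_s^\bot)^{m+1}\curv|_g^2\dd s = -|\lambda(\gamma)|\int_{\S^1}\langle(\nabla_s^\bot)^{m+2}\curv,(\nabla_s^\bot)^m\curv\rangle_g\dd s \le \tfrac18\int_{\S^1}|(\nabla_s^\bot)^{m+2}\curv|_g^2\dd s + 2(\lambda(\gamma))^2\int_{\S^1}|(\nabla_s^\bot)^m\curv|_g^2\dd s.
\end{equation*}
You should drop the phrase ``lower-order $\mathbb{P}$-terms from commutators and $\partial_s(\dd s)$'': the integration by parts is exact, because $\nabla_s^\perp$ is skew-adjoint on normal fields with respect to $\dd s$, so no such remainders appear. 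Also note that the $2\lambda^2$ coefficient in \eqref{eq:conf-constr-wf-lambda-estimate} comes entirely from this step; the second $\lambda$-term must be absorbed \emph{without} producing any additional $\lambda^2\int|(\nabla_s^\bot)^m\curv|_g^2$ contribution, which rules out your proposed ``keeping a $\int|(\nabla_s^\bot)^m\curv|_g^2$ term in the interpolation.''

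The genuine gap is in your treatment of $|\lambda|\int_{\S^1}|\mathbb{P}_4^{2m,m}|\dd s$. Your $\varepsilon=\tfrac{1}{8|\lambda|}$ trick does not close: in \Cref{cor:inter} the constant $\tilde c_\varepsilon$ grows like a (negative) power of $\varepsilon$, so $|\lambda|\,\tilde c_{1/(8|\lambda|)}$ blows up polynomially in $|\lambda|$ and is not of the form $C(m,L_0,M,\mu)$. What's missing is the key a priori estimate that the paper establishes first: starting from the explicit formula \eqref{eq:lambda-conf-constr-will}, bound the denominator from below by $4\pi/(\max_{\S^1}\gamma^{(2)})^4$ using \Cref{rem:min-e}, integrate by parts in the numerator using \eqref{eq:nabla-L2}, and exploit $\max_{\S^1}\gamma^{(2)}/\gamma^{(2)}\le e^{L_0/2}$ from \Cref{rem:hyplengthestimatesrange(optimal)} together with the fact that $\Ll(\gamma(t))\equiv L_0$; this yields
\begin{equation*}
|\lambda(\gamma)|\le C(m,L_0,M)\Bigl(1+\||(\nabla_s^\bot)^{m+2}\curv|_g\|_{L^2(\dd s)}^{\frac{2}{m+2}}\Bigr),
\end{equation*}
a \emph{subcritical} bound, while \eqref{eq:mult-inter-ineq} gives $\int|\mathbb{P}_4^{2m,m}|\dd s\le C(m,M)(1+\||(\nabla_s^\bot)^{m+2}\curv|_g\|_{L^2(\dd s)}^{\frac{2m+1}{m+2}})$. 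The product has exponent $\tfrac{2m+3}{m+2}<2$, so a plain Young inequality absorbs it into another $\tfrac18\int|(\nabla_s^\bot)^{m+2}\curv|_g^2\dd s$ plus a constant $C(m,L_0,M)$, giving the advertised $\tfrac12-\tfrac18-\tfrac18=\tfrac14$ on the left. You gesture at such an a priori bound but flag it as ``dangerous'' because you quote it with exponent $\tfrac12$ (i.e., power $1$ of the $L^2$-norm), which is indeed critical and useless; the crucial point is that the correct exponent is the much smaller $\tfrac{2}{m+2}$, obtained by interpolating the right-hand side $\int(|\nabla_s^\bot\curv|_g^2+|\curv|_g^4)\dd s$ against the top order $k=m+2$. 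Without this you cannot avoid a coefficient in front of $\int|(\nabla_s^\bot)^{m+2}\curv|_g^2\dd s$ that depends on $\lambda$, which is exactly what \eqref{eq:conf-constr-wf-lambda-estimate} must avoid.
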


Notice that the remaining $\lambda$-term in \eqref{eq:conf-constr-wf-lambda-estimate} cannot be treated with \eqref{eq:subcritical-term-2} and the interpolation methods used above: the term is critical. So in the following, we employ a-priori estimates for the nonlocality to reduce its order in comparison to \eqref{eq:subcritical-term-2}. However, a scaling argument for $\Ll$ and $\E$ as used in \cite[proof of Theorem~3.3]{dziukkuwertschaetzle2002} cannot be employed as both $\E$ and $\Ll$ are scaling invariant in $\H^2$.

\begin{lemma}\label{lem:bounddistintegral}
    Let $\gamma\colon[0,T)\times\S^1\to\H^2$ be a Willmore or conformally constrained Willmore flow with $\E(\gamma_0)\leq M$.  
    Then, for all $t_1,t_2\in[0,T)$,
    \begin{equation}\label{eq:area-est}
        \left|\int_{\S^1}({\gamma^{(2)}}(t_1,x))^2\dd s-\int_{\S^1} ({\gamma^{(2)}}(t_2,x))^2\dd s \right|\leq \frac{1}{2} M \cdot\sqrt{|t_1-t_2|}.
    \end{equation}
    Moreover, for any $x\in \S^1$, we have
    \begin{equation}\label{eq:diam-est}
        (\gamma^{(2)}(t_2,x))^2\leq C(M)\Big( \int_{\S^1}(\gamma^{(2)})^2\dd s\Big|_{t=t_1} + |t_2-t_1|^{\frac12} \Big).
    \end{equation}
\end{lemma}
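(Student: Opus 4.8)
The plan is to exploit two facts: first, the quantity $\int_{\S^1} (\gamma^{(2)})^2 \dd s$ is (up to a constant) the area of the associated surface of revolution, by \eqref{eq:dmu-axisymm}; and second, along both the Willmore and conformally constrained Willmore flows the time-derivative of this area can be controlled by the Willmore energy, which is non-increasing and hence bounded by $M$ (via \eqref{eq:BG}). Concretely, I would first compute $\frac{\dd}{\dd t}\int_{\S^1}(\gamma^{(2)})^2\dd s$ using the evolution of the line element \eqref{eq:evolutionoflineelement2} together with $\partial_t(\gamma^{(2)})^2 = 2\gamma^{(2)}\partial_t\gamma^{(2)} = 2\gamma^{(2)}\langle\partial_t\gamma,e_2\rangle$ where $e_2$ is the second coordinate direction; this is essentially the first variation of area under the normal speed $\partial_t\gamma = -\frac{1}{4(\gamma^{(2)})^4}(\nabla\E(\gamma)-\lambda\curv)$. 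Rather than unravel this by hand, the cleaner route is to observe that $\int_{\S^1}(\gamma^{(2)})^2\dd s = \frac{1}{2\pi}\mu_{f_\gamma}(\T)$ and that $\frac{\dd}{\dd t}\mu_{f_\gamma}(\T) = -\int_{\T} 2H\langle\partial_t f_\gamma, N\rangle\dd\mu_{f_\gamma}$, so by Cauchy--Schwarz and the energy identity \eqref{eq:dtW-for-conf-constr-WF} (respectively \Cref{rem:endecr}),
\begin{equation*}
    \Bigl|\frac{\dd}{\dd t}\mu_{f_\gamma}(\T)\Bigr| \leq 2\Bigl(\int_{\T}H^2\dd\mu\Bigr)^{1/2}\Bigl(\int_{\T}|\partial_t f_\gamma|^2\dd\mu\Bigr)^{1/2} = 2\W(f_\gamma)^{1/2}\bigl(-\tfrac{\dd}{\dd t}\W(f_\gamma)\bigr)^{1/2}.
\end{equation*}
Since $\W(f_\gamma)$ is non-increasing, $\W(f_\gamma(t))\leq \W(f_\gamma(0)) = \frac{\pi}{2}\E(\gamma_0)\leq \frac{\pi}{2}M$; integrating the above from $t_1$ to $t_2$ (assume $t_1<t_2$ WLOG) and applying Cauchy--Schwarz in time gives
\begin{equation*}
    |\mu_{f_\gamma}(\T)(t_2)-\mu_{f_\gamma}(\T)(t_1)| \leq 2\bigl(\tfrac{\pi}{2}M\bigr)^{1/2}\int_{t_1}^{t_2}\bigl(-\tfrac{\dd}{\dd t}\W\bigr)^{1/2}\dd t \leq 2\bigl(\tfrac{\pi}{2}M\bigr)^{1/2}\bigl(\W(t_1)-\W(t_2)\bigr)^{1/2}|t_2-t_1|^{1/2} \leq 2\bigl(\tfrac{\pi}{2}M\bigr)^{1/2}\bigl(\tfrac{\pi}{2}M\bigr)^{1/2}|t_2-t_1|^{1/2} = \pi M |t_2-t_1|^{1/2},
\end{equation*}
and dividing by $2\pi$ yields \eqref{eq:area-est}. (One should double-check the numerical constant $\frac12 M$; if the direct computation via \eqref{eq:evolutionoflineelement2} produces a slightly different constant it can be absorbed or the claimed bound re-derived, since only the $\sqrt{|t_1-t_2|}$ scaling matters for the applications.)

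For \eqref{eq:diam-est}, I would combine \eqref{eq:area-est} with the length bound \Cref{rem:hyplengthestimatesrange(optimal)}: fixing $x\in\S^1$ and any $a\in\S^1$, \eqref{eq:est-gamma2-length} gives $\gamma^{(2)}(t_2,x)\leq \gamma^{(2)}(t_2,a)e^{\frac12\Ll(\gamma(t_2))}$, while the length along the flow is controlled — for the conformally constrained flow $\Ll$ is exactly preserved, and for the Willmore flow one has the standard bound $\Ll(\gamma(t))\leq \Ll(\gamma(0))$ by monotonicity considerations, or in any case $\Ll$ is bounded in terms of $M$ via Fenchel/\eqref{eq:len-low-bound}-type estimates already used in the excerpt. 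Averaging over $a\in\S^1$: pick $a$ such that $(\gamma^{(2)}(t_2,a))^2 \leq \frac{1}{\Ll(\gamma(t_2))}\int_{\S^1}(\gamma^{(2)}(t_2,\cdot))^2\dd s$ (mean value), so that
\begin{equation*}
    (\gamma^{(2)}(t_2,x))^2 \leq e^{\Ll(\gamma(t_2))}\cdot\frac{1}{\Ll(\gamma(t_2))}\int_{\S^1}(\gamma^{(2)}(t_2,\cdot))^2\dd s \leq C(M)\int_{\S^1}(\gamma^{(2)}(t_2,\cdot))^2\dd s,
\end{equation*}
using the upper and lower length bounds in terms of $M$. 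Finally invoke \eqref{eq:area-est} to replace $\int_{\S^1}(\gamma^{(2)})^2\dd s\big|_{t=t_2}$ by $\int_{\S^1}(\gamma^{(2)})^2\dd s\big|_{t=t_1} + \frac12 M|t_2-t_1|^{1/2}$, giving \eqref{eq:diam-est} with a constant $C(M)$.

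The only genuinely delicate point is the first-variation-of-area computation and tracking that the relevant energy is monotone and uniformly bounded; once the identity $\frac{\dd}{\dd t}\int_{\S^1}(\gamma^{(2)})^2\dd s$ is expressed as (a multiple of) $-\int 2H\langle\partial_t f, N\rangle\dd\mu$, everything reduces to Cauchy--Schwarz plus the energy identities \eqref{eq:dtW-for-conf-constr-WF} and \Cref{rem:endecr}, which hold uniformly for both flows. The extraction of \eqref{eq:diam-est} from \eqref{eq:area-est} is then a routine application of the pointwise length estimate \Cref{rem:hyplengthestimatesrange(optimal)}, with the length itself controlled either by exact preservation (constrained case) or by energy monotonicity and Fenchel's theorem (Willmore case).
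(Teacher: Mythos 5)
Your argument for \eqref{eq:area-est} is essentially the same as the paper's: identify $\int_{\S^1}(\gamma^{(2)})^2\dd s$ with $\frac{1}{2\pi}\mu_{f_\gamma}(\T)$, bound $\partial_t\mu_{f_\gamma}(\T)$ via the first variation of area and Cauchy--Schwarz against the Willmore energy dissipation, and integrate in time (the paper outsources this to \cite[Lemma~D.6]{dallacquamullerschatzlespener2020}, but the computation is the one you wrote). That part is fine.

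For \eqref{eq:diam-est}, however, there is a genuine gap. Your argument requires a uniform upper bound on the hyperbolic length $\Ll(\gamma(t_2))$, which you need both to control the factor $e^{\Ll(\gamma(t_2))}$ coming from \Cref{rem:hyplengthestimatesrange(optimal)} and, implicitly, to make $e^{\Ll}/\Ll$ bounded. No such bound is available under the hypotheses of the lemma, which only assumes $\E(\gamma_0)\leq M$. The Fenchel-type estimate \eqref{eq:len-low-bound} gives a \emph{lower} bound $\Ll(\gamma)\geq (2\pi)^2/M$, not an upper bound, and the hyperbolic length is \emph{not} monotone along the Willmore flow; indeed, an a-priori upper bound on $\Ll$ is a nontrivial assumption that the paper imposes separately elsewhere (cf.\ \Cref{hyp:wf-cwf}) and which, in the sub-$8\pi$ regime, comes from entirely different machinery. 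The paper avoids this issue for \eqref{eq:diam-est} by using Simon's diameter estimate $(\diam f)^2\leq C\,\W(f)\,\mu_f(\T)$ from \cite[Lemma~1.1]{simon1993} together with $\gamma^{(2)}(t,x)\leq\frac12\diam(f_{\gamma(t)})$, which needs only the energy bound and the (already controlled) area, not any control on $\Ll$. You should replace your length-based argument by this diameter estimate.
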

\begin{proof}
  The idea is to relate the integrals above to the area of the corresponding surface of revolution. Indeed, for each $t$
  \begin{equation}
        \int_{\S^1}({\gamma^{(2)}}(t,x))^2\dd s =  \int_{\S^1}{\gamma^{(2)}}(t,x) | \partial_x \gamma(t, x)|\dd x = \frac{1}{2 \pi} \mu_{f_{\gamma(t)}}(\S^1\times \S^1)\, .\label{eq:area_in_gamma}
    \end{equation}
    By \Cref{prop:Schl1,prop:Schl2}, $f_{\gamma}(t)$ evolves by (conformally constrained) Willmore flow. Thus 
    \begin{equation}
        \partial_t \W(f_\gamma(t)) = -\int_{\T} |\partial_tf_\gamma|^2\dd\mu,
    \end{equation}
    using \eqref{eq:dtW-for-conf-constr-WF} in case of the conformally constrained Willmore flow.
    Therefore, \eqref{eq:area-est} is proven with the same computations as in \cite[Lemma D.6]{dallacquamullerschatzlespener2020}, using \Cref{rem:endecr} and \eqref{eq:BG}. 
    The second inequality follows from \eqref{eq:area-est} and \eqref{eq:area_in_gamma} upon noting that $\gamma^{(2)}(t,x)\leq \frac12\mathrm{diam}(f_{\gamma(t)})$, and thus 
    \begin{align}
        4(\gamma^{(2)}(t_2,x))^2\leq (\diam(f_{\gamma}(t_2)))^2 & \leq C \W(f_{\gamma}(t_2)) \mu_{f_{\gamma(t_2)}}(\S^1\times \S^1) 
    \end{align}
    by Simon's diameter estimate \cite[Lemma 1.1]{simon1993}.
\end{proof}

\subsection{Controlling the non-local Lagrange multiplier}
\label{sec:a-priori-lambda-control}

In the following, denote by 
\begin{equation}
    \vec n \vcentcolon= \begin{pmatrix}0&-1\\1&0\end{pmatrix}\partial_s\gamma
\end{equation}
the normal associated to an immersion $\gamma\colon\S^1\to\H^2$. Then $\scurv\vcentcolon=\langle \curv,\vec n\rangle_g$ is the \emph{scalar} (hyperbolic) curvature of $\gamma$. Furthermore, $T(\gamma) \vcentcolon= \frac{1}{2\pi} \int_\gamma \kappa_{\mathbb{R}^2} \dd s_{\mathbb{R}^2}\in\Z$ denotes the \emph{turning number}
or \emph{Euclidean total curvature} of $\gamma$ (see \cite[Section~5]{muellerspener2020} for a comprehensive overview).

\begin{lemma}\label{lem:lagr-control-by-testing}
    Consider a conformally constrained Willmore flow $\gamma\colon[0,T)\times\S^1\to\H^2$ with $\Ll(\gamma_0)=L_0$, $\E(\gamma_0)\leq M$, and suppose that there exist $\eta,r>0$ such that
    \begin{equation}\label{eq:delta-r-conf-constr-wf}
        \gamma^{(2)}\leq r\text{ on $[0,T)\times\S^1$}\quad\text{and}\quad \Big|\int_{\S^1}\scurv \dd s\Big| \geq \eta \text{ on $[0,T)$}.
    \end{equation} 
    Then there exists a constant $C=C(L_0,M,\eta)$ such that, for all $t\in[0,T)$,\begin{equation}\label{eq:lagr-control-by-testing}
        (\lambda(\gamma(t)))^2 \leq C(L_0,M,\eta) \Big(1+\int_{\S^1}|\curv|_g^4\dd s - r^4 \partial_t\E(\gamma(t))\Big).
    \end{equation}
\end{lemma}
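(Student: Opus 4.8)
The goal is an a priori bound on the scalar, critical Lagrange multiplier $\lambda(\gamma(t))$ in terms of quantities that are subcritical (once integrated in time). The plan is to test the evolution equation \eqref{eq:conf-constr-wf-eq} against a cleverly chosen vector field whose normal component ``sees'' $\curv$, exploiting the non-vanishing total scalar curvature hypothesis to invert the relevant quantity. Recall that $\lambda(\gamma)$ is defined in \eqref{eq:lambda-conf-constr-will} as a quotient; the standard difficulty is that both numerator and denominator involve the degenerate weight $(\gamma^{(2)})^{-4}$, so naive interpolation makes $\lambda$ critical. Instead, I would proceed as follows.

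\textbf{Step 1: Reformulate $\lambda$ via the length constraint.} Since the conformally constrained Willmore flow preserves the hyperbolic length (as used in \Cref{prop:Schl2}), we have $\partial_t\Ll(\gamma(t))=0$, which by \eqref{eq:length-dt} reads $\int_{\S^1}\langle\curv,\partial_t\gamma\rangle_g\,\dd s=0$. Substituting $\partial_t\gamma=-\frac{1}{4(\gamma^{(2)})^4}(\nabla\E(\gamma)-\lambda\curv)$ recovers exactly the defining identity \eqref{eq:lambda-conf-constr-will}; so instead I would test against a different, non-conformal variation. Namely, pick a fixed smooth vector field that produces a definite change of length: a natural choice is to test \eqref{eq:conf-constr-wf-eq} against $(\gamma^{(2)})^4\curv$ itself, or against $\vec n$ (so that $\langle\cdot,\vec n\rangle_g$ picks up the scalar curvature). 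Concretely, multiply \eqref{eq:conf-constr-wf-eq} by $(\gamma^{(2)})^4\,\vec n$ and integrate, giving
\begin{equation}
\int_{\S^1}(\gamma^{(2)})^4\langle\partial_t\gamma,\vec n\rangle_g\,\dd s = -\frac14\int_{\S^1}\langle\nabla\E(\gamma),\vec n\rangle_g\,\dd s + \frac{\lambda}{4}\int_{\S^1}\langle\curv,\vec n\rangle_g\,\dd s = -\frac14\int_{\S^1}\langle\nabla\E(\gamma),\vec n\rangle_g\,\dd s + \frac{\lambda}{4}\int_{\S^1}\scurv\,\dd s.
\end{equation}
By hypothesis \eqref{eq:delta-r-conf-constr-wf}, $|\int_{\S^1}\scurv\,\dd s|\geq\eta$, so one may solve for $\lambda$:
\begin{equation}
\lambda(\gamma(t)) = \frac{1}{\int_{\S^1}\scurv\,\dd s}\Big( 4\int_{\S^1}(\gamma^{(2)})^4\langle\partial_t\gamma,\vec n\rangle_g\,\dd s + \int_{\S^1}\langle\nabla\E(\gamma),\vec n\rangle_g\,\dd s \Big),
\end{equation}
and hence $(\lambda(\gamma(t)))^2 \leq \frac{2}{\eta^2}\big( 16 (\int_{\S^1}(\gamma^{(2)})^4\langle\partial_t\gamma,\vec n\rangle_g\,\dd s)^2 + (\int_{\S^1}\langle\nabla\E(\gamma),\vec n\rangle_g\,\dd s)^2 \big)$.

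\textbf{Step 2: Estimate the two terms.} For the second term, integrate by parts in $\nabla\E(\gamma)=2(\nabla_s^\bot)^2\curv+|\curv|_g^2\curv-2\curv$: moving the two normal derivatives off the highest-order term onto $\vec n$ produces (using $\nabla_s\vec n = -\scurv\,\partial_s\gamma$, hence $(\nabla_s^\bot)^2\vec n$ is lower order in $\curv$) an integrand of type $\mathbb{P}_2^{0,0} + \mathbb{P}_4^{0,0}$, i.e., controlled by $\int_{\S^1}(|\curv|_g^2+|\curv|_g^4)\,\dd s$ up to factors of $(\gamma^{(2)})^{-1}$ which are harmless since $\gamma^{(2)}\leq r$ only enters the other term; more carefully, the lower bound $\Ll(\gamma)\geq (2\pi)^2/M$ from \eqref{eq:len-low-bound} and the energy bound $\E(\gamma(t))\leq\E(\gamma_0)\leq M$ (which holds along the flow by \Cref{rem:endecr} and \eqref{eq:BG}, or rather its conformally constrained analogue \eqref{eq:dtW-for-conf-constr-WF}) let one bound $\int|\curv|_g^2\,\dd s\leq M$ and absorb everything into $C(M)(1+\int_{\S^1}|\curv|_g^4\,\dd s)$. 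For the first term, Cauchy--Schwarz with weight splitting gives
\begin{equation}
\Big(\int_{\S^1}(\gamma^{(2)})^4\langle\partial_t\gamma,\vec n\rangle_g\,\dd s\Big)^2 \leq \int_{\S^1}(\gamma^{(2)})^4|\partial_t\gamma|_g^2\,\dd s \cdot \int_{\S^1}(\gamma^{(2)})^4\,\dd s,
\end{equation}
and the second factor is $\leq r^4 \Ll(\gamma) = r^4 L_0 \leq r^4 M^2/(2\pi)^2$ using $\gamma^{(2)}\leq r$ and the length bound; while the first factor is, by the gradient-flow structure \eqref{eq:conf-constr-wf-eq}, precisely $-\frac14 r^4$-free: indeed $\int_{\S^1}(\gamma^{(2)})^4|\partial_t\gamma|_g^2\,\dd s \leq r^4\int_{\S^1}|\partial_t\gamma|_g^2\cdot\frac{(\gamma^{(2)})^4}{r^4}\dd s$ — better, observe $\int_{\S^1}\frac{1}{(\gamma^{(2)})^4}|\nabla\E(\gamma)-\lambda\curv|_g^2\,\dd s = -16\,\partial_t\E(\gamma(t))$ is exactly the dissipation from \Cref{rem:endecr}'s analogue, so $\int_{\S^1}(\gamma^{(2)})^4|\partial_t\gamma|_g^2\,\dd s = \frac{1}{16}\int_{\S^1}(\gamma^{(2)})^4\cdot\frac{1}{(\gamma^{(2)})^8}|\nabla\E-\lambda\curv|_g^2\,\dd s \leq \frac{r^4}{16}\int_{\S^1}\frac{1}{(\gamma^{(2)})^4}|\nabla\E-\lambda\curv|_g^2\,\dd s = -r^4\,\partial_t\E(\gamma(t))$ (up to the universal constant $\tfrac14$). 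Collecting, $(\lambda(\gamma(t)))^2 \leq C(L_0,M,\eta)\big(1 + \int_{\S^1}|\curv|_g^4\,\dd s - r^4\,\partial_t\E(\gamma(t))\big)$, which is \eqref{eq:lagr-control-by-testing}.

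\textbf{Main obstacle.} The delicate point is the bookkeeping in Step 2: (a) the integration by parts on $\int\langle\nabla\E(\gamma),\vec n\rangle_g\,\dd s$ must be done carefully, since $\nabla_s^\bot$ does not simply commute through $\vec n$ and one must verify that no genuinely highest-order ($(\nabla_s^\bot)^2\curv$) term survives after integrating both derivatives by parts — this works because the resulting $\int\langle\curv,(\nabla_s^\bot)^2\vec n\rangle$ reduces via $\nabla_s\vec n=-\scurv\partial_s\gamma$ to cubic-in-curvature terms of type $P_3^{0,0}$, hence bounded by $\int(|\curv|_g^2+|\curv|_g^4)\,\dd s$; and (b) tracking that the weight $r^4$ appears in front of $-\partial_t\E$ with the right power, which forces the choice of test field to carry exactly the factor $(\gamma^{(2)})^4$. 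One should also double-check that the $\frac14$'s and sign conventions from \eqref{eq:conf-constr-wf-eq} and \eqref{eq:nabla-L2} match up, but these are absorbed into $C(L_0,M,\eta)$ and do not affect the statement. Finally, the constant's dependence on $L_0$ rather than just $M$ enters only through $\Ll(\gamma_0)=L_0$ bounding $\int(\gamma^{(2)})^4\,\dd s \leq r^4 L_0$; since $L_0\leq M^2/(2\pi)^2$ by Fenchel, one could even phrase it with $M$ alone, but keeping $L_0$ is harmless and matches the statement.
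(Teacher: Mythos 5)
Your proof is correct and mirrors the paper's argument: you test \eqref{eq:conf-constr-wf-eq} against $(\gamma^{(2)})^4\vec n$, use $|\int_{\S^1}\scurv\dd s|\geq\eta$ to isolate $\lambda$, identify $\int_{\S^1}(\gamma^{(2)})^4|\partial_t\gamma|_g^2\dd s=-\tfrac14\partial_t\E(\gamma(t))$ from the length constraint, and close with Cauchy--Schwarz. One technical remark: after your integration by parts, the term $\int_{\S^1}\langle\curv,(\nabla_s^\bot)^2\vec n\rangle_g\dd s$ is not merely ``cubic in curvature''---it vanishes identically, since $\nabla_s\vec n=-\scurv\,\partial_s\gamma$ is tangential and hence $\nabla_s^\bot\vec n=0$; equivalently, as the paper notes, $\langle(\nabla_s^\bot)^m\curv,\vec n\rangle_g=\partial_s^m\scurv$, so $\int_{\S^1}\partial_s^2\scurv\,\dd s=0$ by periodicity. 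Also, your intermediate bound involving ``$\leq\frac{r^4}{16}\int\cdots$'' would require $r\geq1$ and is unnecessary: the exact identity $\int_{\S^1}(\gamma^{(2)})^4|\partial_t\gamma|_g^2\dd s=-\tfrac14\partial_t\E$ together with $\int_{\S^1}(\gamma^{(2)})^4\dd s\leq r^4L_0$ already supplies the $r^4$ factor.
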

\begin{proof}
    By induction, we have $\langle (\nabla_s^\bot)^m\curv,\vec n\rangle_g = \partial_s^m\scurv$ for all $m\in\N_0$. Thus, by the fundamental theorem of calculus and using \eqref{eq:evol-eq-with-general-lambda},
    \begin{align}
        -4\int_{\S^1} (\gamma^{(2)})^4\langle \partial_t\gamma,\vec n\rangle_g\dd s &= \int_{\S^1} (2\partial_s^2\scurv + \scurv^3 - 2 
        \scurv)
        \dd s -\lambda(\gamma)\int_{\S^1}\scurv\dd s\\
        &= \int_{\S^1} (\scurv^3-2\scurv )\dd s-\lambda(\gamma)\int_{\S^1}\scurv\dd s.\label{eq:lagr-control-by-testing-2}
    \end{align}
    Since $0=-\partial_t\Ll(\gamma(t))=\int_{\S^1}\langle\curv,\partial_t\gamma\rangle_g\dd s$ and using \eqref{eq:1-var-elen}, we have
    \begin{align}
        4\int_{\S^1} (\gamma^{(2)})^4|\partial_t\gamma|_g^2 \dd s &= - \int_{\S^1} \langle \nabla \E(\gamma) -\lambda(\gamma)\curv,\partial_t\gamma\rangle_g\dd s =- \int_{\S^1} \langle \nabla \E(\gamma),\partial_t\gamma\rangle_g\dd s = -\partial_t\E(\gamma(t)).
    \end{align}
    Moreover, as $\Ll(\gamma(t))=L_0$, we have $\int_{\S^1}4(\gamma^{(2)})^4\dd s\leq 4r^4L_0$. Altogether, repeatedly using Cauchy-Schwarz and $\E(\gamma(t))\leq\E(\gamma_0)\leq M$, \eqref{eq:lagr-control-by-testing-2} yields
    \begin{align}
        (\lambda(\gamma))^2 &\leq \frac{1}{\eta^2} \Big(2\int_{\S^1}4(\gamma^{(2)})^4|\partial_t\gamma|_g^2\dd s\int_{\S^1}4(\gamma^{(2)})^4\dd s +  2\E(\gamma)(2\int_{\S^1}\scurv^4\dd s + 8\Ll(\gamma)) \Big)\\
        &\leq C(L_0,M,\eta) \Big(1+\int_{\S^1}|\curv|_g^4\dd s - r^4 \partial_t\E(\gamma(t))\Big)
    \end{align}
    as claimed.
\end{proof}

\begin{remark}\label{rem:hyp-tot-curv-contro-by-energy}
    As a consequence of the Gauss--Bonnet Theorem and the Jordan Curve Theorem, if $\gamma\colon\S^1\to\H^2$ is an embedding, it bounds a disk $D\subset\H^2$, and
    \begin{align}
       \pm \int_{\S^1} \scurv \dd s  - \mathrm{vol}_{g_{\H^2}}(D) = \pm \int_{\S^1} \scurv \dd s +\int_D  K_{\H^2} \dd\mathrm{vol}_{g_{\H^2}}  = 2\pi,
    \end{align}
    where $K_{\H^2}=-1$ is the Gauss curvature and the sign $\pm$ is determined by the orientation of $\gamma$. This implies $|\int_{\S^1}\scurv\dd s|\geq 2\pi$. Notice that any immersion $\gamma$ with $\E(\gamma)<16$ is necessarily an embedding by the Li--Yau inequality \cite{liyau1982}. 
    Moreover, if $\gamma\colon [0,T)\times\S^1\to\H^2$ is a conformally constrainedWillmore flow with $\E(\gamma_0)<16$, then $\E(\gamma(t))\leq\E(\gamma_0)<16$ for all $0\leq t<T$ by \Cref{rem:endecr}. So, in this case, the bound $|\int_{\S^1} \scurv \dd s|\geq 2 \pi$ is preserved along the flow.
\end{remark}

The following elementary inequality enables us to control $\int_{\S^1}\kappa \dd s$ purely in terms of quantities that are determined by the initial datum of a conformally constrained Willmore flow.
\begin{lemma}\label{lem:hyp-tot-curv-control-by-length}
    Let $\gamma\colon\S^1\to\H^2$ be an immersion. Then the total hyperbolic curvature satsifies
    \begin{align}
        \int_{\S^1}\kappa \dd s = \int_{\S^1}\kappa_{\R^2}\dd s_{\R^2} + \int_{\S^1}\frac{\partial_s \gamma^{(1)}}{\gamma^{(2)}} \dd s.
    \end{align}
    In particular, with the turning number $T(\gamma)=m\in\Z$, we have
    \begin{equation}
        \Big|\int_{\S^1}\scurv\dd s\Big| \geq 2\pi |m| - \Ll(\gamma).
    \end{equation}
\end{lemma}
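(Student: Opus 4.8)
\textbf{Proof plan for \Cref{lem:hyp-tot-curv-control-by-length}.}

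The plan is to compare the hyperbolic scalar curvature $\scurv$ with its Euclidean counterpart $\kappa_{\R^2}$ pointwise and integrate. First I would recall the general formula relating curvatures of a curve under a conformal change of the ambient metric. If $g = e^{2\varphi}\langle\cdot,\cdot\rangle$ on a surface, then for a curve $\gamma$ with Euclidean arc-length $s_{\R^2}$ and hyperbolic arc-length $s$, the scalar curvatures are related by $\scurv\,\dd s = \kappa_{\R^2}\,\dd s_{\R^2} - \partial_{\nu_{\R^2}}\varphi\,\dd s_{\R^2}$, where $\nu_{\R^2}$ is the Euclidean unit normal and the sign depends on orientation conventions. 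In the hyperbolic case $\varphi(y^{(1)},y^{(2)}) = -\log y^{(2)}$, so $\nabla\varphi = (0,-1/\gamma^{(2)})$ and $\partial_{\nu_{\R^2}}\varphi = \langle \nabla\varphi, \nu_{\R^2}\rangle$. Choosing the Euclidean normal as the rotation of the Euclidean unit tangent $\partial_{s_{\R^2}}\gamma$ by $\tfrac{\pi}{2}$, a direct computation gives $\partial_{\nu_{\R^2}}\varphi\,\dd s_{\R^2} = -\tfrac{\partial_s\gamma^{(1)}}{\gamma^{(2)}}\,\dd s$ after rewriting in hyperbolic arc-length, using $|\partial_x\gamma|_g = \tfrac{1}{\gamma^{(2)}}|\partial_x\gamma|_{\R^2}$. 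Rather than invoke a general conformal-change lemma, I would instead verify the identity directly from \eqref{eq:curvature}: writing $\scurv = \langle\curv,\vec n\rangle_g$ and expanding in coordinates, one isolates the purely Euclidean second-order term (which integrates to $\int \kappa_{\R^2}\,\dd s_{\R^2}$) plus a first-order correction term that, after simplification using $\dd s = \tfrac{1}{\gamma^{(2)}}\dd s_{\R^2}$ and $|\partial_s\gamma|_g = 1$, equals exactly $\tfrac{\partial_s\gamma^{(1)}}{\gamma^{(2)}}$. Integrating over $\S^1$ yields the first displayed identity.

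For the second claim, I would simply estimate the correction integral: since $|\partial_s\gamma^{(1)}| \leq |\partial_s\gamma|_{\R^2} = \gamma^{(2)}|\partial_s\gamma|_g = \gamma^{(2)}$ (the last equality because $s$ is hyperbolic arc-length), we get
\begin{equation}
    \Big|\int_{\S^1}\frac{\partial_s\gamma^{(1)}}{\gamma^{(2)}}\,\dd s\Big| \leq \int_{\S^1} 1\,\dd s = \Ll(\gamma).
\end{equation}
Combining this with $\int_{\S^1}\kappa_{\R^2}\,\dd s_{\R^2} = 2\pi\, T(\gamma) = 2\pi m$ and the reverse triangle inequality gives
\begin{equation}
    \Big|\int_{\S^1}\scurv\,\dd s\Big| \geq \Big|\int_{\S^1}\kappa_{\R^2}\,\dd s_{\R^2}\Big| - \Big|\int_{\S^1}\frac{\partial_s\gamma^{(1)}}{\gamma^{(2)}}\,\dd s\Big| \geq 2\pi|m| - \Ll(\gamma),
\end{equation}
as claimed.

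The only genuinely delicate point is getting the pointwise identity and, in particular, the sign and orientation conventions right: one must be careful that the Euclidean normal $\vec n_{\R^2}$ used to define $\kappa_{\R^2}$ is consistent with the hyperbolic normal $\vec n$ from \eqref{eq:curvature} used to define $\scurv$ (they differ by a positive scalar factor but point the same way), and that the turning number sign matches. This is bookkeeping rather than a conceptual obstacle; the cleanest route is probably the direct coordinate computation from \eqref{eq:curvature} rather than quoting an external conformal-change formula, since then all conventions are fixed by the formulas already in the paper. Everything else is a short estimate.
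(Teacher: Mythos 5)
Your proposal is correct and takes essentially the same route as the paper's proof: the paper normalizes to Euclidean arc-length, expands $\scurv\,\dd s$ in coordinates (quoting a coordinate formula from Eichmann--Grunau rather than expanding $\langle\curv,\vec n\rangle_g$ from scratch, but this is the same computation), identifies the Euclidean curvature contribution and the correction term $\partial_s\gamma^{(1)}/\gamma^{(2)}$, and then uses exactly the pointwise bound $|\partial_s\gamma^{(1)}|\le\gamma^{(2)}|\partial_s\gamma|_g=\gamma^{(2)}$ together with the reverse triangle inequality. Your remark about the conformal-change formula is a valid alternative phrasing of the same identity, and your sanity-checking of sign and normal conventions is the right thing to worry about; no gap.
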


\begin{proof}
    Without loss of generality, suppose that $|\partial_x\gamma|\equiv 1$, that is, $\gamma$ is parametrized by Euclidean arc-length. By \cite[Equation~(2.2)]{eichmanngrunau2019},
    \begin{align}
        \int_{\S^1}\scurv\dd s&=\int_{\S^1} \big(\partial_x^2\gamma^{(2)}\partial_x\gamma^{(1)}\gamma^{(2)} - \partial_x^2\gamma^{(1)}\partial_x\gamma^{(2)}\gamma^{(2)} + \partial_x\gamma^{(1)}\big) \frac{1}{\gamma^{(2)}} \dd x\\
        &= \int_{\S^1} \langle \partial_x^2\gamma,\begin{pmatrix}0&-1\\1&0\end{pmatrix}\partial_x\gamma\rangle \dd x + \int_{\S^1} \partial_x\gamma^{(1)}\frac{1}{\gamma^{(2)}}\dd x\\
        &= \int_{\S^1}\scurv_{\R^2}\dd s_{\R^2} + \int_{\S^1} \frac{\partial_s\gamma^{(1)}}{\gamma^{(2)}} \dd s = 2\pi m + \int_{\S^1} \frac{\partial_s\gamma^{(1)}}{\gamma^{(2)}} \dd s.
    \end{align}
    Now we use $|\partial_s\gamma^{(1)}|\leq \gamma^{(2)}|\partial_s\gamma|_g=\gamma^{(2)}$ and the inverse triangle inequality.
\end{proof}

Finally, we are ready to improve the energy estimate in \Cref{prop:il-conf-constr-wf} with the aid of the control of the Lagrange multiplier.

\begin{proposition}\label{prop:GronwallReplacement}
    Let $\gamma\colon[0,T)\times\S^1\to\H^2$ be a conformally constrained Willmore flow and let $L_0=\Ll(\gamma(0))$, $\E(\gamma_0)\leq M$, and suppose that there exist $\eta,r>0$ with \eqref{eq:delta-r-conf-constr-wf}. Moreover, let $\bar r\in (0,r)$ such that $\max_{\S^1}\gamma^{(2)}(t)\geq \bar r$ for all $0\leq t<T$. Fix any $\mu>0$. Then, for all $0\leq t<T$,\begin{align}
        \int_{\S^1} (\gamma^{(2)})^4 &| (\nabla_s^{\perp})^m \vec{\kappa}|^2_g \dd s \leq C(m,r,\bar r,L_0,M,\eta,\mu) \\
        &+ \int_{\S^1} (\gamma^{(2)})^4 | (\nabla_s^{\perp})^m \vec{\kappa}|^2_g \dd s \Big|_{t=0} \ C(r,\bar r,L_0,M,\eta) e^{ - \frac{\mu}{r^4}t}.\label{eq:il-conf-constr-1}
    \end{align}
\end{proposition}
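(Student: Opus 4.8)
The plan is to combine the differential inequality from \Cref{prop:il-conf-constr-wf} with the a-priori control of the Lagrange multiplier in \Cref{lem:lagr-control-by-testing} and then close the estimate via a Gronwall argument, absorbing the newly appearing curvature term using the interpolation inequality \Cref{cor:inter}. Concretely, set $\phi(t)\vcentcolon=\int_{\S^1}(\gamma^{(2)})^4|(\nabla_s^{\bot})^m\curv|_g^2\dd s$. By \Cref{prop:il-conf-constr-wf} we have
\begin{align}
    \frac{\dd}{\dd t}\phi(t) + \frac{\mu}{r^4}\phi(t) + \frac14\int_{\S^1}|(\nabla_s^{\bot})^{m+2}\curv|_g^2\dd s \leq C + 2(\lambda(\gamma(t)))^2\int_{\S^1}|(\nabla_s^{\bot})^m\curv|_g^2\dd s,
\end{align}
where $C=C(m,L_0,M,\mu)$. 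The point is that the critical term on the right involves only $m$ derivatives of $\curv$ (not $m+1$), which leaves room to pay with the dissipation term containing $m+2$ derivatives. However, $\lambda^2$ is itself only controlled by \Cref{lem:lagr-control-by-testing} in terms of $\int_{\S^1}|\curv|_g^4\dd s$ and $-r^4\partial_t\E(\gamma(t))$, so the product structure must be handled carefully.

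First I would treat the base case $m=0$ separately, or more precisely, establish an $L^4$-bound $\int_{\S^1}|\curv|_g^4\dd s \leq C$ along the flow. For this, note that by \Cref{cor:inter} with $k=2$, $\int_{\S^1}|\curv|_g^4\dd s=\int_{\S^1}|\mathbb{P}_4^{0,0}|\dd s\leq \varepsilon\int_{\S^1}|(\nabla_s^{\bot})^2\curv|_g^2\dd s + \tilde c_\varepsilon$, so this term can be absorbed into the dissipation term of the $m=0$ inequality after first inserting \Cref{lem:lagr-control-by-testing} into the right-hand side; the quadratic appearance of $\lambda$ combined with the $L^4$-term would produce a sixth-power curvature term, which is still sub-critical relative to the $m+2=2$ dissipation and can be handled by \Cref{cor:inter} together with a Young's inequality splitting (one must check the exponent condition $A+\frac B2-1<2k$ is met — here with $B=6$, $A=0$, $k=2$ we get $2<4$, which is fine). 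The surviving $-r^4\partial_t\E(\gamma(t))$ term, after multiplying the differential inequality by a suitable factor or just tracking it, integrates to something bounded by $r^4(\E(\gamma_0)-\E(\gamma(t)))\leq r^4 M$ since $\E$ is non-increasing; this is where the constants pick up dependence on $r$. Here one also uses $\max_{\S^1}\gamma^{(2)}(t)\geq\bar r$ to convert the weighted quantity $\phi(t)$ into an honest bound on $\int_{\S^1}|(\nabla_s^{\bot})^m\curv|_g^2\dd s$ up to constants depending on $r,\bar r$; indeed by \Cref{rem:hyplengthestimatesrange(optimal)}, $\gamma^{(2)}$ oscillates by at most $e^{L_0/2}$, so $\gamma^{(2)}\geq \bar r e^{-L_0/2}$ pointwise, giving two-sided comparability of $\phi(t)$ with the unweighted integral.

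Then I would run the induction on $m$: assuming the bound for all indices $<m$ (in particular a uniform bound on $\int_{\S^1}|(\nabla_s^{\bot})^j\curv|_g^2\dd s$ for $j<m$ from the previous steps together with two-sidedness of the weight), one feeds \Cref{lem:lagr-control-by-testing} into the right-hand side of the $m$-th differential inequality. The term $2(\lambda(\gamma))^2\int_{\S^1}|(\nabla_s^{\bot})^m\curv|_g^2\dd s$ becomes $\leq C(1+\int_{\S^1}|\curv|_g^4\dd s - r^4\partial_t\E)\int_{\S^1}|(\nabla_s^{\bot})^m\curv|_g^2\dd s$; using the already-established $L^4$-bound on $\curv$ and writing $-\partial_t\E = 4\int_{\S^1}(\gamma^{(2)})^4|\partial_t\gamma|_g^2\dd s\geq 0$ (a non-negative quantity whose time-integral is $\leq M$), one gets a Gronwall-type inequality $\frac{\dd}{\dd t}\phi + \frac{\mu}{r^4}\phi + \frac14\mathcal D \leq C + C(1+ g(t))\,\psi(t)$, where $\mathcal D=\int_{\S^1}|(\nabla_s^{\bot})^{m+2}\curv|_g^2\dd s$, $\psi(t)=\int_{\S^1}|(\nabla_s^{\bot})^m\curv|_g^2\dd s\leq C(r,\bar r,L_0)\phi(t)$, and $\int_0^T g(t)\dd t\leq C r^4 M$. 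The key obstacle — and the reason \Cref{cor:inter} is essential — is that $\psi(t)$ by itself is \emph{not} directly absorbable into $\mathcal D$ (they differ by a full factor, $\|(\nabla_s^\bot)^m\curv\|_{L^2}^2$ versus $\|(\nabla_s^\bot)^{m+2}\curv\|_{L^2}^2$): here one uses the interpolation $\|(\nabla_s^\bot)^m\curv\|_{L^2}^2 = \int|\mathbb{P}_2^{2m,m}|\dd s \leq \varepsilon\|(\nabla_s^\bot)^{m+2}\curv\|_{L^2}^2 + \tilde c_\varepsilon$ (valid since $A+\frac B2 - 1 = 2m < 2(m+2)$), so with $\varepsilon$ small the part $C\cdot 1\cdot\psi$ is absorbed into $\frac14\mathcal D$, while the part $C g(t)\psi(t)\leq C g(t)(\varepsilon\mathcal D + \tilde c_\varepsilon)$ leaves a term $C g(t)\mathcal D$ which cannot be absorbed pointwise but only after the Gronwall integration exploiting $\int_0^T g\,\dd t<\infty$. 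So one arrives at $\frac{\dd}{\dd t}\phi + \frac{\mu}{r^4}\phi \leq C + C g(t)(1+\phi(t))$, and the integrating factor $\exp(\frac{\mu}{r^4}t - C\int_0^t g)$ together with $\int_0^\infty g\leq C r^4 M$ yields precisely \eqref{eq:il-conf-constr-1}, with the exponential decay $e^{-\mu t/r^4}$ on the initial-data term surviving because $\int_0^t g$ is bounded uniformly. The main obstacle is bookkeeping the interplay of the three scales ($m$, $m+1$, $m+2$ derivatives) and ensuring the nonlocal $\lambda^2$-term, after substitution, genuinely drops to sub-critical order so that all spurious terms either interpolate into the dissipation or integrate to a finite constant via monotonicity of $\E$.
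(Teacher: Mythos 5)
Your high-level plan is the right one — feed the $\lambda$-control of \Cref{lem:lagr-control-by-testing} into the differential inequality from \Cref{prop:il-conf-constr-wf}, split the resulting right-hand side, interpolate what can be interpolated, and finish with a Gronwall argument using $\int_0^T(-\partial_t\E)\dd t\leq M$. However, there are two genuine gaps in the execution, and the second would make the proof fail.

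First, the claimed $L^4$-bound $\int_{\S^1}|\curv|_g^4\dd s\leq C$ is not established by your base step. The $m=0$ estimate gives a uniform bound on $\phi_0(t)=\int_{\S^1}(\gamma^{(2)})^4|\curv|_g^2\dd s$, which by the weight comparison only recovers $\int_{\S^1}|\curv|_g^2\dd s\leq C$ — already known from $\E(\gamma(t))\leq M$. To get an $L^4$-bound via interpolation you would need $\|\nabla_s^\bot\curv\|_{L^2(\dd s)}$ bounded, i.e.\ the $m=1$ case, and at that stage the $L^4$-bound is exactly what you are assuming. The paper sidesteps this entirely: it never establishes an $L^4$-bound, but interpolates the \emph{product} $\big(1+\int|\curv|_g^4\dd s\big)\int|(\nabla_s^\bot)^m\curv|_g^2\dd s$ directly against the dissipation $\int|(\nabla_s^\bot)^{m+2}\curv|_g^2\dd s$ via \eqref{eq:mult-inter-ineq} with $(a,b,k)=(0,4,m+2)$ and $(a,b,k)=(2m,2,m+2)$; the exponents add to $\tfrac{1}{m+2}+\tfrac{2m}{m+2}<2$, so Young absorbs. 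This works uniformly in $m$ without any induction.

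Second, and more seriously, your treatment of the term $g(t)\psi(t)$ with $g(t)=-r^4\partial_t\E(\gamma(t))$ does not close. You propose $\psi(t)\leq\varepsilon\,\mathcal D+\tilde c_\varepsilon$ and then claim the resulting $Cg(t)\mathcal D$ can be handled ``after the Gronwall integration'' since $\int_0^T g\,\dd t<\infty$. This is not so: $\mathcal D=\int|(\nabla_s^\bot)^{m+2}\curv|_g^2\dd s$ is of strictly higher order than $\phi$, and a Gronwall argument cannot control $\int g(t)\mathcal D(t)\dd t$ from $\int g\,\dd t<\infty$ because $g(t)$ has no pointwise bound — on the set where $Cg(t)>\tfrac14$ the dissipation term flips sign and is completely lost. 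The paper instead \emph{avoids interpolating here}: using precisely the two-sided weight comparison you already write down (namely $\gamma^{(2)}\geq\bar r e^{-L_0/2}$ from \Cref{rem:hyplengthestimatesrange(optimal)} and $\max_{\S^1}\gamma^{(2)}\geq\bar r$), it bounds $\psi(t)\leq C(\bar r,L_0)\phi(t)$, giving $g(t)\psi(t)\leq C(\bar r,L_0)g(t)\phi(t)$. This is of Gronwall form, and the finiteness of $\int_0^T g\,\dd t$ enters only as a uniform multiplicative constant in the integrating factor. You state this bound $\psi\leq C\phi$ in your text, but then take the wrong branch and interpolate instead of using it.
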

Notice that the second constant in \eqref{eq:il-conf-constr-1} does not depend on $\mu$. We will use this in \eqref{eq:choice-mu-conf-constr} below. 
\begin{proof}[Proof of \Cref{prop:GronwallReplacement}]
    To conclude the claim from \eqref{eq:conf-constr-wf-lambda-estimate} with a Gronwall argument, we show that the $\lambda$-term on the right hand side of \eqref{eq:conf-constr-wf-lambda-estimate} can be suitably absorbed. To this end, by \eqref{eq:lagr-control-by-testing},
   \begin{align}
        (\lambda(\gamma))^2 \int_{\S^1} |(\nabla_s^\bot)^m\curv|_g^2\dd s &\leq C(L_0,M,\eta) (1+\int_{\S^1}|\curv|_g^4\dd s)\int_{\S^1}|(\nabla_s^\bot)^m\curv|_g^2\dd s \\
        & \quad + C(L_0,M,\eta) (-r^4\partial_t\E(\gamma(t))) \int_{\S^1}|(\nabla_s^\bot)^m\curv|_g^2\dd s. \label{eq:il-conf-constr-2}
    \end{align}
    For the first term on the right hand side of \eqref{eq:il-conf-constr-2}, using \eqref{eq:mult-inter-ineq} with $(a,b,k)=(0,4,m+2)$ and $(a,b,k)=(2m,2,m+2)$,
    \begin{align}
        (1+\int_{\S^1}|\curv|_g^4\dd s)&\int_{\S^1}|(\nabla_s^\bot)^m\curv|_g^2\dd s \\
        &\leq C(m,M)(1+\||(\nabla_s^\bot)^{m+2}\curv|_g\|_{L^2(\dd s)}^{\frac{1}{m+2}})(1+\||(\nabla_s^\bot)^{m+2}\curv|_g\|_{L^2(\dd s)}^{\frac{2m}{m+2}})
    \end{align}
    which, using Young's inequality can be absorbed in \eqref{eq:conf-constr-wf-lambda-estimate} since $\frac{1}{m+2}+\frac{2m}{m+2} < 2$. For the second term on the right hand side of \eqref{eq:il-conf-constr-2}, write $\varphi(t)\vcentcolon=-\partial_t\E(\gamma(t))$. By \Cref{rem:hyplengthestimatesrange(optimal)},
    \begin{align}
        \varphi(t) \int_{\S^1}|(\nabla_s^\bot)^m\curv|_g^2\dd s &\leq \varphi(t) \Big(\frac{e^{L_0/2}}{\max_{\S^1}\gamma^{(2)}(t)}\Big)^4 \int_{\S^1}(\gamma^{(2)})^4|(\nabla_s^\bot)^m\curv|_g^2\dd s\\
        &\leq C(\bar r,L_0) \varphi(t) \int_{\S^1} (\gamma^{(2)})^4|(\nabla_s^\bot)^m\curv|_g^2\dd s.
    \end{align}
    Altogether, combined with \eqref{eq:il-conf-constr-2} and \eqref{eq:conf-constr-wf-lambda-estimate}, 
   \begin{align}
        \partial_t &\int_{\S^1}(\gamma^{(2)})^4|(\nabla_s^\bot)^m\curv|_g^2\dd s + \frac{\mu}{r^4} \int_{\S^1}(\gamma^{(2)})^4|(\nabla_s^\bot)^m\curv|_g^2\dd s \\
        &\leq C(m,L_0,M,\eta,\mu) + C(\bar r,L_0,M,\eta) r^4 \varphi(t) \int_{\S^1}(\gamma^{(2)})^4|(\nabla_s^\bot)^m\curv|_g^2\dd s.
    \end{align}
    Using the estimate $\int_0^T\varphi(t)\dd t=\E(\gamma_0)-\lim_{t\nearrow T}\E(\gamma(t))\leq M$, a Gronwall argument yields \eqref{eq:il-conf-constr-1}.
\end{proof}

\section{Proof of convergence}\label{sec:convergence}

\subsection{Life-span bound via the distance to the rotation axis}

With the a-priori control on all derivatives of the curvature in \eqref{eq:il-1}, we conclude that a uniform control from below on $\gamma^{(2)}$ suffices to extend the Willmore flow beyond a finite time $T$.
For the conformally constrained Willmore flow, we obtain the same using \eqref{eq:il-conf-constr-1} if additionally $|\int_{\S^1}\scurv\dd s|$ has a uniform positive lower bound.

\begin{lemma}\label{lem:ewigesLeben}
    Let $\gamma\colon[0,T)\times\S^1\to\H^2$, $T<\infty$, either be a Willmore flow or a conformally constrained Willmore flow. 
    Suppose that $\gamma^{(2)}(t,x)\geq \alpha>0$  for all $(t,x)\in [0,T) \times \S^1$ for some $\alpha>0$. If $\gamma$ is a conformally constrained Willmore flow, additionally assume that there exists $\eta>0$ with $|\int_{\S^1}\scurv\dd s|\geq \eta$ on $[0,T)$.    
    Then the maximal existence time $T_{\mathrm{max}}$ of the flow satisfies $T_{\mathrm{max}}>T$.
\end{lemma}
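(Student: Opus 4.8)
The plan is a standard continuation argument: under the stated hypotheses the flow stays uniformly smooth up to time $T$, so it can be restarted, contradicting $T_{\mathrm{max}}=T$. First, since $T<\infty$, \Cref{lem:bounddistintegral} applies: \eqref{eq:area-est} shows that $\int_{\S^1}(\gamma^{(2)})^2\dd s$ stays bounded on $[0,T)$, and then \eqref{eq:diam-est} gives a uniform upper bound $\gamma^{(2)}\leq r$ on $[0,T)\times\S^1$. Set $M\vcentcolon=\E(\gamma_0)$. If $\gamma$ is a Willmore flow, \Cref{prop:il} yields $\sup_{t\in[0,T)}\int_{\S^1}(\gamma^{(2)})^4|(\nabla_s^\bot)^m\curv|_g^2\dd s<\infty$ for every $m\in\N_0$. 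If $\gamma$ is conformally constrained, then $\gamma^{(2)}\geq\alpha$ gives $\max_{\S^1}\gamma^{(2)}(t)\geq\bar r\vcentcolon=\tfrac12\min\{\alpha,r\}\in(0,r)$ for all $t$, so together with the hypothesis $|\int_{\S^1}\scurv\dd s|\geq\eta$ one invokes \Cref{prop:GronwallReplacement} to obtain the same uniform bounds. Dividing by $\alpha^4\le(\gamma^{(2)})^4$ gives $\sup_{t\in[0,T)}\int_{\S^1}|(\nabla_s^\bot)^m\curv|_g^2\dd s<\infty$ for all $m$; since $\Ll(\gamma(t))\geq(2\pi)^2/M$ by \eqref{eq:len-low-bound} and \Cref{rem:endecr}, the interpolation estimates of \Cref{prop:inter-multi} then upgrade this to uniform bounds on $\|(\nabla_s^\bot)^m\curv\|_{L^\infty}$ for every $m$.

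Next I control the parametrization and produce a smooth limit. The velocity $V\vcentcolon=\partial_t\gamma=-\tfrac{1}{4(\gamma^{(2)})^4}(\nabla\E(\gamma)-\lambda\curv)$ is uniformly bounded: $|\nabla\E(\gamma)|_g$ is controlled by the previous step via \eqref{eq:nabla-L2}, $\lambda\equiv0$ for the Willmore flow, and for the conformally constrained flow $\lambda(\gamma)$ is bounded because the numerator in \eqref{eq:lambda-conf-constr-will} is controlled by the curvature bounds together with $\gamma^{(2)}\geq\alpha$ and $\Ll(\gamma)\equiv L_0$, while the denominator is bounded below by $r^{-4}\E(\gamma)\geq 4\pi r^{-4}$ thanks to \Cref{rem:min-e}. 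Hence \eqref{eq:evolutionoflineelement2} gives $|\partial_t\log|\partial_x\gamma|_g|=|\langle V,\curv\rangle_g|\leq C$, so $|\partial_x\gamma|_g$, and therefore $|\partial_x\gamma|=\gamma^{(2)}|\partial_x\gamma|_g$, stays bounded above and below on $[0,T)\times\S^1$ (here $T<\infty$ is used once more). Combining $\alpha\leq\gamma^{(2)}\leq r$, the uniform immersion bounds, $|\partial_t\gamma^{(1)}|\leq r|V|_g\leq C$ (so $\gamma^{(1)}$ stays in a bounded set), and the $L^\infty$-bounds on the curvature derivatives, an induction on $m$ — converting $\nabla_s^\bot$ and $\partial_s=|\partial_x\gamma|_g^{-1}\partial_x$ into $\partial_x$ via \eqref{eq:curvature} and \Cref{lemma:evo} — yields $\sup_{t\in[0,T)}\|\gamma(t)\|_{C^m(\S^1)}<\infty$ and then $\sup_{t\in[0,T)}\|\partial_t\gamma(t)\|_{C^m(\S^1)}<\infty$ for all $m\in\N_0$. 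In particular $t\mapsto\gamma(t)$ is Lipschitz into $C^m(\S^1,\R^2)$ for every $m$, so $\gamma(t)$ converges in $C^\infty(\S^1,\H^2)$ as $t\nearrow T$ to a smooth immersion $\gamma(T)\colon\S^1\to\H^2$.

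Finally, I restart. Applying \Cref{prop:ste} with initial datum $\gamma(T)$ produces a Willmore flow (respectively conformally constrained Willmore flow) $\hat\gamma\colon[0,\varepsilon)\times\S^1\to\H^2$ for some $\varepsilon>0$. Concatenating $\gamma$ on $[0,T)$ with $t\mapsto\hat\gamma(t-T)$ on $[T,T+\varepsilon)$ gives a continuous map which is a smooth solution of the relevant flow equation on $[0,T)$ and on $[T,T+\varepsilon)$ with matching data at $t=T$; since the equation is a quasilinear parabolic system of fourth order, parabolic smoothing forces the concatenation to be smooth across $t=T$, hence a smooth solution on $[0,T+\varepsilon)$ with initial datum $\gamma_0$, which by uniqueness is the restriction of the maximal flow. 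Therefore $T_{\mathrm{max}}\geq T+\varepsilon>T$. The first and last parts are essentially bookkeeping on top of results already proven; the genuine difficulty is the middle step, since the weighted estimates of \Cref{sec:2} are reparametrization-invariant and say nothing about the tangential motion of the flow — the work lies in leveraging $\gamma^{(2)}\geq\alpha$ (and, in the constrained case, $|\int_{\S^1}\scurv\dd s|\geq\eta$) to turn those estimates into honest uniform $C^\infty(\S^1)$-bounds on $\gamma(t)$ itself, which is precisely what makes the smooth extension, and hence the restart, possible.
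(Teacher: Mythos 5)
Your proof is correct and follows essentially the same route as the paper's: obtain an upper bound $\gamma^{(2)}\leq r$ via \Cref{lem:bounddistintegral}, apply the weighted Gronwall estimates (\Cref{prop:il} resp.\ \Cref{prop:GronwallReplacement}) together with $\gamma^{(2)}\geq\alpha$ to get uniform $L^\infty$ bounds on all covariant curvature derivatives, convert these to bounds on $\partial_x$-derivatives of $\gamma$ by controlling $|\partial_x\gamma|_g$ through its evolution equation, extend smoothly to $t=T$, and restart via \Cref{prop:ste}. You spell out the parametrization control and the control of $\lambda(\gamma)$ somewhat more explicitly than the paper (which cites \cite[Lemma~2.8 and Theorem~1.1, Steps~3--4]{dallacquaspener2017} for these steps), but the substance is identical.
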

\begin{proof}
    First of all, since $T<\infty$, applying \eqref{eq:diam-est} with $t_1=0$ yields that there exists $R=R(T,\gamma_0)>0$ with $\gamma^{(2)}(t,x)\leq R$ for all $(t,x)\in[0,T)\times\S^1$. We now show that all derivatives of $\gamma$ can be uniformly bounded up to time $T$ in order to obtain an extension as in the claim by short-time existence. To this end, for $m\in\N$, define
    \begin{equation}\label{eq:Am}
        A_m \vcentcolon= \sum_{i=0}^m \int_{\S^1} |(\nabla_s^{\bot})^i\curv|_g^2\dd s \Big\vert_{t=0} = \sum_{i=0}^m \| |(\nabla_s^{\bot})^i\curv|_g\|_{L^2(\dd s)}^2 \Big\vert_{t=0}.
    \end{equation}
    Using \eqref{eq:il-1} in case of a Willmore flow and \eqref{eq:il-conf-constr-1} in case of a conformally constrained Willmore flow, in both cases with $\mu=1$, and using $\alpha\leq \gamma^{(2)}\leq R$, we find
    \begin{align}
        \| |(\nabla_s^{\bot})^m\curv|_g\|_{L^2(\dd s)}^2 
        &\leq C(m,A_m,M,R,\alpha,[L(\gamma_0),\eta])\label{eq:int-step2}
    \end{align}
    for all $0\leq t<T$. Here and in the following, the parentheses $[\cdot]$ indicate that the constant depends on $\eta$ and $\Ll(\gamma_0)$ only for the conformally constrained Willmore flow. Our goal is to conclude from \eqref{eq:int-step2} the boundedness of all space derivatives of $\gamma$.

    As in Step 3 of the proof of \cite[Theorem 1.1]{dallacquaspener2017}, using \cite[Proposition 4.3 and Lemma 2.7]{dallacquaspener2017} with \eqref{eq:len-low-bound}, we deduce 
    \begin{equation}
        \||\nabla_s^m\curv|_g\|_{L^2(\dd s)}^2
        \leq  C(m,A_m,M,R,\alpha,[L(\gamma_0),\eta])\quad\text{for all $0\leq t<T$}
    \end{equation}
    from \eqref{eq:int-step2} which, by interpolation (cf.\ \cite[Proposition 4.1]{dallacquaspener2017} and using \eqref{eq:len-low-bound}) improves to 
    \begin{equation}\label{eq:int-step3}
        \| |\nabla_s^m\curv|_g\|_{L^{\infty}(\S^1)}
        \leq C(m,A_{m+1},M,R,\alpha,[L(\gamma_0),\eta]) \quad\text{for all $0\leq t<T$}.
    \end{equation}
    The final step involves showing that $\||\nabla_x^m\curv|_g\|_{L^{\infty}(\S^1)}$ is uniformly bounded on $[0,T)$. To this end, we may proceed exactly as in Step 4 of the proof of \cite[Theorem 1.1]{dallacquaspener2017}. More precisely, using \cite[Lemma 2.8]{dallacquaspener2017}, $\nabla_x^m\curv$ is controlled by controlling $\nabla_s^m\curv$ and $|\partial_x\gamma|_g,\dots,\partial_x^{m-1}|\partial_x\gamma|_g$.  In \emph{finite time}, i.e., for $T<\infty$, this is obtained from computing the evolution equations for the partial derivatives of $|\partial_x\gamma|_g$ and using \eqref{eq:int-step3} to obtain $L^{\infty}$-bounds for the coefficients arising in these evolution equations. 

    We conclude that $\gamma$ can be smoothly extended to $[0,T]\times\S^1$ and thus the claim follows by short-time existence for \eqref{eq:wf-eq}, respectively \eqref{eq:conf-constr-wf-eq}, after restarting the flow in $\gamma(T)$, cf.\ \Cref{prop:ste}.
\end{proof}

As a consequence of \Cref{lem:ewigesLeben}, we obtain the following global existence result.

\begin{corollary}\label{cor:global-existence}
    Let $\gamma\colon[0,T)\times\S^1\to\H^2$ be a smooth family of immersions such that
    \begin{equation}
        \gamma^{(2)}(t,x)\geq \rho>0\quad\text{for all $0\leq t<T$ and $x\in\S^1$}.
    \end{equation}
    If $\gamma$ is a maximal Willmore flow, or if $\gamma$ is a maximal conformally constrained Willmore flow such that $\liminf_{t\nearrow T}|\int_{\S^1}\scurv\dd s| > 0$, then $T=\infty$.
\end{corollary}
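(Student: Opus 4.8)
The plan is to deduce the statement from the life-span bound \Cref{lem:ewigesLeben} by a contradiction argument, restarting the flow if necessary. Suppose $T<\infty$. The uniform bound $\gamma^{(2)}\geq\rho$ on $[0,T)\times\S^1$ is precisely the hypothesis ``$\gamma^{(2)}\geq\alpha$'' of \Cref{lem:ewigesLeben} with $\alpha=\rho$; moreover, since $T<\infty$, applying \eqref{eq:diam-est} with $t_1=0$ automatically provides an upper bound $\gamma^{(2)}\leq R(T,\gamma_0)$ on $[0,T)\times\S^1$, so no further preparation is needed (this upper bound is in any case only used inside \Cref{lem:ewigesLeben}).

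\textbf{Willmore flow.} Here \Cref{lem:ewigesLeben} applies verbatim and yields that the maximal existence time $T_{\mathrm{max}}$ of $\gamma$ satisfies $T_{\mathrm{max}}>T$. Since $\gamma$ is by assumption a \emph{maximal} Willmore flow, $T_{\mathrm{max}}=T$, a contradiction. Hence $T=\infty$.

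\textbf{Conformally constrained Willmore flow.} Now \Cref{lem:ewigesLeben} additionally requires a lower bound $\bigl|\int_{\S^1}\scurv\dd s\bigr|\geq\eta$ on the \emph{entire} interval, whereas the hypothesis only gives $\liminf_{t\nearrow T}\bigl|\int_{\S^1}\scurv\dd s\bigr|>0$. To bridge this, I would choose $\eta>0$ and $t_0\in[0,T)$ with $\bigl|\int_{\S^1}\scurv(t,\cdot)\dd s\bigr|\geq\eta$ for all $t\in[t_0,T)$, and then restart the flow at time $t_0$: the translated family $\tilde\gamma(t,\cdot)\vcentcolon=\gamma(t_0+t,\cdot)$, $t\in[0,T-t_0)$, is again a conformally constrained Willmore flow because \eqref{eq:conf-constr-wf-eq} is autonomous, it still satisfies $\tilde\gamma^{(2)}\geq\rho$, and now $\bigl|\int_{\S^1}\scurv_{\tilde\gamma}\dd s\bigr|\geq\eta$ holds on all of $[0,T-t_0)$, which is bounded. \Cref{lem:ewigesLeben} applied to $\tilde\gamma$ then shows its maximal existence time exceeds $T-t_0$. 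But $\tilde\gamma$ is maximal, since any extension of $\tilde\gamma$ past $T-t_0$ would, by uniqueness in \Cref{prop:ste}, extend $\gamma$ past $T$; this is a contradiction. Hence $T=\infty$.

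There is no real obstacle here: the entire analytic content sits in \Cref{lem:ewigesLeben}, and the only subtlety is that the constrained hypothesis is a $\liminf$ condition near $T$ rather than a uniform bound on $[0,T)$, which the restart removes.
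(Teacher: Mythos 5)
Your proof is correct and follows the same route the paper implicitly takes: the statement is deduced directly from the life-span bound of \Cref{lem:ewigesLeben} by a contradiction argument. The restart at a time $t_0$ close to $T$ which you supply in the conformally constrained case, needed because the corollary only assumes a $\liminf$ bound as $t\nearrow T$ whereas \Cref{lem:ewigesLeben} requires a uniform lower bound $\bigl|\int_{\S^1}\scurv\dd s\bigr|\geq\eta$ on all of $[0,T)$, is precisely the routine detail the paper leaves implicit when it calls the corollary a consequence of the lemma.
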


From the $L^2$-control of the distance 
to the rotation axis in \Cref{lem:bounddistintegral}, we now deduce $L^{\infty}$-type estimates depending only on the initial elastic energy and a uniform bound on the hyperbolic length. 

\begin{lemma}\label{lem:BD}
    Let $\gamma\colon[0,T)\times\S^1\to\H^2$ be a maximal Willmore or conformally constrained Willmore flow such that, for some $\rho>0$ and $B>1$, $\rho\leq \gamma_0^{(2)}\leq \rho B$. Further suppose that $\E(\gamma_0)\leq M$ and, 
    \begin{enumerate}[(a)]
        \item in case of a Willmore flow, suppose that $\mathcal{L}(\gamma(t))\leq L$ for all $t\in[0,T)$;
        \item in case of a conformally constrained Willmore flow, denote $\Ll(\gamma_0)=L$ and suppose that $\liminf_{t\nearrow T} |\int_{\S^1}\scurv\dd s|>0$.
    \end{enumerate}
  Then, there
  exist $\delta_0= \delta_0(M)$ and 
    $D=D(L,M)>1$ such that $T > \delta_0\rho^4$ and
    \begin{equation}\label{eq:g2-est}
        \frac{\rho}{D} \leq \gamma^{(2)}(t,x)\leq \rho B D \quad\text{for all $t\in[0,\delta_0\rho^4]$ and $x\in \S^1$}.
    \end{equation}
\end{lemma}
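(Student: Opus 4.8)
The plan is to set up a continuity (bootstrap) argument on the time interval. First I would fix $r \vcentcolon= \rho B D$ as a candidate upper bound and $\alpha \vcentcolon= \rho/D$ as a candidate lower bound for $\gamma^{(2)}$, where $D=D(L,M)>1$ is to be chosen, and $\delta_0=\delta_0(M)$ is a small constant to be chosen. Define
\begin{equation}
    T^* \vcentcolon= \sup\Big\{ t\in[0,\min\{T,\delta_0\rho^4\}] : \tfrac{\rho}{D}\leq \gamma^{(2)}(\tau,\cdot)\leq \rho B D \text{ on }[0,t]\times\S^1 \Big\}.
\end{equation}
Since $\rho\leq\gamma_0^{(2)}\leq\rho B$ and $D>1$, the strict inequalities hold at $t=0$, so $T^*>0$. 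The goal is to show that the choice of $D$ and $\delta_0$ forces $T^*=\min\{T,\delta_0\rho^4\}$ and that in fact $T>\delta_0\rho^4$, which would give \eqref{eq:g2-est}.

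On $[0,T^*)$ we have the a priori bound $\gamma^{(2)}\leq \rho B D$, so \Cref{prop:il} (for the Willmore flow) respectively \Cref{prop:GronwallReplacement} (for the conformally constrained flow, using the hypothesis $\liminf_{t\nearrow T}|\int_{\S^1}\scurv\dd s|>0$ together with the length being constant $=L$ to get the lower bound $\bar r$ on $\max_{\S^1}\gamma^{(2)}$ via \Cref{rem:hyplengthestimatesrange(optimal)} and the $L^2$-area control) applies with $r=\rho B D$. These estimates give, after the interpolation and $\nabla_x$-conversion steps already carried out in \Cref{lem:ewigesLeben}, uniform control of $\||\curv|_g\|_{L^\infty}$ and hence of $|\partial_t\gamma|_g$ in terms of $M$, $L$, $D$, $B$ and $\rho$; crucially $|\partial_t\gamma(t,x)| \leq \frac{1}{4(\gamma^{(2)})^4}|\nabla\E(\gamma)-\lambda\curv|$ and, by parabolic scaling (\Cref{lem:par-scaling}), one can track the $\rho$-dependence: rescaling by $\rho$ reduces to the case $\rho=1$ where the relevant quantities are bounded by constants depending only on $M,L,D,B$. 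Then $\partial_t(\gamma^{(2)})^{-1}$ and $\partial_t\gamma^{(2)}$ are controlled, so integrating in time,
\begin{equation}
    \big|\log\gamma^{(2)}(t,x)-\log\gamma_0^{(2)}(x)\big|\leq C(M,L,D,B)\,\rho^{-4}\cdot t \leq C(M,L,D,B)\,\delta_0 \quad\text{for }t\leq\delta_0\rho^4,
\end{equation}
where I have absorbed the $\rho$-scaling correctly so that the right-hand side depends only on $\delta_0$ and the dimensionless constants. Choosing first $D=D(L,M)$ large enough (using $\E(\gamma_0)\leq M$ and the diameter/area estimate \eqref{eq:diam-est} to get the crude bound $\gamma^{(2)}\leq C(M)(\,(\text{area})^{1/2}+\dots)$ that feeds into $B$-independence where needed) and then $\delta_0=\delta_0(M)$ small enough that $C(M,L,D,B)\delta_0 \leq \log D - \log B$ on the upper side and $\leq \log D$ on the lower side, we conclude that on $[0,T^*)$ one has the \emph{strict} improved bounds $\frac{\rho}{D}< \gamma^{(2)} < \rho B D$ with room to spare, so $T^*$ cannot be an interior point of $[0,\min\{T,\delta_0\rho^4\}]$; hence $T^*=\min\{T,\delta_0\rho^4\}$. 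Finally, if $T\leq\delta_0\rho^4$ then $\gamma^{(2)}\geq\rho/D>0$ uniformly on $[0,T)$, so \Cref{lem:ewigesLeben} (in the constrained case using the $\liminf$ hypothesis) contradicts maximality; therefore $T>\delta_0\rho^4$ and \eqref{eq:g2-est} holds.

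The main obstacle I anticipate is bookkeeping the dependence on $\rho$ and ensuring the final constant in the exponential-type estimate for $\log\gamma^{(2)}$ is genuinely independent of $\rho$ and of $B$ in the right places — the parabolic scaling \Cref{lem:par-scaling} makes this possible in principle, but one must be careful that translating the profile curve to re-center it (as \Cref{lem:par-scaling} allows only translations along the axis $\R\times\{0\}$, which are \emph{not} hyperbolic isometries fixing the height) does not interfere; in fact only the dilation $z\mapsto z/\rho$, which \emph{is} a hyperbolic isometry composed with rescaling, is needed here, and under it $\gamma^{(2)}$ scales by $1/\rho$, the elastic energy and length are invariant, and $t$ scales by $\rho^{-4}$, which is exactly what makes the estimate scale-consistent. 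A secondary technical point is that in the conformally constrained case one needs a uniform lower bound $\bar r$ on $\max_{\S^1}\gamma^{(2)}(t)$ valid on $[0,T^*)$ to invoke \Cref{prop:GronwallReplacement}; this follows from \eqref{eq:area_in_gamma} together with a lower area bound, or alternatively from the fixed length $L$ and \Cref{rem:hyplengthestimatesrange(optimal)} applied at the maximum point, and does not require the bootstrap — it holds on all of $[0,T)$.
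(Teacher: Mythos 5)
Your strategy diverges from the paper's in a way that introduces a genuine gap. The paper's proof relies entirely on the $L^2$-in-space, H\"older-$\tfrac12$-in-time estimate for the area functional, \eqref{eq:area-est} of \Cref{lem:bounddistintegral}:
\begin{equation}
    \Big|\int_{\S^1}(\gamma^{(2)}(t_1))^2\dd s - \int_{\S^1}(\gamma^{(2)}(t_2))^2\dd s\Big| \leq \tfrac12 M\sqrt{|t_1-t_2|},
\end{equation}
which is a direct consequence of energy dissipation and requires \emph{no} control on curvature derivatives. Combined with the max/min ratio bound of \Cref{rem:hyplengthestimatesrange(optimal)}, this yields the pointwise bounds \eqref{eq:g2-est} with constants $\delta_0=\delta_0(M)$ and $D=D(L,M)$ that depend only on the elastic energy bound $M$ and the length bound $L$.

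You instead propose to use \Cref{prop:il} (resp.\ \Cref{prop:GronwallReplacement}) together with the interpolation steps from \Cref{lem:ewigesLeben} to obtain ``uniform control of $\||\curv|_g\|_{L^\infty}$ and hence of $|\partial_t\gamma|_g$ in terms of $M,L,D,B,\rho$'', and then integrate in time. This step fails. The conclusion of \Cref{prop:il} is
\begin{equation}
    \int_{\S^1}(\gamma^{(2)}/r)^4 |(\nabla_s^\perp)^m\curv|^2_g\dd s \leq C(m,M,\mu) + \int_{\S^1}(\gamma^{(2)}/r)^4 |(\nabla_s^\perp)^m\curv|^2_g\dd s\Big|_{t=0}\, e^{-\mu t/r^4},
\end{equation}
and for $m\geq 1$ the initial term is \emph{not} bounded by $M$ alone; it depends on the higher curvature derivatives of $\gamma_0$. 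Only the case $m=0$ gives a bound purely in terms of $M$ (since the elastic energy dominates the weighted $L^2$-norm of $\curv$), but that is merely $L^2$ control on curvature, not $L^\infty$, and controlling $|\partial_t\gamma|_g$ requires $L^\infty$ bounds on $(\nabla_s^\perp)^2\curv$ via \eqref{eq:nabla-L2}. Moreover, on a \emph{short} time interval $[0,\delta_0\rho^4]$ the exponential factor $e^{-\mu t/r^4}$ has not decayed, so the bound is essentially dominated by the initial derivative data. Consequently your $\delta_0$ and $D$ would implicitly depend on $\gamma_0$ through its higher curvature derivatives, not only through $M$ and $L$. This would break the lemma's essential use in the iterative blow-up scheme of \Cref{sec:iterative_blow_up}, where \Cref{lem:BD} is applied repeatedly to the rescaled curves $\gamma_j$ with uniform constants, and the rescaled curvature derivatives of $\gamma_j(0)$ are precisely what is \emph{not} a priori controlled. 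You do mention \eqref{eq:diam-est} parenthetically, but it is not used as the central mechanism; the $L^2$-area estimate from \Cref{lem:bounddistintegral} has to carry the whole proof, exactly because it bypasses curvature-derivative control entirely.
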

\begin{proof}
By parabolic scaling as in \Cref{lem:par-scaling}, we may w.l.o.g.\ assume $\rho=1$. As argued in \eqref{eq:len-low-bound}, $\mathcal{L}(\gamma(t))\geq \ell\vcentcolon= (2\pi)^2/M$ for all $t\in[0,T)$. Further, 
\Cref{rem:hyplengthestimatesrange(optimal)} yields that for all $t\in [0,T)$ we have
\begin{equation}\label{eq:maxmin-len}
    \max_{\S^1}\gamma^{(2)}(t,\cdot)\leq e^{L/2}\min_{\S^1}\gamma^{(2)}(t,\cdot) .
   \end{equation}
For a constant $D>1$ that will be fixed later, define 
$$
    \delta_1 \vcentcolon=\sup\{t_1\in(0,T): D^{-1}\leq \gamma^{(2)}(t,x)\leq BD \text{ for all $t\in[0,t_1]$ and $x\in\S^1$}\}.
$$
Suppose that $\delta_1=T$. Then, \Cref{cor:global-existence} applies and yields $T=\infty$. In the remaining case,  $\delta_1<T$ and, by continuity, we then have $\min_{\S^1}\gamma^{(2)}(\delta_1,\cdot)=\frac{1}{D}$ or $\max_{\S^1}\gamma^{(2)}(\delta_1,\cdot) = BD$. We establish a non-trivial  uniform lower bound $\delta_0$ for $\delta_1$ only depending on $M$. 

    \textbf{Case 1:} Suppose that $\min_{\S^1}\gamma^{(2)}(\delta_1,\cdot)=\frac{1}{D}$. 

    Using \eqref{eq:maxmin-len} and \Cref{lem:bounddistintegral} and recalling $1=\rho\leq \gamma_0^{(2)}\leq B$, we find
    \begin{align}
        \frac{e^{L}}{D^2}  L \geq  \int_{\S^1}{\gamma^{(2)}(\delta_1,\cdot)}^2\dd s \geq \int_{\S^1}({\gamma^{(2)}(0,\cdot)})^2\dd s - \frac12 M \sqrt{\delta_1} \geq \ell - \frac12 M \sqrt{\delta_1} .
    \end{align}
    That is, by choosing $D$ sufficiently large,
    \begin{equation}\label{eq:linfty-bd-1}
       \delta_1 \geq \frac{4}{M^2} \Big(\ell-\frac{e^{L}}{D^2} L\Big)^2 >0.
    \end{equation}

    \textbf{Case 2:} $\max_{\S^1}\gamma^{(2)}(\delta_1,\cdot)= BD$.

    Using again \eqref{eq:maxmin-len} and \Cref{lem:bounddistintegral}, it follows that
    \begin{align}
        \ell \left(\frac{BD}{e^{L/2}}\right)^2 &\leq \int_{\S^1}(\gamma^{(2)}(\delta_1,x))^2\dd s \leq \int_{\S^1}\gamma^{(2)}(0,x)^2\dd s + \frac{M}{2} \sqrt{\delta_1}\leq B^2L+ \frac{M}{2} \sqrt{\delta_1}.
    \end{align}
    Therefore, in this case, we obtain by again choosing $D$ sufficiently large,
    \begin{equation}\label{eq:linfty-bd-2}
        \delta_1\geq \frac{4}{M^2} \Bigg(\ell \left(\frac{BD}{e^{L/2}}\right)^2-B^2L\Bigg)^2.
    \end{equation}

    Using that $B>1$, we can choose $D=D(L,M)>1$ sufficiently large such that the right-hand-sides in \eqref{eq:linfty-bd-1} and \eqref{eq:linfty-bd-2} both are greater than or equal to $\frac{\ell^2}{M^2}$.  Particularly, $\delta_1\geq \frac{\ell^2}{M^2}=\vcentcolon \delta_0=\delta_0(M)$.
\end{proof}

\subsection{Global bounds and sub-convergence via an iterative blow-up scheme}\label{sec:iterative_blow_up}

Throughout this section, we consider the following situation.
\begin{hypothesis}\label{hyp:wf-cwf}
Let $\gamma\colon[0,T)\times\S^1\to\H^2$ be a maximal Willmore or conformally constrained Willmore flow such that $\E(\gamma_0)\leq M$ and, 
\begin{enumerate}[(a)]
    \item in case of a Willmore flow, suppose that $\mathcal{L}(\gamma(t))\leq L$ for all $t\in[0,T)$;
    \item in case of a conformally constrained Willmore flow, denote $\Ll(\gamma_0)=\vcentcolon L$ and suppose that $\eta=\inf_{t\in[0,T)} |\int_{\S^1}\scurv\dd s| >0$.
\end{enumerate}
\end{hypothesis}

We wish to show that $T=\infty$ and that the flow converges. The standard methods
for one-dimensional geometric evolution equations based on interpolation inequalities as employed, e.g., in \cite{dziukkuwertschaetzle2002,lin2012,dallacquapozzi2014,schlierf2024}, are not directly applicable without some a-priori control of the factor $1/(\gamma^{(2)})^4$, cf.\ \Cref{cor:global-existence}. Therefore, we iteratively rescale the flow along a sequence of times $(t_j)_{j\in\N}$ in order to obtain uniform control of the distance to the rotation axis in the rescaled flows. In \Cref{lem:BD}, choose $B\vcentcolon= e^{L/2}$ and fix $\delta_0=\delta_0(M)>0$ as well as $D=D(L,M)>1$ as in the lemma. The sequence of times is constructed inductively. Let $t_0=0$ and, if $t_j$ is already defined and finite for some $j\in\N_0$, write 
$$\rho_j\vcentcolon=\min_{x\in\S^1}\gamma^{(2)}(t_j,x)$$ and define 
\begin{equation}
    t_{j+1} \vcentcolon= \sup\{\Bar{t}\in[t_j,T): \frac{\rho_j}{D}\leq \gamma^{(2)}(t,x) \leq \rho_j\cdot BD \text{ for all $t\in[0,\Bar{t}]$ and $x\in\S^1$} \}\in (t_j,T].
\end{equation}
Note that $t_{j+1}>t_j$ follows from \Cref{rem:hyplengthestimatesrange(optimal)}. The iteration stops if, for some $J\geq 1$, $t_J=T$. Otherwise, set $J\vcentcolon=\infty$. We will now show that
\begin{align}
t_j\nearrow T \text{ for } j\to J. 
\label{cl:tj-to-T}
\end{align}

\begin{proof}[{Proof of \eqref{cl:tj-to-T}}]
    W.l.o.g.\ $J=\infty$ and, for the sake of contradiction, suppose $T'\vcentcolon=
    \lim_{j\to\infty}t_j<T$. 
    If $\liminf_{j\to\infty}\rho_j=0$, then choose $(x_j)_{j\in\N}\subseteq\S^1$ with $\rho_j=\gamma^{(2)}(t_j,x_j)$. There exists a sub-sequence with $x_{j_k}\to x'\in\S^1$. Since $T'<T$, we find $\gamma^{(2)}(T',x')=\lim_{k\to\infty}\gamma^{(2)}(t_{j_k},x_{j_k})=0$, a contradiction! Therefore, there exists $\varepsilon>0$ with $\rho_j\geq \varepsilon$ for all $j\in\N$. 
     Using the above definition of $t_j$, \Cref{lem:BD} yields
    \begin{equation}
        t_{j+1}-t_j \geq \varepsilon^4 \delta_0.
    \end{equation}
    Therefore, $T'=\sum_{j=0}^{\infty}(t_{j+1}-t_j) = \infty$ which especially yields $T=T'$, a contradiction!
\end{proof}
For $0\leq j<J$, define
\begin{equation}
    \gamma_j(t,x)\vcentcolon=\frac{1}{\rho_j} \gamma(t_j+\rho_j^4 t,x).
\end{equation}
By \Cref{lem:par-scaling}, for $0\leq j<J$, each $\gamma_j$ again solves \eqref{eq:wf-eq} (respectively \eqref{eq:conf-constr-wf-eq}) with initial datum $\gamma_{j,0}=\frac{1}{\rho_j}\gamma(t_j)$. Using \Cref{rem:hyplengthestimatesrange(optimal)},
\begin{equation}\label{eq:apple}
    1\leq \gamma_{j,0}^{(2)}(x)\leq e^{L/2} = B\quad\text{for all $x\in\S^1$}.
\end{equation} 
By \Cref{lem:BD} and the scaling invariance of the hyperbolic length, the elastic energy and the total geodesic curvature in $\H^2$, we have the following. For $0\leq j<J$, if $T_j \vcentcolon= (t_{j+1}-t_j)/{\rho_j^4}<\infty$, each $\gamma_j$ is defined on $[0,T_j]\times\S^1$ with $T_j\geq \delta_0$ (cf.\ \Cref{lem:BD}) and, by definition,
\begin{equation}\label{eq:broccoli}
    \gamma_{j-1}(T_{j-1},\cdot) = \frac1{\rho_{j-1}} \gamma(t_{j},\cdot) = \frac{\rho_{j}}{\rho_{j-1}} \gamma_{j}(0,\cdot).
\end{equation}
Otherwise, $\gamma_j$ is already globally defined. In both cases, 
\begin{equation}\label{eq:unif-rot-dist-bounds}
    \frac{1}{D} \leq \gamma_j^{(2)}(t,x) \leq BD\quad\text{for all $0\leq t<T_j$ and $x\in\S^1$}.
\end{equation}

For $0\leq j<J$ and $m\in\N$, define $\vec{N}_j\vcentcolon=(\gamma_j^{(2)})^2(\nabla_s^{\bot})^m\curv_j$, where $\curv_j$ denotes the curvature vector of $\gamma_j$. Since scalings by $\frac{1}{\rho_j}$ and $\frac{1}{\rho_{j-1}}$ are isometries of $\H^2$, for $1\leq j <J$ and $x\in \S^1$ we have
\begin{align}
    |\Vec{N}_j(0,x)|_g &= (\gamma_j^{(2)}(0,x))^2|({\nabla_s^{\bot}})^m\curv_j(0,x)|_g = \frac{1}{\rho_j^2}(\gamma^{(2)}(t_j,x))^2 |(\nabla_s^{\bot})^m\curv(t_j,x)|_g\\
    &=\frac{\rho_{j-1}^2}{\rho_j^2}(\gamma^{(2)}_{j-1}(T_{j-1},x))^2 |(\nabla_s^{\bot})^m\curv_{j-1}(T_{j-1},x)|_g  \label{mustep2}
    = \Big(\frac{\rho_{j-1}}{\rho_j}\Big)^2  |\Vec{N}_{j-1}(T_{j-1},x)|_g.
\end{align}
In the following, we write $\dd s_j$ for the arc-length element of $\gamma_j$.

\textbf{Case 1: $\gamma$ is a Willmore flow.}
Choose $\tilde\mu>0$ sufficiently large such that
\begin{equation}\label{eq:choice-mu}
    D^4 \cdot e^{-\tilde\mu\delta_0} < \frac12.
\end{equation}
Applying \Cref{prop:il} with $r=BD$, $\mu=(BD)^4\tilde\mu$ and using $T_j\geq \delta_0$,
\begin{align}
     \int_{\S^1}|\Vec{N}_j|_g^2 \dd s_j\Big\vert_{t=T_j} &\leq \int_{\S^1}|\Vec{N}_j|_g^2 \dd s_j\Big\vert_{t=0}\cdot e^{-\tilde\mu T_j} + C(m,L,M)\\
     &\leq \int_{\S^1}|\Vec{N}_j|_g^2 \dd s_j\Big\vert_{t=0}\cdot e^{-\tilde\mu \delta_0} + C(m,L,M). \label{mustep1}
\end{align}
Finally using \eqref{mustep1}, \eqref{mustep2} and $\dd s_{j-1}\vert_{t=T_{j-1}}=\dd s_j\vert_{t=0}$ by \eqref{eq:broccoli}, and estimating $\rho_j=\min_{\S^1}\gamma^{(2)}(t_j)\geq \rho_{j-1}/D$, we find
\begin{align}
    \int_{\S^1}|\Vec{N}_j|_g^2\dd s_j\Big\vert_{t=T_j} &\leq \Big(\frac{\rho_{j-1}}{\rho_j}\Big)^{4} \int_{\S^1}|\vec{N}_{j-1}|_g^2\dd s_{j-1}\Big\vert_{t=T_{j-1}} \cdot e^{-\tilde\mu\delta_0}+ C(m,L,M)\\
    &\leq D^{4} e^{-\tilde\mu\delta_0} \cdot \int_{\S^1}|\vec{N}_{j-1}|_g^2\dd s_{j-1}\Big\vert_{t=T_{j-1}}+ C(m,L,M) \label{eq:iteration}
\end{align}
for all $1\leq j<J$. By \eqref{eq:choice-mu}, we have $q\vcentcolon=D^{4}e^{-\tilde\mu\delta_0}<\frac12$, so that \eqref{eq:iteration} yields 
\begin{align}
    \int_{\S^1}|\Vec{N}_j|_g^2\dd s_j\Big\vert_{t=T_j} &\leq q^j\cdot \int_{\S^1}|\vec{N}_{0}|_g^2\dd s_0\Big\vert_{t=0} + \sum_{i=0}^{j-1} q^i\cdot C(m,L,M) \leq  C(m,A_m,L,M)
\end{align}
for all $1\leq j<J$ 
where $A_m$ is defined as in \eqref{eq:Am} 
 using \eqref{eq:apple} for $j=0$. 
With \eqref{eq:il-1} and \eqref{eq:apple},
\begin{equation}
    \sup_{t\in[0,T_j)}\int_{\S^1}|\Vec{N}_j|_g^2\dd s_j \leq C(m,A_m,L,M)\quad\text{for all $0\leq j<J$}.
\end{equation}

\textbf{Case 2: $\gamma$ is a conformally constrained Willmore flow.}
Choosing $\tilde\mu>0$ sufficiently large,
\begin{equation}\label{eq:choice-mu-conf-constr}
    C_1\cdot D^4 \cdot e^{-\tilde\mu\delta_0} < \frac12
\end{equation}
where $C_1\vcentcolon=C(r,\bar r,L_0,M,\eta)$ denotes the second constant in \eqref{eq:il-conf-constr-1} for $r=BD, \bar r=1/D$. Applying \eqref{eq:il-conf-constr-1} with $r=BD$, $\mu=(BD)^4\tilde\mu$, $\bar r=1/D$, and using $T_j\geq \delta_0$,
\begin{align}
     \int_{\S^1}|\Vec{N}_j|_g^2\dd s_j\Big\vert_{t=T_j} &\leq \int_{\S^1}|\Vec{N}_j|_g^2\dd s_j\Big\vert_{t=0}\cdot C_1 e^{-\tilde\mu T_j} + C(m,L,M,\eta)\\
     &\leq \int_{\S^1}|\Vec{N}_j|_g^2\dd s_j\Big\vert_{t=0}\cdot C_1 e^{-\tilde\mu \delta_0} + C(m,L,M,\eta). \label{mustep1-conf-constr}
\end{align}
Finally using that $\rho_j=\min_{\S^1}\gamma^{(2)}(t_j)\geq \rho_{j-1}/D$, and again that scalings are isometries, we obtain from \eqref{mustep1-conf-constr} and \eqref{mustep2} the iteration
\begin{align}
    \int_{\S^1}|\Vec{N}_j|_g^2\dd s_j\Big\vert_{t=T_j} &\leq \Big(\frac{\rho_{j-1}}{\rho_j}\Big)^{4} \int_{\S^1}|\vec{N}_{j-1}|_g^2\dd s_{j-1}\Big\vert_{t=T_{j-1}} \cdot C_1e^{-\tilde\mu\delta_0}+ C(m,L,M,\eta)\\
    &\leq C_1D^{4} e^{-\tilde\mu\delta_0} \cdot \int_{\S^1}|\vec{N}_{j-1}|_g^2\dd s_{j-1}\Big\vert_{t=T_{j-1}}+ C(m,L,M,\eta) \label{eq:iteration-conf-constr}
\end{align}
for all $1\leq j<J$. By \eqref{eq:choice-mu-conf-constr}, we have $q\vcentcolon=C_1D^{4}e^{-\tilde\mu\delta_0}<\frac12$ and thus, we may argue as in \emph{Case~1} with \eqref{eq:conf-constr-wf-lambda-estimate} replaced by \eqref{eq:il-conf-constr-1} to show
\begin{equation}
    \sup_{t\in[0,T_j)}\int_{\S^1}|\Vec{N}_j|_g^2\dd s_j \leq C(m,A_m,L,M,\eta) \quad\text{for all $0\leq j<J$}.
\end{equation}

By the uniform bounds on $\gamma_j^{(2)}$ in \eqref{eq:unif-rot-dist-bounds}, using the same the notation as in \eqref{eq:Am}, we have
\begin{equation}\label{eq:bu-step2}
    \sup_{t\in[0,T_j)}\int_{\S^1}|({\nabla_s^{\bot}})^m\curv_j|_g^2\dd s_j\leq C(m,A_m,L,M,[\eta])\quad\text{for all $0\leq j<J$}.
\end{equation}
Since scalings are isometries of $\H^2$ as well as \Cref{cl:tj-to-T}, we just proved the following proposition.
\begin{proposition}

Assume \Cref{hyp:wf-cwf}. With $A_m$ defined as in \eqref{eq:Am}, for all $m\in\N$ we have
    \begin{equation}\label{eq:glob-en-est}
        \sup_{t\in[0,T)} \int_{\S^1} |(\nabla_s^{\bot})^m\curv|_g^2\dd s\leq C(m,A_m,L,M,[\eta]).
    \end{equation}        
\end{proposition}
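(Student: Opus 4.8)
The plan is to harvest the work already carried out in the iterative blow-up construction above; essentially no new estimate is required. First I would recall the rescaled flows $\gamma_j(t,x)=\rho_j^{-1}\gamma(t_j+\rho_j^4t,x)$, $0\le j<J$, and the uniform bound \eqref{eq:bu-step2} established in the two cases above, namely
\[
    \sup_{t\in[0,T_j)}\int_{\S^1}|(\nabla_s^{\bot})^m\curv_j|_g^2\dd s_j\le C(m,A_m,L,M,[\eta])\qquad\text{for all }0\le j<J,
\]
with a constant that does \emph{not} deteriorate as $j\to J$ — this is precisely what is bought by arranging the contraction factor $q<\tfrac12$ in \eqref{eq:iteration}, resp.\ \eqref{eq:iteration-conf-constr}.

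Second, I would invoke scale invariance. The map $z\mapsto z/\rho_j$ is an isometry of $(\H^2,g)$, so the curvature vector $\curv$, the operator $\nabla_s^{\bot}$, the metric $g$ and the arc-length element $\dd s$ all transform naturally under it; consequently the functional $\int_{\S^1}|(\nabla_s^{\bot})^m\curv|_g^2\dd s$ is unchanged when one passes from $\gamma$ restricted to $[t_j,t_{j+1})$ to $\gamma_j$ restricted to $[0,T_j)$. Concretely, for $\tau\in[0,T_j)$ and $t=t_j+\rho_j^4\tau$ one has $\int_{\S^1}|(\nabla_s^{\bot})^m\curv(t,\cdot)|_g^2\dd s=\int_{\S^1}|(\nabla_s^{\bot})^m\curv_j(\tau,\cdot)|_g^2\dd s_j$.

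Third, I would patch the time intervals together. As $\tau$ runs through $[0,T_j)$ the parameter $t=t_j+\rho_j^4\tau$ runs through $[t_j,t_{j+1})$, and since $t_j\nearrow T$ as $j\to J$ by \eqref{cl:tj-to-T} (with $t_J=T$ if the iteration stops at a finite $J$), the family $\{[t_j,t_{j+1}):0\le j<J\}$ covers $[0,T)$. Taking the supremum over $j$ and $\tau$ and combining with the bound of the first step then yields \eqref{eq:glob-en-est}. Since the weighted curvature estimates, the life-span bound \Cref{lem:BD}, the iteration, and \eqref{cl:tj-to-T} are all in hand, there is no genuine obstacle remaining; the only thing to watch is that the constants are tracked so as to stay uniform in $j$, which the geometric-series structure already guarantees.
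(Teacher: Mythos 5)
Your proposal is correct and follows essentially the same route as the paper: the paper derives \eqref{eq:bu-step2} with a $j$-uniform constant via the geometric-series contraction, then remarks that ``Since scalings are isometries of $\H^2$ as well as \Cref{cl:tj-to-T}, we just proved the following proposition,'' which is exactly the scale-invariance and interval-patching argument you spell out.
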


The following sub-convergence result is a direct consequence.

\begin{corollary}[Sub-convergence]\label{cor:better-sub-convergence}
    Assume \Cref{hyp:wf-cwf}. For any sequence of times $\tau_j\nearrow T$ and for any $(\rho_j)_{j\in\N}\subseteq(0,\infty)$ and $(p_j)_{j\in\N}\subseteq \R\times\{0\}$ satisfying
    \begin{equation}\label{eq:linfty-control}
        \frac{\gamma(\tau_j,\S^1)-p_j}{\rho_j} \subseteq K\quad\text{for all $j\in\N$}
    \end{equation}
    for some compact set $K\subseteq\H^2$, there exists a critical point $\hat{\gamma}\colon\S^1\to\H^2$ 
    \begin{enumerate}[(a)]
        \item of $\E$ in case that $\gamma$ is a Willmore flow,
        \item of $\E + \lambda\Ll$ for some $\lambda\in\R$ in case that $\gamma$ is a conformally constrained Willmore flow,
    \end{enumerate}
    such that
    \begin{equation}
        \frac{\gamma(\tau_j)-p_j}{\rho_j} \circ\phi_j \to \hat{\gamma}\quad\text{in the $C^{\infty}(\S^1)$ topology}
    \end{equation}
    after passing to a subsequence. Here, $\phi_j\colon\S^1\to\S^1$ are diffeomorphisms such that $\gamma(\tau_j)\circ\phi_j$ is parametrized by constant hyperbolic speed.
\end{corollary}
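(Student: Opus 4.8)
The plan is to combine the global curvature bounds \eqref{eq:glob-en-est} with the monotonicity of $\E$ along the flow. Throughout, recall that $z\mapsto\rho^{-1}(z-p)$ with $\rho>0$ and $p\in\R\times\{0\}$ is an isometry of $\H^2$, so that $\E$, $\Ll$, $\curv$, the multiplier $\lambda(\cdot)$, the quantity $\int_{\S^1}\scurv\dd s$ and all norms $\|(\nabla_s^\bot)^m\curv\|_{L^2(\dd s)}$ are invariant under the rescalings in \eqref{eq:linfty-control}. First, I would set $\gamma_j\vcentcolon=\rho_j^{-1}(\gamma(\tau_j,\cdot)-p_j)$. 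By \eqref{eq:glob-en-est} and this invariance, $\|(\nabla_s^\bot)^m\curv_{\gamma_j}\|_{L^2(\dd s)}$ is bounded uniformly in $j$ for every $m$, and $\E(\gamma_j)\leq M$. Since $\gamma_j(\S^1)\subset K\Subset\H^2$, the functions $\gamma_j^{(2)}$ are bounded above and below away from $0$ uniformly in $j$, and $\Ll(\gamma_j)\in[\ell,L]$ with $\ell=(2\pi)^2/M$ by \eqref{eq:len-low-bound} (with $\Ll(\gamma_j)=L$ in the constrained case). Reparametrizing by $\phi_j$ to constant hyperbolic speed gives $|\partial_x(\gamma_j\circ\phi_j)|_g\equiv\Ll(\gamma_j)/(2\pi)$, hence also $|\partial_x(\gamma_j\circ\phi_j)|$ is bounded above and below; arguing exactly as in the proof of \Cref{lem:ewigesLeben} (cf.\ Step~4 of the proof of \cite[Theorem~1.1]{dallacquaspener2017}), the curvature bounds then upgrade to uniform $C^k(\S^1)$-bounds on $\gamma_j\circ\phi_j$ for all $k$. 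Since also $\gamma_j\circ\phi_j(\S^1)\subset K$, Arzel\`a--Ascoli yields a subsequence with $\gamma_j\circ\phi_j\to\hat\gamma$ in $C^\infty(\S^1,\H^2)$ for some constant-hyperbolic-speed immersion $\hat\gamma$.

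Next, I would consider the rescaled flows $\tilde\gamma_j(t,x)\vcentcolon=\rho_j^{-1}(\gamma(\tau_j+\rho_j^4t,x)-p_j)$, which by \Cref{lem:par-scaling} solve \eqref{eq:wf-eq} resp.\ \eqref{eq:conf-constr-wf-eq} and satisfy $\tilde\gamma_j(0,\cdot)=\gamma_j$. Restarting the flow at $\gamma_j$ and invoking the life-span bound \Cref{lem:BD} — applicable since $\E(\gamma_j)\leq M$, $\gamma_j^{(2)}$ is uniformly two-sidedly bounded, $\Ll(\gamma_j)\leq L$, and $|\int_{\S^1}\scurv\dd s|\geq\eta$ in the constrained case — together with uniqueness and the maximality of $T$, one obtains a fixed $\sigma>0$ (depending only on $K$, $L$, $M$ and, in the constrained case, $\eta$) such that each $\tilde\gamma_j$ is defined on $[0,\sigma]\times\S^1$ with $\tilde\gamma_j^{(2)}$ two-sidedly bounded there, uniformly in $j$. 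On this slab, \eqref{eq:il-1} resp.\ \eqref{eq:il-conf-constr-1}, the uniform initial curvature bounds, and the estimate $\int_0^\sigma\lambda(\tilde\gamma_j(t))^2\dd t\leq C$ following from \Cref{lem:lagr-control-by-testing} yield, via the evolution equation, uniform $C^k(\S^1)$-bounds on $\tilde\gamma_j(t,\cdot)$ for all $k$ and all $t\in[0,\sigma]$ together with a uniform $C^{0,1/2}$-modulus of continuity in $t$; the same holds for the reparametrizations $\check\gamma_j(t,\cdot)$ of $\tilde\gamma_j(t,\cdot)$ to constant hyperbolic speed, which satisfy $\check\gamma_j(0,\cdot)=\gamma_j\circ\phi_j$. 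Finally, as $\E(\gamma(\cdot))$ is non-increasing along the flow (\Cref{rem:endecr}, resp.\ \Cref{prop:Schl2} with \eqref{eq:dtW-for-conf-constr-WF} and \eqref{eq:BG}) and bounded below by $4\pi$ (\Cref{rem:min-e}), it converges as $t\nearrow T$, so that, by scaling invariance of $\E$,
\begin{equation*}
    \int_0^\sigma\!\int_{\S^1}\frac{1}{4(\check\gamma_j^{(2)})^4}\bigl|\nabla\E(\check\gamma_j)-\lambda(\check\gamma_j)\curv_{\check\gamma_j}\bigr|_g^2\dd s\dd t=\E\bigl(\gamma(\tau_j)\bigr)-\E\bigl(\gamma(\tau_j+\rho_j^4\sigma)\bigr)\longrightarrow0 .
\end{equation*}

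Since $\check\gamma_j^{(2)}$ is bounded above, this forces $\nabla\E(\check\gamma_j)-\lambda(\check\gamma_j)\curv_{\check\gamma_j}\to0$ in $L^2([0,\sigma]\times\S^1)$, i.e.\ the normal velocities of $\check\gamma_j$ vanish in $L^2$. Passing to a further subsequence, $\check\gamma_j$ converges, continuously in $t$ and in $C^\infty(\S^1)$, to some $\check\gamma_\infty$ with $\check\gamma_\infty(0,\cdot)=\hat\gamma$. For a.e.\ $t\in[0,\sigma]$, along a subsequence, $\nabla\E(\check\gamma_j(t,\cdot))-\lambda(\check\gamma_j(t,\cdot))\curv_{\check\gamma_j(t,\cdot)}\to0$ in $L^2(\S^1)$; since $\check\gamma_j(t,\cdot)\to\check\gamma_\infty(t,\cdot)$ in $C^\infty$ and $\|\curv_{\check\gamma_\infty(t,\cdot)}\|_{L^2(\dd s)}^2=\E(\check\gamma_\infty(t,\cdot))\geq4\pi>0$ (\Cref{rem:min-e}), testing against $\curv$ shows $\lambda(\check\gamma_j(t,\cdot))\to\lambda_\infty(t)$ for some $\lambda_\infty(t)\in\R$ and $\nabla\E(\check\gamma_\infty(t,\cdot))=\lambda_\infty(t)\curv_{\check\gamma_\infty(t,\cdot)}$, i.e.\ $\check\gamma_\infty(t,\cdot)$ is a $\lambda_\infty(t)$-constrained elastica. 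Choosing $t_k\searrow0$ within this a.e.\ set, the continuity of $\check\gamma_\infty$ in $t$ and of the quotient defining $\lambda_\infty$ give that $\hat\gamma=\check\gamma_\infty(0,\cdot)$ is a $\lambda$-constrained elastica with $\lambda\vcentcolon=\lim_k\lambda_\infty(t_k)$. In the Willmore case $\lambda(\cdot)\equiv0$, hence $\lambda=0$ and $\hat\gamma$ is a free elastica, which would complete the proof.

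The main difficulty will be the constrained case in the last step: one must bound the non-local, scale-invariant multiplier $\lambda(\check\gamma_j)$ uniformly — which is exactly where the assumption $|\int_{\S^1}\scurv\dd s|\geq\eta$ enters, through \Cref{lem:lagr-control-by-testing} and the uniform lifespan \Cref{lem:BD} — and then extract a single limiting value $\lambda$ along the subsequence despite only having $L^2$-in-time control of $\lambda$. A secondary technical point is the bookkeeping of the time-dependent reparametrizations to constant hyperbolic speed needed to produce the limit flow $\check\gamma_\infty$; this should be routine given the uniform curvature and non-degeneracy estimates obtained above.
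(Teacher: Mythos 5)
Your argument follows the same overall plan as the paper's proof — global curvature bounds from \eqref{eq:glob-en-est}, the lifespan bound from \Cref{lem:BD} applied to the rescaled flows on a uniform time slab, passage to a smooth subsequential limit, and the vanishing of the energy dissipation to identify the limit as a critical point — so the structural skeleton is correct. The main deviation is in how you handle the reparametrization and extract criticality. The paper fixes a single diffeomorphism $\phi_j$ at $t=0$ (constant hyperbolic speed of $\gamma_j(0)$) for the whole slab $[0,\delta_0\rho^4]$, controls both spatial and temporal derivatives of $\gamma_j(\cdot,\phi_j(\cdot))$ via the scheme of \cite[pp.~20--22]{dallacquaspener2017}, and so obtains a smooth limit $\hat\gamma\colon[0,\delta_0\rho^4]\times\S^1\to\H^2$ for which the dissipation integral vanishes; this yields $\partial_t\hat\gamma\equiv0$ and $\nabla\E(\hat\gamma)\equiv 0$ (resp.\ $\nabla\E(\hat\gamma)-\lambda(\hat\gamma)\curv\equiv0$) directly, in one stroke. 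You instead reparametrize at every time $t$ to constant hyperbolic speed, pass to a limit $\check\gamma_\infty$ (not a flow), deduce criticality only for a.e.\ $t$ from the dissipation, and then use continuity of $t\mapsto\check\gamma_\infty(t,\cdot)$ in $C^\infty$ and the uniform positivity of the denominator in \eqref{eq:lambda-conf-constr-will} to transport criticality to $t=0$. This is workable but shifts the technical burden onto controlling $\partial_t$ of the time-dependent reparametrization (your "secondary technical point"), which the paper avoids entirely; the fixed-$\phi_j$ argument is cleaner here. Two small inaccuracies: your citation of \Cref{lem:ewigesLeben} / Step 4 of \cite[Theorem 1.1]{dallacquaspener2017} for upgrading $L^2$-curvature bounds to static $C^k(\S^1)$-bounds at a fixed time is misplaced (those steps control the time evolution of $|\partial_x\gamma|_g$; what you want here is \cite[Lemma 2.8 and Proposition 4.1]{dallacquaspener2017} for a constant-speed curve); and in the constrained case you do not actually need the $L^2$-in-time control of $\lambda$ from \Cref{lem:lagr-control-by-testing} — as the paper observes, the $C^\infty$ convergence and the uniform lower bound on the denominator of \eqref{eq:lambda-conf-constr-will} already force $\lambda(\gamma_j(t)\circ\phi_j)\to\lambda(\hat\gamma(t))$, so the multiplier converges pointwise without any integral estimate.
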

The possibility to take \emph{any} sequence $\tau_j\nearrow T$ in the previous corollary is exploited in  \Cref{thm:main-conf-constr-wf} below.
\begin{remark}[The Willmore flow with $\W(f_{\gamma_0})< 8\pi$]
    Consider the Willmore flow and assume that $\W(f_{\gamma_0}) < 8\pi$. By \cite[Theorem~1.1]{muellerspener2020}, there exists $L>0$ such that $\Ll(\gamma(t,\cdot))\leq L$ for all $0\leq t<T$. Hence, \Cref{hyp:wf-cwf} is satisfied. In the setting of \Cref{cor:better-sub-convergence}, let $\tau_j\nearrow T$ and make the specific choice $\rho_j\vcentcolon=\min_{\S^1}\gamma^{(2)}(\tau_j)$, $p_j=(p_j^{(1)},0)\in\R\times\{0\}$ such that $(p_j^{(1)},\rho_j)\in\gamma(\tau_j,\S^1)$. Then 
    \begin{equation}
        \frac{\gamma(\tau_j,x)-p_j}{\rho_j} \in \overline{B}_{\frac12L}((0,1))\subseteq\H^2 \quad\text{for all $x\in\S^1$}
    \end{equation}
    so that \eqref{eq:linfty-control} is satisfied. Moreover, \cite[Corollary~6.6]{muellerspener2020} yields that $\hat{\gamma}$ in \Cref{cor:better-sub-convergence} necessarily parametrizes the round circle with center $(0,\sqrt{2}/(\sqrt{2}-1))$ and radius $1/(\sqrt{2}-1)$. In particular, up to reparametrization, the limit does not depend on the sequence $(\tau_j)_{j\in\N}$.
\end{remark}
\begin{proof}[Proof of \Cref{cor:better-sub-convergence}.]
    As in \Cref{lem:par-scaling}, consider $\gamma_j\vcentcolon=(\gamma(\tau_j+\rho_j^4\ \cdot\ ,\ \cdot\ )-p_j)/\rho_j$ with $\rho_j$, $p_j$ as in the statement. Further, as $K\subseteq\H^2$ is compact, there exists $\rho\in (0,1)$ with $\rho\leq u^{(2)}\leq 1/\rho$ for all $u=(u^{(1)},u^{(2)})\in K$. 
    
    Using \eqref{eq:linfty-control} and \Cref{lem:BD}, there exist $\delta_0=\delta_0(M)$ and  $D=D(L,M)$ such that each $\gamma_j$ exists on $[0,\delta_0\rho^4]\times\S^1$ with
    \begin{equation}\label{eq:gammaj2-unif-bd}
        \frac{\rho}{D}\leq \gamma_j^{(2)}\leq \frac{D}{\rho} \quad\text{on $[0,\delta_0\rho^4]\times\S^1$ for all $j\in\N$}.
    \end{equation}    
    Further, by \eqref{eq:glob-en-est}, writing again $\curv_j$ for the curvature vector of $\gamma_j$, 
    \begin{equation}
        \sup_{t\in[0,\delta_0\rho^4]}\||(\nabla_s^{\bot})^m\curv_j|_g\|_{L^2(\dd s)} \leq C(m,A_m,L,M,[\eta])\quad\text{for all $j\in\N$ and $m\in\N_0$}. 
    \end{equation}
    The Gagliardo--Nirenberg type interpolation inequality in \cite[Proposition 4.1]{dallacquaspener2017} and \eqref{eq:len-low-bound}  combined with $\||\curv_j|_g\|_{L^2(\dd s)}\leq C(M)$ immediately imply
    \begin{equation}
        \sup_{t\in[0,\delta_0\rho^4]}\||(\nabla_s^{\bot})^m\curv_j|_g\|_{L^{\infty}(\S^1)} \leq C(m,A_{m+1},L,M,[\eta])\quad\text{for all $j\in\N$ and $m\in\N_0$}. 
    \end{equation}
    Fix now diffeomorphisms $\phi_j\colon\S^1\to\S^1$ such that $\gamma_j(0)\circ\phi_j$ is parametrized by constant hyperbolic speed. That is,
    \begin{equation}
        |\partial_x(\gamma_j\circ\phi_j)|_g\Big|_{t=0} = \frac{\mathcal{L}(\gamma_j(0))}{2\pi}.
    \end{equation}
     The length $\mathcal{L}(\gamma_j(0))$ is uniformly bounded from above and below, cf.\ \eqref{eq:len-low-bound} for the lower bound. Proceeding as on \cite[pp. 20-22]{dallacquaspener2017}, using the above $L^{\infty}$-bounds, the control of the parametrization at $t=0$, and \eqref{eq:linfty-control}, for $m\in\N_0$, we have
    \begin{equation}
        \|\partial_x^m(\gamma_j\circ\phi_j)\|_{L^{\infty}([0,\delta_0\rho^4]\times\S^1)},\ \|\partial_x^m\partial_t(\gamma_j\circ\phi_j)\|_{L^{\infty}([0,\delta_0\rho^4]\times\S^1)} \leq C(m,L,\gamma_0).
    \end{equation}
    Using again \eqref{eq:linfty-control}, there exists a smooth family of immersions $\hat{\gamma}\colon[0,\delta_0\rho^4]\times\S^1\to\H^2$ such that, after passing to a subsequence without relabeling,
    \begin{equation}\label{eq:subconv-cor-smooth-convergence}
        \gamma_j(\cdot,\phi_j)=\frac{\gamma(\tau_j+\rho_j^4\ \cdot\ ,\phi_j)-p_j}{\rho_j}\to\hat{\gamma}\quad\text{in the $C^{\infty}([0,\delta_0\rho^4]\times\S^1)$ topology}.
    \end{equation}
    In the case where $\gamma$ is a Willmore flow, \eqref{eq:subconv-cor-smooth-convergence}, the reparametrization invariance of $\nabla\mathcal{E}$, and \Cref{rem:endecr} imply
    \begin{align}
        \frac{1}{4} \int_{0}^{\delta_0\rho^4}\int_{\S^1}&\frac{1}{(\hat{\gamma}^{(2)})^4}|\nabla\E(\hat{\gamma})|_g^2\dd s\dd t \\
        &=\lim_{j\to\infty} \frac{1}{4}\int_{0}^{\delta_0\rho^4} \int_{\S^1}\frac{1}{(\gamma_j^{(2)})^4} |\nabla\E(\gamma_{j})|_g^2\dd s_j\dd t \\
        &= \lim_{j\to\infty} \int_{0}^{\delta_0\rho^4} -\partial_t\E(\gamma_j(t))\dd t =\lim_{j\to\infty} \E(\gamma_j({0}))-\E(\gamma_j(\delta_0\rho^4)) \\
        &= \lim_{j\to\infty} \E(\gamma(\tau_j))-\E(\gamma(\tau_j+\rho_j^4\delta_0\rho^4)) = 0,\label{eq:blowups-are-critical}
    \end{align} 
    since $\lim_{t\to T}\E(\gamma(t))$ exists. Using \Cref{lem:par-scaling} and \eqref{eq:subconv-cor-smooth-convergence} we conclude $\partial_t\hat\gamma=0$ and $\nabla\E(\hat{\gamma})\equiv 0$. 
    
    In the case where $\gamma$ is a conformally constrained Willmore flow, using \eqref{eq:subconv-cor-smooth-convergence}, \eqref{eq:lambda-conf-constr-will}, and the fact that the denominator in \eqref{eq:lambda-conf-constr-will} is uniformly controlled, we have that
    \begin{equation}
        \lambda(\gamma_j(t)\circ\phi_j) \to \lambda(\hat{\gamma}(t)) \quad\text{in the $C^{\infty}([0,\delta_0\rho^4])$ topology}.
    \end{equation}
    A similar computation as in \eqref{eq:blowups-are-critical} yields $\partial_t\hat{\gamma}=0$ and $\nabla\E(\hat \gamma)-\lambda(\hat{\gamma})\hat\curv\equiv 0$ on $[0,\delta_0\rho^4]\times\S^1$.
\end{proof}

\subsection{Global existence and convergence of the conformally constrained Willmore flow}
\label{sec:conv-conf-constr-will}

In this section, we prove the following convergence theorem for the conformally constrained Willmore flow of tori of revolution. 

Let $\gamma\colon\S^1\to\H^2$ be a smooth immersion with $\mathcal{L}(\gamma)=L$. Writing $\S^1=\R/2\pi\Z$, consider the diffeomorphism $\psi\colon\S^1\to\S^1$ with
\begin{equation}
    \psi(y)=\frac{2\pi}{L} \int_{0}^{y} |\partial_x\gamma|_g\dd x
\end{equation}
for $y\in\S^1$. Writing $\varphi=\psi^{-1}$, we define $\widetilde{\gamma} \vcentcolon= \gamma\circ\varphi$, and in the following refer to this special parametrization as the reparametrization of $\gamma$ by \emph{constant hyperbolic speed.}

\begin{theorem}\label{thm:main-conf-constr-wf}
    Let $\gamma\colon[0,T)\times\S^1\to\H^2$ be a maximal conformally constrained Willmore flow with 
    \begin{equation}\label{eq:tot-hyp-curv-constraint}
        \inf_{0\leq t<T}\Big|\int_{\S^1}\scurv\dd s(t)\Big| > 0.
    \end{equation}
    Then $T=\infty$ and, if $\widetilde{\gamma}(t)$ is the reparametrization of $\gamma(t)$ by constant hyperbolic speed for each $0\leq t<T$, then
    \begin{equation}
        \widetilde{\gamma}(t) \to \hat{\gamma} \quad\text{in the $C^{\infty}(\S^1)$ topology}
    \end{equation}
    as $t\to\infty$ where $\hat{\gamma}$ is the profile curve of a conformally constrained Willmore torus of revolution in the same conformal class as the initial datum. 
\end{theorem}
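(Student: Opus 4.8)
The strategy combines the global curvature bounds of \Cref{sec:iterative_blow_up} with a constrained {\L}ojasiewicz--Simon argument on the Hilbert manifold of immersions of fixed hyperbolic length, using parabolic rescalings to tame the degenerate weight $(\gamma^{(2)})^{-4}$.

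First I would record that the hypotheses of \Cref{sec:iterative_blow_up} are met. Since the choice of $\lambda$ makes the flow length-preserving, $\Ll(\gamma(t))\equiv L\vcentcolon=\Ll(\gamma_0)$ (cf.\ the proof of \Cref{prop:Schl2}) and $\E(\gamma(t))\leq M\vcentcolon=\E(\gamma_0)$ by \Cref{rem:endecr}; together with \eqref{eq:tot-hyp-curv-constraint} this is exactly \Cref{hyp:wf-cwf}. Hence \eqref{eq:glob-en-est} bounds, for every $m$, $\sup_{[0,T)}\int_{\S^1}|(\nabla_s^\bot)^m\curv|_g^2\dd s$ in terms of $m,L,M$, the lower bound $\eta$ in \eqref{eq:tot-hyp-curv-constraint}, and $\gamma_0$; by Gagliardo--Nirenberg interpolation and \eqref{eq:len-low-bound} the same holds for $\||(\nabla_s^\bot)^m\curv|_g\|_{L^\infty(\S^1)}$. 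As hyperbolic length, curvature and total curvature are invariant under the isometries $z\mapsto(z-p)/\rho$ of $\H^2$, these bounds pass unchanged to all parabolic rescalings of $\gamma$.

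Next I would show that $\gamma^{(2)}$ stays pinched between two positive constants on $[0,T)$, which by \Cref{cor:global-existence} forces $T=\infty$. Suppose not; then there is $t_j\nearrow T$ with $\rho_j\vcentcolon=\min_{\S^1}\gamma^{(2)}(t_j)\to 0$ or $\rho_j\to\infty$. Choosing $p_j\in\R\times\{0\}$ with $(p_j^{(1)},\rho_j)\in\gamma(t_j,\S^1)$, \Cref{rem:hyplengthestimatesrange(optimal)} puts the rescaled curves $(\gamma(t_j,\cdot)-p_j)/\rho_j$ in the fixed compact set $\overline{B}_{L/2}((0,1))\subseteq\H^2$, so \Cref{cor:better-sub-convergence} applies to the rescaled flows $\gamma_j\vcentcolon=(\gamma(t_j+\rho_j^4\,\cdot\,,\cdot)-p_j)/\rho_j$: along a subsequence their constant-hyperbolic-speed reparametrizations converge in $C^\infty$ to a $\hat\lambda$-constrained elastica $\hat\gamma$ with $\Ll(\hat\gamma)=L$, $\min\hat\gamma^{(2)}=1$, and $\E(\hat\gamma)=\E_\infty\vcentcolon=\lim_{t\nearrow T}\E(\gamma(t))$. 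Viewing $\hat\gamma$ as a critical point of $\E$ on $\mathcal M_L\vcentcolon=\{\gamma\in C^\infty_{\mathrm{imm}}(\S^1,\H^2):\Ll(\gamma)=L\}$, the constrained {\L}ojasiewicz--Simon gradient inequality \cite{rupp2020} gives $\theta\in(0,\tfrac12]$, $C>0$ and a $C^k$-neighbourhood $U$ of $\hat\gamma$ — on which $\gamma^{(2)}$ is pinched in $[\tfrac12,2e^{L/2}]$ — with $|\E(\gamma)-\E_\infty|^{1-\theta}\leq C\|\nabla\E(\gamma)-\lambda_{L^2}(\gamma)\curv\|_{L^2(\dd s)}$ for $\gamma\in U\cap\mathcal M_L$, where $\lambda_{L^2}$ is the $L^2$-Lagrange multiplier. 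Since $\nabla\E(\gamma)-\lambda(\gamma)\curv$ minimizes $t\mapsto\int_{\S^1}(\gamma^{(2)})^{-4}|\nabla\E(\gamma)-t\curv|_g^2\dd s$, the energy identity $-\tfrac{\dd}{\dd t}\E(\gamma)=\tfrac14\int_{\S^1}(\gamma^{(2)})^{-4}|\nabla\E(\gamma)-\lambda(\gamma)\curv|_g^2\dd s$ together with the pinching of $\gamma^{(2)}$ turns this into a {\L}ojasiewicz differential inequality and a finite-length estimate $\int\|\partial_t\gamma\|_{L^2}\dd t\leq c\,(\E(\gamma(\cdot))-\E_\infty)^\theta$ valid as long as the (reparametrized) rescaled flow stays in $U$. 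Because $\E(\gamma(t_j))\to\E_\infty$, for $j$ large the rescaled flow starts in an inner neighbourhood and, using the $C^\infty$-bounds of the previous step to promote $L^2$-displacement to $C^k$-displacement, it never leaves $U$; hence its $\gamma^{(2)}$ stays in $[\tfrac12,2e^{L/2}]$, so by \Cref{cor:global-existence} it exists for all positive times — which, rescaling back, forces $T=\infty$ and $\min_{\S^1}\gamma^{(2)}(\tau)\in[\tfrac12\rho_j,2e^{L/2}\rho_j]$ for all $\tau\geq t_j$, contradicting $\rho_j\to0$ (resp.\ $\rho_j\to\infty$). Thus $\alpha\leq\gamma^{(2)}\leq\beta$ on $[0,\infty)\times\S^1$ for some $0<\alpha\leq\beta$.

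Finally I would deduce full convergence. With $\gamma^{(2)}$ globally pinched and $\Ll(\gamma)=L$, one has $\Ll_{\R^2}(\gamma(t))=\int_{\S^1}\gamma^{(2)}\dd s\leq\beta L$, so after subtracting a suitable point on the axis the curves $\gamma(t,\S^1)$ stay in a fixed compact subset of $\H^2$, and \Cref{cor:better-sub-convergence} (with trivial rescaling) yields $\tau_j\nearrow\infty$ along which the constant-hyperbolic-speed reparametrizations $\widetilde\gamma(\tau_j)$ converge in $C^\infty$ to a $\lambda$-constrained elastica $\hat\gamma$ with $\Ll(\hat\gamma)=L$. Running the {\L}ojasiewicz--Simon argument of the previous step at $\hat\gamma$ — now with the pinching available on the whole flow — the trajectory enters and cannot leave an arbitrarily small $C^k$-neighbourhood of $\hat\gamma$, and the finite-length estimate makes $\widetilde\gamma(t)$ Cauchy in $L^2$, hence $\widetilde\gamma(t)\to\hat\gamma$ in $L^2$; the uniform $C^\infty$-bounds and interpolation upgrade this to $C^\infty$-convergence. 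By \Cref{prop:conf-will-grad-axisym}, $f_{\hat\gamma}$ is a conformally constrained Willmore torus, and by \eqref{eq:conf-class-of-torus-of-rev} its conformal class is $\max\{L/2\pi,2\pi/L\}\cdot(0,1)=\mathfrak c(f_{\gamma_0})$. The main difficulty is the pinching step: one must run the {\L}ojasiewicz--Simon machinery on the \emph{rescaled} flows around the blow-up limit, keep the {\L}ojasiewicz constants under control via the scale-invariant bounds of the first step, ensure the degenerate weight only enters through quantities that are pinched inside the {\L}ojasiewicz neighbourhood, and promote $L^2$-trajectory bounds to $C^k$ so that the flow provably cannot escape that neighbourhood.
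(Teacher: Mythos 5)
Your proposal is correct and follows essentially the same route as the paper's proof: verify \Cref{hyp:wf-cwf}, rescale around a blow-up limit of $\min_{\S^1}\gamma^{(2)}$, invoke the constrained {\L}ojasiewicz--Simon inequality \Cref{thm:loja-e-constr} (together with the observation that the $L^2(\dd s)$-optimal multiplier gives a lower bound than $\lambda(\gamma)$), and use the pinching of $\gamma^{(2)}$ inside the {\L}ojasiewicz neighbourhood to compare the unweighted $L^2$-norm to the weighted one in the dissipation identity. The only notable organizational differences are that the paper (i) proves $T=\infty$ and convergence in a single {\L}ojasiewicz pass rather than your two-step ``pinching by contradiction, then convergence'', and (ii) closes the ``flow cannot leave $U$'' step by applying \Cref{cor:better-sub-convergence} a second time to the sequence of exit times and deriving a contradiction, whereas you promote $L^2$-smallness of the trajectory to $C^k$-smallness by interpolating against the uniform curvature bounds from \eqref{eq:glob-en-est}; both closings are standard and work here.
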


\begin{remark}
    By \Cref{rem:hyp-tot-curv-contro-by-energy,lem:hyp-tot-curv-control-by-length}, the condition \eqref{eq:tot-hyp-curv-constraint} is necessarily satisfied along a conformally constrained Willmore flow starting at $\gamma_0$ if we either suppose that $\E(\gamma_0)< 16$ or alternatively that $\Ll(\gamma_0)<2\pi |m|$ where $m\in\Z$ is the turning number of $\gamma_0$. We show in \Cref{rem:circular-elastica} that, in any rectangular conformal class, initial data with arbitrarily high elastic energy satisfying the latter condition exist.
\end{remark}

Using \cite[Corollary~5.2]{rupp2020}, and proceeding similarly as in \cite{ruppspener2020} and \cite{pozzetta2022}, one can prove the following length-constrained \L ojasiewicz--Simon gradient inequality for the elastic energy in $\H^2$.

\begin{theorem}\label{thm:loja-e-constr}
    Let $\hat{\gamma}\colon\S^1\to\H^2$ be a smooth critical point of $\E+\lambda\Ll$ for some $\lambda\in\R$. Then there exist $C=C(\hat{\gamma})>0$, $\theta\in(0,\frac12]$ and $\sigma>0$ with the following property. For all immersions $\gamma\colon\S^1\to\H^2$ with $\Ll(\gamma)=\Ll(\hat{\gamma})$ such that there exists a diffeomorphism $\phi\colon\S^1\to\S^1$ satisfying $\|\hat{\gamma}-\gamma\circ \phi\|_{W^{4,2}(\S^1,\R^2)}<\sigma$, we have
    \begin{equation}\label{eq:loja-e-constr}
        |\E(\gamma)-\E(\hat{\gamma})|^{1-\theta}\leq C\||\nabla\E(\gamma)-\frac{\int_{\S^1}\langle \nabla\E(\gamma),\curv\rangle_g\dd s}{\E(\gamma)}\cdot \curv|_g\|_{L^2(\dd s)}.
    \end{equation} 
\end{theorem}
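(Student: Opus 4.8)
The plan is to obtain \eqref{eq:loja-e-constr} as a consequence of the abstract constrained \L ojasiewicz--Simon gradient inequality \cite[Corollary~5.2]{rupp2020}, arguing along the lines of \cite{ruppspener2020,pozzetta2022}. First I would set up the analytic framework: let $X\vcentcolon= W^{4,2}(\S^1,\R^2)$ and let $\mathcal U\subseteq X$ be the open neighbourhood of $\hat\gamma$ consisting of immersions with image in $\H^2$. Both $\E$ and $\Ll$ restrict to real-analytic functionals on $\mathcal U$, since their integrands are analytic expressions in $\gamma,\partial_x\gamma,\partial_x^2\gamma$, $1/\gamma^{(2)}$ and $1/|\partial_x\gamma|$, composed with the real-analytic embeddings $W^{4,2}\hookrightarrow C^2$. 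By \eqref{eq:nabla-L2} and \eqref{eq:length-dt}, the $L^2(\dd s)$-gradients are $\nabla\E(\gamma)=2(\nabla_s^\bot)^2\curv+|\curv|_g^2\curv-2\curv$ and $\nabla\Ll(\gamma)=-\curv$; counting derivatives, $\nabla\E\colon\mathcal U\to L^2(\S^1,\R^2)$ and $\nabla\Ll\colon\mathcal U\to W^{2,2}(\S^1,\R^2)$ are real-analytic maps, both taking values in normal vector fields along $\gamma$.

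\textbf{The constraint and the gauge.} The level set $\mathcal C\vcentcolon=\{\gamma\in\mathcal U:\Ll(\gamma)=\Ll(\hat\gamma)\}$ is a real-analytic Hilbert submanifold in a neighbourhood of $\hat\gamma$, because $D\Ll(\hat\gamma)V=-\int_{\S^1}\langle\curv,V\rangle_g\dd s$ is a nontrivial functional: a closed curve in $\H^2$ can never be a geodesic, hence $\curv\not\equiv0$. Both $\E$ and $\Ll$ are moreover invariant under reparametrization by $\mathrm{Diff}(\S^1)$; since $\nabla\E(\gamma)$ and $\curv$ are normal, the relevant linearized operator is the constrained Hessian acting on \emph{normal} variations, and one eliminates the (infinite-dimensional) reparametrization orbit by restricting to a normal slice transverse to it, exactly as in \cite{ruppspener2020,rupp2020}. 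Finally, the multiplier appearing in \eqref{eq:loja-e-constr}, $\mu(\gamma)\vcentcolon=\frac{\int_{\S^1}\langle\nabla\E(\gamma),\curv\rangle_g\dd s}{\E(\gamma)}$ (using $\E(\gamma)=\int_{\S^1}|\curv|_g^2\dd s$), is the unique scalar for which $\nabla\E(\gamma)-\mu(\gamma)\curv$ is $L^2(\dd s)$-orthogonal to $\curv=-\nabla\Ll(\gamma)$; thus the right-hand side of \eqref{eq:loja-e-constr} is $C$ times the $L^2(\dd s)$-norm of the Riemannian gradient of $\E|_{\mathcal C}$ at $\gamma$. At $\hat\gamma$, \eqref{eq:constrelast} gives $\nabla\E(\hat\gamma)=\lambda\curv$, so $\mu(\hat\gamma)=\lambda$ and this gradient vanishes there, i.e.\ $\hat\gamma$ is a critical point of $\E|_{\mathcal C}$.

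\textbf{Fredholmness and conclusion.} The linearization $D(\nabla\E-\lambda\curv)(\hat\gamma)\colon W^{4,2}\to L^2$ is a fourth-order linear elliptic operator on $\S^1$ (its leading part acts on normal fields like $2(\nabla_s^\bot)^4$, cf.\ \eqref{eq:EvoKappa} and \eqref{eq:nabla-L2}), hence Fredholm of index zero; restriction to the tangent space of $\mathcal C$ and to the normal slice preserves the index-zero Fredholm property. Together with the analyticity and the submanifold structure above, this verifies the hypotheses of \cite[Corollary~5.2]{rupp2020}. Applying that result at the critical point $\hat\gamma$ of $\E|_{\mathcal C}$ produces $\theta\in(0,\frac12]$ and $C,\sigma>0$ with the desired property: for every immersion $\gamma$ with $\Ll(\gamma)=\Ll(\hat\gamma)$ admitting a diffeomorphism $\phi$ with $\|\hat\gamma-\gamma\circ\phi\|_{W^{4,2}(\S^1,\R^2)}<\sigma$, the inequality \eqref{eq:loja-e-constr} holds. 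Here one uses that $\E$ and the $L^2(\dd s)$-norm on the right-hand side of \eqref{eq:loja-e-constr} are reparametrization-invariant, so it suffices to treat $\phi=\mathrm{id}$.

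\textbf{Main obstacle.} The crux is the reparametrization invariance: the $\mathrm{Diff}(\S^1)$-orbit of $\hat\gamma$ is infinite-dimensional, so the unrestricted Hessian of $\E+\lambda\Ll$ on $X$ is highly degenerate and not Fredholm. This is circumvented, as in \cite{ruppspener2020,pozzetta2022}, by working on a slice transverse to the orbit, where the constrained normal Hessian is Fredholm of index zero; the remaining, more routine, technical point is to obtain the estimate directly in the $L^2(\dd s)$-gradient form of \eqref{eq:loja-e-constr}, which relies on the ellipticity of $\nabla\E$ to upgrade from a weaker dual norm.
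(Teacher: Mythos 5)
Your proposal correctly fleshes out the argument that the paper leaves implicit: the paper itself gives no proof beyond citing \cite[Corollary~5.2]{rupp2020} and saying to proceed ``similarly as in \cite{ruppspener2020} and \cite{pozzetta2022},'' which is exactly the route you take (analytic Hilbert-manifold setup on $W^{4,2}$, the length level set as a constraint submanifold, the multiplier $\mu(\gamma)=\int\langle\nabla\E,\curv\rangle_g\dd s/\E(\gamma)$ as the $L^2(\dd s)$-projection coefficient since $\E(\gamma)=\|\curv\|_{L^2(\dd s)}^2$, Fredholmness on a slice transverse to the $\mathrm{Diff}(\S^1)$-orbit, and reparametrization invariance to reduce to $\phi=\mathrm{id}$). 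This is essentially the same approach as the paper intends.
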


The following lemma is fundamental in applying \Cref{thm:loja-e-constr} and is proven with the same elementary computation as \cite[Lemma 4.10]{ruppspener2020}.

\begin{lemma}\label{lem:beschdes-lemma}
    Consider $T\in (0,\infty]$ and a family of immersions $\gamma\colon [0,T)\times \S^1\to\H^2$. If $\widetilde{\gamma}(t)$ is the constant hyperbolic speed reparametrization of $\gamma(t)$ for each $0\leq t<T$, we have
    \begin{equation}\label{eq:rs-hl}
        \||\partial_t \widetilde{\gamma}(t)|_g\|_{L^2(\dd x)} \leq \Big(\frac{4\pi}{\mathcal{L}(\gamma(t))} +  16 \pi \E(\gamma(t))\Big)^{\frac12} \cdot \||\partial_t\gamma(t)|_g\|_{L^2(\dd s)}.
    \end{equation}
\end{lemma}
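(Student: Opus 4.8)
The plan is to fix $t$, abbreviate $\Ll\vcentcolon=\Ll(\gamma(t))$, and write $\varphi\vcentcolon=\psi^{-1}$, so that $\widetilde{\gamma}(t,\cdot)=\gamma(t,\varphi(\cdot))$ with $\psi(y)=\frac{2\pi}{\Ll}\int_0^y|\partial_x\gamma(t,x)|_g\,\dd x$. By the chain rule,
\begin{equation}
    \partial_t\widetilde{\gamma}(t,y)=(\partial_t\gamma)(t,\varphi(y))+(\partial_x\gamma)(t,\varphi(y))\,\partial_t\varphi(y),
\end{equation}
where the first summand is normal and the second tangential to $\widetilde{\gamma}$; here we use that $\partial_t\gamma\perp\partial_s\gamma$, which holds along the conformally constrained Willmore flow (the setting in which \Cref{lem:beschdes-lemma} is applied) and makes \eqref{eq:evolutionoflineelement2} available. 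Hence $|\partial_t\widetilde{\gamma}|_g^2=|(\partial_t\gamma)\circ\varphi|_g^2+|(\partial_x\gamma)\circ\varphi|_g^2\,(\partial_t\varphi)^2$, and since the substitution $y\mapsto\varphi(y)$ turns $\dd y$ into $\psi'\,\dd z=\frac{2\pi}{\Ll}\,\dd s$, the normal part contributes exactly $\frac{2\pi}{\Ll}\||\partial_t\gamma(t)|_g\|_{L^2(\dd s)}^2$ to $\||\partial_t\widetilde\gamma(t)|_g\|_{L^2(\dd x)}^2$.

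For the tangential part I would differentiate $\psi(\varphi(y))=y$ in $t$ to obtain $\partial_t\varphi=-\frac{(\partial_t\psi)\circ\varphi}{(\partial_y\psi)\circ\varphi}$; since $\partial_y\psi(z)=\frac{2\pi}{\Ll}|\partial_z\gamma|_g$, this gives $|(\partial_x\gamma)\circ\varphi|_g\,|\partial_t\varphi|=\frac{\Ll}{2\pi}\,|(\partial_t\psi)\circ\varphi|$, and substituting $y\mapsto\varphi(y)$ once more,
\begin{equation}
    \int_{\S^1}|(\partial_x\gamma)\circ\varphi|_g^2\,(\partial_t\varphi)^2\,\dd y=\frac{\Ll}{2\pi}\int_{\S^1}(\partial_t\psi)^2\,\dd s.
\end{equation}
Using $\partial_t(\dd s)=-\langle\partial_t\gamma,\curv\rangle_g\,\dd s$ from \eqref{eq:evolutionoflineelement2} together with \eqref{eq:length-dt}, one computes the explicit formula
\begin{equation}
    \partial_t\psi(z)=-\frac{\partial_t\Ll(\gamma(t))}{\Ll}\,\psi(z)-\frac{2\pi}{\Ll}\int_0^z\langle\partial_t\gamma,\curv\rangle_g\,\dd s.
\end{equation}

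It then remains to estimate $\|\partial_t\psi\|_{L^2(\dd s)}$. By Cauchy--Schwarz and $\||\curv|_g\|_{L^2(\dd s)}=\E(\gamma)^{1/2}$ one gets $|\partial_t\Ll(\gamma(t))|\le\E(\gamma)^{1/2}\||\partial_t\gamma(t)|_g\|_{L^2(\dd s)}$ and, pointwise in $z$, $\big|\int_0^z\langle\partial_t\gamma,\curv\rangle_g\,\dd s\big|\le\E(\gamma)^{1/2}\||\partial_t\gamma(t)|_g\|_{L^2(\dd s)}$; moreover, viewed as a function of arclength $\sigma\in[0,\Ll]$, $\psi$ equals $\frac{2\pi}{\Ll}\sigma$, so $\|\psi\|_{L^2(\dd s)}^2=\frac{4\pi^2\Ll}{3}$. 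The triangle inequality then yields $\|\partial_t\psi\|_{L^2(\dd s)}\le\frac{4\pi}{\sqrt{\Ll}}\,\E(\gamma)^{1/2}\||\partial_t\gamma(t)|_g\|_{L^2(\dd s)}$, so the tangential part contributes at most $8\pi\,\E(\gamma)\,\||\partial_t\gamma(t)|_g\|_{L^2(\dd s)}^2$. Adding the two contributions gives $\||\partial_t\widetilde{\gamma}(t)|_g\|_{L^2(\dd x)}^2\le\big(\tfrac{2\pi}{\Ll}+8\pi\,\E(\gamma)\big)\||\partial_t\gamma(t)|_g\|_{L^2(\dd s)}^2$, which is stronger than \eqref{eq:rs-hl}.

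The only genuine obstacle is the tangential (reparametrization) term: in contrast to the normal part, $\partial_t\varphi$ has no a priori pointwise control in terms of $\partial_t\gamma$. The resolution is the identity $\partial_t\varphi=-\frac{(\partial_t\psi)\circ\varphi}{(\partial_y\psi)\circ\varphi}$, which reduces matters to the scalar quantity $\partial_t\psi$, together with the structural fact that $\partial_t\gamma$ is normal, so that $\partial_t(\dd s)$ — and hence $\partial_t\psi$ — is expressed purely through $\langle\partial_t\gamma,\curv\rangle_g$ with no uncontrolled boundary terms; everything after that is Cauchy--Schwarz and the elementary arclength identity for $\psi$.
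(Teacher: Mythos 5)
Your proof is correct and is essentially the elementary computation the paper alludes to via the reference to \cite[Lemma~4.10]{ruppspener2020}: decompose $\partial_t\widetilde\gamma$ into the (pulled-back) normal part $(\partial_t\gamma)\circ\varphi$ and the tangential reparametrization part $(\partial_x\gamma)\circ\varphi\cdot\partial_t\varphi$, convert the tangential term to $\partial_t\psi$ via implicit differentiation of $\psi\circ\varphi=\mathrm{id}$, and then control $\partial_t\psi$ by Cauchy--Schwarz using \eqref{eq:evolutionoflineelement2} and \eqref{eq:length-dt}. The computation is clean and in fact yields the sharper constant $\big(\tfrac{2\pi}{\Ll}+8\pi\E(\gamma)\big)^{1/2}$, which implies \eqref{eq:rs-hl}.

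One remark worth making explicit in a write-up: you correctly note that the orthogonal decomposition, the identity $\partial_t(\dd s)=-\langle\partial_t\gamma,\curv\rangle_g\,\dd s$, and hence your formula for $\partial_t\psi$ all rely on $\partial_t\gamma$ being normal. The lemma as stated in the paper is phrased for an arbitrary family of immersions, so this hypothesis is implicit rather than written; but in the only place the lemma is used (the proof of \Cref{thm:main-conf-constr-wf}, where $\gamma$ solves \eqref{eq:conf-constr-wf-eq}) the velocity is indeed purely normal, so nothing is lost. You identified this correctly. A genuinely general-family statement would require a different treatment of the tangential contribution and is not needed here.
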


\begin{proof}[Proof of \Cref{thm:main-conf-constr-wf}]
    Write $L\vcentcolon=\Ll(\gamma_0)$. Then $\Ll(\gamma(t))=L$ for all $0\leq t<T$. We fix any sequence $t_j\nearrow T$, let $\rho_j\vcentcolon=\min_{\S^1}\gamma^{(2)}(t_j)$ and choose $p_j\in\R\times\{0\}$ such that $(p_j^{(1)},\rho_j)\in\gamma(t_j,\S^1)$. After passing to a subsequence without relabeling, we find
    \begin{equation}\label{eq:conv-1-constr}
        \frac{\widetilde{\gamma}(t_j)-p_j}{\rho_j} \to \hat{\gamma} \quad\text{in the $C^{\infty}(\S^1)$ topology}
    \end{equation}
    for some critical point $\hat{\gamma}$ of $\E+\lambda\Ll$, for some $\lambda\in\R$, using \Cref{cor:better-sub-convergence}. 
    Indeed, $\rho_j \leq \gamma^{(2)}(t_j) \leq e^{L/2} \rho_j$ and therefore
    \begin{equation}
        L = \mathcal{L}(\gamma(t_j)) \geq \frac{1}{\lVert \gamma^{(2)}(t_j)\rVert_\infty} \mathcal{L}_{\mathbb{R}^2}(\gamma(t_j)) \quad \textrm{so that \; } \mathcal{L}_{\mathbb{R}^2}(\gamma(t_j)) \leq Le^{L/2}\rho_j.
    \end{equation}
    From this follows that, if $p_j = (p_j^{(1)},0)$ is chosen suitably, we have $ \frac{\tilde{\gamma}(t_j)- p_j}{\rho_j} \in [-Le^{L/2}, L e^{L/2}] \times [ 1, e^{L/2}]$ and \eqref{eq:linfty-control} is ensured.

    By a standard argument, using \eqref{eq:dtW-for-conf-constr-WF}, we can w.l.o.g.\ assume that $\E(\gamma(t))>\E(\hat{\gamma})$ for all $0\leq t<T$.
    Moreover, as $\E(\gamma(t))$ is non-increasing, $\lim_{t\nearrow T}\E(\gamma(t))=\E(\hat{\gamma})$ by \eqref{eq:conv-1-constr}.

    By \Cref{thm:loja-e-constr}, there exist $C_{\mathrm{LS}}>0$, $\theta\in(0,\frac12]$ and $\sigma>0$ such that \eqref{eq:loja-e-constr} is satisfied. For $j\in\N$, define the smooth function $G_j\colon [0,(T-t_j)/\rho_j^4)\to(0,\infty)$,
    \begin{equation}
        G_j(t)= \Big( \E\big((\gamma(t_j+t\rho_j^4)-p_j)/\rho_j\big)-\E(\hat{\gamma}) \Big)^{\theta}=(\E(\gamma_j(t))-\E(\hat\gamma))^{\theta}
    \end{equation}
    where $\gamma_j(t,\cdot)\vcentcolon=(\gamma(t_j+t\rho_j^4,\cdot)-p_j)/\rho_j$. By \eqref{eq:conv-1-constr}, passing to a subsequence, we may assume that $\|(\widetilde{\gamma}(t_j)-p_j)/\rho_j-\hat{\gamma}\|_{W^{4,2}}<\sigma$ for all $j\in\N$. Then define
    \begin{equation}\label{eq:conv-2-constr}
        s_j\vcentcolon= \sup\{ \bar{t}\in [0,(T-t_j)/\rho_j^4)\mid \Big\|\frac{\widetilde{\gamma}(t_j+t\rho_j^4)-p_j}{\rho_j}-\hat{\gamma}\Big\|_{W^{4,2}}<\sigma\text{ for all $0\leq t\leq \bar{t}$} \}.
    \end{equation} 
    For any $0< t<s_j$, using $\|(\widetilde{\gamma}(t_j+t\rho_j^4)-p_j)/\rho_j-\hat{\gamma}\|_{W^{4,2}}<\sigma$, w.l.o.g.\ choosing $\sigma$ sufficiently small, we may assume
    \begin{equation}\label{eq:conv-4-constr}
       0 < \alpha < \frac{\gamma^{(2)}(t_j + t \rho_j^4) - p_j^{(2)}}{\rho_j} = \gamma_j^{(2)}(t) < \frac{1}{\alpha}
    \end{equation}
    for some $\alpha=\alpha(\hat{\gamma},\sigma)\in(0,1)$. Thus, using \eqref{eq:conf-constr-wf-eq}, and the fact that
    \begin{equation}
        0=-\lambda(\gamma_j(t)) \partial_t\Ll(\gamma_j(t)) = \int_{\S^1}\langle \lambda(\gamma_j(t))\curv,\partial_t\gamma_j\rangle_g\dd s,
    \end{equation}
    we have 
    \begin{align}
        -&G_j'(t)=\theta \Big( \E\big(\gamma_j(t)\big)-\E(\hat{\gamma}) \Big)^{\theta-1}  \cdot \int_{\S^1} \langle -\nabla\E\big(\gamma_j(t)\big),\partial_t\gamma_j(t) \rangle_g \dd s\\
        &=\theta \Big( \E\big(\gamma_j(t)\big)-\E(\hat{\gamma}) \Big)^{\theta-1}  \cdot \int_{\S^1} \langle -(\nabla\E\big(\gamma_j(t)\big)-\lambda(\gamma_j(t))\curv_{\gamma_j(t)}),\partial_t\gamma_j(t) \rangle_g \dd s\\
        &= \frac{\theta}{4} \Big( \E\big(\gamma_j(t)\big)-\E(\hat{\gamma}) \Big)^{\theta-1} \cdot\Big\|\frac{1}{(\gamma_j^{(2)}(t))^2}|\nabla \E (\gamma_j(t))-\lambda(\gamma_j(t))\curv_{\gamma_j(t)} |_g\Big\|_{L^2(\dd s)}^2\\
        &\geq \frac{\alpha^4\theta}{4}  \Big( \E\big(\gamma_j(t)\big)-\E(\hat{\gamma}) \Big)^{\theta-1} \||\nabla \E (\gamma_j(t))-\lambda(\gamma_j(t))\curv_{\gamma_j(t)} |_g\|_{L^2(\dd s)}^2
    \end{align}
    such that, by the $L^2(\dd s)$-orthogonality estimate
    \begin{align}
        \||\nabla \E (\gamma_j(t))-&\lambda(\gamma_j(t))\curv_{\gamma_j(t)} |_g\|_{L^2(\dd s)} \\
        &\geq \||\nabla \E (\gamma_j(t))-\frac{\int_{\S^1}\langle\nabla\E(\gamma_j(t)),\curv_{\gamma_j(t)}\rangle_g\dd s}{\E(\gamma_j(t))}\cdot\curv_{\gamma_j(t)} |_g\|_{L^2(\dd s)}
    \end{align}
    and \eqref{eq:loja-e-constr}, using $0<t<s_j$ and \eqref{eq:conv-2-constr},
    \begin{align}
        -\frac{4}{\theta}G_j'(t) &\geq \frac{\alpha^4}{C_{\mathrm{LS}}} \||\nabla \E (\gamma_j(t))-\lambda(\gamma_j(t))\curv_{\gamma_j(t)} |_g\|_{L^2(\dd s)}\\
        &\geq 4\frac{\alpha^{8}}{C_{\mathrm{LS}}} \Big\|\frac{1}{4(\gamma_j^{(2)}(t))^{4}}|\nabla \E (\gamma_j(t))-\lambda(\gamma_j(t))\curv_{\gamma_j(t)} |_g\Big\|_{L^2(\dd s)} = 4\frac{\alpha^{8}}{C_{\mathrm{LS}}} \||\partial_t \gamma_j(t)|_g\|_{L^2(\dd s)} \\
        &= 4\frac{\alpha^{8}}{C_{\mathrm{LS}}} \||\partial_t (\gamma(t_j+t\rho_j^4)-p_j)/\rho_j|_g\|_{L^2(\dd s)},
    \end{align}
    also using \Cref{lem:par-scaling} and \eqref{eq:conf-constr-wf-eq} in the penultimate step. Using \Cref{lem:beschdes-lemma} and \eqref{eq:len-low-bound}
    as well as \eqref{eq:dtW-for-conf-constr-WF},
    \begin{align}
        -G_j'(t) &\geq C(C_{\mathrm{LS}},\E(\gamma_0),\alpha) \|\partial_t(\widetilde{\gamma}(t_j+t\rho_j^4)-p_j)/\rho_j\|_{L^2(\dd x)},
    \end{align}
    where we again used \eqref{eq:conv-4-constr} to estimate the $|\cdot|_g$ norm from below by the Euclidean norm. Integrating on $[0,t]$ therefore yields for $0<t<s_j$
    \begin{align}
        \Big\| \frac{\widetilde{\gamma}(t_j)-p_j}{\rho_j} - \frac{\widetilde{\gamma}(t_j+t\rho_j^4)-p_j}{\rho_j} \Big\|_{L^2(\dd x)} &\leq C(C_{\mathrm{LS}},\E(\gamma_0),\alpha) \big( G_j(0) - G_j(t) \big) \\
        &\leq C(C_{\mathrm{LS}},\E(\gamma_0),\alpha) G_j(0)       
        \to 0, \label{eq:conv-3-constr}
    \end{align}
    as $j\to\infty$, uniformly in $0<t<s_j$. We will now show that
    \begin{align}
        \label{cl:conv-constr}
        \text{ there exists $J\in\N$ with $s_J=(T-t_J)/\rho_J^4$}.
    \end{align}
    To this end, for the sake of contradiction, suppose that $s_j\in (0,(T-t_j)/\rho_j^4)$ for all $j\in\N$. By continuity, \eqref{eq:conv-3-constr} continues to hold for $t=s_j$ for any $j\in\N$. Applying \Cref{cor:better-sub-convergence} to the sequence $\tau_j\vcentcolon=t_j+s_j\rho_j^4$ and using that \eqref{eq:linfty-control} is satisfied by \eqref{eq:conv-2-constr}, passing to a subsequence, we have 
    \begin{equation}
        \frac{\widetilde{\gamma}(t_j+s_j\rho_j^4)-p_j}{\rho_j} \to \overline{\gamma}
    \end{equation}
    for some critical point $\overline{\gamma}$ of $\E+\tilde\lambda \mathcal{L}$ for some $\tilde\lambda\in\R$. By \eqref{eq:conv-2-constr} and \eqref{eq:conv-1-constr}, we have $\|\hat{\gamma}-\overline{\gamma}\|_{W^{4,2}}=\sigma>0$, so particularly $\|\hat{\gamma}-\overline{\gamma}\|_{L^2(\dd x)}>0$. However, passing to the limit in \eqref{eq:conv-3-constr} gives
    \begin{align}
        \|\hat{\gamma}-\overline{\gamma}\|_{L^2(\dd x)} \leq \liminf_{j\to\infty} C(C_{\mathrm{LS}},\E(\gamma_0),\alpha)  G_j(0)
        = 0,
    \end{align}
    a contradiction, so \eqref{cl:conv-constr} is proven. \Cref{cor:global-existence} and \eqref{eq:conv-4-constr} immediately yield that $T=\infty$. Arguing as in \eqref{eq:conv-3-constr}, we find that
    \begin{equation}
        \Big(\frac{\widetilde{\gamma}(t_J+t\rho_J^4)-p_J}{\rho_J}\Big)_{t\in[0,\infty)} \subseteq L^2(\dd x)
    \end{equation}
    is a Cauchy-sequence. Finally, the theorem is proven with a standard subsequence argument, repeatedly using \Cref{cor:better-sub-convergence} to obtain convergence in the topology of $C^{\infty}(\S^1)$.
\end{proof}

\begin{proof}[Proof of \Cref{intro:thm:main-conf-constr-wf}]
    If \eqref{item:intro:thm_conformal_case_1} is not satisfied, then, after replacing $f(0)$ with $f(T-\delta)$ with $\delta>0$ sufficiently small, we may assume \eqref{eq:tot-hyp-curv-constraint}. In this case, Part \eqref{item:intro:thm_conformal_case_2} of the statement follows from \Cref{thm:main-conf-constr-wf} together with \Cref{prop:Schl2}. The last  statement is a consequence of \Cref{rem:hyp-tot-curv-contro-by-energy} and \Cref{lem:hyp-tot-curv-control-by-length}, recalling that the turning number does not change along the flow and that the length is also fixed along the conformally constrained Willmore flow.
\end{proof}

We now prove \Cref{cor:intro_new_conf_willmore_tori}. For the definition of the Willmore energy for surfaces in $\S^3$ or $\R^4$, we refer to \cite[Section 1]{kuwertschaetzle2012}. Due to the conformal invariance of the Willmore functional, after composing with a stereographic projection, we may equivalently consider the tori $f_{m,r}$ from \Cref{cor:intro_new_conf_willmore_tori} (which are immersed in $\S^3$) to be in $\R^3$ instead. Choosing the stereographic projection appropriately, one can show that the symmetry with respect to $u\in \S^1=\R /2\pi\Z$ of the tori $f_{m,r}$ corresponds to a rotational symmetry. This is due to the fact that stereographic projections preserve circles. After applying an isometry of $\R^3$, we may assume that the $x$-axis is the axis of revolution. \Cref{cor:intro_new_conf_willmore_tori} is thus a consequence of the following result for surfaces of revolution in $\R^3$.

\begin{corollary}\label{cor:new_conf_willmore_tori}
    Let $f_{m,r}=f_{\gamma_{m,r}}$ where $\gamma_{m,r}\colon \S^1\to\mathbb{H}^2$ is an $m$-fold circle of radius $r\in (0,1)$ centered at $(0,1)$.
    For any $m\in\N$, $m\geq 3$, there exists $r\in (0,1)$ and a rotationally symmetric conformally constrained Willmore torus $f$ with $8\pi\leq \mathcal{W}(f)< \mathcal{W}(f_{m,r})$ whose profile curve is not circular and has turning number $m$.
\end{corollary}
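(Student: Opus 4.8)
The plan is to start the conformally constrained Willmore flow at a suitable small perturbation of the $m$-fold circle and apply \Cref{intro:thm:main-conf-constr-wf} (equivalently, \Cref{thm:main-conf-constr-wf}) to obtain convergence to a conformally constrained Willmore torus, while controlling the energy along the way. First I would recall the key facts about $\gamma_{m,r}$: by \Cref{ex:circle} and \Cref{prop:conf-will-grad-axisym}, the $m$-fold circle $\gamma_{m,r}$ is a $\lambda$-constrained elastica, so $f_{\gamma_{m,r}}$ is a conformally constrained Willmore torus, but by the computations in \cite{langersinger1984} it is \emph{unstable} for $\E$ (hence for $\W(f_\gamma)$ via \eqref{eq:BG}) whenever $m\geq 3$. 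Concretely, there is a normal variation $\varphi$ along $\gamma_{m,r}$ with $\int_{\S^1}\langle\curv_{\gamma_{m,r}},\varphi\rangle_g\dd s=0$ (so it is length-infinitesimally admissible, cf.\ the construction of $\Gamma$ in the proof of \Cref{prop:conf-will-grad-axisym}) along which the second variation of $\E$ is strictly negative. I would fix $m\geq 3$, choose $r\in(0,1)$ appropriately (the admissible range being dictated by the instability computation and by \Cref{lem:hyp-tot-curv-control-by-length}; see \Cref{rem:admissible_conf_class}), and let $\gamma_0$ be obtained from $\gamma_{m,r}$ by flowing a short time along the length-preserving variation realizing this instability, or simply an $\E$-decreasing, length-preserving perturbation with the same turning number $m$. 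Then $\E(\gamma_0)<\E(\gamma_{m,r})$, equivalently $\W(f_{\gamma_0})<\W(f_{m,r})$ by \eqref{eq:BG}, while $\mathfrak c(f_{\gamma_0})=\mathfrak c(f_{m,r})$ by \eqref{eq:conf-class-of-torus-of-rev} since the hyperbolic length is unchanged, and the turning number remains $m$.

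Next I would verify the hypotheses of \Cref{thm:main-conf-constr-wf}. Run the conformally constrained Willmore flow $\gamma(t)$ starting at $\gamma_0$. The turning number $m$ and the hyperbolic length $L=\Ll(\gamma_0)$ are preserved along the flow, so by choosing $r$ (and the size of the perturbation) so that $L<2\pi|m|$, \Cref{lem:hyp-tot-curv-control-by-length} gives $|\int_{\S^1}\scurv\dd s|\geq 2\pi|m|-L>0$ uniformly in $t$, which is exactly \eqref{eq:tot-hyp-curv-constraint}. Hence \Cref{thm:main-conf-constr-wf} applies: $T=\infty$ and the reparametrized flow converges in $C^\infty(\S^1)$ to a profile curve $\hat\gamma$ of a conformally constrained Willmore torus $f\vcentcolon=f_{\hat\gamma}$ in the conformal class $\mathfrak c(f_{m,r})$. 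By the energy identity \eqref{eq:dtW-for-conf-constr-WF} and $C^\infty$-convergence, $\W(f)=\lim_{t\to\infty}\W(f_{\gamma(t)})\leq\W(f_{\gamma_0})<\W(f_{m,r})$.

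It remains to establish the two-sided bound: that $\hat\gamma$ is not a circle and that $\W(f)\geq 8\pi$. For the first, suppose $\hat\gamma$ were circular; then $f_{\hat\gamma}$ would be one of the Clifford-type tori, which (by the minimality in \Cref{rem:min-e}, or by the classification of circular profiles in \cite{langersinger1984,dallacquaspener2018}) has a specific, small elastic energy — in particular strictly less than $\E(\gamma_{m,r})$ only if it is the global minimizer, but more to the point, a circular profile curve has turning number $1$, not $m\geq 3$, contradicting preservation of the turning number along the flow and in the $C^\infty$-limit. Hence $\hat\gamma$ is non-circular. For the lower energy bound $\W(f)\geq 8\pi$: since $f$ is a non-round conformally constrained Willmore torus whose profile has turning number $m\geq 3$, its profile curve $\hat\gamma$ must have $\E(\hat\gamma)\geq 16$, i.e.\ $\W(f)\geq 8\pi$ by \eqref{eq:BG}; indeed, if $\E(\hat\gamma)<16$ then by the Li--Yau inequality (cf.\ \Cref{rem:hyp-tot-curv-contro-by-energy}) $\hat\gamma$ would be embedded, forcing turning number $\pm1$, a contradiction. \textbf{The main obstacle} is making the first paragraph rigorous: one must produce a concrete length-preserving, $\E$-decreasing perturbation $\gamma_0$ of $\gamma_{m,r}$ with the correct turning number and with $L=\Ll(\gamma_0)<2\pi m$ simultaneously — this requires extracting from the Langer--Singer instability analysis \cite{langersinger1984} both the sign of the second variation along an admissible direction and quantitative control of the resulting hyperbolic length, and identifying the resulting admissible range of $(m,r)$ and conformal classes (deferred to \Cref{rem:admissible_conf_class}).
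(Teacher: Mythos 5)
Your strategy is essentially the paper's: perturb $\gamma_{m,r}$ in a length-preserving, $\E$-decreasing direction coming from the Langer--Singer second-variation computation (the paper packages this as \Cref{lem:circle_stability}), verify $\Ll(\gamma_0)<2\pi m$ so that \Cref{intro:thm:main-conf-constr-wf}\eqref{item:intro:thm_conformal_case_2} yields global existence and smooth convergence, and then use preservation of the turning number together with the Li--Yau inequality to get $\W(f)\geq 8\pi$.

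The one genuine flaw is your non-circularity argument. You assert that ``a circular profile curve has turning number $1$, not $m\geq 3$,'' but this is false: $\gamma_{m,r}$ itself is a circular profile curve (an $m$-fold cover of a round circle) with turning number $m$, so preservation of the turning number does not exclude that $\hat\gamma$ is again an $m$-fold circle. The preceding remark about Clifford-type tori having ``a specific, small elastic energy'' is also off-base for $m$-fold circles with $m\geq 3$, whose energy is at least $4\pi m$. The correct argument, which the paper leaves implicit, is the following: if $\hat\gamma$ were a $k$-fold circle, then $T(\hat\gamma)=m$ would force $k=m$, and since the preserved hyperbolic length $L=2\pi m r/\sqrt{1-r^2}$ determines the radius uniquely (by strict monotonicity of $r'\mapsto r'/\sqrt{1-r'^2}$ on $(0,1)$), $\hat\gamma$ would have to coincide with $\gamma_{m,r}$ up to an isometry of $\H^2$ --- contradicting the strict decrease $\W(f)\leq\W(f_{\gamma_0})<\W(f_{m,r})$ that you correctly established.
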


\begin{proof}
    Fix $m\in\N$. The immersion $f_{m,r}$ is a conformally constrained Willmore torus with Lagrange multiplier $\lambda_r \vcentcolon= r^{-2}-2$  and the hyperbolic length $L\vcentcolon=\mathcal{L}(\gamma_{m,r})$ satisfies $L<2\pi m$ provided $r<1/\sqrt{2}$, see \Cref{app:cmc-tori}. As a consequence of the second variation formula in \cite{langersinger1984} (see \Cref{lem:circle_stability} for the details) we have that $\gamma_{m,r}$, $r\in (0,1/\sqrt{2})$, is unstable for $\mathcal{E}$ with respect to length-preserving variations provided
    \begin{align}\label{eq:range_r}
        \frac{\sqrt{1+2m}}{1+m}<r<\frac{1}{\sqrt{2}}.
    \end{align}
    Such an $r$ exists if $m\geq 3$, in which case we have $\mathcal{E}(\gamma_{m,r})>16$, see  \eqref{eq:LET_gamma_mr}. \Cref{lem:circle_stability} now yields a smooth perturbation $\tilde{\gamma}$ of $\gamma$ with $\mathcal{L}(\tilde{\gamma})=\mathcal{L}(\gamma_{m,r})$, $T(\tilde{\gamma})=T(\gamma_{m,r})=m$, and $16<\E(\tilde{\gamma})<\E(\gamma_{m,r})$. By \Cref{intro:thm:main-conf-constr-wf}\eqref{item:intro:thm_conformal_case_2}, the conformally constrained Willmore flow with initial datum $f_{\tilde{\gamma}}$ exists globally and converges to a  rotationally symmetric conformally constrained Willmore torus $f_\infty$. The limiting profile curve $\gamma_\infty$ satisfies $T(\gamma_\infty)=m\geq 3$, so $\mathcal{W}(f_\infty)\geq 8\pi$ by the Li--Yau inequality. 
\end{proof}

\begin{remark}\label{rem:admissible_conf_class}
The admissible values of $r$ in the proof of \Cref{cor:new_conf_willmore_tori} can be used to determine the admissible rectangular conformal classes. By continuity and monotonicity in $r$ of the length \eqref{eq:LET_gamma_mr}, for any $L_0 \in (2\pi\sqrt{1+2m},2\pi m)$, there exists $r$ as in \eqref{eq:range_r} and $m\geq 3$ such that $\mathcal{L}_{\gamma_{m,r}}=L_0$. Taking into account all $m\geq 3$, we find that for any $L_0\in (2\pi \sqrt{7},6\pi)\cup (6\pi,\infty)$ there exists $m\geq 3$ and $r$ satisfying \eqref{eq:range_r} with $\mathcal{L}(\gamma_{m,r})=L_0$. By \Cref{lem:Rcurv-orth-Cfomega}, the set of admissible conformal classes is thus given by $\big((\sqrt{7},3)\cup (3,\infty)\big) \cdot(0,1)\subset\mathcal{M}$. We do not expect this range to be optimal.
\end{remark}

\section{Singularities of the Willmore flow: the inverted catenoid}\label{sec:catenoid}
In \cite{dallacquamullerschatzlespener2020}, singular examples for the Willmore flow of tori of revolution are studied. It is shown that curves of vanishing Euclidean total curvature exhibit a non-convergent behavior. Here, we seek to understand their singular behavior. We first show that, if such flows are global and the area is uniformly bounded from above and below, they converge to an inverted catenoid in a weak sense. We then study how to start the Willmore flow from the inverted catenoid.

\subsection{Flowing into an inverted catenoid}

In this section, we identify $\S^1=[-1,1]\ /\sim$ where the end-points $\pm1$ are equivalent by ``$\sim$''. As in \Cref{sec:a-priori-lambda-control}, $T(\gamma)$ denotes the turning number of an immersion $\gamma\colon\S^1\to\R^2$. 
\begin{remark}\label{rem:on-inf-length-elastica}
    Throughout this section, the terminology in the classification of elastica in \cite{langersinger1984,muellerspener2020} is used. In particular, we recall the following facts, see also \cite[Remark~2.17]{schlierf2023}. Let $\gamma\colon (a,b) \to\H^2$ be a free elastica with $\Ll(\gamma)=\infty$ and bounded elastic energy. Then $\gamma$ either is (a segment of) a geodesic or of an asymptotically geodesic elastica. If $\lim_{x\searrow a}\gamma(x)=\lim_{x\nearrow b}\gamma(x)$, then $\gamma$ parametrizes an asymptotically geodesic elastica and $\E(\gamma)=8$. Moreover, if $\lim_{x\searrow a}\gamma^{(2)}(x)=\lim_{x\nearrow b}\gamma^{(2)}(x)=0$ but $\lim_{x\searrow a}\gamma(x)\neq\lim_{x\nearrow b}\gamma(x)$, then $\gamma$ parametrizes a geodesic.
\end{remark}

\begin{lemma}\label{lem:conseq-of-tot-curv-0}
    Consider an immersion $\gamma\colon\S^1\to\R^2$ with 
    \begin{equation}\label{eq:symm-lem}
        \gamma(-x)=\begin{pmatrix}
            -1&0\\0&1
        \end{pmatrix}\gamma(x)\quad\text{for all $x\in\S^1$}
    \end{equation}
    and $T(\gamma)=0$. Then $\gamma(0),\gamma(\pm1)\in \{0\}\times\R$ and $\partial_x\gamma^{(1)}(0),\partial_x\gamma^{(1)}(\pm1)\neq 0$. Moreover, there exists $h\in(0,1)$ with $\gamma(h)=\gamma(-h)\in \{0\}\times\R$.
\end{lemma}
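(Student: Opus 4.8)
The plan is to exploit the reflection symmetry \eqref{eq:symm-lem} together with the vanishing turning number $T(\gamma)=0$. First I would note that the symmetry map $S\vcentcolon=\begin{pmatrix}-1&0\\0&1\end{pmatrix}$ fixes the points $0$ and $\pm1$ of $\S^1$ (under the identification $\S^1=[-1,1]/\sim$). Applying \eqref{eq:symm-lem} at $x=0$ gives $\gamma(0)=S\gamma(0)$, which forces $\gamma^{(1)}(0)=0$, i.e. $\gamma(0)\in\{0\}\times\R$; the same argument at $x=\pm1$ gives $\gamma(\pm1)\in\{0\}\times\R$. Differentiating \eqref{eq:symm-lem} yields $-\partial_x\gamma(-x)=S\partial_x\gamma(x)$, so at the fixed points $0,\pm1$ we get $\partial_x\gamma(0)=-S\partial_x\gamma(0)$ and $\partial_x\gamma(\pm1)=-S\partial_x\gamma(\pm1)$; since $-S=\begin{pmatrix}1&0\\0&-1\end{pmatrix}$, this means $\partial_x\gamma^{(2)}(0)=\partial_x\gamma^{(2)}(\pm1)=0$. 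As $\gamma$ is an immersion, $\partial_x\gamma\neq 0$ at these points, whence $\partial_x\gamma^{(1)}(0)\neq 0$ and $\partial_x\gamma^{(1)}(\pm1)\neq 0$.

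Next I would establish the existence of a self-intersection point $h\in(0,1)$ with $\gamma(h)=\gamma(-h)$. Consider the restriction $\gamma|_{[0,1]}$; by the first part, both endpoints $\gamma(0)$ and $\gamma(1)$ lie on the axis $\{0\}\times\R$, and the curve crosses the axis transversally there (since $\partial_x\gamma^{(1)}\neq 0$). The idea is to look at the closed curve $\tilde\gamma$ obtained by concatenating $\gamma|_{[0,1]}$ with its reflection $S\circ(\gamma|_{[0,1]})$ traversed backwards — but this is exactly $\gamma$ again. Instead, I would argue via the sign of $\gamma^{(1)}$: if $\gamma^{(1)}(x)\neq 0$ for all $x\in(0,1)$, then $\gamma|_{(0,1)}$ stays strictly on one side of the axis, and by symmetry $\gamma|_{(-1,0)}$ stays strictly on the other side; combined with the transversal crossings at $0$ and $\pm1$, one computes that the turning number of $\gamma$ must be $\pm1$ (the curve makes one net loop), contradicting $T(\gamma)=0$. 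Hence there exists $h\in(0,1)$ with $\gamma^{(1)}(h)=0$, i.e. $\gamma(h)\in\{0\}\times\R$; by \eqref{eq:symm-lem}, $\gamma(-h)=S\gamma(h)=\gamma(h)$, giving the desired self-intersection on the axis.

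The main obstacle I anticipate is making the turning-number/degree argument in the second paragraph fully rigorous: one must carefully track how the tangent direction rotates along $\gamma|_{[0,1]}$ and how the two transversal axis-crossings at the endpoints, together with the reflection symmetry gluing $\gamma|_{[0,1]}$ to $\gamma|_{[-1,0]}$, contribute to $T(\gamma)$. A clean way to do this is to use the Whitney--Graustein / Hopf Umlaufsatz bookkeeping: the total turning of $\gamma$ equals twice the turning accumulated on $\gamma|_{[0,1]}$ plus the contribution of the two ``corners'' at the fixed points, where the reflection reverses orientation. Assuming $\gamma^{(1)}$ has constant sign on $(0,1)$ pins down these corner contributions and forces $|T(\gamma)|=1$. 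Alternatively, and perhaps more robustly, one can invoke a Jordan-curve-type argument: if $\gamma|_{(0,1)}$ and $\gamma|_{(-1,0)}$ were disjoint and on opposite sides of the axis, $\gamma$ would be a simple closed curve, hence $T(\gamma)=\pm1$ by the Umlaufsatz, again a contradiction. Either route reduces the claim to standard plane-curve topology once the symmetry is used to reduce to the half-curve $\gamma|_{[0,1]}$.
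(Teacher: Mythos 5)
The first two parts (the symmetry forces $\gamma(0),\gamma(\pm1)\in\{0\}\times\R$ and $\partial_x\gamma^{(2)}=0$, hence $\partial_x\gamma^{(1)}\neq 0$, at these fixed points) are exactly the paper's argument. For the third part your core idea --- that the absence of a zero of $\gamma^{(1)}$ on $(0,1)$ forces $T(\gamma)\neq 0$ --- is right, but both justifications you propose have real problems. The Jordan-curve route is not valid: even if $\gamma|_{(0,1)}$ and $\gamma|_{(-1,0)}$ stay strictly on opposite sides of the axis, each half-curve can still self-intersect, so $\gamma$ need not be simple and the Umlaufsatz for simple curves does not apply. The ``corner-bookkeeping'' route is also off: the curve is smooth at the fixed points $0,\pm1$ (one checks $\partial_x\gamma(-x)=-S\,\partial_x\gamma(x)$, so the tangents match there), so there are no corner contributions to account for. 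Moreover the conclusion $|T(\gamma)|=1$ is stronger than what you can actually infer; the constant-sign hypothesis only pins $T(\gamma)$ down to be odd, not equal to $\pm1$.

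The paper sidesteps these issues by running the argument directly rather than via a contradiction to $T(\gamma)\neq 0$. Differentiating \eqref{eq:symm-lem} twice in arc-length gives $\scurv_{\R^2}(-x)=\scurv_{\R^2}(x)$, so the total turning of the half-curve is half that of $\gamma$, i.e.\ $\theta(1)-\theta(0)=\pi\,T(\gamma)=0$, where $\theta$ is a continuous angle for the unit tangent. Hence the unit tangents at $x=0$ and $x=1$ are equal, so $\partial_x\gamma^{(1)}(0)/|\partial_x\gamma(0)| = \partial_x\gamma^{(1)}(1)/|\partial_x\gamma(1)|$ and in particular these two quantities have the same sign. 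But if $\gamma^{(1)}$ has no zero on $(0,1)$, then (since $\gamma^{(1)}(0)=\gamma^{(1)}(1)=0$ and both derivatives are nonzero) $\partial_x\gamma^{(1)}(0)$ and $\partial_x\gamma^{(1)}(1)$ must have opposite signs --- a contradiction. This makes the self-intersection argument elementary, avoids any appeal to the Jordan curve theorem, and needs nothing about the curve being simple.
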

\begin{proof}
    Equation \eqref{eq:symm-lem} immediately yields $\gamma(0),\gamma(\pm1)\in \{0\}\times\R$.
    Moreover, differentiating \eqref{eq:symm-lem} 
    we find $\partial_x\gamma^{(2)}(0)=\partial_x\gamma^{(2)}(\pm1)=0$ which yields 
    \begin{equation}\label{eq:conseq-of-tot-curv-0-1}
        \partial_x\gamma^{(1)}(0),\partial_x\gamma^{(1)}(\pm1)\neq 0.
    \end{equation}   
    Differentiating \eqref{eq:symm-lem} twice with respect to the Euclidean arc-length parameter $s_{\R^2}$, $\scurv_{\R^2}(-x)=\scurv_{\R^2}(x)$ for all $x\in\S^1$ so that
    \begin{equation}
        0=T(\gamma)= \frac1\pi \int_0^1 \scurv_{\R^2}\dd s_{\R^2}.
    \end{equation}
    Writing $\partial_x\gamma(x)/|\partial_x\gamma(x)|=(\cos(\theta(x)),\sin(\theta(x)))$ for a smooth function $\theta\colon\S^1\to\R$, we get $\scurv_{\R^2}=\partial_{s_{\R^2}}\theta$, and therefore $0=\theta(1)-\theta(0)$, that is,
    \begin{equation}
        \frac{1}{|\partial_x\gamma(0)|}\partial_x\gamma^{(1)}(0)=\frac{1}{|\partial_x\gamma(1)|}\partial_x\gamma^{(1)}(1).
    \end{equation}
    However, if no $h\in(0,1)$ as in the statement exists, using $\gamma^{(1)}(0)=\gamma^{(1)}(\pm 1)=0$ and \eqref{eq:conseq-of-tot-curv-0-1}, we find $\partial_x\gamma^{(1)}(0)\cdot\partial_x\gamma^{(1)}(1) < 0$, a contradiction!
\end{proof}

\begin{lemma}\label{lem:max-g2-vs-diam}
    Let $\gamma\colon\S^1\to\H^2$ be an immersion with $\E(\gamma)\leq M$. Then there exists a constant $c(M)>0$ with
    \begin{equation}\label{eq:max-g2-vs-diam-1}
        \max_{\S^1} \gamma^{(2)} \geq c(M) \cdot \mathrm{diam}(f_{\gamma}).
    \end{equation}
\end{lemma}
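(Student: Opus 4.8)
\textbf{Proof proposal for \Cref{lem:max-g2-vs-diam}.}

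The plan is to relate both sides of \eqref{eq:max-g2-vs-diam-1} to the geometry of the associated surface of revolution $f_\gamma$ and invoke Simon's diameter estimate. First I would recall from \eqref{eq:dmu-axisymm} that the area of $f_\gamma$ is $\mu_{f_\gamma}(\T) = 2\pi \int_{\S^1} \gamma^{(2)}\,|\partial_x\gamma|\,\dd x = 2\pi\int_{\S^1}(\gamma^{(2)})^2\,\dd s$, which immediately gives the crude bound $\mu_{f_\gamma}(\T) \leq 2\pi\,(\max_{\S^1}\gamma^{(2)})\int_{\S^1}\gamma^{(2)}\,\dd s = 2\pi\,(\max_{\S^1}\gamma^{(2)})\cdot\Ll_{\R^2}(\gamma)$. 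To control the Euclidean length, I would use that $\Ll_{\R^2}(\gamma) = \int_{\S^1}|\partial_x\gamma|\,\dd x \leq (\max_{\S^1}\gamma^{(2)})\int_{\S^1}\frac{|\partial_x\gamma|}{\gamma^{(2)}}\,\dd x = (\max_{\S^1}\gamma^{(2)})\,\Ll(\gamma)$, and then bound the hyperbolic length via Fenchel's theorem in $\H^2$, i.e.\ $\Ll(\gamma)\leq \E(\gamma)/(2\pi) \leq M/(2\pi)$ as already used in \eqref{eq:len-low-bound} (or the equivalent statement $\Ll(\gamma)\leq\frac{1}{4\pi}\,\Ll(\gamma)\E(\gamma)$ refined to $\Ll(\gamma)^2 \leq \frac{1}{4\pi^2}\Ll(\gamma)\E(\gamma)$; in any case a length bound in terms of $M$ follows). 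This yields $\mu_{f_\gamma}(\T)\leq C(M)\,(\max_{\S^1}\gamma^{(2)})^2$.

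Next I would bring in Simon's diameter estimate \cite[Lemma~1.1]{simon1993} exactly as in the proof of \Cref{lem:bounddistintegral}: $\mathrm{diam}(f_\gamma)^2 \leq C\,\W(f_\gamma)\,\mu_{f_\gamma}(\T)$. Combined with the Bryant--Griffiths formula \eqref{eq:BG}, $\W(f_\gamma) = \frac{\pi}{2}\E(\gamma)\leq \frac{\pi}{2}M$, this gives $\mathrm{diam}(f_\gamma)^2 \leq C(M)\,\mu_{f_\gamma}(\T) \leq C(M)\,(\max_{\S^1}\gamma^{(2)})^2$. Taking square roots produces $\mathrm{diam}(f_\gamma)\leq C(M)\,\max_{\S^1}\gamma^{(2)}$, which is \eqref{eq:max-g2-vs-diam-1} with $c(M) \vcentcolon= 1/C(M)$.

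The argument is essentially a chain of elementary estimates, so there is no serious obstacle; the only point requiring mild care is making sure the constant in Simon's diameter estimate applies in the form $\mathrm{diam}^2\leq C\,\W\,\mu(\T)$ (valid since $\W(f_\gamma)\geq 4\pi$ by Willmore's inequality, so the hypothesis of the estimate is met), and that Fenchel's inequality in $\H^2$ is invoked in the correct direction to bound $\Ll(\gamma)$ from above by a function of $M$. An alternative, slightly cleaner route avoiding the two-step length bound would be to observe directly that $\mathrm{diam}(f_\gamma)\leq 2\,\max_{\S^1}\gamma^{(2)} + \Ll_{\R^2}(\gamma)$ — the surface is contained in a slab of the appropriate size plus the extent of the profile curve — but the area-based argument above is the one that parallels the existing proofs in the paper most directly, so I would present that one.
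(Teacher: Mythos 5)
There is a genuine gap in your argument: the step claiming $\Ll(\gamma)\leq \E(\gamma)/(2\pi)\leq M/(2\pi)$ does not hold. You cite Fenchel's theorem in $\H^2$ and \eqref{eq:len-low-bound}, but \eqref{eq:len-low-bound} reads $\frac{1}{\Ll(\gamma)}\leq \frac{1}{(2\pi)^2}\E(\gamma)$, i.e.\ $\Ll(\gamma)\geq (2\pi)^2/\E(\gamma)$ — a \emph{lower} bound on the hyperbolic length, not an upper bound. Fenchel's theorem combined with Cauchy--Schwarz gives $(2\pi)^2\leq \big(\int|\kappa|\,\dd s\big)^2\leq \Ll(\gamma)\,\E(\gamma)$, and your parenthetical reformulation $\Ll(\gamma)^2\leq \frac{1}{4\pi^2}\Ll(\gamma)\E(\gamma)$ has the inequality reversed. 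In fact, no upper bound on $\Ll(\gamma)$ in terms of $\E(\gamma)$ alone can exist: closed curves approximating an asymptotically geodesic elastica (cf.\ \Cref{rem:on-inf-length-elastica}) have elastic energy close to a fixed value while their hyperbolic length tends to infinity. This is precisely why, elsewhere in the paper, the length bound along the Willmore flow is only obtained under the extra hypothesis $\W(f_{\gamma_0})<8\pi$ via \cite[Theorem~1.1]{muellerspener2020}, rather than from an energy bound alone. Since your whole chain of estimates — controlling $\Ll_{\R^2}(\gamma)$ by $\max\gamma^{(2)}\cdot\Ll(\gamma)$, then the area by $\max\gamma^{(2)}\cdot\Ll_{\R^2}(\gamma)$ — funnels through this step, the direct area/diameter route does not close. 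Your alternative observation $\diam(f_\gamma)\lesssim \max\gamma^{(2)}+\Ll_{\R^2}(\gamma)$ runs into the same obstruction.

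The paper's proof avoids the issue entirely by a compactness argument: normalize $\diam(f_{\gamma_j})=1$, suppose $\max_{\S^1}\gamma_j^{(2)}\to 0$ with $\E(\gamma_j)\leq M$, and derive a contradiction. The key inputs are that a diameter lower bound forces a Euclidean length lower bound $\inf_j\Ll_{\R^2}(\gamma_j)>0$ (\cite[Lemma~2.6]{dallacquamullerschatzlespener2020}), and that a positive Euclidean length bound together with $\max_{\S^1}\gamma_j^{(2)}\to 0$ forces $\E(\gamma_j)\to\infty$ (\cite[Proposition~2.8]{schlierf2023}). The energy blow-up in the second input is exactly the qualitative mechanism that replaces the (false) quantitative length bound you attempted.
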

\begin{proof}
    For the sake of contradiction, suppose that there is a sequence of immersions $(\gamma_j)_{j\in\N}$ with $\max_{\S^1}\gamma_j^{(2)} \to 0$, $\E(\gamma_j)\leq M$ and $\mathrm{diam}(f_{\gamma})=1$. In particular, by \cite[Lemma~2.6]{dallacquamullerschatzlespener2020}, $\inf_{j\in\N}\Ll_{\R^2}(\gamma_j)>0$. Therefore, \cite[Proposition~2.8]{schlierf2023} yields that $\E(\gamma_j)\to\infty$, a contradiction!
\end{proof}

\begin{proposition}\label{prop:sing}
    Let $\gamma_0\colon\S^1\to\H^2$ be an immersion with turning number $T(\gamma_0)=0$ satisfying
    \begin{equation}\label{eq:symm}
        \gamma_0(-x)=\begin{pmatrix}
            -1&0\\0&1
        \end{pmatrix}\gamma_0(x)\quad\text{for all $x\in\S^1$}
    \end{equation}
   and let $\E(\gamma_0)\in (16,24)$, that is, $\W(f_{\gamma_0})\in (8\pi,12\pi)$. Denote by $\gamma\colon [0,T)\times\S^1\to\H^2$ the maximal Willmore flow starting at $\gamma_0$ and let $f(t,\cdot)\vcentcolon=f_{\gamma(t,\cdot)}$. Suppose that 
    \begin{equation}\label{eq:ass-sing-ex}
        T=\infty\quad\text{and}\quad 0<\inf_{t\in[0,\infty)}\A(f(t)) \leq \sup_{t\in[0,\infty)} \A(f(t))<\infty,
    \end{equation}
    with $ \A(f(t))$ the surface area of $f(t)$. Then, for every sequence $t_j\nearrow \infty$, there are reparametrizations $\tilde\gamma_j$ of $\gamma(t_j)$ by constant Euclidean speed converging to an asymptotically geodesic elastica uniformly on $\S^1$ and in the topology of $C^{\infty}_{\mathrm{loc}}(\S^1\setminus\{0\})$, after horizontal translation and passing to a subsequence.
\end{proposition}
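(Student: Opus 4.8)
The plan is to run the blow-up machinery along the sequence $t_j\nearrow\infty$ and then use the topological constraint $T(\gamma_0)=0$, together with the symmetry \eqref{eq:symm} and the energy window $\E(\gamma_0)\in(16,24)$, to pin down the limit. First I would set $M:=\E(\gamma_0)$ and recall that $\E(\gamma(t))\le M$ for all $t$ by \Cref{rem:endecr}. The area hypothesis \eqref{eq:ass-sing-ex} combined with \eqref{eq:area_in_gamma} gives uniform two-sided control of $\int_{\S^1}(\gamma^{(2)})^2\dd s$; via Simon's diameter estimate (as in \Cref{lem:bounddistintegral}) one gets $\operatorname{diam}(f(t))\le C$, hence by \Cref{lem:max-g2-vs-diam} and \Cref{rem:hyplengthestimatesrange(optimal)} a uniform upper bound $\gamma^{(2)}(t,\cdot)\le r$. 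The hard input I would want is a uniform \emph{lower} bound on $\mathcal L(\gamma(t))$: this is exactly \eqref{eq:len-low-bound}, so \Cref{hyp:wf-cwf}(a) is satisfied provided I also have a uniform \emph{upper} bound on $\mathcal L(\gamma(t))$ — and here the turning number $T(\gamma_0)=0$ is \emph{not} enough by itself, so I would instead argue that the area bound forces $\mathcal L_{\R^2}(\gamma(t))$ to stay bounded (again \cite[Lemma~2.6]{dallacquamullerschatzlespener2020} plus \eqref{eq:max-g2-vs-diam-1}, relating Euclidean length, diameter and $\max\gamma^{(2)}$), which with $\gamma^{(2)}\le r$ bounds $\mathcal L(\gamma(t))$ from above. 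Thus \Cref{hyp:wf-cwf}(a) holds and the global curvature estimate \eqref{eq:glob-en-est} applies.

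Next, apply \Cref{cor:better-sub-convergence} to the sequence $\tau_j:=t_j$. The symmetry \eqref{eq:symm} is preserved along the flow (uniqueness of the flow plus equivariance of $\nabla\W$ under the reflection, exactly as in the rotational-symmetry reduction), so by \Cref{lem:conseq-of-tot-curv-0} applied to each $\gamma(t_j)$ there is $h_j\in(0,1)$ with $\gamma(t_j)(h_j)=\gamma(t_j)(-h_j)\in\{0\}\times\R$; choosing the scale $\rho_j:=\min_{\S^1}\gamma^{(2)}(t_j)$ and the horizontal translation $p_j$ so that $(p_j^{(1)},\rho_j)\in\gamma(t_j,\S^1)$, the uniform diameter bound gives \eqref{eq:linfty-control} with a fixed compact $K\subset\H^2$. \Cref{cor:better-sub-convergence} then yields, after passing to a subsequence and reparametrizing by constant Euclidean speed (the $C^\infty$-convergence lets me switch between constant-hyperbolic-speed and constant-Euclidean-speed parametrizations), a limit curve $\hat\gamma$ which is a critical point of $\E$, i.e.\ a free elastica, with $\E(\hat\gamma)\le M<24$, and which inherits the symmetry \eqref{eq:symm} and a self-touching point from the $h_j$. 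The remaining task is to identify $\hat\gamma$.

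The identification is the main obstacle and the heart of the argument. Because $\rho_j=\min\gamma^{(2)}(t_j)$ and convergence is smooth on $\S^1$, the limit satisfies $\min_{\S^1}\hat\gamma^{(2)}=1$, so $\hat\gamma$ stays a definite distance from the axis on all of $\S^1$; but I expect in fact that the profile ``opens up'' and that $\hat\gamma$, viewed as a map on $\S^1\setminus\{0\}$, limits onto a \emph{complete} elastica of infinite hyperbolic length — this is why the statement only asserts $C^\infty_{\mathrm{loc}}(\S^1\setminus\{0\})$-convergence and uniform convergence, not $C^\infty(\S^1)$-convergence. Concretely: suppose for contradiction that $\mathcal L(\hat\gamma)<\infty$; then $\hat\gamma$ is a smooth closed free elastica with $T(\hat\gamma)=0$, and the classification in \cite{langersinger1984,muellerspener2020} shows that the only closed free elastica with turning number $0$ would have to be a geodesic or a wavelike elastica, none of which is both closed and of turning number $0$ — the only candidate is the Clifford-torus elastica with $T=1$, excluded — forcing $\mathcal L(\hat\gamma)=\infty$. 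Then by \Cref{rem:on-inf-length-elastica}, a free elastica of infinite length and bounded energy is either a geodesic or an asymptotically geodesic elastica. The geodesic case would give $\E(\hat\gamma)\in\{0,\dots\}$ incompatible with a nontrivial self-touching profile of turning number $0$ coming from \Cref{lem:conseq-of-tot-curv-0} (a geodesic of $\H^2$ is a vertical line or a half-circle, which cannot carry the required self-touching pattern with $\hat\gamma^{(2)}\ge 1$ everywhere and endpoints on the axis); so $\hat\gamma$ is an asymptotically geodesic elastica, and by \Cref{rem:on-inf-length-elastica} $\E(\hat\gamma)=8$, consistent with $8<16<M<24$ and with the loss of one ``period'' in the limit. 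Finally, the horizontal-translation normalization and the symmetry \eqref{eq:symm} make the limiting asymptotically geodesic elastica the expected inverted-catenoid profile, and uniform convergence on $\S^1$ (not just locally away from $0$) follows from equicontinuity: the curves $\tilde\gamma_j$ are uniformly bounded with uniformly bounded Euclidean speed $\mathcal L_{\R^2}(\gamma(t_j))/2$, hence uniformly Lipschitz, so Arzelà–Ascoli upgrades the $C^\infty_{\mathrm{loc}}(\S^1\setminus\{0\})$-convergence to uniform convergence on all of $\S^1$.
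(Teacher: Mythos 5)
Your proposal does not work, and the central gap is a sign/direction error that then propagates. You claim that the upper bound $\gamma^{(2)}\le r$ together with a bound on the Euclidean length yields an \emph{upper} bound on the hyperbolic length $\Ll(\gamma(t))$. In fact $\Ll(\gamma)=\int_{\S^1}|\partial_x\gamma|/\gamma^{(2)}\,\dd x\ge \Ll_{\R^2}(\gamma)/r$, so an upper bound on $\gamma^{(2)}$ produces a \emph{lower} bound on $\Ll$; to bound $\Ll$ from above you would need a positive lower bound on $\min_{\S^1}\gamma^{(2)}$, which is precisely what fails here (and must fail: the limit you want, the profile of the inverted catenoid, has infinite hyperbolic length). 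Because of this, \Cref{hyp:wf-cwf}(a) is not available, and you are not entitled to invoke \Cref{cor:better-sub-convergence}. Indeed, if $\Ll(\gamma(t))$ were uniformly bounded, Corollary~\ref{cor:better-sub-convergence} and Theorem~\ref{thm:main-1d} would give smooth convergence to a \emph{closed} free elastica in $\H^2$, whereas the statement asserts only $C^\infty_{\loc}(\S^1\setminus\{0\})$-convergence to a noncompact limit. You notice this tension yourself mid-proof (``I expect in fact that the profile opens up''), but that observation contradicts the machinery you just invoked rather than completing the argument: the would-be ``contradiction to the classification of closed free elastica with $T=0$'' shows the length is \emph{not} bounded, it does not produce the convergence statement.

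The paper's actual proof takes a genuinely different route that avoids the global blow-up scheme. It works in the constant-\emph{Euclidean}-speed parametrization, uses the area bound to get two-sided bounds on $\Ll_{\R^2}$ and $\diam$, extracts a uniform limit $\tilde\gamma_\infty\in W^{1,\infty}$ by Arzel\`a--Ascoli and weak-$*$ compactness (not via \Cref{cor:better-sub-convergence}), and then studies the zero set $Z=\{\tilde\gamma_\infty^{(2)}=0\}$. The curvature estimates (\Cref{prop:il}) are applied only \emph{locally}, on compact sets $I_\varepsilon\subset\S^1\setminus Z$ where $\gamma^{(2)}$ is bounded below, giving $C^\infty_{\loc}(\S^1\setminus Z)$-convergence and criticality of the limit there. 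The heart of the proof is then a three-way case analysis (using $\E(\gamma_0)<24=3\cdot 8$, the symmetry, \Cref{lem:conseq-of-tot-curv-0}, and the energy drop bound $\E(\tilde\gamma_\infty)\le\E(\gamma_0)-8m$) to rule out $|Z|=0$ and $|Z|=2$, forcing $|Z|=1$ and hence, via \Cref{rem:on-inf-length-elastica}, the asymptotically geodesic elastica. None of this appears in your proposal; in particular, the energy-quantization step $\E(\tilde\gamma_\infty)\le\E(\gamma_0)-8m$ and the careful elimination of the two-point case (distinguishing $\tilde\gamma_\infty(\tilde x)=\tilde\gamma_\infty(-\tilde x)$ from the geodesic alternative, then using the self-intersection at $h_j$ to derive $8\le 0$) is where the real work is, and your sketch does not address it.
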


\begin{remark}\label{rem:numerics}
    Our singular limit classification in \Cref{prop:sing} requires \eqref{eq:ass-sing-ex} which we cannot verify analytically. However, the validity of this hypothesis is suggested by numerical experiments for the axi-symmetric Willmore flow of tori in \cite[Appendix~B, Figures~B.2 – B.4 and p.~883]{barrettgarckenuernberg2021}.
\end{remark}

\begin{proof}[Proof of \Cref{prop:sing}]
    By \cite[Lemma 1.1]{simon1993} and \Cref{rem:endecr}, \eqref{eq:ass-sing-ex} yields
    \begin{equation}\label{eq:prop-sing-0}
        0<\inf_{t\in[0,\infty)} \mathrm{diam}(f(t)) \leq \sup_{t\in[0,\infty)} \mathrm{diam}(f(t)) < \infty.
    \end{equation}
    Thus, using \Cref{lem:max-g2-vs-diam}, also 
    \begin{equation}\label{eq:prop-sing-1}
        \inf_{t\in[0,\infty)}\max_{\S^1}\gamma^{(2)}(t,\cdot) > 0.
    \end{equation}
    \begin{claim}\label{cl:sing-1}
        There are $0<\ell<L<\infty$ with $\Ll_{\R^2}(\gamma(t))\in (\ell,L)$ for all $0\leq t<\infty$.
    \end{claim}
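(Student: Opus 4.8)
The plan is to establish the two bounds separately. The lower bound is elementary. Since $\mathcal{L}_{\mathbb{R}^2}(\gamma)=\int_{\S^1}|\partial_x\gamma|\,\dd x=\int_{\S^1}\gamma^{(2)}\,\dd s$, we have, using \eqref{eq:area_in_gamma},
\[
\frac{\A(f_\gamma)}{2\pi}=\int_{\S^1}(\gamma^{(2)})^2\,\dd s\le\Big(\max_{\S^1}\gamma^{(2)}\Big)\int_{\S^1}\gamma^{(2)}\,\dd s=\Big(\max_{\S^1}\gamma^{(2)}\Big)\,\mathcal{L}_{\mathbb{R}^2}(\gamma),
\]
and since $\max_{\S^1}\gamma^{(2)}\le\tfrac12\diam(f_\gamma)$, as noted in the proof of \Cref{lem:bounddistintegral}, this yields $\mathcal{L}_{\mathbb{R}^2}(\gamma(t))\ge\A(f(t))/(\pi\diam(f(t)))$. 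The lower bound $\ell>0$ then follows from the uniform bounds \eqref{eq:ass-sing-ex} on the area and \eqref{eq:prop-sing-0} on the diameter; alternatively one may cite the corresponding lower bound from \cite{dallacquamullerschatzlespener2020}.

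For the upper bound, which I expect to be the main obstacle, I would isolate the purely geometric estimate $\mathcal{L}_{\mathbb{R}^2}(\gamma)\le C\big(\mathcal{E}(\gamma)\big)\,\diam(f_\gamma)$ for immersions $\gamma\colon\S^1\to\H^2$. Granting it, \Cref{rem:endecr} together with \eqref{eq:BG} gives $\mathcal{E}(\gamma(t))\le\mathcal{E}(\gamma_0)<24$, and \eqref{eq:prop-sing-0} bounds $\diam(f(t))$ from above, so that $\mathcal{L}_{\mathbb{R}^2}(\gamma(t))\le L$ for all $0\le t<\infty$. To prove the estimate I would argue by contradiction and compactness: if it fails, rescale by the hyperbolic isometries of \Cref{lem:par-scaling} to obtain immersions $\gamma_j$ with $\mathcal{E}(\gamma_j)\le M$, $\diam(f_{\gamma_j})=1$ and $\mathcal{L}_{\mathbb{R}^2}(\gamma_j)\to\infty$; since $\max_{\S^1}\gamma_j^{(2)}\le\tfrac12$ and $\operatorname{osc}\gamma_j^{(1)}\le1$, after a horizontal translation (again an isometry of $\H^2$) the $\gamma_j$ lie in a fixed bounded box of $\H^2$. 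The divergence of the Euclidean length forces the number of ``U-turns'' of $\gamma_j$ (maximal arcs along which the Euclidean unit tangent rotates by $\pi$) to diverge as well, while between consecutive U-turns the Euclidean length of $\gamma_j$ is controlled by the diameter; but every such U-turn carries a definite amount of elastic energy, contradicting $\mathcal{E}(\gamma_j)\le M$. Equivalently, one passes to $C^\infty_{\mathrm{loc}}$-limits of the $\gamma_j$ away from the finitely many points where they approach the rotation axis, identifies the blow-ups there with the infinite-length elastica of \Cref{rem:on-inf-length-elastica} (geodesics, or asymptotically geodesic curves of energy $8$, of which there are at most $\lfloor M/8\rfloor$), and checks that no Euclidean length is lost in these degenerations, so that $\limsup_j\mathcal{L}_{\mathbb{R}^2}(\gamma_j)<\infty$.

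The hard part is this last estimate, namely a quantitative lemma bounding the elastic energy of an arc from below in terms of its Euclidean turning, \emph{uniformly in the arc's height above the rotation axis} — or, in the compactness formulation, controlling the possible loss of Euclidean length near the axis in the borderline range $\mathcal{E}<24$. The turning-number hypothesis $T(\gamma_0)=0$ and the symmetry \eqref{eq:symm}, which also underlie \Cref{lem:conseq-of-tot-curv-0}, are at our disposal and should be used to exclude the degenerate limiting configurations (such as a curve escaping entirely to the axis) that the area lower bound in \eqref{eq:ass-sing-ex} does not by itself rule out; a similar estimate may also already be available in \cite{dallacquamullerschatzlespener2020,schlierf2023}. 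Once \ref{cl:sing-1} is proven, it feeds — through \Cref{lem:max-g2-vs-diam} and the already established \eqref{eq:prop-sing-1} — into the subsequent blow-up analysis identifying the limiting asymptotically geodesic elastica in the remainder of the proof of \Cref{prop:sing}.
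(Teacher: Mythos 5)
Your lower bound argument is correct and is in fact more direct than the paper's. The paper proves the lower bound by contradiction: it assumes $\Ll_{\R^2}(\gamma(\tau_j))\to 0$ along a sequence $\tau_j\nearrow\infty$, uses the elementary oscillation estimate $\min_{\S^1}\gamma^{(2)}\geq\max_{\S^1}\gamma^{(2)}-\tfrac12\Ll_{\R^2}(\gamma)$ together with the Fenchel bound $\Ll(\gamma)\geq 4\pi^2/\E(\gamma_0)$ from \eqref{eq:len-low-bound} and the uniform positivity of $\max_{\S^1}\gamma^{(2)}$ from \eqref{eq:prop-sing-1}, and obtains a contradiction. Your inequality chain $\A(f_\gamma)/(2\pi)=\int(\gamma^{(2)})^2\dd s\leq(\max\gamma^{(2)})\,\Ll_{\R^2}(\gamma)\leq\tfrac12\diam(f_\gamma)\,\Ll_{\R^2}(\gamma)$ reaches the same conclusion in one shot from \eqref{eq:ass-sing-ex} and \eqref{eq:prop-sing-0}, bypassing the contradiction and the hyperbolic-length Fenchel input; it is a legitimate simplification.

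For the upper bound, the paper does not prove a new estimate at all: it simply cites \cite[Lemma~2.5]{dallacquamullerschatzlespener2020} together with the uniform area bound in \eqref{eq:ass-sing-ex}. You correctly identify this as the substantive part and correctly guess that the needed bound is of the schematic form $\Ll_{\R^2}(\gamma)\leq C(\E(\gamma))\cdot(\text{extrinsic size of }f_\gamma)$ and likely available in the literature. However, the compactness argument you sketch to prove it has a genuine gap. The claim that ``every U-turn carries a definite amount of elastic energy'' is false \emph{uniformly in height}: the hyperbolic elastic energy is scale-invariant, and by Cauchy--Schwarz a turning of total hyperbolic curvature $\pi$ can cost as little as $\pi^2/\Ll$ in elastic energy, which vanishes as the hyperbolic length of the arc grows. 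This is precisely what the asymptotically geodesic elastica in \Cref{rem:on-inf-length-elastica} exhibit --- they turn, have infinite hyperbolic length, yet carry finite energy $8$ --- so dips toward the axis cannot be dismissed by a naive turning-energy lower bound. Controlling the Euclidean length that accumulates in such degenerating arcs is exactly the content of the cited lemma, and your own ``equivalently'' reformulation (``checks that no Euclidean length is lost in these degenerations'') is an unproven assertion, not a proof. In short: your lower bound is a valid alternative; your upper bound should either (as you half-suggest) simply invoke \cite[Lemma~2.5]{dallacquamullerschatzlespener2020}, or else the compactness step near the axis needs a real quantitative lemma which your sketch does not supply.
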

    The upper bound on the Euclidean length follows from \cite[Lemma~2.5]{dallacquamullerschatzlespener2020} and \eqref{eq:ass-sing-ex}. For the sake of contradiction, suppose that there is no lower bound $\ell>0$ as in the claim. Then there exists $\tau_j\nearrow \infty$ with $\Ll_{\R^2}(\gamma(\tau_j))\to 0$. Now, choosing $j$ sufficiently large such that $\inf_{t\geq 0}\max_{\S^1}\gamma^{(2)}(t,\cdot) > \Ll_{\R^2}(\gamma(\tau_j))$, and using that $\min_{\S^1}\gamma^{(2)}(\tau_j,\cdot) \geq \max_{\S^1}\gamma^{(2)}(\tau_j,\cdot) - \frac12\Ll_{\R^2}(\gamma(\tau_j))$, we obtain
    \begin{align}
        \Ll_{\R^2}(\gamma(\tau_j)) &= \int_{\S^1} \frac{|\partial_x\gamma(\tau_j,x)|}{\gamma^{(2)}(\tau_j,x)}\gamma^{(2)}(\tau_j,x)\dd x \geq \big(\max_{\S^1}\gamma^{(2)}(\tau_j,\cdot)- \frac12\Ll_{\R^2}(\gamma(\tau_j))\big)
        \Ll(\gamma(\tau_j))\\
        &\geq \Big(\inf_{t\geq 0}\max_{\S^1}\gamma^{(2)}(t,\cdot) - \frac{1}{2}\Ll_{\R^2}(\gamma(\tau_j))\Big) \cdot \frac{4\pi^2}{\E(\gamma_0)},
    \end{align}
    using \eqref{eq:len-low-bound} in the last step. This contradicts $\Ll_{\R^2}(\gamma(\tau_j))\to 0$ and thus \Cref{cl:sing-1} is proven.
    
    There is a smooth family $\varphi\colon[0,\infty)\times\S^1\to\S^1$ of diffeomorphisms of $\S^1$ with
    \begin{equation}
        (\varphi(t,\cdot)^{-1})(y)=\frac{2}{\Ll_{\R^2}(\gamma(t))} \int_0^y |\partial_x\gamma(t,x)|\dd x,
    \end{equation}
    cf.\ \Cref{sec:conv-conf-constr-will}.
     Define $\tilde\gamma(t,x)\vcentcolon=\gamma(t,\varphi(t,x))$ and abbreviate $\tilde\gamma_j\vcentcolon=\tilde\gamma(t_j,\cdot)$ where $t_j\nearrow\infty$ is as in the statement. Note that $|\partial_x\tilde\gamma(t,x)|=\frac{1}{2}\Ll_{\R^2}(\gamma(t))$ for all $t\geq 0$ and $x\in\S^1$. Since the turning number
     is preserved along the evolution and independent of the parametrization, see \cite[Remark~5.4]{muellerspener2020}, $T(\tilde\gamma(t))=T(\gamma_0)=0$ for all $0\leq t<\infty$.
    
    As in \cite[proof of Theorem~1.5]{schlierf2023}, we have
    \begin{equation}\label{eq:sing-2}
        \tilde{\gamma}_j(x) = \begin{pmatrix}
            -1&0\\0&1
        \end{pmatrix}\tilde{\gamma}_j(-x)\quad\text{for all $x\in \S^1$}.
    \end{equation}
    Note that using \eqref{eq:prop-sing-0}, we have
    \begin{equation}
        0<R\vcentcolon=\sup_{t\geq 0} \max_{\S^1}\gamma^{(2)}(t,\cdot) \leq \frac12\sup_{t\geq 0}\diam(f(t)) < \infty.
    \end{equation}
    Since $|\partial_x\tilde{\gamma}_j| \equiv \frac12\Ll_{\R^2}(\gamma(t_j))$, by \Cref{cl:sing-1}, there exists $\tilde{\gamma}_{\infty}\in W^{1,\infty}(\S^1,\R\times[0,\infty))$ such that, after horizontal translation and passing to a subsequence,
    \begin{equation}\label{eq:prop-sing-2}
        \tilde\gamma_j\to\tilde\gamma_{\infty}\text{ uniformly and }\tilde\gamma_j\rightharpoonup^*\tilde\gamma_{\infty}\text{ in $W^{1,\infty}(\S^1)$}.
    \end{equation}
    Using $\E(\gamma_0) < 24 = 3\cdot8$, \cite[Theorem~2.6]{schlierf2023} yields that $Z\vcentcolon=\{\tilde\gamma_{\infty}^{(2)}=0\}$ consists of at most two points. Write $m\in\{0,1,2\}$ for the number of elements in $Z$. By \eqref{eq:sing-2}, $Z$ is \emph{symmetric}, i.e., $x\in Z$ implies $-x\in Z$.
    \begin{claim}\label{cl:sing-2}
        We have $\tilde\gamma_j\to \tilde\gamma_{\infty}$ in $C^{\infty}_{\mathrm{loc}}(\S^1\setminus Z)$ and $\nabla\E(\tilde\gamma_{\infty})=0$ on $\S^1\setminus Z$. Moreover,
        \begin{equation}\label{eq:sing-3}
            \E(\tilde\gamma_{\infty}|_{\S^1\setminus Z}) \leq \E(\gamma_0) - 8m.
        \end{equation}
    \end{claim}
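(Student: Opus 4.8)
\textbf{Proof plan for Claim~\ref{cl:sing-2}.}
The plan is to combine the global curvature estimates for the Willmore flow with a localized version of the argument used in \Cref{cor:better-sub-convergence}, together with an energy-concentration analysis near the degenerate set $Z$. First, I would show that the hyperbolic length $\Ll(\gamma(t))$ stays uniformly bounded: this follows from \Cref{cl:sing-1} and \eqref{eq:prop-sing-1}, since $\Ll(\gamma(t)) = \int_{\S^1} \frac{|\partial_x\gamma|}{\gamma^{(2)}} \dd x$ and away from a neighborhood of $Z$ the factor $\gamma^{(2)}$ is bounded below after passing to the limit; more directly, one uses that $\E(\gamma_0) < 24$ keeps the flow in a regime where the results of \cite{schlierf2023,muellerspener2020} apply to give a uniform length bound (indeed $\Ll$ being bounded would follow from \Cref{hyp:wf-cwf}(a) being verifiable, which in turn needs $\W(f_{\gamma_0}) < 8\pi$; since here $\W(f_{\gamma_0}) \in (8\pi,12\pi)$, one instead argues locally). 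Once a uniform length bound on compact subsets of $\S^1 \setminus Z$ is available, \Cref{prop:il} (applied on the rescaled/translated flows) provides uniform $L^2$-bounds on all curvature derivatives $\||(\nabla_s^\perp)^m \curv|_g\|_{L^2}$ over any time-interval, which together with \eqref{eq:prop-sing-1} and Gagliardo--Nirenberg interpolation (\Cref{cor:inter}, \cite[Prop.~4.1]{dallacquaspener2017}) upgrades \eqref{eq:prop-sing-2} to $C^\infty_{\loc}(\S^1 \setminus Z)$ convergence $\tilde\gamma_j \to \tilde\gamma_\infty$.

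The criticality $\nabla\E(\tilde\gamma_\infty) = 0$ on $\S^1 \setminus Z$ would then follow exactly as in \eqref{eq:blowups-are-critical}: since $\E(\gamma(t))$ is non-increasing and bounded below (by \Cref{rem:min-e}), the dissipation $\int_0^\infty \int_{\S^1} \frac{1}{4(\gamma^{(2)})^4}|\nabla\E(\gamma)|_g^2 \dd s \dd t$ is finite, so along the sequence $t_j \to \infty$ (after the translations) the rescaled flows become stationary in the limit on any compact subset of $\S^1 \setminus Z$ where $\gamma^{(2)}$ is bounded below; hence $\nabla\E(\tilde\gamma_\infty) \equiv 0$ there by \Cref{prop:Schl1} and smooth convergence. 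Equivalently, one can integrate $\partial_t\E$ over $[t_j, t_j + 1]$ and use $C^\infty_{\loc}$-convergence to pass to the limit in the localized energy dissipation.

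For the energy-loss estimate \eqref{eq:sing-3}, I would use lower semicontinuity of $\E$ under the $C^\infty_{\loc}(\S^1 \setminus Z)$-convergence together with an energy-quantization statement at each point of $Z$. Concretely, fix $p \in Z$; on a small punctured neighborhood the limit $\tilde\gamma_\infty$ is a free elastica (critical for $\E$) with a singularity at $p$, and by \Cref{rem:on-inf-length-elastica} combined with the analysis in \cite{schlierf2023} — this is precisely where $\E(\gamma_0) < 24 = 3 \cdot 8$ enters — each such point carries a quantized energy defect of at least $8$. Summing over the $m = |Z|$ points and using Fatou/lower semicontinuity gives $\E(\tilde\gamma_\infty|_{\S^1\setminus Z}) + 8m \leq \liminf_j \E(\tilde\gamma_j) = \liminf_j \E(\gamma(t_j)) \leq \E(\gamma_0)$, which is \eqref{eq:sing-3}. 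The main obstacle I anticipate is establishing the uniform curvature control near, but away from, $Z$ without a global length bound: one must either localize the energy estimates of \Cref{sec:2} to subintervals (tracking boundary terms on the moving subinterval) or bootstrap from the uniform bound $\gamma^{(2)} \geq c > 0$ on compact subsets of $\S^1 \setminus Z$ that holds \emph{uniformly in $j$} because of \eqref{eq:prop-sing-1} and the quantitative description of $Z$ from \cite[Theorem~2.6]{schlierf2023}; making this uniformity precise — in particular ensuring that the ``bad'' regions where $\gamma^{(2)}$ is small do not migrate — is the technical heart of the argument, and I expect it to lean heavily on the structural results of \cite{schlierf2023}.
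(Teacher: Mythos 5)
Your overall outline is in the right neighborhood—localize to compact subsets of $\S^1\setminus Z$, extract curvature bounds, derive criticality from dissipation, and use energy quantization for \eqref{eq:sing-3}—but the first move is both false and unnecessary, and the criticality step as stated has a gap.

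\textbf{On the length bound.} You open by asserting that $\Ll(\gamma(t))$ stays uniformly bounded. This cannot be true here: the Euclidean length is bounded (\Cref{cl:sing-1}), but $\gamma^{(2)}(t,\cdot)$ goes to zero at $Z$, so $\Ll(\gamma(t))=\int_{\S^1}|\partial_x\gamma|/\gamma^{(2)}\,\dd x$ must blow up as $t\to\infty$—consistent with the limit $\tilde\gamma_\infty|_{\S^1\setminus Z}$ having infinite hyperbolic length (\Cref{rem:on-inf-length-elastica}). Fortunately, no global length bound is needed. \Cref{prop:il} only requires $\gamma^{(2)}\leq r$ (from the diameter bound \eqref{eq:prop-sing-0}) and $\E(\gamma_0)\leq M$, and yields a \emph{weighted} $L^2$ curvature estimate with weight $(\gamma^{(2)}/r)^4$. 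This weight is bounded below on $I_\varepsilon=\S^1\setminus B_\varepsilon(Z)$ for $j$ large, uniformly in $j$, by \eqref{eq:prop-sing-2}, which is what gives \eqref{eq:sing-4}. What the paper actually needs regarding length is a \emph{lower} bound on $\Ll(\tilde\gamma_j|_{I_\varepsilon})$ to feed into the interpolation inequalities; this comes from Fenchel (if $Z=\emptyset$) or from $\log(\max_{I_\varepsilon}\tilde\gamma_j^{(2)}/\min_{I_\varepsilon}\tilde\gamma_j^{(2)})$ (if $Z\neq\emptyset$). So your plan to ``localize the energy estimates tracking boundary terms'' or ``ensure the bad regions do not migrate'' is solving a harder problem than the one that needs solving: the weighted estimate \eqref{eq:il-1} is already global and the localization happens only when you remove the weight.

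\textbf{On criticality.} Finite dissipation $\int_0^\infty\int_{\S^1}\frac{1}{4(\gamma^{(2)})^4}|\nabla\E(\gamma)|_g^2\,\dd s\,\dd t<\infty$ gives vanishing of the integrand only along \emph{some} sequence of times, not along the given $t_j$, so the conclusion ``the rescaled flows become stationary in the limit'' does not follow directly. Your alternative—integrate over $[t_j,t_j+1]$ and use monotonicity of $\E$—correctly fixes the sequence-selection issue, but passing to the limit in the localized dissipation then requires a uniform lower bound on $\gamma^{(2)}$ over the entire time interval $[t_j,t_j+1]$, not just at the slices $t_j$, which you have not established. The paper circumvents this by introducing the cut-off $\eta_\sigma$ (supported where $\gamma^{(2)}>\sigma$), setting $h_\sigma(t)=\int_{\S^1}\frac{1}{4(\gamma^{(2)})^4}|\nabla\E(\gamma)|_g^2\,\eta_\sigma(\gamma^{(2)})\,\dd s$, and showing $h_\sigma\in L^1([0,\infty))$ (since $\eta_\sigma\leq 1$) together with $\partial_t h_\sigma\in L^\infty$ (from the localized curvature bounds via \Cref{lem:evolkappa}). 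This upgrades to $h_\sigma(t)\to 0$ as $t\to\infty$, and then $\nabla\E(\tilde\gamma_\infty)=0$ on $I_\varepsilon$ follows for any $\varepsilon$. The cut-off is not cosmetic: it is what makes the argument work without controlling $\gamma^{(2)}$ from below over time intervals.

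\textbf{On \eqref{eq:sing-3}.} Your quantization-plus-lower-semicontinuity plan is exactly what is proved in \cite[Proof of Theorem~1.1]{schlierf2023}, which is what the paper invokes, so this part matches.
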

    Let $\varepsilon\in (0,1)$ and note that $I_{\varepsilon}\vcentcolon=\S^1\setminus B_{\varepsilon}(Z)$ is compact. If $Z=\emptyset$, then $\tilde{\gamma}_j$ are uniformly bounded from below. If $Z\neq\emptyset$, for $\varepsilon$ sufficiently small and $j$ sufficiently large, using \eqref{eq:prop-sing-1},
    \begin{align}
     0< c(\varepsilon) \leq \min_{I_{\varepsilon}}\gamma^{(2)}(t_j,\cdot) \leq  \frac16 \max_{\S^1}\gamma^{(2)}(t_j,\cdot)\leq \frac16 R,\label{eq:fenchel}
     \end{align}
    with $c(\varepsilon)\vcentcolon= \frac12 \inf_{I_{\varepsilon}} \tilde{\gamma}_{\infty}^{(2)}$. 
     Using \Cref{prop:il}, we get for $j\in\N$ sufficiently large
    \begin{equation}\label{eq:sing-4}
        \|(\nabla_s^{\bot})^k\curv_{\tilde\gamma_j}\|_{L^2(\dd s\measurerestr I_\varepsilon)} \leq C(c(\varepsilon),k,R,\gamma_0) \quad\text{ for all $k\in\N_0$}.
    \end{equation} 
    We argue now that the length is uniformly bounded from below. If $Z=\emptyset$, then $\Ll(\tilde\gamma_j|_{I_{\varepsilon}})=\Ll(\tilde\gamma_j)\geq 4\pi^2/\E(\gamma_0)$ by \eqref{eq:len-low-bound}. If $Z\neq\emptyset$, for $\varepsilon$ sufficiently small and $j$ sufficiently large, by \Cref{rem:hyplengthestimatesrange(optimal)}
    \begin{equation}
       \frac12  \Ll(\tilde\gamma_j|_{I_{\varepsilon}}) \geq \log(\max_{I_{\varepsilon}}\tilde\gamma_j^{(2)} / \min_{I_{\varepsilon}}\tilde\gamma_j^{(2)})=\log(\max_{\S^1}\tilde\gamma_j^{(2)} / \min_{I_{\varepsilon}}\tilde\gamma_j^{(2)}) \geq 1.
    \end{equation}
    
    So in both cases, using \cite[Proposition~4.1]{dallacquaspener2017} (also see \cite[Proposition~3.12]{schlierf2024} for a version for open curves), we have the $L^{\infty}$ bounds
    \begin{equation}\label{eq:sing-5}
        \|(\nabla_s^{\bot})^k\curv_{\tilde\gamma_j}\|_{L^\infty(I_\varepsilon)} \leq C(k+1,\gamma_0,\varepsilon) \quad\text{ for all $j\in\N$ and $k\in\N_0$}.
    \end{equation}
    Proceeding as in \cite[Lemma~3.4]{schlierf2023}, one finds that $(\tilde\gamma_j)_{j\in\N}$ is bounded in any $W^{k,\infty}(I_{\varepsilon})$ and therefore the smooth convergence locally in $\S^1\setminus Z$ follows from a standard subsequence argument. In particular, $\tilde\gamma_{\infty}\colon \S^1\setminus Z\to\H^2$ is a smooth immersion.

    For $\sigma>0$ sufficiently small, consider a smooth cut-off function $\eta_{\sigma}$ with $\chi_{(2\sigma,\infty)}\leq\eta_{\sigma}\leq \chi_{(\sigma,\infty)}$ and $|\eta_{\sigma}'|\leq 2\sigma^{-1}\chi_{(\sigma,2\sigma)}$. Define
    \begin{equation}
        h_{\sigma}\colon[0,\infty)\to\R,\quad h_{\sigma}(t) \vcentcolon= \int_{\S^1} \frac{1}{4(\gamma(t,x)^{(2)})^4} |\nabla\E(\gamma(t,x))|_g^2\eta_{\sigma}(\gamma^{(2)}(t,x))\dd s.
    \end{equation}
    Since $\eta_{\sigma}\leq 1$, \Cref{rem:endecr} implies
    \begin{equation}
        h_{\sigma}(t) \leq -\partial_t \E(\gamma(t,\cdot))\quad\text{so that}\quad h_{\sigma}\in L^1([0,\infty),\dd t).
    \end{equation}
    Using \Cref{lemma:evo} and computing as in \Cref{lem:evolkappa}, \eqref{eq:sing-4} and \eqref{eq:sing-5} yield that $\partial_th_{\sigma}\in L^{\infty}([0,\infty))$ with norm depending on $\sigma$. This yields $h_{\sigma}(t)\to 0$ for $t\to\infty$. Using \eqref{eq:prop-sing-2}, choose $\varepsilon=\varepsilon(\sigma)>0$ such that $\inf_{x\in I_{\varepsilon}}\tilde\gamma^{(2)}(t_j,x) > 2\sigma$ for $j$ sufficiently large. Then
    \begin{align}
        \int_{I_{\varepsilon}} &\frac{1}{4(\tilde\gamma(t_j,x)^{(2)})^4} |\nabla\E(\tilde\gamma(t_j,x))|_g^2\dd s \\
        &\leq \int_{\S^1} \frac{1}{4(\gamma(t_j,x)^{(2)})^4} |\nabla\E(\gamma(t_j,x))|_g^2\chi_{(2\sigma,\infty)}(\gamma^{(2)}(t_j,x))\dd s \leq h_{\sigma}(t_j)\to 0\quad\text{for $j\to\infty$},
    \end{align} 
    so that $\nabla\E(\tilde\gamma_{\infty})=0$ on $\S^1\setminus Z$. Finally, \eqref{eq:sing-3} follows exactly as in \cite[Proof of Theorem~1.1]{schlierf2023}.
    \begin{claim}\label{cl:sing-3}
        $Z$ consists of exactly one point.
    \end{claim}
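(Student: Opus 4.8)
The plan is to establish $\#Z = 1$ by excluding $\#Z = 0$ and $\#Z = 2$; recall $m := \#Z \in \{0,1,2\}$ and that, by \eqref{eq:sing-2}, $Z$ is symmetric under $x \mapsto -x$. I will use two structural facts. First, $\tilde\gamma_j$ is parametrized with constant Euclidean speed $\tfrac12\Ll_{\R^2}(\gamma(t_j))$, which lies in a fixed interval $(\ell/2,\,L/2)$ by \Cref{cl:sing-1}; passing to the limit, $|\partial_x\tilde\gamma_\infty| \ge \ell/2 > 0$ everywhere. Consequently, on any component of $\S^1 \setminus Z$ whose closure meets $Z$, the curve $\tilde\gamma_\infty$ reaches $\{x^{(2)} = 0\}$ with Euclidean speed bounded away from $0$, so that component has infinite hyperbolic length; thus, by \Cref{cl:sing-2} and \Cref{rem:on-inf-length-elastica}, $\tilde\gamma_\infty$ restricted to such a component is either a geodesic (if its two ideal endpoints are distinct) or an asymptotically geodesic elastica of energy $8$ (if they coincide). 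Second, if $0 \notin Z$, then differentiating \eqref{eq:sing-2} gives $\partial_x\tilde\gamma_\infty^{(2)}(0) = 0$, so $\partial_x\tilde\gamma_\infty(0)$ is horizontal and nonzero; by $C^\infty_{\mathrm{loc}}(\S^1\setminus Z)$-convergence, $\tilde\gamma_j$ is then a graph over the horizontal direction on a fixed neighborhood of $0$ for $j$ large, hence injective there, and the analogous statement holds at $x = 1$.

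Next I exploit the persistent self-intersection. By \Cref{lem:conseq-of-tot-curv-0} there is $h_j \in (0,1)$ with $\tilde\gamma_j(h_j) = \tilde\gamma_j(-h_j) \in \{0\} \times \R$; after passing to a subsequence $h_j \to h \in [0,1]$, and uniform convergence yields $\tilde\gamma_\infty(h) = \tilde\gamma_\infty(-h) \in \{0\}\times\R$. The injectivity just noted forces $h \neq 0$ whenever $0 \notin Z$, and $h \neq 1$ whenever $1 \notin Z$: otherwise $h_j$ and $-h_j$ (which are distinct since $h_j \in (0,1)$) would eventually lie in a neighborhood on which $\tilde\gamma_j$ is injective.

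\textbf{Excluding $m = 2$.} Write $Z = \{z_1, z_2\}$; by symmetry either $Z = \{0, 1\}$, or $z_2 = -z_1$ with $z_1 \in (0,1)$. Both components of $\S^1 \setminus Z$ share the same pair of ideal endpoints $\{\tilde\gamma_\infty(z_1), \tilde\gamma_\infty(z_2)\}$. If these two points coincide — which occurs when $Z = \{0,1\}$, since then $\tilde\gamma_\infty(0) = \tilde\gamma_\infty(1) = (0,0)$ (fixed points of \eqref{eq:sing-2} lying in $Z$), and also when $z_2 = -z_1$ with $\tilde\gamma_\infty(z_1) = (0,0)$ — then both components are asymptotically geodesic elastica, so $\E(\tilde\gamma_\infty|_{\S^1\setminus Z}) = 16$, contradicting \eqref{eq:sing-3} because $\E(\gamma_0) < 24$. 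Hence $z_2 = -z_1$, $z_1 \in (0,1)$, and $\tilde\gamma_\infty(z_1) = (a, 0)$, $\tilde\gamma_\infty(-z_1) = (-a, 0)$ with $a \neq 0$; moreover each component, being a geodesic of infinite length joining two distinct ideal points, is injectively parametrized with image the complete geodesic semicircle spanned by $(\pm a, 0)$. Thus $\tilde\gamma_\infty(\S^1)$ is that semicircle, whose only point on $\{0\}\times\R$ is the apex $(0, |a|)$; by \eqref{eq:sing-2} and the fact that each component is a monotone (constant-speed) traversal of the semicircle, the $\tilde\gamma_\infty$-preimage of the apex is exactly $\{0, 1\}$, so $h \in \{0, 1\}$. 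This contradicts the previous paragraph, since here $0, 1 \notin Z$. Therefore $m \neq 2$.

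\textbf{Excluding $m = 0$.} If $Z = \emptyset$, then $\nabla\E(\tilde\gamma_\infty) = 0$ on all of $\S^1$ and $\tilde\gamma_j \to \tilde\gamma_\infty$ in $C^\infty(\S^1)$, so $\tilde\gamma_\infty$ is a smooth closed free elastica in $\H^2$; since the turning number is a $C^1$-continuous integer, $T(\tilde\gamma_\infty) = \lim_j T(\tilde\gamma_j) = 0$, and by the self-intersection argument $\tilde\gamma_\infty$ has a genuine double point at $h \in (0,1)$. But \eqref{eq:sing-3} gives $\E(\tilde\gamma_\infty) \le \E(\gamma_0) < 24$, and by the classification of free elastica in $\H^2$ \cite{langersinger1984,muellerspener2020} there is no closed free elastica in $\H^2$ with turning number $0$ of energy below $24$ — a contradiction. (Alternatively, an unconstrained {\L}ojasiewicz--Simon inequality for $\E$ in $\H^2$, combined with the $C^\infty$-subconvergence above, would upgrade to full convergence of the Willmore flow, contradicting the non-convergence of such flows in \cite{dallacquamullerschatzlespener2020}.) Hence $m = 1$. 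The step I expect to be the main obstacle is precisely this exclusion of $m = 0$: it hinges on a sufficiently sharp description of closed free elastica in $\H^2$ — the absence of a turning-number-$0$ representative of energy $< 24$ — which requires extracting the relevant information from the Langer--Singer/Müller--Spener classification rather than a soft argument. By contrast, the $m = 2$ analysis is elementary once the constant-speed lower bound and the persistence of the self-intersection are in place, but it does demand the careful bookkeeping above to pin down the semicircle and the preimage of its apex.
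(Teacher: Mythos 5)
Your argument is correct, and for the exclusion of $m=2$ it takes a genuinely different route from the paper's. The paper first uses the constant-speed parametrization to pin down $\tilde x=\tfrac12$, observes that the self-intersection points $\tilde\gamma_j(\pm h_j)$ stay a definite distance $\varepsilon$ from $\pm\tfrac12$, deduces that one of the two truncated arcs $\tilde\gamma_j|_{[-\frac12(1-\varepsilon),\frac12(1-\varepsilon)]}$ or $\tilde\gamma_j|_{\S^1\setminus[-\frac12(1+\varepsilon),\frac12(1+\varepsilon)]}$ carries a self-intersection for $j$ large, and then derives a contradiction from the lower bound $8\leq\E$ for open elastic curves with a self-intersection, \cite[Theorem~6.1]{schlierf2024}, against the fact that by \Cref{cl:sing-2} those arcs converge to geodesic segments of energy~$0$. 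You instead identify $\tilde\gamma_\infty(\S^1)$ with a complete geodesic semicircle, compute that the preimage of its apex is exactly $\{0,1\}$, and contradict this via a local-injectivity argument: differentiating \eqref{eq:sing-2} at $0$ (resp.\ $1$) makes the tangent horizontal and nonzero, so by $C^\infty_{\mathrm{loc}}$-convergence $\tilde\gamma_j$ is a horizontal graph near $0$ and $1$ for $j$ large, forcing $h\notin\{0,1\}$. Your route avoids both the quantitative energy bound for self-intersecting open curves and the $\tilde x=\tfrac12$ step, at the cost of the extra geometric bookkeeping you already flag; both arguments ultimately rest on the same ingredients (\Cref{cl:sing-1}, \Cref{cl:sing-2}, \Cref{rem:on-inf-length-elastica}, \Cref{lem:conseq-of-tot-curv-0}). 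For the exclusion of $m=0$ your argument is the same in substance as the paper's: it cites \cite[Corollary~5.8]{muellerspener2020}, which gives exactly the fact that no closed free elastica in $\H^2$ has vanishing turning number (so the additional restriction to energy $<24$ that you impose is in fact unnecessary), and you correctly identify this as the load-bearing input.
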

    If $Z=\emptyset$, by \Cref{cl:sing-2}, $\tilde\gamma_{\infty}$ is a critical point of $\E$ with vanishing turning number. This contradicts \cite[Corollary~5.8]{muellerspener2020}. For the sake of contradiction, we assume that $Z$ consists of exactly two points. Since $Z$ is symmetric, either $Z=\{0,\pm1\}$ or $Z=\{\tilde x,-\tilde x\}$ for some $\tilde x\in (0,1)$. 

    In the former case, \eqref{eq:sing-2} yields $\tilde\gamma_{\infty}(0)=\tilde\gamma_{\infty}(\pm1)=(0,0)$. Using  \Cref{rem:hyplengthestimatesrange(optimal)} and \Cref{rem:on-inf-length-elastica}, $\tilde\gamma_{\infty}|_{(-1,0)}$ and $\tilde\gamma_{\infty}|_{(0,1)}$ both parametrize an asymptotically geodesic elastica. Moreover, with \eqref{eq:sing-3},
    \begin{equation}\label{eq:prop-sing-3}
        16 = \E(\tilde\gamma_{\infty}|_{\S^1\setminus Z}) \leq \E(\gamma_0)-8m < 24 - 16 = 8,
    \end{equation}
    a contradiction!

    So $Z=\{\tilde x,-\tilde x\}$ for some $\tilde x\in (0,1)$. If $\tilde\gamma_{\infty}(\tilde x)=\tilde\gamma_{\infty}(-\tilde x)$, then \Cref{cl:sing-2,rem:on-inf-length-elastica} yield that $\tilde\gamma_{\infty}|_{(-\tilde x,\tilde x)}$ and $\tilde\gamma_{\infty}|_{\S^1\setminus [-\tilde x,\tilde x]}$ both parametrize an asymptotically geodesic elastica and the same estimate as in \eqref{eq:prop-sing-3} yields a contradiction. So $\tilde\gamma_{\infty}(\tilde x)\neq\tilde\gamma_{\infty}(-\tilde x)$. So again using \Cref{cl:sing-2,rem:on-inf-length-elastica}, and the characterization of geodesics in $\H^2$, $\tilde\gamma_{\infty}|_{(-\tilde x,\tilde x)}$ and $\tilde\gamma_{\infty}|_{\S^1\setminus [-\tilde x,\tilde x]}$ both parametrize the same geodesic (namely the one ``connecting'' $\tilde\gamma_{\infty}(-\tilde x)$ and $\tilde\gamma_{\infty}(\tilde x)$). Due to the choice of parametrization, this yields $\tilde x=\frac12$. Using \eqref{eq:sing-2} and $\tilde\gamma_{\infty}(-\frac12)\neq \tilde\gamma_{\infty}(\frac12)$, we have $\tilde\gamma_{\infty}(\pm\frac12)\neq (0,0)$. 
    
    By \Cref{lem:conseq-of-tot-curv-0}, there exist $h_j\in (0,1)$ with $\tilde\gamma_j(- h_j)=\tilde\gamma_j(h_j)\in\{0\}\times(0,\infty)$. Since $\tilde\gamma_{\infty}(\pm \frac12)\notin \{0\}\times[0,\infty)$, we have $\varepsilon\vcentcolon=\liminf_{j\to\infty}|h_j-\frac12|>0$. Therefore, $\tilde\gamma_j|_{[-\frac12(1-\varepsilon),\frac12(1-\varepsilon)]}$ or $\tilde\gamma_j|_{\S^1\setminus [-\frac12(1+\varepsilon),\frac12(1+\varepsilon)]}$ has a  self-intersection for $j$ sufficiently large. So using \cite[Theorem~6.1]{schlierf2024}, \Cref{cl:sing-2}, and \Cref{rem:on-inf-length-elastica},
    \begin{equation}
        8\leq \E(\tilde\gamma_j|_{[-\frac12(1-\varepsilon),\frac12(1-\varepsilon)]})+ \E(\tilde\gamma_j|_{\S^1\setminus [-\frac12(1+\varepsilon),\frac12(1+\varepsilon)]}) \to 0 + 0 = 0
    \end{equation}
    for $j\to\infty$, a contradiction! In particular, \Cref{cl:sing-3} is established. 

    With \Cref{cl:sing-2}, \Cref{cl:sing-3} and \Cref{rem:on-inf-length-elastica}, it is clear that $\tilde\gamma_{\infty}|_{\S^1\setminus Z}$ parametrizes an asymptotically geodesic elastica. This completes the proof.
\end{proof}

\subsection{Starting at an inverted catenoid}

We now construct a Willmore flow starting at the inverted catenoid and prove \Cref{thm:intro:wf-starting-in-ic}. This initial datum  is not smooth, in fact it is only in $C^{1,\alpha}(\S^2,\R^3)\setminus C^2(\S^2,\R^3)$. In particular, monotonicity (or even finiteness) of the Willmore energy is not immediate.

Local well-posedness for the Willmore flow \eqref{eq:intro:WF} is well-known, cf.\
\cite{simonett2001,lecroneshaosimonett2020}, and is based on the following classical observation, see also \cite[proof of Lemma~4.1]{chillfasangovaschaetzle2009}.

\begin{remark}\label{rem:rep-of-tangential-Wf}
    For a closed surface $\Sigma$, consider a smooth, maximal Willmore flow $f\colon [0,T_{\max})\times \Sigma\to\R^3$ starting at $f_0$ with maximal existence time $T_{\max}$ and suppose that $\tilde{f}\colon[0,\tilde T)\times\Sigma\to\R^3$ is a smooth family of immersions with $\tilde{f}(0)=f_0\circ\Phi_0$ for a diffeomorphism $\Phi_0\colon\Sigma\to\Sigma$ and satisfying   \begin{equation}\label{eq:WF_normal_component}
        \partial_t^{\bot} \tilde f \vcentcolon= \langle \partial_t\tilde f,N_{\tilde f}\rangle N_{\tilde f} =  -\nabla\W(\tilde f)\quad\text{on $[0,\tilde T)\times \Sigma$}.
    \end{equation}
    Then $T_{\max}\geq \tilde T$ and there is a smooth family of diffeomorphisms $\Phi\colon [0,\tilde T)\times\Sigma\to\Sigma$ of $\Sigma$ with $\Phi(0)=\Phi_0$ such that $f(t)\circ \Phi(t)=\tilde f(t)$ for all $0\leq t<\tilde T$. Considering graphs over the initial datum, Equation \eqref{eq:WF_normal_component} corresponds to a strictly parabolic problem. The well-posedness of this has been precisely studied in \cite{simonett2001,lecroneshaosimonett2020}, see \Cref{sec:app:well-posedness}.
\end{remark}

If $\Sigma\subseteq\R^3$ is embedded with inward-pointing normal $\nu_{\Sigma}$, we denote by $L_{\Sigma}\vcentcolon=-d\nu_{\Sigma}$ the associated shape operator.

\begin{lemma}\label{lem:wf-starting-in-ic-1}
    There exist a smooth embedded sphere $\Sigma\subseteq\R^3$ and a continuous function  $a\colon\Sigma\to (0,\infty)$, $x\mapsto a_x$ such that $\det(I+\rho L_{\Sigma}(x))\neq 0$ for all $\rho\in\R$ with $|\rho|<a_x$, for all $x\in\Sigma$, and such that there exists $\rho_0\in W^{2,p}(\Sigma)$ for all $p\in[1,\infty)$ with $|\rho_0(x)|<a_x$ for all $x\in\Sigma$ and such that
    \begin{equation}\label{eq:lem-constr-Sigma-rho0}
        f_0\colon\Sigma\to\R^3, f_0(x)= x + \rho_0(x)\nu_{\Sigma}(x)
    \end{equation}
    parametrizes an inverted catenoid.
\end{lemma}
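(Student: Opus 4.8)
The idea is to parametrize an inverted catenoid as a normal graph over a suitable smooth auxiliary sphere $\Sigma$, and to check that the graph function, although only $C^{1,\alpha}$ (and not $C^2$) at the two points corresponding to the pinching, nevertheless lies in $W^{2,p}$ for every finite $p$. First I would recall the concrete shape of the inverted catenoid: the catenoid $\{(\cosh z\cos\phi,\cosh z\sin\phi,z)\}$ is a minimal surface, hence Willmore with $\mathcal W=0$ on the part inside any ball, and under inversion $x\mapsto x/|x|^2$ at a point on its axis it becomes a compact surface which is a smooth embedded sphere away from one point, where the two catenoidal ends get mapped to a single point and meet tangentially along the (now horizontal) axis direction, producing a $C^{1,\alpha}$ but non-$C^2$ singularity; its Willmore energy is exactly $8\pi$ by conformal invariance of $\mathcal W$ for surfaces in $\mathbb R^3\cup\{\infty\}$ (two catenoidal ends, each contributing $4\pi$ as in the Li--Yau / Bryant analysis). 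Concretely, since the surface of revolution picture is the relevant one here, I would take the profile curve of the inverted catenoid in $\mathbb H^2$ — which is an asymptotically geodesic elastica with $\mathcal E=16$, cf.\ \Cref{rem:on-inf-length-elastica} and \Cref{prop:sing} — and realize the corresponding surface $f_0(\mathbb S^2)\subseteq\mathbb R^3$ explicitly.

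Next I would construct $\Sigma$. The inverted catenoid is smooth except at the single ``neck point'' $q$ where it has a conical/horn-type $C^{1,\alpha}$ singularity; near $q$ it looks, after rotation, like the graph of $x_3 = c|x'|^{1+\beta}(1+o(1))$ over the tangent plane for an appropriate $\beta\in(0,1)$ (this comes from the asymptotics of the catenoid end under inversion, $z\sim \pm\mathrm{arccosh}(1/r)$, which gives a logarithmic-type profile whose inversion is $C^{1,\alpha}$ for all $\alpha<1$). I would choose $\Sigma$ to be a smooth embedded sphere that agrees with the inverted catenoid outside a small neighborhood of $q$ and is replaced by a smooth convex-ish cap through $q$ inside; the key requirement is that $\Sigma$ be close enough to $f_0(\mathbb S^2)$ in $C^1$ (and suitably transversal) so that $f_0(\mathbb S^2)$ is a normal graph over $\Sigma$, i.e.\ there is a well-defined $\rho_0\colon\Sigma\to\mathbb R$ with $f_0(x)=x+\rho_0(x)\nu_\Sigma(x)$ after identifying $\Sigma\cong\mathbb S^2$. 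Because $\Sigma$ is smooth and compact, the admissibility function $a_x\vcentcolon=\big(2\|L_\Sigma\|_{C^0(\Sigma)}\big)^{-1}$ works: $\det(I+\rho L_\Sigma(x))\neq 0$ whenever $|\rho|<a_x$, and by choosing $\Sigma$ $C^0$-close to the inverted catenoid we get $\|\rho_0\|_{C^0}<\min_\Sigma a_x$. The regularity of $\rho_0$ away from $q$ is immediate from smoothness of both surfaces there; near $q$, writing things in the tangent-plane graph coordinates, $\rho_0$ inherits the $|x'|^{1+\beta}$ behavior, so $\rho_0\in C^{1,\alpha}$ for all $\alpha<1$ and $D^2\rho_0\sim |x'|^{\beta-1}$, which is in $L^p$ near $q$ iff $p(1-\beta)<2$, i.e.\ for $p<2/(1-\beta)$. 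To get $\rho_0\in W^{2,p}$ for \emph{all} finite $p$ I would either (i) observe that the precise catenoid asymptotics give $\beta$ arbitrarily close to $1$ — no, $\beta$ is fixed — so instead (ii) use a better description: the inverted catenoid's graph over its tangent plane at $q$ is actually $x_3\sim c\,|x'|^2\log(1/|x'|)$ type, whose second derivatives are $\sim \log(1/|x'|)$, which lies in $L^p$ for \emph{every} $p<\infty$ (a logarithm is in all $L^p_{\mathrm{loc}}$); this is the correct asymptotic and is exactly why the statement claims ``for all $p\in[1,\infty)$'' rather than $W^{2,\infty}$.

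The last step is to assemble these into the stated lemma: fix $\Sigma$ smooth embedded; set $a_x$ as above; verify $\det(I+\rho L_\Sigma(x))\neq0$ for $|\rho|<a_x$ by the elementary eigenvalue estimate; define $\rho_0$ via the normal-graph representation and check $|\rho_0(x)|<a_x$; and check $\rho_0\in W^{2,p}$ for all finite $p$ via the logarithmic second-derivative bound near the neck point together with smoothness elsewhere. The main obstacle I anticipate is the local analysis at the neck point $q$: getting the precise asymptotic expansion of the inverted catenoid near $q$ (to confirm the $|x'|^2\log(1/|x'|)$ behavior, or whatever the correct exponent/log power is) and converting it, through the implicit-function-type change from ``graph over tangent plane'' to ``normal graph over $\Sigma$'', into the sharp statement $D^2\rho_0\in L^p$ for all $p<\infty$ but not $p=\infty$. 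Everything else — existence of a smooth $\Sigma$ interpolating away from $q$, the determinant condition, the $C^0$-smallness of $\rho_0$ — is routine once the shape of the singularity is pinned down. I would also make sure the construction is compatible with the rotational symmetry (so that $\Sigma$ and $\rho_0$ are axially symmetric), since \Cref{prop:sing} and the subsequent restart argument in \Cref{thm:intro:wf-starting-in-ic} live in the symmetric class, though the lemma as stated does not strictly require it.
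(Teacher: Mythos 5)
Your overall plan is on the right track — choose a smooth embedded auxiliary sphere $\Sigma$, realize the inverted catenoid as a normal graph $f_0 = \mathrm{id} + \rho_0\nu_\Sigma$, and verify $W^{2,p}$ regularity of $\rho_0$ from the logarithmic second-derivative asymptotics at the singular point. But there is a genuine gap in your description of the geometry near $q$ and in the resulting construction of $\Sigma$. Near $q$ the inverted catenoid is \emph{not} a single graph over the tangent plane: the two catenoidal ends $z\to\pm\infty$ invert to two distinct sheets, $x_3\sim \pm c|x'|^2\log(1/|x'|)$, both passing through $q$ and tangent there. Hence removing a small ball $B_\delta(q)$ from the inverted catenoid leaves an annulus with \emph{two} boundary circles, and a single ``smooth convex-ish cap through $q$'' cannot close it up to a sphere. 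The construction as stated is topologically inconsistent, and it is unclear that the resulting $\Sigma$ (however one fixes the topology) would have the inverted catenoid as a normal graph with the quantitative smallness you implicitly invoke.

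The paper's construction is quite different and avoids these issues entirely. Writing the \emph{upper half} of the inverted catenoid as a graph $z = u_{\mathrm{ic}}(|x|)$ over the equatorial disk (with $u_{\mathrm{ic}}(0)=0$, $u'_{\mathrm{ic}}(0)=0$, maximum $h$ at some $r^*\in(0,1)$), the paper's $\Sigma$ coincides with the inverted catenoid for $|x| > r^*+2\varepsilon'$ and is flattened to the constant height $\pm h$ for $|x| < r^*-\varepsilon$, with a mollified transition in between. Over the flat disk the normal is vertical, so $\rho_0(x,\pm h) = h - u_{\mathrm{ic}}(|x|)$ explicitly, and the $W^{2,p}$ membership is read off directly from the cited regularity of $u_{\mathrm{ic}}$. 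This forces $a_x$ to be \emph{variable}: on the flat disk $L_\Sigma = 0$ so the determinant condition is vacuous and $a_x$ may be taken as large as needed, which is essential since $\rho_0$ there is as large as $h$ near the disk center. Your constant $a_x = (2\|L_\Sigma\|_{C^0})^{-1}$ would fail for this $\Sigma$; it might work with a $\Sigma$ hugging the catenoid closely, but you have not established the required quantitative relation $\|\rho_0\|_\infty < \min_\Sigma a_x$ (which involves a competition between the smallness of $\rho_0$ near $q$ and the blow-up of $\|L_\Sigma\|$ there), nor resolved the two-sheet topology. Your asymptotics $D^2\rho_0\sim\log(1/|x'|)$ are, however, correct and are exactly why one gets $W^{2,p}$ for all finite $p$ but not $W^{2,\infty}$.
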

\begin{figure}[htb]
    \centering
    \begin{subfigure}{6cm}
        \includegraphics[width=\linewidth]{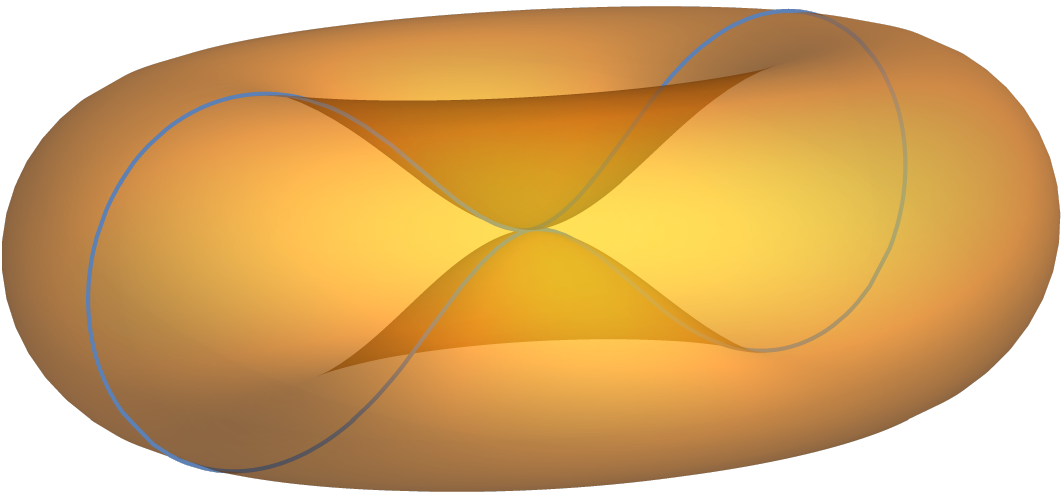}
        \vspace{0.5cm}
        \caption*{The inverted catenoid $\Sigma_{\mathrm{ic}}$.}
    \end{subfigure}
    \qquad
    \begin{subfigure}{8cm}
        \includegraphics[width=\linewidth]{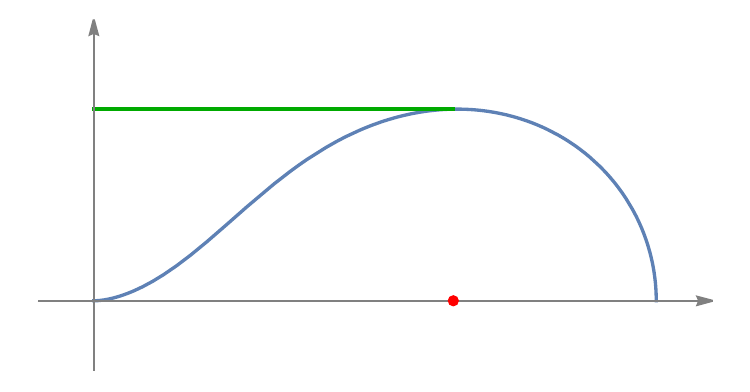}
        \caption*{The functions $u$ (green), $u_{\mathrm{ic}}$ (blue) as well as $(r^*,0)$ (red) used in the proof of \Cref{lem:wf-starting-in-ic-1}.}
    \end{subfigure}
    \caption{Illustrations for \Cref{lem:wf-starting-in-ic-1} and its proof.}
    \label{fig:inv_catenoid_and_Sigma}
\end{figure}
\begin{proof}
    Denote by $\Sigma_{\mathrm{ic}}\subseteq\R^3$ the inverted catenoid with diameter $2$ such that, for
    \begin{equation}
        R_{\varphi} \vcentcolon= \begin{pmatrix}
            \cos \varphi & -\sin\varphi &0\\
            \sin \varphi & \cos \varphi &0\\
            0 & 0 & 1
        \end{pmatrix},
    \end{equation}
    $R_{\varphi}\Sigma_{\mathrm{ic}} = \Sigma_{\mathrm{ic}}$ for all $\varphi\in\R$. In particular, here we consider rotation about the vertical axis, see \Cref{fig:inv_catenoid_and_Sigma}. By \cite[Lemma~3.4 and its proof]{chenli2017}, we have the following. There exists a function $u_{\mathrm{ic}}$ such that $B_1(0)\to \R^3$, $x\mapsto (x,u_{\mathrm{ic}}(|x|))$ parametrizes $\Sigma_{\mathrm{ic}}\cap \big(B_1(0)\times [0,\infty)\big)$. Furthermore, the function $u_{\mathrm{ic}}\colon[0,1]\to[0,\infty)$ is continuous, smooth in $(0,1)$ and in $W^{2,p}((0,\frac12),\R)$ for all $1\leq p<\infty$ with $u_{\mathrm{ic}}'(0)=0$. Moreover, the following are well-defined:
    \begin{equation}
        h\vcentcolon=\max_{(0,1)} u_{\mathrm{ic}} \quad\text{and}\quad r^* \vcentcolon= \arg\max_{(0,1)} u_{\mathrm{ic}}\in (0,1).
    \end{equation}
    With this description, the \lq singular point\rq \, of the inverted catenoid is at the origin. Notice that the points where this graph representation  is not $C^1$ are points where the inverted catenoid is actually smooth.
    
    Define $u\colon(-\infty,1)\to (0,\infty)$, $u_{\varepsilon}\colon (-\infty,1-\varepsilon)\to(0,\infty)$
    \begin{equation}
        u(r)\vcentcolon=\begin{cases}
            u_{\mathrm{ic}} ( r) &\text{for $r^*\leq r < 1$},\\
            h&\text{for $-\infty < r \leq r^*$}
        \end{cases},\quad u_{\varepsilon}(r)\vcentcolon=\int_{r-\varepsilon}^{r+\varepsilon} u(y)\varphi_{\varepsilon}(r-y)\dd y
    \end{equation}
    where $(\varphi_{\varepsilon})_{\varepsilon>0}$ are the standard mollifiers with $\mathrm{supp}(\varphi_{\varepsilon})\subseteq (-\varepsilon,\varepsilon)$, also see \Cref{fig:inv_catenoid_and_Sigma}. Notice that $u \in C^1((-\infty,1))$, which is why for any $\delta \in (0,1)$ one has $\lVert u-u_\varepsilon \rVert_{C^1([0,1-\delta])} \rightarrow 0$ as $\varepsilon \rightarrow 0$.  Consider a family of smooth cut-off functions $\eta_{\varepsilon'}\colon\R\to [0,1]$, $\varepsilon'>0$, such that
    \begin{equation}
        \chi_{(r^*+2\varepsilon',\infty)} \leq \eta_{\varepsilon'} \leq \chi_{(r^*+\varepsilon',\infty)} \quad\text{ and }\quad |\partial_r\eta_{\varepsilon'}|\leq \frac{2}{\varepsilon'}\chi_{(r^*+\varepsilon',r^*+2\varepsilon')},\, |\partial_r^2\eta_{\varepsilon'}|\leq \frac{C}{\varepsilon'^2}\chi_{(r^*+\varepsilon',r^*+2\varepsilon')}.
    \end{equation}
    For $0<\varepsilon<\varepsilon'\vcentcolon=\frac{1}{4}(1-r^*)$, consider $u_{\varepsilon,\varepsilon'}\in C^{\infty}((-\infty,1),(0,\infty))$ 
    \begin{equation}
        u_{\varepsilon,\varepsilon'}(r) \vcentcolon= (1-\eta_{\varepsilon'}(r))u_{\varepsilon}(r) + \eta_{\varepsilon'}(r) u(r).
    \end{equation}
     Finally $\Sigma_{\varepsilon,\varepsilon'}\subseteq\R^3$ denotes the smooth spherical surface one obtains by reflecting the image of $(x,u_{\varepsilon,\varepsilon'}(|x|))$
    in $\overline{B}_1(0)\times[0,\infty)$ by the plane $\R^2\times\{0\}$. This surface is embedded as we have $u_\varepsilon>0$ and smooth, using that $u_{\varepsilon,\varepsilon'}=u_{\mathrm{ic}}$ on $(1-\varepsilon',1)$ and that $\Sigma_{\mathrm{ic}}$ is smooth away from the origin. 
    Since
    \begin{align}
        &u_{\varepsilon,\varepsilon'}(r)=u_{\varepsilon}(r)=h\quad\text{for all }0\leq r<r^*-\varepsilon,\\
        &\|u_{\varepsilon}-u\|_{C^1([0,1-\varepsilon'])} \to 0\quad\text{for }\varepsilon\searrow 0,\\
        &\|u_{\varepsilon,\varepsilon'}''\|_{C^0([0,1-\varepsilon'])} \leq  \|u_{\mathrm{ic}}''\|_{C^0([r^*,1-\varepsilon'])} + C_{\varepsilon'} \|u_{\varepsilon}-u\|_{C^1([0,1-\varepsilon'])},
    \end{align}
    using the explicit formulae in \cite[Example~5 on p.~162]{docarmo1976}, there exists $R=R(\varepsilon')>0$ such that $\det(I-\rho L_{\Sigma_{\varepsilon,\varepsilon'}}(\hat x))\neq 0$ for all $\rho\in\R$ with $|\rho|<\frac{1}{R}$ and for all $\hat{x}\in \Sigma_{\varepsilon,\varepsilon'}$. Moreover, using $\|u_{\varepsilon,\varepsilon'}-u\|_{C^0([0,1])}\leq \|u_\varepsilon-u\|_{C^0([0,1-\varepsilon'])} \to 0$ for $\varepsilon\searrow 0$ and $u=u_{\mathrm{ic}}$ on $[r^*,1]$, there exists $\varepsilon>0$ sufficiently small such that
    \begin{equation}\label{eq:approx-property-of-uepseps'}
       \|u_{\mathrm{ic}}-h\|_{C^0([r^*-\varepsilon,r^*])}+\|u_{\mathrm{ic}}-u_{\varepsilon,\varepsilon'}\|_{C^0([r^*-\varepsilon,1])} < \frac{1}{2R}.
    \end{equation}
    With this choice of $\varepsilon$, let $\Sigma\vcentcolon=\Sigma_{\varepsilon,\varepsilon'}$ and observe that
    \begin{equation}\label{eq:sigma-flat-parts}
        \Sigma\cap \big( B_{r^*-\varepsilon}(0)\times\{\pm h\} \big)= B_{r^*-\varepsilon}(0)\times\{\pm h\}.
    \end{equation}
    Define 
    \begin{align}
        a_{\hat x} &\vcentcolon= \frac{1}{R} \text{ for $\hat x\in \Sigma\setminus B_{r^*-\varepsilon}(0)\times\{\pm h\}$}\quad\text{and}\\
        a_{(x,\pm h)} &\vcentcolon= \frac{1}{R} + u_{\mathrm{ic}}(r^*-\varepsilon)-u_{\mathrm{ic}}(|x|)\text{ for $x\in B_{r^*-\varepsilon}(0)$}.\label{eq:def-of-ax}
    \end{align}
    Notice that $\rho_0$ as in the claim is uniquely determined and satisfies $\rho_0(x,\pm h)=h-u_{\mathrm{ic}}(|x|)$ for $(x,\pm h)\in B_{r^*-\varepsilon}(0)\times\{\pm h\}$. In particular, \eqref{eq:approx-property-of-uepseps'}, \eqref{eq:sigma-flat-parts} and \eqref{eq:def-of-ax} yield $|\rho_0(\hat x)|<a_{\hat x}$ for all $\hat x\in\Sigma$. Moreover, $\rho_0\in W^{2,p}(\Sigma)$ follows using $u_{\mathrm{ic}}\in W^{2,p}([0,\frac12])$ and $u_{\mathrm{ic}}'(0)=0$.
\end{proof}

\begin{lemma}\label{lem:will-lsc}
    Consider a closed and oriented surface $\Sigma$ and let $f_j,f\in C^1(\Sigma,\R^n)$ be proper immersions. Suppose that $U\subseteq\R^n$ is open such that $f_j|_{f_j^{-1}(U)}$ for all $j\in\N$ and $f|_{f^{-1}(U)}$ are $C^2$. If $f_j\to f$ in $C^1$, then
    \begin{equation}
        \int_{f^{-1}(U)} H_f^2\dd\mu_f\leq \liminf_{j\to\infty} \int_{f_j^{-1}(U)} H_{f_j}^2\dd\mu_{f_j}.
    \end{equation}
\end{lemma}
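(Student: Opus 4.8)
This is a standard lower-semicontinuity statement for the Willmore energy under $C^1$-convergence on the region where the limit is $C^2$, so the natural route is a localization-plus-testing argument combined with weak compactness of the mean curvature in $L^2$. The key point is that $H^2\dd\mu$ is, locally, the squared $L^2$-norm of a distributional object that depends continuously on the immersion in the $C^1$-topology, so the energy is weakly lower semicontinuous once an $L^2$-bound is available. Since we only assume a bound on $f_j$ in the region $f_j^{-1}(U)$, the right normalization is to exhaust $f^{-1}(U)$ by compactly contained sets and argue on each piece.

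\textbf{Step 1 (Reduction and exhaustion).} Fix $\varepsilon>0$ and choose an open set $V\ssubset f^{-1}(U)$ with $\int_{V}H_f^2\dd\mu_f\geq \int_{f^{-1}(U)}H_f^2\dd\mu_f-\varepsilon$ if the latter is finite (if it is infinite, the same argument with $V$ chosen so that $\int_V H_f^2\dd\mu_f$ is arbitrarily large works verbatim). Since $f$ is a proper immersion and $f_j\to f$ in $C^1(\Sigma,\R^n)$, for $j$ large we have $f_j(\overline V)\subseteq U$, hence $\overline V\subseteq f_j^{-1}(U)$ and $f_j|_V$ is $C^2$. It therefore suffices to prove $\int_V H_f^2\dd\mu_f\leq \liminf_{j\to\infty}\int_V H_{f_j}^2\dd\mu_{f_j}$ and then let $\varepsilon\to0$ (and $V\uparrow f^{-1}(U)$). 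We may further cover $\overline V$ by finitely many coordinate charts of $\Sigma$ in each of which $f$ (and, for $j$ large, $f_j$) is a $C^2$ embedding, and reduce to a single such chart by a partition of unity subordinate to this cover, using additivity of $\int H^2\dd\mu$ over the pieces; here one uses that on compact subsets of a chart the metrics $g_{f_j}$ are uniformly comparable to $g_f$ because $f_j\to f$ in $C^1$.

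\textbf{Step 2 (Weak $L^2$-compactness of the mean curvature).} Work in a fixed chart $\Omega\subseteq\R^2$ with $\overline V$ replaced by a compact $K\subseteq\Omega$. In local coordinates the mean curvature vector of an immersion $g\in C^2(\Omega,\R^n)$ is $\vec H_g=\tfrac12\Delta_{g}g$ where $\Delta_g$ is the Laplace--Beltrami operator of the induced metric, and $\vec H_g$ is orthogonal to the tangent plane. If $\liminf_{j\to\infty}\int_K H_{f_j}^2\dd\mu_{f_j}=\infty$ there is nothing to prove, so pass to a subsequence realizing a finite $\liminf$; then $\sup_j\int_K|\vec H_{f_j}|^2\dd\mu_{f_j}<\infty$, and since $\dd\mu_{f_j}\to\dd\mu_f$ uniformly on $K$ (the metric coefficients converge in $C^0$), the vector fields $\vec H_{f_j}$ are bounded in $L^2(K,\dd\mu_f;\R^n)$. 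Extract a further subsequence with $\vec H_{f_j}\rightharpoonup \xi$ weakly in $L^2(K,\dd\mu_f;\R^n)$ for some $\xi$. Weak lower semicontinuity of the $L^2$-norm then gives $\int_K|\xi|^2\dd\mu_f\leq\liminf_{j\to\infty}\int_K|\vec H_{f_j}|^2\dd\mu_{f_j}=\liminf_{j\to\infty}\int_K H_{f_j}^2\dd\mu_{f_j}$ (the last equality since $|\vec H|^2=H^2$ with the scalar convention, and in the surface case $\vec H=H\,N$). It remains to identify $\xi=\vec H_f$ $\mu_f$-a.e.\ on $K$, which would yield $\int_K H_f^2\dd\mu_f=\int_K|\vec H_f|^2\dd\mu_f\leq\liminf_{j}\int_K H_{f_j}^2\dd\mu_{f_j}$, finishing the proof after reassembling the charts.

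\textbf{Step 3 (Identifying the weak limit — the main obstacle).} This is the crux. Test the weak formulation of $\vec H_{f_j}=\tfrac12\Delta_{g_{f_j}}f_j$ against a vector field $\psi\in C_c^\infty(\Omega;\R^n)$: for a $C^2$ immersion $g$ one has the first-variation identity $\int_\Omega\langle\vec H_g,\psi\rangle\dd\mu_g=-\int_\Omega\big(\operatorname{div}_{g}\psi^{\top}-\langle\psi,\vec H_g\rangle\big)\dd\mu_g$, i.e.\ the standard integration-by-parts formula expressing $\int\langle\vec H_g,\psi\rangle\dd\mu_g$ purely through first derivatives of $g$ and first derivatives of $\psi$ (tangential divergence). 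Concretely, $\int_\Omega\langle \vec H_g,\psi\rangle\dd\mu_g=-\int_\Omega g_g^{ij}\langle\partial_i g,\partial_j\psi\rangle\,\sqrt{\det g_g}\,\dd x$, which involves only $\partial g$ and $\partial\psi$ and depends continuously on $g$ in $C^1_{\mathrm{loc}}$. Since $f_j\to f$ in $C^1$ we have $g_{f_j}^{ij}\to g_f^{ij}$ and $\sqrt{\det g_{f_j}}\to\sqrt{\det g_f}$ in $C^0(K)$ and $\partial_i f_j\to\partial_i f$ in $C^0(K)$; hence the right-hand side converges to $-\int_\Omega g_f^{ij}\langle\partial_i f,\partial_j\psi\rangle\sqrt{\det g_f}\,\dd x=\int_\Omega\langle\vec H_f,\psi\rangle\dd\mu_f$ (the last identity because $f$ is $C^2$ on $\Omega$). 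On the other hand the left-hand side is $\int_\Omega\langle\vec H_{f_j},\psi\rangle\dd\mu_{f_j}=\int_\Omega\langle\vec H_{f_j},\psi\rangle\,(\mathrm{density}_j)\dd\mu_f$, and since $\mathrm{density}_j\to1$ uniformly and $\vec H_{f_j}\rightharpoonup\xi$ weakly in $L^2(\dd\mu_f)$, this converges to $\int_\Omega\langle\xi,\psi\rangle\dd\mu_f$. Comparing limits, $\int_\Omega\langle\xi,\psi\rangle\dd\mu_f=\int_\Omega\langle\vec H_f,\psi\rangle\dd\mu_f$ for all $\psi\in C_c^\infty(\Omega;\R^n)$, whence $\xi=\vec H_f$ $\mu_f$-a.e.\ on $K$. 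Subtleties to handle carefully here are: the product of a weakly convergent sequence with a uniformly (strongly) convergent multiplier converges weakly, so the factor $\mathrm{density}_j$ causes no trouble; and the test fields $\psi$ must be allowed to range over a dense subset of $L^2(K,\dd\mu_f;\R^n)$, which $C_c^\infty(\Omega;\R^n)$ restricted to $K$ provides. Assembling Steps 1--3 and letting the exhausting sets increase to $f^{-1}(U)$ completes the proof.
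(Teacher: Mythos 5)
Your proof is correct and takes a genuinely more elementary route than the paper's. The paper works with the image varifolds $V_{f_j}$, $V_f$: it notes that the first variation $\delta V_{f_j}(X)=\int_\Sigma(\operatorname{div}_{df_j(T_p\Sigma)}X)(f_j)\,\dd\mu_{f_j}$ depends only on $df_j$ and hence converges under $C^1$-convergence, invokes lower semicontinuity of $\int\|\vec H_V\|^2\dd\|V\|$ under this convergence, and cites \cite[Theorem~4.1]{schaetzle2009} to identify the generalized mean curvature of $V_f$ with the classical $\vec H_f$. You unwind precisely this mechanism in local coordinates: integrating by parts once expresses $\int\langle\vec H_g,\psi\rangle\dd\mu_g$ through $\partial g$ and $\partial\psi$ alone, giving distributional continuity of $\vec H$ in $C^1$; weak $L^2$-compactness together with weak lower semicontinuity of the $L^2$-norm closes the argument, with the weak limit identified by testing against $C_c^\infty$ fields (the product with the uniformly convergent density factor is handled correctly). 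What each route buys: the paper's is short because it leans on varifold machinery and Sch\"atzle's identification theorem; yours is longer but self-contained, at the cost of doing the chart reduction and the identification step by hand. Two cosmetic slips, neither of which affects the argument: the abstract ``first-variation identity'' as you display it collapses to the trivially true $\int_\Omega\operatorname{div}_g\psi^\top\,\dd\mu_g=0$ (the identity you actually use is the coordinate formula in the next sentence), and with your convention $\vec H_g=\tfrac12\Delta_{g_g}g$ that coordinate formula is missing a factor $\tfrac12$ on the right-hand side.
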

\begin{proof}
    Denote by $V_{f_j}$ and $V_f$ the rectifiable varifolds induced by $f_j$ and $f$, respectively (see \cite[p.~79]{SimonLectures1983}). Note that $V_{f_j}$ and $V_f$ have (locally) bounded first variation when restricted to $U$ with generalized mean curvature $\vec H_{V_{f_j}}, \vec H_{V_f}$. For $X\in C_c^1(U,\R^3)$ we obtain
    \begin{align}
        -&\int_{\R^n}\langle X(x),\vec H_{V_{f_j}}\rangle \dd \|V_{f_j}\| = \delta V_j(X) = \int_{\Sigma} (\mathrm{div}_{df_j(T_p\Sigma)}X)(f_j(p))\dd\mu_{f_j}(p)\\
        &\to \int_{\Sigma} (\mathrm{div}_{df(T_p\Sigma)}X)(f(p))\dd\mu_{f}(p) = \delta V(X) = -\int_{\R^n}\langle X(x),\vec H_{V_{f}}\rangle \dd \|V_f\|,
    \end{align}
    using $f_{\#}\mu_f=\|V_f\|$ and $f_{\#}\mu_{f_j}=\|V_{f_j}\|$ 
    (see \cite[15.7 (and p.~69)]{SimonLectures1983}) 
    and the $C^1$ convergence $f_j\to f$. Therefore, 
    \begin{equation}
        \int_U \|\vec H_{V_{f}}\|^2\dd\|V_{f}\|\leq \liminf_{j\to\infty} \int_U \|\vec H_{V_{f_j}}\|^2\dd\|V_{f_j}\|.
    \end{equation}
    Since $\vec H_{V_f}\circ f=\Delta_{g_f}f=\vec H_f$ $\mu_f$-a.e.\ on $f^{-1}(U)$ and $\vec H_{V_{f_j}}\circ f_j=\Delta_{g_{f_j}}f_j=\vec H_{f_j}$ $\mu_{f_j}$-a.e.\ on $f_j^{-1}(U)$ by \cite[Theorem~4.1]{schaetzle2009}, the claim follows after again using $f_{\#}\mu_f=\|V_f\|$ and $f_{\#}\mu_{f_j}=\|V_{f_j}\|$.
\end{proof}

\begin{lemma}\label{lem:wf-starting-in-ic-2}
    Denote by $\rho\colon[0,T)\times \Sigma\to \R$ the solution in \Cref{thm:lecroneshaosimonett} with $\rho_0$ and $\Sigma$ as constructed in \Cref{lem:wf-starting-in-ic-1}. Writing $f(t,x)=x+\rho(t,x)\nu_{\Sigma}(x)$, we have
    \begin{equation}
        \W(f(t,\cdot))\leq 8\pi\quad\text{for all }0\leq t<T.
    \end{equation}
\end{lemma}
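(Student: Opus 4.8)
The strategy is to combine the lower semicontinuity of the Willmore energy under $C^1$-convergence (\Cref{lem:will-lsc}) with the fact that the inverted catenoid has Willmore energy exactly $8\pi$, using the smooth approximating surfaces $\Sigma_{\varepsilon,\varepsilon'}$ (which have energy below $8\pi$ if chosen suitably close) as initial data for the classical Willmore flow, which does not increase energy.

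\textbf{Step 1: energy of the inverted catenoid and its approximants.} Recall from \Cref{lem:wf-starting-in-ic-1} that the inverted catenoid $\Sigma_{\mathrm{ic}}$ arises from inverting a catenoid, hence $\W(\Sigma_{\mathrm{ic}})=8\pi$ by conformal invariance of $\W$ (the catenoid is a complete minimal surface with two planar ends, each contributing $4\pi$ under inversion; this is classical, see e.g.\ the computation underlying \cite{chenli2017}). The surface $\Sigma=\Sigma_{\varepsilon,\varepsilon'}$ constructed there is a smooth perturbation of $\Sigma_{\mathrm{ic}}$, differing only by the mollification near $r=r^*-\varepsilon$ and the replacement of the top/bottom by flat disks of height $\pm h$. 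First I would check that, for $\varepsilon,\varepsilon'$ small, $\W(\Sigma_{\varepsilon,\varepsilon'})\leq 8\pi$. Actually the cleaner route is to observe that $f_0(x)=x+\rho_0(x)\nu_\Sigma(x)$ parametrizes $\Sigma_{\mathrm{ic}}$ itself, so $f_0\in C^{1,\alpha}(\Sigma,\R^3)$ with $\W(f_0)=8\pi$, and it is $C^\infty$ away from the singular point. The relevant monotonicity input is then applied to the solution $\rho(t,\cdot)$ of \Cref{thm:lecroneshaosimonett}.

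\textbf{Step 2: approximate, run the flow, pass to the limit.} The technical heart is that the solution $\rho(t,\cdot)$ produced by the $C^{1,\alpha}$-well-posedness result of LeCrone--Shao--Simonett (\Cref{thm:lecroneshaosimonett}, Appendix) is, for $t>0$, a smooth immersion solving the Willmore flow (up to tangential diffeomorphism, cf.\ \Cref{rem:rep-of-tangential-Wf}), and depends continuously on the initial datum in the relevant topology. I would take a sequence of smooth initial data $f_0^{(k)}\to f_0$ in $C^{1,\alpha}(\Sigma)$ — concretely, the surfaces obtained from $u_{\varepsilon_k,\varepsilon'}$ with $\varepsilon_k\searrow 0$, suitably normal-graphed over $\Sigma$ — arranged so that each $f_0^{(k)}$ is smooth and $\W(f_0^{(k)})<8\pi$ (possible since $\W$ is continuous on $C^2$-immersions and $\W(f_0)=8\pi$, after a harmless rescaling/deformation ensuring strict inequality; alternatively invoke that the smooth $\Sigma_{\varepsilon,\varepsilon'}$ already satisfies this by the flat-disk truncation lowering the energy). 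For each $k$, the smooth Willmore flow $f^{(k)}(t,\cdot)$ starting at $f_0^{(k)}$ satisfies $\W(f^{(k)}(t,\cdot))\leq \W(f_0^{(k)})<8\pi$ by \Cref{rem:endecr}-type monotonicity (the energy identity for the classical flow, $\partial_t\W = -\int|\nabla\W|^2$). By continuous dependence in \Cref{thm:lecroneshaosimonett}, $f^{(k)}(t,\cdot)\to f(t,\cdot)$ for each fixed $t\in[0,T)$, at least in $C^1_{\mathrm{loc}}$ on $\Sigma$ (and in $C^\infty_{\mathrm{loc}}$ away from any bad point, though $C^1$ globally suffices). Then \Cref{lem:will-lsc} with $U=\R^3$ gives
\begin{equation}
    \W(f(t,\cdot)) \leq \liminf_{k\to\infty}\W(f^{(k)}(t,\cdot)) \leq 8\pi
\end{equation}
for every $0\leq t<T$, which is the claim.

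\textbf{Main obstacle.} The delicate point is the continuous dependence of the flow on the initial datum in a topology strong enough to apply \Cref{lem:will-lsc}, i.e.\ at least $C^1$-convergence $f^{(k)}(t,\cdot)\to f(t,\cdot)$ for fixed $t>0$. This must be extracted from the maximal-regularity framework of \Cref{thm:lecroneshaosimonett}: the solution map $\rho_0\mapsto\rho(\cdot)$ is (locally Lipschitz) continuous from the little-Hölder / Besov trace space into the solution space $C([0,T];h^{1+\alpha})\cap\ldots$, so one needs the approximants $f_0^{(k)}$ to converge to $f_0$ in that trace space, not merely in $C^{1,\alpha}$; verifying this requires the $W^{2,p}$-bounds on $\rho_0$ from \Cref{lem:wf-starting-in-ic-1} and the corresponding bounds on the mollified data, and care that the maximal existence time does not shrink to zero along the sequence (uniform lower bound on $T_k$, which follows from local well-posedness being uniform on $C^{1,\alpha}$-bounded sets). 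Once this continuous dependence is in hand, everything else is routine. One should also double-check that, for $t>0$, $f(t,\cdot)$ is genuinely a $C^2$ (indeed smooth) immersion so that the left-hand side of the asserted inequality is the classical Willmore energy; this is the parabolic smoothing built into \Cref{thm:lecroneshaosimonett}. A minor subtlety: the statement is about all $0\le t<T$ including $t=0$, where $\W(f(0,\cdot))=\W(f_0)=8\pi$, so the inequality is not strict there — consistent with \eqref{eq:wf-starting-in-ic-en-cont} in \Cref{thm:intro:wf-starting-in-ic}.
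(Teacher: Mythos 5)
Your overall strategy matches the paper's: approximate the non-smooth initial datum by smooth ones, run the classical (energy-monotone) Willmore flow from each approximant, invoke the continuous dependence in \Cref{thm:lecroneshaosimonett} to get $C^{1,\alpha}$-convergence of the flows at each fixed time, and close with the lower semicontinuity of $\W$ from \Cref{lem:will-lsc}. However, there is one unnecessary complication that introduces a genuine gap: you insist on arranging $\W(f_0^{(k)})<8\pi$ strictly, appealing vaguely to a ``harmless rescaling/deformation'' or to the flat-disk truncation in $\Sigma_{\varepsilon,\varepsilon'}$. Neither of these cleanly delivers strict inequality, and the $C^2$-continuity of $\W$ only tells you the approximate energies are \emph{near} $8\pi$, not below it. The paper avoids this entirely by working with a fixed reference surface $\Sigma$ and mollifying the height function $\rho_0$ in $W^{2,p}(\Sigma)$ (for $p>2$), which gives $\W(f_j(0,\cdot))\to 8\pi$ — merely convergence, not strict inequality. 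Combined with the monotonicity $\W(f_j(t,\cdot))\le\W(f_j(0,\cdot))$ for each smooth flow (which requires a short argument via Simonett's $C^{2,\alpha}$-continuity at $t\searrow 0$, since the LeCrone--Shao--Simonett solution space is only $C_{3/4}$) and the $\liminf$ from \Cref{lem:will-lsc}, this already gives $\W(f(t,\cdot))\le 8\pi$. Relatedly, your proposal to take the approximants to be graphs over $\Sigma$ coming from $u_{\varepsilon_k,\varepsilon'}$ with $\varepsilon_k\searrow 0$ is not what is needed: $\Sigma$ is fixed at a specific $(\varepsilon,\varepsilon')$ and one should directly mollify $\rho_0$ over that fixed $\Sigma$, which is both simpler and delivers the $W^{2,p}$-convergence you correctly identify as the crucial input for $\W(f_j(0,\cdot))\to 8\pi$.
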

\begin{proof}
    Since $\rho_0\in W^{2,p}(\Sigma)$ for all $p\in[1,\infty)$, $|\rho_0(x)|< a_x$ for all $x \in \Sigma$, and since $x\mapsto a_x$ is continuous, there exists a sequence $(\rho^{j}_0)_{j\in\N}\subseteq C^{\infty}(\Sigma)$ converging to $\rho_0$ in $W^{2,p}(\Sigma)$ for all $1\leq p<\infty$ with $|\rho_0^j(x)|<a_x$. Especially, $\|\rho_0^j-\rho_0\|_{C^{1,\alpha}(\Sigma)}\to 0$ for any $\alpha \in (0,1)$.
    By \Cref{thm:lecroneshaosimonett}, after passing to a subsequence, $\rho(t,\rho_0^j)$ exists for $0\leq t<T$, $j\in\N$, and 
    \begin{equation}\label{eq:ulm5:1}
        \|\rho(t,\rho_0^j)-\rho(t)\|_{C^{1,\alpha}(\Sigma)}
         \to 0\quad\text{for $j\to\infty$ for all $0\leq t<T$}.
    \end{equation}
    Furthermore, since $\rho_0^j$ is smooth, \cite{simonett2001} especially yields that $\rho(t,\rho_0^j)\to \rho_0^j$ in $C^{2,\alpha}(\Sigma)$ as $t\searrow 0$, so that, writing $f_j(t,x)\vcentcolon=x+\rho((t,x),\rho_0^j)\nu_{\Sigma}(x)$, for all $t\in [0,T)$, $j\in\N$, we have
    \begin{equation}\label{eq:cat-en-dec-1}
        \W(f_j(t,\cdot))\leq \lim_{s\searrow 0 } \W(f_j(s,\cdot)) = \W(f_j(0,\cdot)).
    \end{equation}
    Moreover, since $\rho_0^j\to \rho_0$ in $W^{2,p}(\Sigma)$ for $p>2$, 
    \begin{equation}\label{eq:cat-en-dec-2}
        \W(f_j(0,\cdot)) \to \W(f_0) = 8\pi.
    \end{equation}    
   Consider any $t\in (0,T)$. Since $f_j(t,\cdot)\to f(t,\cdot)$ in $C^{1,\alpha}(\Sigma)$ by \eqref{eq:ulm5:1}
    and as all immersions are smooth, \Cref{lem:will-lsc} yields
    \begin{equation}
        \W(f(t,\cdot)) \leq \liminf_{j\to\infty} \W(f_j(t,\cdot)) \leq \liminf_{j\to\infty} \W(f_j(0,\cdot)),
    \end{equation}
    using \eqref{eq:cat-en-dec-1} in the last estimate. Now the claim follows from $\eqref{eq:cat-en-dec-2}$.
\end{proof}

\begin{proof}[Proof of \Cref{thm:intro:wf-starting-in-ic}]
    We work in the setting of \Cref{lem:wf-starting-in-ic-1,lem:wf-starting-in-ic-2}. In particular, $f_0$ is given by \eqref{eq:lem-constr-Sigma-rho0} and we consider the solution $\rho\colon[0,T)\times\Sigma\to\R$ of \eqref{eq:wf-in-rho} starting at $\rho_0$ given by \Cref{thm:lecroneshaosimonett}. Write $f(t,x)\vcentcolon=x+\rho(t,x)\nu_{\Sigma}(x)$ for $0\leq t<T$ and $x\in\Sigma$. 
    Let $\delta\in(0,T)$. Since $\W(f(\delta))\leq 8\pi$ and since $f(\delta)$ is a smooth spherical immersion, the Willmore flow of \cite[Theorem~5.2]{kuwertschaetzle2004} starting at $f(\delta)$ exists globally and converges to a round sphere up to reparametrization. A gluing argument yields global existence and convergence, see \Cref{rem:rep-of-tangential-Wf} and \cite[Appendix~D]{ruppspener2020}.

    We show next continuity of the Willmore energy for $t\searrow 0$. For \eqref{eq:wf-starting-in-ic-en-cont}, using $f(t,\cdot)\to f_0$ in $C^{1,\alpha}(\S^2)$ and \Cref{lem:will-lsc} with $U=\R^3\setminus\{0\}$,
    \begin{equation}
        8\pi=\W(f_0|_{f_0^{-1}(U)})\leq \liminf_{t\searrow 0}\W(f(t,\cdot))\leq \limsup_{t\searrow 0}\W(f(t,\cdot)) \leq 8\pi,
    \end{equation}
    using \Cref{lem:wf-starting-in-ic-2} in the last estimate. 
    Finally, we argue that  $\mathcal{W}(f(t))< 8\pi$ for all $t> 0$. Fix $\delta>0$. Since $f(\delta)$ is smooth and $\mathcal{W}(f(\delta))\leq 8\pi$ we conclude from \cite[Theorem~5.2]{kuwertschaetzle2004} that the Willmore flow of $f(\delta)$ is not stationary and hence $f(\delta)$ is not a Willmore immersion (unless $f(\delta)$ is the round sphere, in which case $4\pi=\mathcal{W}(f(\delta))< 8\pi$). Therefore $\mathcal{W}(f(2\delta))< 8\pi$.
\end{proof}

\appendix

\section{Technical proofs}\label{app:techproofs}

\begin{proof}[Proof of \Cref{lem:allesOK}]
For $m=0$ the statement is clear. We prove first the statement for $m=1$ and $\beta=0$. We consider first a (normal) vector field appearing in $ \mathbb{P}^{A,C}_B$. Since we are in codimension one, this can be written as 
$$ \frac{ \partial_s \gamma}{\gamma^{(2)}} * \cdots * \frac{ \partial_s \gamma}{\gamma^{(2)}} * \frac{(\nabla_{s}^\bot)^{i_1} \vec{\kappa}}{\gamma^{(2)}}  * \cdots * \frac{(\nabla_{s}^\bot)^{i_{b-1}}  \vec{\kappa}   }{\gamma^{(2)}} \  \frac{ (\nabla_{s}^\bot)^{i_{b}}  \vec{\kappa} }{\gamma^{(2)}} =\vcentcolon p(s) \frac{(\nabla_{s}^\bot)^{i_{b}}  \vec{\kappa}}{\gamma^{(2)}} ,$$
with $p(s)$ of the type $P^{a-i_{b},c}_{b-1}$, where $a+\frac12 b\leq A +\frac12 B$. Then we compute
$$ \nabla_s^{\bot} \Big(p(s) \frac{(\nabla_{s}^\bot)^{i_{b}}  \vec{\kappa}}{\gamma^{(2)}}\Big)  = p(s) \frac{(\nabla_{s}^\bot)^{i_{b}+1}  \vec{\kappa}}{\gamma^{(2)}}  + p(s)\ \frac{\partial_s\gamma}{\gamma^{(2)}}*\frac{(\nabla_s^{\bot})^{i_b}\Vec{\kappa}}{\gamma^{(2)}} + (\partial_s p(s) ) \frac{(\nabla_s^{\bot})^{i_b}\Vec{\kappa}}{\gamma^{(2)}}\, ,$$
and it is sufficient to prove that $\partial_s p =  \mathbb{P}^{a-i_b+1,c+1}_{b-1}$.  It is useful to note that from \eqref{eq:NablaLocalH} it follows that for a vector field $\Phi$ along $\gamma$ we have
$ \partial_s \Phi = \nabla_s \Phi + \Phi * \frac{\partial_s \gamma}{\gamma^{(2)}}.$
Then, 
\begin{equation} \label{eq:secondderivativesinPnotation} \partial_s \big( \frac{\partial_s \gamma}{\gamma^{(2)}} \big) = \frac{\partial_s (\partial_s \gamma)}{\gamma^{(2)}}  - \frac{\partial_s \gamma}{(\gamma^{(2)})^2} \partial_s \gamma^{(2)} = \frac{\vec{\kappa}}{\gamma^{(2)}}  + \frac{\partial_s \gamma}{\gamma^{(2)}} * \frac{\partial_s \gamma}{\gamma^{(2)}} = \mathbb{P}^{0,0}_{1}\, , \end{equation}
\begin{align} \partial_s \big( \frac{(\nabla_{s}^\bot)^{i_{j}}  \vec{\kappa}}{\gamma^{(2)}} \big)& = \frac{\partial_s (\nabla_{s}^\bot)^{i_{j}}  \vec{\kappa}}{\gamma^{(2)}}  - \frac{(\nabla_{s}^\bot)^{i_{j}}  \vec{\kappa}}{(\gamma^{(2)})^2} \partial_s \gamma^{(2)}
= \frac{\nabla_s (\nabla_{s}^\bot)^{i_{j}}  \vec{\kappa}}{\gamma^{(2)}}  + \frac{(\nabla_{s}^\bot)^{i_{j}}  \vec{\kappa}}{\gamma^{(2)}} * \frac{\partial_s \gamma}{\gamma^{(2)}} \\
& =  \frac{(\nabla_{s}^\bot)^{i_{j}+1}  \vec{\kappa}}{\gamma^{(2)}}   + \frac{(\nabla_{s}^\bot)^{i_{j}}  \vec{\kappa}}{\gamma^{(2)}} * \frac{\vec{\kappa}}{\gamma^{(2)}} *\frac{\partial_s \gamma}{\gamma^{(2)}} + \frac{(\nabla_{s}^\bot)^{i_{j}}  \vec{\kappa}}{\gamma^{(2)}} * \frac{\partial_s \gamma}{\gamma^{(2)}}
= \mathbb{P}^{i_j+1,i_j+1}_{1}\, , 
\end{align}
using \eqref{eq:NormalDerOfNormal}. By the differentiation rule for products it follows that $\partial_s p$ is of the type $\mathbb{P}^{a-i_{b}+1,c+1}_{b-1}$. 
The same computations yield the same result for functions appearing in $\mathbb{P}^{A,C}_B$. 
By induction the claim is true for all $m \in \N$ and $\beta=0$. 

Now we consider the case $\beta \ne 0$. For $m=1$ we find
 $$ \nabla_s^{\bot} ( (\gamma^{(2)})^{\beta}\, \mathbb{P}^{A,C}_B ) = (\gamma^{(2)})^{\beta}\, \mathbb{P}^{A+1,C+1}_B +  (\gamma^{(2)})^{\beta} \frac{\partial_s \gamma}{\gamma^{(2)}} *\mathbb{P}^{A,C}_B, $$
 and the claim is proven for $m=1, \beta\in \Z\setminus\{0\}$. The case $m\geq 2$ follows by induction.
 \end{proof}

\begin{lemma}\label{lem:evolkappa}
    Consider a smooth family of immersions satisfying
    \eqref{eq:evol-eq-with-general-lambda}. Then we have
    \begin{equation}
        \nabla_t^{\bot} (\nabla_s^{\bot})^{m} \vec{\kappa} - (\nabla_s^{\bot})^{m}\nabla_t^{\bot}  \vec{\kappa} = \frac{1}{(\gamma^{(2)})^3} \mathbb{P}_{3}^{m+2,m+1} +  \frac{\lambda}{(\gamma^{(2)})^3} \mathbb{P}_{3}^{m,m}
    \end{equation}
    for all $m\geq 1$. Moreover,
    \begin{align}
        \nabla_t^{\bot} (\nabla_s^{\bot})^{m} \vec{\kappa} = & -\frac{1}{2 (\gamma^{(2)})^4} (\nabla_s^{\bot})^{m+4} \vec{\kappa} + (2m +4) \frac{\partial_s \gamma^{(2)}}{ (\gamma^{(2)})^5} (\nabla_s^{\bot})^{m+3} \vec{\kappa} +  \frac{1}{(\gamma^{(2)})^3} \mathbb{P}_{3}^{m+2,m+2} \\
        & + \lambda\Big(\frac{1}{4(\gamma^{(2)})^4}(\nabla_s^{\bot})^{m+2}\curv - (m+2)\frac{\partial_s\gamma^{(2)}}{(\gamma^{(2)})^5}(\nabla_s^{\bot})^{m+1}\curv + \frac{1}{(\gamma^{(2)})^3}\mathbb{P}_3^{m,m}\Big).
    \end{align} 
\end{lemma}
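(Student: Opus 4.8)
\textbf{Proof proposal for \Cref{lem:evolkappa}.}

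The plan is to establish the commutator formula by induction on $m$ and then derive the evolution equation for $(\nabla_s^\bot)^m\curv$ by combining it with the base case $m=0$, i.e.\ the evolution of $\curv$ itself. First I would compute the base case: from \eqref{eq:EvoKappa} in \Cref{lemma:evo} with $V=\partial_t\gamma = -\frac{1}{4(\gamma^{(2)})^4}(\nabla\E(\gamma)-\lambda\curv)$ and using \eqref{eq:nabla-L2}, one has $\nabla\E(\gamma)=2(\nabla_s^\bot)^2\curv + |\curv|_g^2\curv - 2\curv$, so $V = -\frac{1}{2(\gamma^{(2)})^4}(\nabla_s^\bot)^2\curv + \frac{\lambda}{4(\gamma^{(2)})^4}\curv + (\gamma^{(2)})^{-4}\mathbb{P}_3^{0,0} - \dots$; plugging this into \eqref{eq:EvoKappa} and expanding $(\nabla_s^\bot)^2$ applied to $(\gamma^{(2)})^{-4}(\nabla_s^\bot)^2\curv$ via the Leibniz rule and \Cref{lem:allesOK} gives the leading terms $-\frac{1}{2(\gamma^{(2)})^4}(\nabla_s^\bot)^4\curv + 4\frac{\partial_s\gamma^{(2)}}{(\gamma^{(2)})^5}(\nabla_s^\bot)^3\curv$ plus lower-order $\mathbb{P}$-terms, carefully tracking which factors of $(\gamma^{(2)})^{-1}$ are differentiated. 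The point is that only the two highest derivatives of $\curv$ escape the $\mathbb{P}$-notation because of the explicit weight $(\gamma^{(2)})^{-4}$; everything else is absorbed.

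For the inductive step on the commutator, I would use \eqref{eq:Commutator_Normal_Normal_S} from \Cref{lemma:evo}, which reads $\nabla_t^\bot\nabla_s^\bot\Phi - \nabla_s^\bot\nabla_t^\bot\Phi = \langle V,\curv\rangle_g\nabla_s^\bot\Phi$. Applying this with $\Phi = (\nabla_s^\bot)^{m-1}\curv$ and telescoping, one gets $\nabla_t^\bot(\nabla_s^\bot)^m\curv - (\nabla_s^\bot)^m\nabla_t^\bot\curv = \sum_{j} (\nabla_s^\bot)^{j}\big(\langle V,\curv\rangle_g (\nabla_s^\bot)^{m-1-j}\curv\big)$ up to reindexing. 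Since $\langle V,\curv\rangle_g = -\frac{1}{4(\gamma^{(2)})^4}\langle \nabla\E(\gamma)-\lambda\curv,\curv\rangle_g$, and $\langle(\nabla_s^\bot)^2\curv,\curv\rangle_g$ contributes a $P_2^{2,2}$-type term while $|\curv|_g^2\langle\curv,\curv\rangle_g$ is $P_4^{0,0}$, one checks that $\langle V,\curv\rangle_g = (\gamma^{(2)})^{-4}(\mathbb{P}_2^{2,2} + \mathbb{P}_4^{0,0}) + \lambda(\gamma^{(2)})^{-4}\mathbb{P}_2^{0,0}$. Multiplying by $(\nabla_s^\bot)^{m-1-j}\curv$ raises $b$ by one and then applying $(\nabla_s^\bot)^j$ and \Cref{lem:allesOK} lands everything in $(\gamma^{(2)})^{-3}\mathbb{P}_3^{m+2,m+1}$ for the non-$\lambda$ part (the weight drops from $-4$ to $-3$ because one power of $\gamma^{(2)}$ is consumed... actually one must verify the weight bookkeeping: $(\gamma^{(2)})^{-4}$ times a $\mathbb{P}$ with three curvature factors, after the extra $\curv$ and derivatives, the homogeneity in $\gamma^{(2)}$ works out to $(\gamma^{(2)})^{-3}$), and in $\lambda(\gamma^{(2)})^{-3}\mathbb{P}_3^{m,m}$ for the $\lambda$-part. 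This is essentially the computation already indicated by the $*$-notation conventions, so it is bookkeeping rather than genuinely hard.

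Finally, to get the evolution equation for $(\nabla_s^\bot)^m\curv$, I would write $\nabla_t^\bot(\nabla_s^\bot)^m\curv = (\nabla_s^\bot)^m\nabla_t^\bot\curv + [\text{commutator}]$, substitute the already-computed formula for $\nabla_t^\bot\curv$ (the $m=0$ evolution), apply $(\nabla_s^\bot)^m$ to it using the Leibniz rule and \Cref{lem:allesOK} to see that $(\nabla_s^\bot)^m\big((\gamma^{(2)})^{-4}(\nabla_s^\bot)^4\curv\big) = (\gamma^{(2)})^{-4}(\nabla_s^\bot)^{m+4}\curv + (\text{coefficient})\frac{\partial_s\gamma^{(2)}}{(\gamma^{(2)})^5}(\nabla_s^\bot)^{m+3}\curv + (\gamma^{(2)})^{-3}\mathbb{P}_3^{m+2,m+2}$ with the coefficient on the $(m+3)$-term coming from $-4$ (from differentiating $(\gamma^{(2)})^{-4}$) summed appropriately with the $4$ already present in the $m=0$ formula — giving $2m+4$ — and similarly track the $\lambda$-terms to produce the coefficient $-(m+2)$. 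The main obstacle is the precise tracking of the lower-order coefficients, especially confirming the combinatorial factors $2m+4$ and $m+2$: this requires carefully counting how many of the $m$ derivatives $(\nabla_s^\bot)$ land on the weight $(\gamma^{(2)})^{-4}$ versus on $(\nabla_s^\bot)^4\curv$ in the Leibniz expansion, and reconciling this with the $4$ (resp.\ $1$) already present from the base case. Everything else reduces to routine application of \Cref{lem:allesOK} and \Cref{lemma:evo}.
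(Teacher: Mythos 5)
Your proposal follows essentially the same route as the paper: compute $V$ and $\langle V,\curv\rangle_g$ explicitly using \eqref{eq:nabla-L2}, prove the commutator formula via the telescoping identity from \eqref{eq:Commutator_Normal_Normal_S} combined with \Cref{lem:allesOK}, and obtain the second formula by applying $(\nabla_s^\bot)^m$ to the $m=0$ evolution via Leibniz and adding the commutator, with the coefficients $2m+4$ and $-(m+2)$ arising exactly as you describe from the interplay of derivatives hitting the weights $(\gamma^{(2)})^{-4}$. The one minor slip — writing $(\gamma^{(2)})^{-4}\mathbb{P}_3^{0,0}$ instead of $(\gamma^{(2)})^{-3}\mathbb{P}_3^{0,0}$ in your expansion of $V$ — is a typo and does not affect the argument, since you correct the weight bookkeeping a few lines later.
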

\begin{proof}
    In the following $V$ denotes the right hand side in \eqref{eq:evol-eq-with-general-lambda}. Note that by \eqref{eq:nabla-L2}
    \begin{equation}\label{eq:V-with-lambda}
        V=\partial_t \gamma= - \frac{1}{2(\gamma^{(2)})^4} (\nabla_s^{\perp})^2 \vec{\kappa} + \frac{1}{(\gamma^{(2)})^3} \mathbb{P}_3^{0,0} + \frac{\lambda}{4(\gamma^{(2)})^4}\curv= \frac{1}{(\gamma^{(2)})^3}\mathbb{P}_1^{2,2} + \frac{\lambda}{(\gamma^{(2)})^3}\mathbb{P}_1^{0,0}.
    \end{equation}
    Using \eqref{eq:Commutator_Normal_Normal_S}, we find
    \begin{align}
        \nabla_t^{\bot} (\nabla_s^{\bot})^{m} \vec{\kappa} &- (\nabla_s^{\bot})^{m}\nabla_t^{\bot}  \vec{\kappa}  = \sum_{i=0}^{m-1} (\nabla_s^{\bot})^i (\nabla_t^{\bot} \nabla_s^{\bot}  - \nabla_s^{\bot}\nabla_t^{\bot})   (\nabla_s^{\bot})^{m-1-i}\vec{\kappa} \\
        & = \sum_{i=0}^{m-1} (\nabla_s^{\bot})^i (\langle V,\vec{\kappa}\rangle_g   (\nabla_s^{\bot})^{m-i}\vec{\kappa}) \\
        & = \sum_{i=0}^{m-1} (\nabla_s^{\bot})^i      
        \Big(\frac{1}{(\gamma^{(2)})^3}
        \mathbb{P}_{3}^{m+2-i,\max\{m-i,2\}}+\frac{\lambda}{(\gamma^{(2)})^3}\mathbb{P}_3^{m-i,m-i}\Big) \\
        & = \frac{1}{(\gamma^{(2)})^3}
        \mathbb{P}_{3}^{m+2,m+1}+ \frac{\lambda}{(\gamma^{(2)})^3}\mathbb{P}_3^{m,m}
   \end{align}
    where we used \Cref{lem:allesOK} and $\max\{m-i,2\}+i\leq m+1$ for $0\leq i\leq m-1$. This yields the first part of the claim since $m \geq 1$. For the second equation, using \eqref{eq:EvoKappa} and \Cref{lem:allesOK}  
    a computation based on the Leibniz rule for differentiating products yields
    \begin{align}
        (&\nabla_s^{\bot})^{m}\nabla_t^{\bot}   \vec{\kappa}  = (\nabla_s^{\bot})^{m} \Big[-(\nabla_{s}^\bot)^2 \Big(\frac{1}{2 (\gamma^{(2)})^4} (\nabla_s^{\bot})^{2} \vec{\kappa} \Big) +   \frac{1}{(\gamma^{(2)})^3}
        \mathbb{P}_{3}^{2,2}  \\
        &\qquad + \lambda\Big((\nabla_s^{\bot})^2\Big(\frac{1}{4(\gamma^{(2)})^4}\curv\Big)+\frac{1}{(\gamma^{(2)})^3}\mathbb{P}_3^{0,0}\Big)\Big]\\
        & = (\nabla_s^{\bot})^{m} \Big[- \frac{1}{2 (\gamma^{(2)})^4} (\nabla_s^{\bot})^{4} \vec{\kappa} +4 \frac{1}{ (\gamma^{(2)})^5}  \partial_s \gamma^{(2)}(\nabla_s^{\bot})^{3} \vec{\kappa}+   \frac{1}{(\gamma^{(2)})^3}
        \mathbb{P}_{3}^{2,2} \Big]\\
        &\qquad + \lambda \Big((\nabla_s^{\bot})^{m+2}\Big(\frac{1}{4(\gamma^{(2)})^4}\curv\Big)+\frac{1}{(\gamma^{(2)})^3}\mathbb{P}_3^{m,m}\Big)\\
        & = - \frac{1}{2 (\gamma^{(2)})^4} (\nabla_s^{\bot})^{m+4} \vec{\kappa} +(2m+4) \frac{1}{ (\gamma^{(2)})^5}  \partial_s \gamma^{(2)}(\nabla_s^{\bot})^{m+3} \vec{\kappa}  +   \frac{1}{(\gamma^{(2)})^3}
        \mathbb{P}_{3}^{m+2,m+2}\\
        &\qquad + \lambda\Big(\frac{1}{4(\gamma^{(2)})^4}(\nabla_s^{\bot})^{m+2}\curv - (m+2)\frac{1}{(\gamma^{(2)})^5}\partial_s\gamma^{(2)}(\nabla_s^{\bot})^{m+1}\curv + \frac{1}{(\gamma^{(2)})^3}\mathbb{P}_3^{m,m}\Big).
    \end{align}
    This combined with the first equation gives the second equality.
\end{proof}

\begin{proof}[Proof of \Cref{lem:int-est-general-lambda}]
    We claim that 
    \begin{align}
        \frac{\dd}{\dd t} \int_{\S^1} (\gamma^{(2)})^4&  | (\nabla_s^{\perp})^m \vec{\kappa}|^2_g \dd s  + \frac{\mu}{r^4}\int_{\S^1} (\gamma^{(2)})^4 | (\nabla_s^{\perp})^m \vec{\kappa}|^2_g \dd s   + \int_{\S^1}  | (\nabla_s^{\perp})^{m+2} \vec{\kappa}|^2_g \dd s \\
        & \leq \int_{\S^1} (1+ (\gamma^{(2)})^4\frac{\mu}{r^4} ) |\mathbb{P}^{2m+3,m+2}_2| \dd s  \label{eq:EvolutionOfweightedNorm}\\
        &\quad + |\lambda| \Big( \int_{\S^1}|(\nabla_s^{\bot})^{m+1}\curv|_g^2\dd s + \int_{\S^1} |\mathbb{P}_4^{2m,m}|\dd s \Big). \label{eq:EvolutionOfweightedNorm-lambda-terms}
    \end{align}
    To this end we compute (again with $V$ being the right hand side in \eqref{eq:evol-eq-with-general-lambda})
    \begin{align}
 & \frac{\dd}{\dd t} \int_{\mathbb{S}^1} (\gamma^{(2)})^4 | (\nabla_s^\perp)^m \vec{\kappa} |_g^2 \; \dd s  \\ & = \int_{\mathbb{S}^1} 4 (\gamma^{(2)})^3 V^{(2)} |(\nabla_s^\perp)^m \vec{\kappa}|_g^2 \dd s +2  \int_{\mathbb{S}^1}  (\gamma^{(2)})^4 \langle (\nabla_s^\perp)^m \vec{\kappa} , \nabla_t^\perp (\nabla_s^\perp)^m \vec{\kappa} \rangle_g \dd s  \\ & \quad  - \int_{\mathbb{S}^1} (\gamma^{(2)})^4 |(\nabla_s^\perp)^m \vec{\kappa}|_g^2 \langle V,  \vec{\kappa} \rangle_g \dd s.
    \end{align}
    Using the last equality in \eqref{eq:V-with-lambda}, one readily checks that the first and the third term can be absorbed in the term in \eqref{eq:EvolutionOfweightedNorm} and in the second summand in \eqref{eq:EvolutionOfweightedNorm-lambda-terms}. In the second integral we use \Cref{lem:evolkappa} to rewrite it as 
    \begin{align}
 & - \int_{\mathbb{S}^1} \langle (\nabla_s^\perp)^m \vec{\kappa} , (\nabla_s^\perp)^{m+4} \vec{\kappa}  \rangle_g  \dd s  + 2(2m+4) \int_{\mathbb{S}^1} \frac{\partial_s\gamma^{(2)}}{\gamma^{(2)}} \langle (\nabla_s^\perp)^m \vec{\kappa}, (\nabla_s^\perp)^{m+3} \vec{\kappa} \rangle_g \dd s  \\ & + \int_{\mathbb{S}^1} \gamma^{(2)} \langle (\nabla_s^\perp)^m \vec{\kappa}, \mathbb{P}_{3}^{m+2,m+2} \rangle_g \dd s \label{eq:explainMiddleTerm}\\
 &+ \lambda\Big( \frac12\int_{\S^1} \langle (\nabla_s^{\bot})^m\curv,(\nabla_s^{\bot})^{m+2}\curv\rangle_g \dd s - 2(m+2)\int_{\S^1}\frac{\partial_s\gamma^{(2)}}{\gamma^{(2)}}\langle(\nabla_s^{\bot})^m\curv,(\nabla_s^{\bot})^{m+1}\curv\rangle_g\dd s \\
 &\qquad + \int_{\S^1} \gamma^{(2)}\langle (\nabla_s^{\bot})^m\curv,\mathbb{P}_3^{m,m}\rangle_g\dd s\bigg). 
    \end{align}
    Integrating by parts in the first term in the penultimate line and using Young's inequality on the second term in the penultimate line yields terms that are contained in the $\lambda$-terms in \eqref{eq:EvolutionOfweightedNorm-lambda-terms}.
    
    Integrating by parts twice in the first term in the first line of
    \eqref{eq:explainMiddleTerm}, one obtains the integral 
    $- \int_{\mathbb{S}^1} |(\nabla_s^\perp)^{m+2} \vec{\kappa}|_g^2 \dd s$
    which is to be found on the left hand side of \eqref{eq:EvolutionOfweightedNorm} with the opposite sign. Using that $\langle \cdot , \cdot \rangle_g = \frac{1}{(\gamma^{(2)})^2} \langle\cdot,\cdot\rangle$ we obtain that the integrand in the second line of \eqref{eq:explainMiddleTerm} is of the form $\frac{(\nabla_s^\perp)^m \vec{\kappa}}{\gamma^{(2)}} * \mathbb{P}_3^{m+2,m+2}$, yielding a term in $\mathbb{P}_4^{2m+2,m+2}$ (i.e., also in $\mathbb{P}_2^{2m+3,m+2}$). Next we integrate by parts in the second term of the first line of \eqref{eq:explainMiddleTerm} and obtain (without the prefactor $2(2m+4)$)  
    \begin{equation}
- \int_{\mathbb{S}^1} \frac{\partial_s \gamma^{(2)}}{\gamma^{(2)}} \langle (\nabla_s^\perp)^{m+1} \vec{\kappa}, (\nabla_s^\perp)^{m+2} \vec{\kappa} \rangle_g \dd s - \int_{\mathbb{S}^1}
 \partial_s \big( \frac{\partial_s \gamma^{(2)}}{\gamma^{(2)}} \big) \langle (\nabla_s^\perp)^{m} \vec{\kappa}, (\nabla_s^\perp)^{m+2} \vec{\kappa}   \rangle_g . \end{equation}
 The first term now clearly lies in $\mathbb{P}_3^{2m+3,m+2}$. The same follows for the second term after an application of \eqref{eq:secondderivativesinPnotation}. \eqref{eq:EvolutionOfweightedNorm} follows now readily from all the previous computations.
    
 We may now use \Cref{cor:inter} with $k = m+2$ and $\gamma^{(2)}/r \leq 1$ to estimate the first term on the right hand side of \eqref{eq:EvolutionOfweightedNorm} by $\frac{1}{2} \int_{\mathbb{S}^1} | (\nabla_{s}^\bot)^{m+2} \vec{\kappa}  |^2_{g} \dd s  + C(m,M,\mu)$. The claim follows after absorbing.
\end{proof}

\begin{proof}[Proof of \Cref{prop:il-conf-constr-wf}]
    To prove the claim, we must suitably absorb the $\lambda(\gamma)$-terms in \eqref{eq:int-est-general-lambda}. For the first term, we integrate by parts and estimate
    \begin{align}
        |\lambda(\gamma)| \int_{\S^1} |(\nabla_s^\bot)^{m+1}\curv|_g^2\dd s &= - |\lambda(\gamma)| \int_{\S^1} \langle (\nabla_s^\bot)^{m+2}\curv,(\nabla_s^{\bot})^m\curv\rangle_g\dd s \\
        &\leq \frac18 \int_{\S^1} |(\nabla_s^\bot)^{m+2}\curv|_g^2\dd s + 2(\lambda(\gamma))^2 \int_{\S^1} |(\nabla_s^{\bot})^m\curv|_g^2\dd s
    \end{align}
    and the first term can be absorbed on the left hand side of \eqref{eq:int-est-general-lambda}.\\
    For the second $\lambda(\gamma)$-term in \eqref{eq:int-est-general-lambda}, we first estimate $|\lambda(\gamma)|$ as follows.
    For the denominator in \eqref{eq:lambda-conf-constr-will}, at any time $t\in [0,T)$,
    \begin{equation}
        \int_{\S^1}\frac{|\curv|_g^2}{(\gamma^{(2)})^4}\dd s\Big|_t \geq \frac{1}{(\max_{\S^1}\gamma^{(2)}(t))^4} \E(\gamma)\geq \frac{1}{(\max_{\S^1}\gamma^{(2)}(t))^4} 4\pi,
    \end{equation}
    using \Cref{rem:min-e} in the last estimate. Moreover, integrating by parts in the numerator of \eqref{eq:lambda-conf-constr-will} and using \eqref{eq:nabla-L2}, \Cref{rem:hyplengthestimatesrange(optimal)}, and $\mathcal{L}(\gamma(t))=L_0$, we thus obtain 
    \begin{align}
        |\lambda(\gamma(t))| &\leq \frac{1}{2\pi} \int_{\S^1}\Big(\frac{\max_{\S^1}\gamma^{(2)}(t)}{\gamma^{(2)}}\Big)^4\big(|\nabla_s^{\bot}\curv|_g^2 + 4 |\nabla_s^{\bot}\curv|_g|\curv|_g +  \frac12|\curv|_g^4\big) \dd s  + 2 \\
        &\leq \frac{e^{ 2 L_0}}{2\pi} \int_{\S^1} \big(3|\nabla_s^{\bot}\curv|_g^2 + 2|\curv|_g^2 + \frac12|\curv|_g^4\big) \dd s  + 2\leq C(L_0,M) \big( 1 + \int_{\S^1} \big(|\nabla_s^{\bot}\curv|_g^2 + |\curv|_g^4\big) \dd s \big).\label{eq:lambda-trivial-estimate}
    \end{align}
    Using \eqref{eq:mult-inter-ineq} with $(a,b,k)=(2,2,m+2)$ and $(a,b,k)=(0,4,m+2)$,
    \begin{align}
        \int_{\S^1} (|\nabla_s^\bot\curv|_g^2+|\curv|_g^4)\dd s  \leq C(m,M) (1 + \||(\nabla_s^\bot)^{m+2}\curv|_g\|_{L^2(\dd s)}^{\frac{2}{m+2}})
    \end{align}
    so that, by \eqref{eq:lambda-trivial-estimate},
    \begin{align}
        |\lambda(\gamma)|\leq C(m,L_0,M)(1 + \||(\nabla_s^\bot)^{m+2}\curv|_g\|_{L^2(\dd s)}^{ \frac{2}{m+2}}).\label{eq:subcritical-term-2}
    \end{align}
    Any term $P_{b_j}^{a_j,c_j}$ in $\mathbb P_{4}^{2m,m}$ in the last line of \eqref{eq:int-est-general-lambda} satisfies $a_j+\frac12b_j \leq 2m+2$, 
    hence, using \eqref{eq:mult-inter-ineq},
    \begin{align}
        \int_{\S^1} | P_{b_j}^{a_j,c_j}|\dd s\leq C(m,M)  (1+\||(\nabla_s^{\bot})^{m+2}\curv|_g\|_{L^2(\dd s)}^{\frac{2m+1}{m+2}})\label{eq:subcritical-term-1}.
    \end{align}
    Combining \eqref{eq:subcritical-term-2} and \eqref{eq:subcritical-term-1},
    \begin{equation}
        |\lambda(\gamma)|\int_{\S^1}|P_{b_j}^{a_j,c_j}|\dd s\leq C(m,L_0,M) \big(1+\||(\nabla_s^{\bot})^{m+2}\curv|_g\|_{L^2(\dd s)}^{\frac{2m+3}{m+2}}\big).
    \end{equation}
    Altogether, since $\frac{2m+3}{m+2}<2$, by Young's inequality,
    \begin{equation}
        |\lambda(\gamma)|\int_{\S^1} |\mathbb P_4^{2m,m}|\dd s \leq \frac18 \int_{\S^1} |(\nabla_s^\bot)^{m+2}\curv|_g^2\dd s + C(m,L_0,M).
    \end{equation}
    Again, we can absorb the first term on the left hand side of \eqref{eq:int-est-general-lambda}.
\end{proof}

\section{Parabolic theory: Well-posedness of the Willmore flow}\label{sec:app:well-posedness}

Throughout this section, $\Sigma\subseteq\R^3$ denotes a closed smooth embedded surface with inward-pointing unit normal $\nu_{\Sigma}$, $L_{\Sigma}=-d\nu_{\Sigma}$ denotes the associated shape operator, and we fix a continuous mapping $a\colon\Sigma\to(0,\infty)$, $x\mapsto a_x$ such that 
\begin{equation}\label{eq:conddiff}
\det(I-\rho L_{\Sigma}(x))\neq 0  \text{ for all $\rho\in\R$ with } |\rho|<a_x,  
\end{equation}
and for all $x\in\Sigma$. 
Moreover, for $k\in\N_0$ and $\alpha\in(0,1)$, $bc^{k+\alpha}(\Sigma)$ denotes the little-Hölder space of order $k+\alpha$, i.e., the closure of
$C^\infty(\Sigma)$ in $\|\cdot\|_{C^{k,\alpha}}$, see \cite[Section 2.2]{lecroneshaosimonett2020}.

Consider any $\rho_0\in bc^{1+\alpha}(\Sigma)$ such that $|\rho_0(x)|<a_x$ for all $x\in\Sigma$. 
Then, $f_0\colon\Sigma\to\R^3$ with $f_0(x)=x+\rho_0(x)\nu_h(x)$ is an immersion by \eqref{eq:conddiff}. In addition, the computations in \cite[Section~5]{lecroneshaosimonett2020} apply since they are purely local. That is, a family $f\colon[0,T)\times\Sigma\to\R^3$, $f(t,x)=x+\rho(t,x)\nu_{\Sigma}(x)$ with $|\rho(t,x)|<a_x$ satisfies
\begin{equation}
    \begin{cases}
        (\partial_tf)^{\bot} = - \nabla\W(f)&\text{on $[0,T)\times\Sigma$}\\
        f(0)=f_0&\text{on $\Sigma$}
    \end{cases}
\end{equation}
if and only if
\begin{equation}\label{eq:wf-in-rho}
    \begin{cases}
        \partial_t\rho = -\frac{1}{\beta(\rho)} (\Delta_{\rho} H_{\rho} + 2H_{\rho}(H_{\rho}^2-K_{\rho}))    &\text{on $[0,T)\times\Sigma$},\\
        \rho(0)=\rho_0&\text{on $\Sigma$},
    \end{cases}
\end{equation}
using the notation of \cite{lecroneshaosimonett2020}. 

Again, the computations in \cite[Sections~4 and 5]{lecroneshaosimonett2020} are purely local and do not require $f_0$ to be in a tubular neighborhood of $\Sigma$ with a fixed height, as long as the estimate $|\rho(t,x)|<a_x$ is satisfied. Therefore, one can still apply \cite[Theorem~2.4]{lecroneshaosimonett2020} to get the following analog of \cite[Theorem~5.1]{lecroneshaosimonett2020}, see \cite[Section 2.3]{lecroneshaosimonett2020} for the function space notation.

\begin{theorem}\label{thm:lecroneshaosimonett}
    Let $\alpha\in(0,1)$ and $\rho_0\in bc^{1+\alpha}(\Sigma)$ with $|\rho_0(x)|<a_x$ for all $x \in \Sigma$. Then there exists $T_+=T_+(\rho_0)>0$ and a unique maximal solution
        \begin{equation}
            \rho(\cdot,\rho_0)\in C_{3/4} ([0,T_+),bc^{4+\alpha}(\Sigma))\cap C^{1}_{3/4}([0,T_+),bc^{\alpha}(\Sigma)
        \end{equation}
        to \eqref{eq:wf-in-rho}.
        Moreover, $\rho(\cdot,\rho_0)\in C([0,T_+),bc^{1+\alpha}(\Sigma))$. Furthermore, for $T\in (0,T_+)$ there exists $\varepsilon=\varepsilon(\rho_0,T)>0$ such that, for all $\rho^1_0,\rho^2_0\in bc^{1+\alpha}(\Sigma)$ with $\|\rho^i_0-\rho_0\|_{C^{1,\alpha}(\Sigma)}<\varepsilon$ for $i=1,2$, we have $T_+(\rho_0^i)\geq T$ and 
        \begin{equation}
            \|\rho(t,\rho^1_0)-\rho(s,\rho^2_0)\|_{C^{1,\alpha}(\Sigma)} \leq \sigma \big(\|\rho^1_0-\rho^2_0\|_{C^{1,\alpha}(\Sigma)}+|t-s|\big)\quad\text{for all } 0\leq t,s<T,
        \end{equation}
        where $\sigma=\sigma(\rho_0,T)>0$. 
\end{theorem}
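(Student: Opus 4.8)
\textbf{Proof plan for \Cref{thm:lecroneshaosimonett}.}

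The plan is to reduce the assertion to an application of the abstract quasilinear parabolic existence and continuous-dependence theory of LeCrone--Shao--Simonett, i.e.\ \cite[Theorem~2.4]{lecroneshaosimonett2020}, exactly as in \cite[Section~5]{lecroneshaosimonett2020}, and to verify that their hypotheses continue to hold in our slightly more general setting where the graph height $\rho$ is only required to satisfy the pointwise bound $|\rho(t,x)|<a_x$ rather than lying in a tubular neighborhood of uniform thickness. First I would recall the precise form of the evolution equation: for $f(t,x)=x+\rho(t,x)\nu_\Sigma(x)$, under the condition \eqref{eq:conddiff}, the normal component of the Willmore flow is equivalent to the scalar quasilinear fourth-order equation \eqref{eq:wf-in-rho}, with $\beta(\rho)=\det(I-\rho L_\Sigma)$ and the geometric quantities $H_\rho$, $K_\rho$, $\Delta_\rho$ computed for the graph parametrization; all of this is a purely local computation carried out in \cite[Section~5]{lecroneshaosimonett2020} and does not use any global bound on $\rho$ beyond invertibility of $I-\rho L_\Sigma$.

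Next I would set up the functional-analytic framework on the scale of little-Hölder spaces: take $E_0=bc^\alpha(\Sigma)$, $E_1=bc^{4+\alpha}(\Sigma)$, with continuous interpolation spaces $E_\theta=bc^{4\theta+\alpha}(\Sigma)$ for $\theta\in(0,1)$, and note $E_{3/4}=bc^{3+\alpha}(\Sigma)$ and, relevant for the trace, $bc^{1+\alpha}(\Sigma)$ appears as the space in which the flow is continuous. The open set of admissible initial data is $V\vcentcolon=\{\rho\in bc^{1+\alpha}(\Sigma):|\rho(x)|<a_x \text{ for all }x\in\Sigma\}$, which is open because $x\mapsto a_x$ is continuous and $\Sigma$ is compact. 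One then writes \eqref{eq:wf-in-rho} in the form $\partial_t\rho=-A(\rho)\rho+F(\rho)$ where $A(\rho)$ is the linearization of $\rho\mapsto \frac{1}{\beta(\rho)}\Delta_\rho H_\rho$ at the top order (a fourth-order elliptic operator with coefficients depending on $\rho$ up to second order) and $F$ collects the lower-order terms. The key structural facts to check, all of which are verified locally in \cite[Sections~4--5]{lecroneshaosimonett2020} and survive verbatim here, are: (i) $A\in C^\omega(V,\mathcal{L}(E_1,E_0))$ (or at least $C^1$, which suffices for \cite[Theorem~2.4]{lecroneshaosimonett2020}); (ii) for each $\rho\in V$, $A(\rho)$ has the property of maximal $L_p$- or continuous-maximal regularity on the little-Hölder scale, equivalently $A(\rho)\in\mathcal{H}(E_1,E_0)$ generates an analytic semigroup, which follows from the uniform normal ellipticity of the principal symbol (here one uses \eqref{eq:conddiff} to bound $\beta(\rho)$ away from zero and to control the symbol, using compactness of $\Sigma$ and continuity of $a$); and (iii) $F\in C^\omega(V,E_0)$ with the appropriate lower-order growth. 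Given (i)--(iii), \cite[Theorem~2.4]{lecroneshaosimonett2020} yields existence of a unique maximal solution in $C_{3/4}([0,T_+),E_1)\cap C^1_{3/4}([0,T_+),E_0)$, continuity into $E_{1/4}=bc^{1+\alpha}(\Sigma)$, and the stated Lipschitz-type continuous dependence on the initial datum on compact time-subintervals, including the lower semicontinuity $T_+(\rho_0^i)\geq T$ of the maximal existence time.

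The main obstacle, and the only genuinely new point beyond citing \cite{lecroneshaosimonett2020} directly, is the verification that the enlargement of the admissible set from a fixed tubular neighborhood to the pointwise region $\{|\rho|<a_x\}$ does not break anything. For this I would argue as follows: by compactness of $\Sigma$ and continuity of $a$, any initial datum $\rho_0\in V$ together with a prescribed time horizon is handled on a slightly smaller closed region $\{|\rho|\leq a_x-\delta\}$ for some $\delta>0$, on which all coefficients of $A$ and $F$ and their derivatives are bounded and the ellipticity constant of $A(\rho)$ is uniform; since the a priori bound $|\rho(t,x)|<a_x$ is built into the definition of the solution (the equation only makes sense there), the solution never leaves $V$ while it exists, and the local well-posedness arguments of \cite{lecroneshaosimonett2020} apply on this region without change. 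In other words, the crucial estimates are local in the state variable and uniform on compact subsets of $V$, which is exactly what \cite[Theorem~2.4]{lecroneshaosimonett2020} requires. The remaining assertions---uniqueness, the regularity classes, and the explicit continuous-dependence estimate with modulus $\sigma(\rho_0,T)$---are then immediate consequences of the cited abstract theorem, so the proof reduces to assembling these verifications and invoking it.
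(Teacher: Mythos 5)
Your proposal is correct and follows essentially the same approach as the paper: the paper's ``proof'' is the paragraph preceding the theorem statement, which observes that the computations in \cite[Sections~4--5]{lecroneshaosimonett2020} are purely local (so the fixed-thickness tubular neighborhood can be replaced by the pointwise condition $|\rho(t,x)|<a_x$) and then invokes \cite[Theorem~2.4]{lecroneshaosimonett2020}. You spell out the quasilinear functional-analytic framework and the compactness argument in more detail than the paper does, but the structure and the key observation (locality in the state variable plus compactness of $\Sigma$ and continuity of $a$) are identical.
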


\section{Global existence and convergence of the Willmore flow}
\label{sec:conv-unconstr-wf}

In this section, we discuss the analog of \Cref{thm:main-conf-constr-wf} for the (unconstrained) Willmore flow \eqref{eq:wf-eq}.

\begin{theorem}\label{thm:main-1d}
    Let $\gamma\colon[0,T)\times\S^1\to\H^2$ be a maximal Willmore flow with $\mathcal{L}(\gamma(t))\leq L<\infty$ for all $0\leq t<T$. Then $T=\infty$ and, if $\widetilde{\gamma}(t)$ is the reparametrization of $\gamma(t)$ by constant hyperbolic speed for each $0\leq t<T$, then
    \begin{equation}
        \widetilde{\gamma}(t) \to \hat{\gamma} \quad\text{in the $C^{\infty}(\S^1)$ topology}
    \end{equation}
    as $t\to\infty$ where $\hat{\gamma}$ is the profile curve of a Willmore torus of revolution.
\end{theorem}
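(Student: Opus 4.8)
The plan is to follow the proof of \Cref{thm:main-conf-constr-wf} essentially verbatim, specialised to $\lambda\equiv0$, so that the conformally constrained Willmore flow \eqref{eq:conf-constr-wf-eq} becomes the Willmore flow \eqref{eq:wf-eq}; dispensing with the Lagrange multiplier and the length constraint only simplifies the argument. First I would note that, writing $M\vcentcolon=\E(\gamma_0)$, the hypothesis $\Ll(\gamma(t))\le L$ is precisely \Cref{hyp:wf-cwf}(a), so the global curvature bounds \eqref{eq:glob-en-est} and the sub-convergence statement \Cref{cor:better-sub-convergence}(a) are available. Then, for an arbitrary sequence $t_j\nearrow T$, I set $\rho_j\vcentcolon=\min_{\S^1}\gamma^{(2)}(t_j)$ and pick $p_j=(p_j^{(1)},0)\in\R\times\{0\}$ with $(p_j^{(1)},\rho_j)\in\gamma(t_j,\S^1)$. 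By \Cref{rem:hyplengthestimatesrange(optimal)} one has $\rho_j\le\gamma^{(2)}(t_j)\le e^{L/2}\rho_j$, hence $\Ll_{\R^2}(\gamma(t_j))\le e^{L/2}\rho_j L$, so the rescaled, recentred curves $(\gamma(t_j,\S^1)-p_j)/\rho_j$ lie in a fixed compact subset of $\H^2$ and \eqref{eq:linfty-control} holds. Applying \Cref{cor:better-sub-convergence}, after passing to a subsequence $\widetilde{\gamma}_j\vcentcolon=(\widetilde{\gamma}(t_j)-p_j)/\rho_j\to\hat\gamma$ in $C^\infty(\S^1)$ for some smooth free elastica $\hat\gamma$, which has $\Ll(\hat\gamma)\le L$ since hyperbolic length is invariant under the rescalings.

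Next I would establish a {\L}ojasiewicz--Simon gradient inequality for $\E$ at $\hat\gamma$: there are $C_{\mathrm{LS}}>0$, $\theta\in(0,\tfrac12]$ and $\sigma>0$ such that $|\E(\gamma)-\E(\hat\gamma)|^{1-\theta}\le C_{\mathrm{LS}}\,\||\nabla\E(\gamma)|_g\|_{L^2(\dd s)}$ for every immersion $\gamma\colon\S^1\to\H^2$ admitting a diffeomorphism $\phi\colon\S^1\to\S^1$ with $\|\hat\gamma-\gamma\circ\phi\|_{W^{4,2}(\S^1,\R^2)}<\sigma$. This follows exactly as \Cref{thm:loja-e-constr}, via \cite[Corollary~5.2]{rupp2020}, now \emph{without} the length-preserving restriction (which only lightens the functional-analytic setup). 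After the standard reduction using the energy identity in \Cref{rem:endecr}, I may assume $\E(\gamma(t))>\E(\hat\gamma)$ for all $0\le t<T$; since $\E(\gamma(\cdot))$ is non-increasing and bounded below, $\E(\gamma(t))\searrow\E(\hat\gamma)$ as $t\nearrow T$.

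The core is the threshold-time estimate, copied from the proof of \Cref{thm:main-conf-constr-wf}. Writing $\gamma_j(t,\cdot)\vcentcolon=(\gamma(t_j+t\rho_j^4,\cdot)-p_j)/\rho_j$, a Willmore flow by \Cref{lem:par-scaling}, and $G_j(t)\vcentcolon=(\E(\gamma_j(t))-\E(\hat\gamma))^\theta$, I pass to a further subsequence with $\|\widetilde{\gamma}_j-\hat\gamma\|_{W^{4,2}}<\sigma$ and set
\[
    s_j\vcentcolon=\sup\Bigl\{\bar t\in[0,(T-t_j)/\rho_j^4)\,\Big|\,\bigl\|(\widetilde{\gamma}(t_j+t\rho_j^4)-p_j)/\rho_j-\hat\gamma\bigr\|_{W^{4,2}}<\sigma\ \text{ for all }0\le t\le\bar t\Bigr\}.
\]
On $0<t<s_j$, shrinking $\sigma$ if necessary, the $W^{4,2}$-closeness forces $0<\alpha<\gamma_j^{(2)}(t)<1/\alpha$ for some $\alpha=\alpha(\hat\gamma,\sigma)\in(0,1)$; then combining \eqref{eq:wf-eq}, the {\L}ojasiewicz--Simon inequality and \Cref{lem:beschdes-lemma} (together with \eqref{eq:len-low-bound}, $\Ll(\gamma_j(t))\le L$ and $\alpha<\gamma_j^{(2)}(t)<1/\alpha$) yields $-G_j'(t)\ge c\,\|\partial_t((\widetilde{\gamma}(t_j+t\rho_j^4)-p_j)/\rho_j)\|_{L^2(\dd x)}$ with $c=c(C_{\mathrm{LS}},\theta,M,\alpha)>0$, exactly as in \Cref{thm:main-conf-constr-wf}. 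Integrating over $[0,t]$ shows $\|\widetilde{\gamma}_j-(\widetilde{\gamma}(t_j+t\rho_j^4)-p_j)/\rho_j\|_{L^2(\dd x)}\le c^{-1}G_j(0)\to0$ as $j\to\infty$, uniformly in $0<t<s_j$. A contradiction argument --- applying \Cref{cor:better-sub-convergence} at $\tau_j\vcentcolon=t_j+s_j\rho_j^4$ would produce, along a subsequence, a further critical point $\bar\gamma$ with $\|\hat\gamma-\bar\gamma\|_{W^{4,2}}=\sigma>0$ while the integrated estimate forces $\|\hat\gamma-\bar\gamma\|_{L^2(\dd x)}=0$ --- shows $s_J=(T-t_J)/\rho_J^4$ for some $J\in\N$. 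Then the rescaled, reparametrised orbit stays $\sigma$-close to $\hat\gamma$ and is $L^2(\dd x)$-Cauchy, and since $\gamma_J^{(2)}>\alpha>0$ there, \Cref{cor:global-existence} gives $T=\infty$; a final subsequence argument with \Cref{cor:better-sub-convergence} upgrades $L^2(\dd x)$-convergence to $C^\infty(\S^1)$-convergence of $\widetilde{\gamma}(t)$ to $\hat\gamma$, and $f_{\hat\gamma}$ is a Willmore torus of revolution by \eqref{eq:will-grad-for-fgamma}.

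The main obstacle is obtaining the {\L}ojasiewicz--Simon inequality in its \emph{unconstrained} form, but this is routine given \cite[Corollary~5.2]{rupp2020} and the analyticity of $\E$ together with the Fredholm property of its second variation. The only other point needing attention is that the hyperbolic length is no longer conserved along the flow --- which, however, is harmless: the uniform bound $\Ll(\gamma(t))\le L$ (with \Cref{rem:hyplengthestimatesrange(optimal)}) still suffices for all the compactness arguments and for \Cref{lem:beschdes-lemma}, and removing the multiplier $\lambda$ makes every step strictly simpler than in \Cref{thm:main-conf-constr-wf}.
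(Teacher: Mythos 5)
Your proposal is correct and follows essentially the same approach as the paper: the paper's Appendix~C reduces Theorem~\ref{thm:main-1d} to the proof of Theorem~\ref{thm:main-conf-constr-wf} by ``formally setting $\lambda=0$'' and replacing the constrained {\L}ojasiewicz--Simon inequality (Theorem~\ref{thm:loja-e-constr}) by the unconstrained one (Theorem~\ref{thm:loja-e}), which is exactly what you do. The only cosmetic difference is the provenance of the unconstrained {\L}ojasiewicz--Simon inequality: you derive it from \cite[Corollary~5.2]{rupp2020} by dropping the length constraint, whereas the paper cites \cite[Corollary~3.27]{pozzetta2022} (noting that the arguments there with $\lambda=1$ transfer to $\lambda=0$); both routes are valid and your remark that the unconstrained case only lightens the functional-analytic setup matches the paper's comment that the bounded-length hypothesis suffices in place of conservation.
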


In contrast to \Cref{sec:conv-conf-constr-will}, for the Willmore flow, we use an \emph{unconstrained} version of the \L ojasiewicz--Simon gradient inequality for the elastic energy in $\H^2$. By \cite[Corollary 3.27]{pozzetta2022}, we have the following.

\begin{theorem}\label{thm:loja-e}
    Let $\hat{\gamma}\colon\S^1\to\H^2$ be a smooth critical point of $\E$. Then there exist $C=C(\hat{\gamma})>0$, $\theta\in(0,\frac12]$ and $\sigma>0$ such that, for all immersions $\gamma\colon\S^1\to\H^2$ such that there exists a diffeomorphism $\phi\colon\S^1\to\S^1$ satisfying $\|\hat{\gamma}-\gamma\circ \phi\|_{W^{4,2}(\S^1,\R^2)}<\sigma$, we have
    \begin{equation}\label{eq:loja-e}
        |\E(\gamma)-\E(\hat{\gamma})|^{1-\theta}\leq C\||\nabla\E(\gamma)|_g\|_{L^2(\dd s)}.
    \end{equation} 
\end{theorem}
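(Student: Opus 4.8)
The plan is to run the same scheme as in the proof of \Cref{thm:main-conf-constr-wf}, which simplifies considerably in the unconstrained case since the Lagrange multiplier vanishes ($\lambda\equiv 0$). First, since $\E(\gamma_0)=\vcentcolon M<\infty$ and $\Ll(\gamma(t))\leq L$ for all $0\leq t<T$, \Cref{hyp:wf-cwf}(a) is satisfied, so \eqref{eq:glob-en-est} yields $\sup_{t\in[0,T)}\int_{\S^1}|(\nabla_s^{\bot})^m\curv|_g^2\dd s\leq C(m,A_m,L,M)$ for all $m\in\N$, and \Cref{cor:better-sub-convergence}(a) provides sub-convergence (after reparametrization) to a critical point of $\E$ along any sequence $\tau_j\nearrow T$ for which \eqref{eq:linfty-control} holds. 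Fix now any $t_j\nearrow T$, set $\rho_j\vcentcolon=\min_{\S^1}\gamma^{(2)}(t_j)$ and choose $p_j\in\R\times\{0\}$ with $(p_j^{(1)},\rho_j)\in\gamma(t_j,\S^1)$. By \Cref{rem:hyplengthestimatesrange(optimal)} we have $\rho_j\leq\gamma^{(2)}(t_j)\leq e^{L/2}\rho_j$, so $\Ll_{\R^2}(\gamma(t_j))\leq\|\gamma^{(2)}(t_j)\|_\infty\Ll(\gamma(t_j))\leq Le^{L/2}\rho_j$, and hence $(\gamma(t_j,\S^1)-p_j)/\rho_j$ is contained in a fixed compact subset of $\H^2$; in particular \eqref{eq:linfty-control} holds. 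Since $z\mapsto(z-p_j)/\rho_j$ is an isometry of $\H^2$, it preserves constant hyperbolic speed parametrizations, so after passing to a subsequence $(\widetilde\gamma(t_j)-p_j)/\rho_j\to\hat\gamma$ in $C^\infty(\S^1)$ for some critical point $\hat\gamma$ of $\E$.

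Next I would perform the \L ojasiewicz argument. As in \Cref{sec:conv-conf-constr-will}, using \eqref{eq:dtW-for-conf-constr-WF} and \Cref{rem:endecr} we may assume without loss of generality that $\E(\gamma(t))>\E(\hat\gamma)$ for all $t$, and then $\E(\gamma(t))\searrow\E(\hat\gamma)$ as $t\nearrow T$. Apply \Cref{thm:loja-e} to $\hat\gamma$ to obtain $C_{\mathrm{LS}}>0$, $\theta\in(0,\tfrac12]$ and $\sigma>0$, and pass to a further subsequence so that $\|(\widetilde\gamma(t_j)-p_j)/\rho_j-\hat\gamma\|_{W^{4,2}}<\sigma$. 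With $\gamma_j(t,\cdot)\vcentcolon=(\gamma(t_j+t\rho_j^4,\cdot)-p_j)/\rho_j$ — a Willmore flow by \Cref{lem:par-scaling} — set $G_j(t)\vcentcolon=(\E(\gamma_j(t))-\E(\hat\gamma))^\theta$, let $\widetilde\gamma_j(t)$ be the constant hyperbolic speed reparametrization of $\gamma_j(t)$, and define $s_j\vcentcolon=\sup\{\bar t\in[0,(T-t_j)/\rho_j^4):\|\widetilde\gamma_j(t)-\hat\gamma\|_{W^{4,2}}<\sigma\text{ for all }0\leq t\leq\bar t\}$. Shrinking $\sigma$ if necessary, for $0<t<s_j$ one has $0<\alpha<\gamma_j^{(2)}(t)<1/\alpha$ for some $\alpha=\alpha(\hat\gamma,\sigma)$. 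Using \eqref{eq:wf-eq} one computes
\begin{equation}
-G_j'(t)=\tfrac{\theta}{4}\bigl(\E(\gamma_j(t))-\E(\hat\gamma)\bigr)^{\theta-1}\Bigl\|\tfrac{1}{(\gamma_j^{(2)}(t))^2}|\nabla\E(\gamma_j(t))|_g\Bigr\|_{L^2(\dd s)}^2\geq\tfrac{\alpha^4\theta}{4}\bigl(\E(\gamma_j(t))-\E(\hat\gamma)\bigr)^{\theta-1}\||\nabla\E(\gamma_j(t))|_g\|_{L^2(\dd s)}^2 ,
\end{equation}
and then combining \eqref{eq:loja-e} (applicable since $\widetilde\gamma_j(t)$ reparametrizes $\gamma_j(t)$ and lies within $\sigma$ of $\hat\gamma$ in $W^{4,2}$), \Cref{lem:beschdes-lemma}, \eqref{eq:len-low-bound}, and the bound $\gamma_j^{(2)}<1/\alpha$, one obtains $-G_j'(t)\geq c(C_{\mathrm{LS}},M,\alpha)\,\|\partial_t\widetilde\gamma_j(t)\|_{L^2(\dd x)}$. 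Integrating over $[0,t]$ gives
\begin{equation}
\Bigl\|\tfrac{\widetilde\gamma(t_j)-p_j}{\rho_j}-\tfrac{\widetilde\gamma(t_j+t\rho_j^4)-p_j}{\rho_j}\Bigr\|_{L^2(\dd x)}\leq C\,(G_j(0)-G_j(t))\leq C\,G_j(0)\to 0\qquad(j\to\infty),
\end{equation}
uniformly in $0<t<s_j$, since $G_j(0)=(\E(\gamma(t_j))-\E(\hat\gamma))^\theta\to 0$.

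Finally I would conclude exactly as in the proof of \Cref{thm:main-conf-constr-wf}. If $s_j<(T-t_j)/\rho_j^4$ for all $j$, then $\|\widetilde\gamma_j(s_j)-\hat\gamma\|_{W^{4,2}}=\sigma$, and applying \Cref{cor:better-sub-convergence} along $\tau_j\vcentcolon=t_j+s_j\rho_j^4$ (note \eqref{eq:linfty-control} holds by definition of $s_j$) produces a critical point $\bar\gamma$ of $\E$ with $\|\bar\gamma-\hat\gamma\|_{W^{4,2}}=\sigma>0$, hence $\|\bar\gamma-\hat\gamma\|_{L^2(\dd x)}>0$; but passing to the limit in the integrated estimate at $t=s_j$ gives $\|\bar\gamma-\hat\gamma\|_{L^2(\dd x)}\leq\liminf_j C\,G_j(0)=0$, a contradiction. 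Therefore $s_J=(T-t_J)/\rho_J^4$ for some $J$, and then $\gamma_J^{(2)}>\alpha$ on its whole time interval, so $\gamma^{(2)}\geq\alpha\rho_J>0$ on $[t_J,T)\times\S^1$; combined with the trivial bound on $[0,t_J]$, $\gamma^{(2)}$ is uniformly bounded below on $[0,T)\times\S^1$, whence $T=\infty$ by \Cref{cor:global-existence}. Letting $t\to\infty$ in the integrated estimate shows $(\widetilde\gamma_J(t))_{t\geq0}$ is Cauchy in $L^2(\dd x)$, hence convergent, and the uniform curvature bounds \eqref{eq:glob-en-est} together with a standard subsequence argument upgrade this to $\widetilde\gamma(t)\to\hat\gamma$ in $C^\infty(\S^1)$. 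Since $\Ll(\hat\gamma)\geq(2\pi)^2/M>0$ by \eqref{eq:len-low-bound}, $\hat\gamma$ is an immersed elastica, so $f_{\hat\gamma}$ is a Willmore torus of revolution by \Cref{prop:Schl1} and \eqref{eq:will-grad-for-fgamma}.

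There is no essentially new difficulty here: the argument is a simplification of the already-established conformally constrained case. The only point requiring a little care is that along the unconstrained flow the hyperbolic length is only bounded, not fixed, so one must ensure that the compactness and \L ojasiewicz arguments still pin down a \emph{single} limit; this works because $\Ll(\gamma(t))$ converges (being monotone-bounded along the convergent reparametrized flow) and the \L ojasiewicz estimate confines the tail of the orbit to a $W^{4,2}$-neighbourhood of $\hat\gamma$.
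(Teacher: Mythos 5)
You have proved the wrong statement. The result under review is \Cref{thm:loja-e}, the (unconstrained) \L ojasiewicz--Simon gradient inequality for $\E$ in $\H^2$: the existence of $C,\theta,\sigma$ such that $|\E(\gamma)-\E(\hat\gamma)|^{1-\theta}\leq C\||\nabla\E(\gamma)|_g\|_{L^2(\dd s)}$ for all $\gamma$ that are $W^{4,2}$-close to $\hat\gamma$ after reparametrization. Your text is instead a proof of \Cref{thm:main-1d} (global existence and full convergence of the unconstrained Willmore flow), and in the course of it you explicitly \emph{invoke} \Cref{thm:loja-e} as a black box (``Apply \Cref{thm:loja-e} to $\hat\gamma$ to obtain $C_{\mathrm{LS}},\theta,\sigma$\dots''). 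With respect to the statement you were asked to establish, the argument is therefore circular: nothing in your proposal explains where the inequality \eqref{eq:loja-e} itself comes from.

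What is actually needed (and what the paper does) is entirely different in nature: one deduces \Cref{thm:loja-e} from the abstract/curve-theoretic \L ojasiewicz--Simon machinery, concretely from \cite[Corollary~3.27]{pozzetta2022}, which treats $\E_\lambda=\lambda\Ll+\frac12\E$ with $\lambda=1$; one must then observe that the same arguments go through for $\lambda=0$, that the inequality stated with the Euclidean $W^{4,2}$-distance is equivalent to the intrinsic (hyperbolic) formulation because the two metrics are uniformly comparable on a fixed $W^{4,2}$-neighborhood of the smooth curve $\hat\gamma$ (whose second component is bounded away from $0$), and that by the reparametrization invariance of $\E$ and of $\nabla\E$ it suffices to assume closeness to $\hat\gamma$ only up to a diffeomorphism $\phi$ of $\S^1$. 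None of these points --- the analyticity/Fredholm setting behind the abstract gradient inequality, the passage $\lambda=1\to\lambda=0$, the norm comparison, or the reduction modulo reparametrization --- is addressed in your proposal, so the statement remains unproven. (Your argument for \Cref{thm:main-1d} itself is essentially the paper's intended one, but that is not the task here.)
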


\Cref{thm:loja-e} is not exactly as in \cite[Corollary 3.27]{pozzetta2022}. Indeed, \cite{pozzetta2022} considers the energy $\E_{\lambda}(\gamma)\vcentcolon=\lambda \mathcal{L}(\gamma)+\frac12\E(\gamma)$ with $\lambda=1$ instead of $\E$. But the arguments also apply in the case $\lambda=0$. Moreover, the norms appearing in \cite[Corollary 3.27]{pozzetta2022} make use of the intrinsic geometry of $\H^2$ while in \Cref{thm:loja-e} we measure the distance of $\hat{\gamma}$ to $\gamma$ with the Euclidean metric. However, this does not pose any problems since \eqref{eq:loja-e} is a local statement in a $W^{4,2}$-neighborhood of a reference curve $\hat{\gamma}$ --- in any such neighborhood, the Euclidean metric and the hyperbolic metric are equivalent with uniform constants. Lastly, using the parametrization invariance of the energy and the $L^2$-gradient, \Cref{thm:loja-e} assumes closeness to the critical point $\bar{\gamma}$ only after reparametrization.

With the specific reparametrizations by constant speed introduced in \Cref{sec:conv-conf-constr-will}, the \L ojasiewicz-Simon gradient inequality improves \Cref{cor:better-sub-convergence} to \Cref{thm:main-1d}. Indeed, in order to prove \Cref{thm:main-1d}, one can follow the proof of \Cref{thm:main-conf-constr-wf}, formally setting $\lambda=0$ and replacing \Cref{thm:loja-e-constr} by \Cref{thm:loja-e}. No additional difficulties arise from the fact that the hyperbolic length is non-constant in general since we assume it to be bounded in \Cref{thm:main-1d}.

\section{On a special class of conformally constrained Willmore tori}
\label{app:cmc-tori}

In this section, we collect properties of conformally constrained Willmore tori one obtains by rotating an $m$-fold circle of radius $r\in (0,1)$ centered at $(0,1)$ around the $x$-axis. More precisely, for $r\in(0,1)$, let $\gamma_r\colon\S^1\to\H^2$, $\gamma_r(x)\vcentcolon=(r\cos x,r\sin x+1)$. Then
\begin{equation}
    \Ll(\gamma_r)=\int_0^{2\pi} \frac{r}{r\sin x+1}\dd x = \frac{2\pi r}{\sqrt{1-r^2}}.
\end{equation}
Moreover, by \cite[Lemma~3.5]{dallacquaspener2018}, $\scurv_{\gamma_r}=|\curv_{\gamma_r}|_g\equiv \frac1r$ so that $\E(\gamma_r)=\frac{2\pi}{r\sqrt{1-r^2}}$. For $m\in\N$, define $\gamma_{m,r}=\gamma_r(m x)$. Clearly, with \eqref{eq:nabla-L2}
\begin{align}
    & \nabla\E(\gamma_{m,r}) - \lambda_r \curv_{\gamma_{m,r}} = 0\quad\text{for }\lambda_r\vcentcolon=r^{-2}-2\in (-1,\infty), \\
    & \Ll(\gamma_{m,r})=\frac{2\pi m r}{\sqrt{1-r^2}}, \quad \E(\gamma_{m,r})=\frac{2\pi m}{r\sqrt{1-r^2}} \quad\text{and}\quad T(\gamma_{m,r})=m.\label{eq:LET_gamma_mr}
\end{align}
In particular, for $0<r<\frac{1}{\sqrt{2}}$ and $m\in\N$ arbitrary, we have $\Ll(\gamma_{m,r}) < 2\pi m$. 

\begin{lemma}\label{rem:circular-elastica}
    Given any rectangular conformal class $\vec\omega\in \{0\}\times [1,\infty)$ and any $E>0$, for all $m\in\N$ sufficiently large, there exist $r=r(m)\in (0,1/\sqrt{2})$ such that $\mathfrak c(f_{\gamma_{m,r}})=\vec\omega$, $\E(\gamma_{m,r})>E$, and $\Ll(\gamma_{m,r})<2\pi|T(\gamma_{m,r})|= 2 \pi m$.
\end{lemma}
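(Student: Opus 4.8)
The plan is to verify the three stated properties of the $m$-fold circles $\gamma_{m,r}$ for a suitably chosen radius $r=r(m)$, relying entirely on the explicit formulae \eqref{eq:LET_gamma_mr} and \Cref{lem:Rcurv-orth-Cfomega}. First I would fix a target rectangular conformal class $\vec\omega=(0,c)$ with $c\geq 1$. By \Cref{lem:Rcurv-orth-Cfomega}, one has $\mathfrak c(f_{\gamma_{m,r}}) = \max\{\Ll(\gamma_{m,r})/2\pi, 2\pi/\Ll(\gamma_{m,r})\}\cdot(0,1)$, so the requirement $\mathfrak c(f_{\gamma_{m,r}})=\vec\omega$ becomes $\Ll(\gamma_{m,r})\in\{2\pi c, 2\pi/c\}$. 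Since we also want $\Ll(\gamma_{m,r})<2\pi m$, for $m$ large it is harmless to aim for $\Ll(\gamma_{m,r})=2\pi c$ (if $c\geq 1$) or just $\Ll(\gamma_{m,r})=2\pi/c$; either way, the target length $L_0\vcentcolon=2\pi c$ (resp.\ $2\pi/c$) is a fixed positive constant independent of $m$.

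Next I would solve for $r$. From \eqref{eq:LET_gamma_mr}, $\Ll(\gamma_{m,r})=\frac{2\pi m r}{\sqrt{1-r^2}}$, which is a continuous, strictly increasing bijection from $r\in(0,1)$ onto $(0,\infty)$; hence for each $m$ there is a unique $r=r(m)\in(0,1)$ with $\Ll(\gamma_{m,r})=L_0$. Solving explicitly, $\frac{m r}{\sqrt{1-r^2}}=\frac{L_0}{2\pi}$ gives $r(m)^2 = \frac{(L_0/2\pi)^2}{m^2 + (L_0/2\pi)^2}$, so $r(m)\to 0$ as $m\to\infty$; in particular $r(m)<1/\sqrt 2$ for all $m$ sufficiently large, which is required in the statement. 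This already secures $\mathfrak c(f_{\gamma_{m,r(m)}})=\vec\omega$ and, since $r(m)<1/\sqrt2$, also $\Ll(\gamma_{m,r(m)}) = L_0 < 2\pi m = 2\pi|T(\gamma_{m,r(m)})|$ once $m$ is large enough that $L_0<2\pi m$.

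Finally I would check the energy blow-up. By \eqref{eq:LET_gamma_mr}, $\E(\gamma_{m,r}) = \frac{2\pi m}{r\sqrt{1-r^2}}$. Substituting $r=r(m)$, or more simply using $\Ll(\gamma_{m,r})=L_0$ together with $\E(\gamma_{m,r})=\frac{2\pi m}{r\sqrt{1-r^2}} = \frac{(2\pi m)^2}{2\pi\,\Ll(\gamma_{m,r})}\cdot\frac{2\pi r/\sqrt{1-r^2}}{2\pi m}\cdot\frac{m}{r}$ — actually the cleanest route is: from $\Ll(\gamma_{m,r})\E(\gamma_{m,r}) = \frac{2\pi m r}{\sqrt{1-r^2}}\cdot\frac{2\pi m}{r\sqrt{1-r^2}} = \frac{(2\pi m)^2}{1-r^2}$, so $\E(\gamma_{m,r}) = \frac{(2\pi m)^2}{L_0(1-r(m)^2)} \geq \frac{(2\pi m)^2}{L_0}\to\infty$ as $m\to\infty$. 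Thus for any prescribed $E>0$ we have $\E(\gamma_{m,r(m)})>E$ for all $m$ sufficiently large, completing the proof. This argument is essentially a one-line computation once the explicit formulae are in hand; there is no genuine obstacle, the only mild care needed is the case distinction in \Cref{lem:Rcurv-orth-Cfomega} for whether the conformal class is read off as $\Ll/2\pi$ or $2\pi/\Ll$, which is resolved by noting $\Ll(\gamma_{m,r(m)})$ is the fixed constant $L_0$ and choosing $L_0$ to be either $2\pi c$ or $2\pi/c$ so that the maximum picks out $c$.
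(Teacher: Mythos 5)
Your proposal is correct and follows the same route as the paper: translate the conformal-class condition into a fixed length constraint $L_0$ via \Cref{lem:Rcurv-orth-Cfomega}, solve $\Ll(\gamma_{m,r})=L_0$ for $r=r(m)$ using \eqref{eq:LET_gamma_mr}, note $r(m)<1/\sqrt2$ once $2\pi m>L_0$, and observe $\E(\gamma_{m,r(m)})\to\infty$. The only cosmetic difference is that you solve for $r(m)$ explicitly and use the identity $\Ll\cdot\E=(2\pi m)^2/(1-r^2)$, whereas the paper argues by continuity of $r\mapsto\Ll(\gamma_{m,r})$ and bounds $\E(\gamma_{m,r})\geq\sqrt2\cdot2\pi m$ directly from $r<1/\sqrt2$.
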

\begin{proof}
    Given any $\vec\omega$ as in the statement, define $L_{\vec\omega}\vcentcolon=2\pi |\vec\omega|$. By \eqref{eq:conf-class-of-torus-of-rev}, $\Ll(\gamma)=L_{\vec\omega}$ implies $\mathfrak c(f_\gamma)=\vec\omega$. By \eqref{eq:LET_gamma_mr} and continuity, for each $m\in\N$ with $2\pi m > L_{\vec\omega}$, there exists $r=r(m)\in (0,1/\sqrt{2})$ with $\Ll(\gamma_{m,r(m)})=L_{\vec\omega}$. So choosing $m \in \N$
    sufficiently large such that $2\pi m>L_{\vec\omega}$ and $\sqrt{2}\cdot 2\pi m>E$, with $r=r(m)$, \eqref{eq:LET_gamma_mr} yields $
        \Ll(\gamma_{m,r})=L_{\vec\omega} < 2\pi m$ and $\E(\gamma_{m,r})\geq \sqrt{2}\cdot 2\pi m>E. $
\end{proof}

\begin{lemma}\label{lem:circle_stability}
    Let $m\in\N$ and $r\in(0,1)$ such that $\lambda_r=r^{-2}-2\in (-1,(m^2-2m-1)/(1 + 2 m))$. With the notation of \Cref{rem:circular-elastica}, there exists a smooth family of immersions $\gamma\colon(-\varepsilon,\varepsilon)\times\S^1\to\H^2$ with
    \begin{equation}
        \gamma(0,\cdot)=\gamma_{m,r},\quad \partial_t\Ll(\gamma(t,\cdot))\equiv 0,\quad \partial_{t,0}\E(\gamma(t,\cdot))=0\quad\text{and}\quad \partial_{t,0}^2 \E(\gamma(t,\cdot)) < 0.
    \end{equation}
\end{lemma}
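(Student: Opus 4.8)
The plan is to use the second variation formula for the hyperbolic elastic energy from Langer--Singer \cite{langersinger1984} evaluated on the $m$-fold circle $\gamma_{m,r}$. First I would recall that $\gamma_{m,r}$ is a $\lambda_r$-constrained elastica, i.e., $\nabla\E(\gamma_{m,r}) - \lambda_r \curv_{\gamma_{m,r}} = 0$ with $\lambda_r = r^{-2}-2$, and that its scalar curvature is constant, $\scurv_{\gamma_{m,r}} \equiv 1/r$. Writing a normal variation field as $\varphi = \psi \vec n$ for a scalar function $\psi\colon\S^1\to\R$, the length constraint $\partial_t\Ll = 0$ translates (to first order) into the condition $\int_{\S^1} \scurv\,\psi\,\dd s = \tfrac1r\int_{\S^1}\psi\,\dd s = 0$, i.e., $\psi$ has vanishing mean. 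For such $\psi$, the second variation of $\E + \lambda_r\Ll$ at $\gamma_{m,r}$ is a quadratic form $Q(\psi)$ which, after the computations in \cite[Section 5]{langersinger1984} (and since $\scurv$ is constant), becomes a constant-coefficient fourth-order expression in $\psi$; expanding $\psi$ in a Fourier series $\psi = \sum_{k} c_k e^{ikx}$ (using the constant-speed parametrization so that $\dd s = \tfrac{L}{2\pi}\dd x$) diagonalizes $Q$, reducing the problem to checking the sign of the symbol $q(k)$ for $k\in\Z\setminus\{0,\pm 1\}$ against the specific mode that respects the period and the length constraint.

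The next step is the explicit computation: one finds that $Q$ is negative on some admissible mode precisely when a certain polynomial inequality in $m$ and $\lambda_r$ holds, and I would verify that this inequality is exactly $\lambda_r < (m^2 - 2m - 1)/(1 + 2m)$ — this is presumably where the stated threshold on $\lambda_r$ (equivalently the range \eqref{eq:range_r} for $r$ via $\lambda_r = r^{-2}-2$) comes from. Concretely, the instability is witnessed by a variation supported on the $k = m\pm\text{(small)}$ Fourier modes adapted to the $m$-fold symmetry; one checks $q$ evaluated there is strictly negative under the hypothesis. Having found a scalar $\psi_0$ with $\int_{\S^1}\psi_0\,\dd s = 0$ and $Q(\psi_0) < 0$, I would then promote the infinitesimal variation to an actual length-preserving family: take $\Gamma(t,\xi,x) := \gamma_{m,r}(x) + t\,\psi_0(x)\vec n(x) + \xi\,\curv_{\gamma_{m,r}}(x)$ and solve the same ODE for $\xi = \xi(t)$ as in the proof of \Cref{prop:conf-will-grad-axisym} (using that $\int \langle\curv,\psi_0\vec n\rangle_g\,\dd s = \tfrac1r\int\psi_0\,\dd s = 0$ so that $\xi'(0) = 0$), obtaining a smooth family $\gamma(t,\cdot) := \Gamma(t,\xi(t),\cdot)$ with $\gamma(0,\cdot) = \gamma_{m,r}$, $\Ll(\gamma(t,\cdot)) \equiv \Ll(\gamma_{m,r})$, and $\partial_{t,0}\gamma = \psi_0\vec n$.

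Finally, since $\gamma_{m,r}$ is a critical point of $\E + \lambda_r\Ll$ and $\Ll$ is constant along this family, $\partial_{t,0}\E(\gamma(t,\cdot)) = \partial_{t,0}(\E + \lambda_r\Ll)(\gamma(t,\cdot)) = \int_{\S^1}\langle\nabla\E - \lambda_r\curv, \psi_0\vec n\rangle_g\,\dd s = 0$, and the second derivative satisfies
\begin{equation}
    \partial_{t,0}^2\E(\gamma(t,\cdot)) = \partial_{t,0}^2(\E + \lambda_r\Ll)(\gamma(t,\cdot)) = Q(\psi_0) < 0,
\end{equation}
where the $\xi$-correction contributes nothing at this order because $\xi'(0) = 0$ and the first-order term vanishes. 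I expect the main obstacle to be the bookkeeping in the second-variation computation: extracting the precise symbol $q(k)$ from \cite{langersinger1984} in the hyperbolic setting, correctly incorporating the length constraint (mean-zero condition) and the $m$-fold periodicity, and matching the resulting sign condition with the stated algebraic threshold $(m^2-2m-1)/(1+2m)$. Everything else — the ODE argument for the length-preserving family and the reduction of $\partial_{t,0}^2\E$ to $Q(\psi_0)$ — is routine given the earlier parts of the paper.
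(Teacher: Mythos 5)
Your plan is essentially the paper's proof: use the Langer--Singer second-variation formulas at the $m$-fold circle, pick a mean-zero normal variation, promote it to a length-preserving family via the ODE trick from the proof of \Cref{prop:conf-will-grad-axisym}, and reduce $\partial_{t,0}^2 \E$ to the second variation of the penalized energy $\E + \lambda_r\Ll$ applied to the variation (using that $\gamma_{m,r}$ is a critical point of the penalized energy and $\Ll$ is constant along the family). The identity $\partial_{t,0}^2\E = Q(\psi_0)$ is correct and is equivalent to the paper's formula $\partial_{t,0}^2\E = \E''(\vec\phi,\vec\phi) + \lambda_r\Ll''(\vec\phi,\vec\phi)$.

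The one genuine gap is the sign computation, which you flag but do not carry out, and which is the crux of the statement since the threshold $\lambda_r < (m^2-2m-1)/(1+2m)$ is used quantitatively in \Cref{cor:new_conf_willmore_tori} and \Cref{rem:admissible_conf_class}. The mode has to be pinned down: the paper takes $\phi(s) = \cos\big(\sqrt{1+\lambda_r}\,\tfrac{m+1}{m}\,s\big)$, i.e., the $(m+1)$-th Fourier mode $\cos((m+1)x)$ in the constant-speed parametrization, not just ``something near $m$''. Evaluating the quadratic form
\begin{equation}
  \int_{\S^1} 2(\partial_s^2\phi)^2 - (6+4\lambda_r)(\partial_s\phi)^2 + (4+\lambda_r(6+2\lambda_r))\phi^2\,\dd s
\end{equation}
on this mode and factoring yields
\begin{equation}
  \frac{2\pi}{\sqrt{1+\lambda_r}}\cdot\frac{(\lambda_r+1)(2m+1)\big(\lambda_r(2m+1)-m^2+2m+1\big)}{m^3},
\end{equation}
which is negative precisely when $\lambda_r<(m^2-2m-1)/(2m+1)$; other modes give other thresholds, so this choice is not optional. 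A small side remark: your exclusion of $k=\pm1$ has no basis here (for the $m$-fold circle the translation kernel sits at $k=\pm m$, not $k=\pm1$); the length constraint only rules out $k=0$.
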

\begin{proof}
    Without loss of generality, for $L=2\pi mr/\sqrt{1-r^2}=2\pi m/\sqrt{1+\lambda_r}$, write $\S^1=\R / L\Z$ and suppose that $\gamma_{m,r}$ is parametrized by arc-length. Then define
    \begin{equation}
        \phi(s)\vcentcolon =\cos\big(\sqrt{1+\lambda_r} \cdot \frac{m+1}{m} s\big), \quad \vec\phi(s)=\phi(s)\cdot \vec{n}_{\gamma_{m,r}}(s)\quad\text{for $s\in\S^1$}.
    \end{equation}
    By \Cref{lemma:evo}, we compute for the second variation of $\mathcal L$
    \begin{equation}\label{eq:sec-var-length}
        \mathcal L''(\gamma_{m,r}).(\vec\phi,\vec\phi) = \int_{\S^1} (\partial_s\phi)^2+\phi^2\dd s.
    \end{equation} 
    Since $\int_{\S^1}\langle \curv_{\gamma_{m,r}},\vec\phi\rangle_g\dd s = \frac{1}{r} \int_{\S^1} \phi(s)\dd s=0$, as in the proof of \Cref{prop:conf-will-grad-axisym}, one shows that there exists a length-preserving smooth family of immersions $\gamma\colon(-\varepsilon,\varepsilon)\times\S^1\to\H^2$ with $\gamma(0,\cdot)=\gamma_{m,r}$ and $\partial_{t,0}\gamma(t,\cdot)=\vec\phi$. Using \eqref{eq:1-var-elen}, $\partial_{t,0}\E(\gamma(t,\cdot)) = \lambda_r\sqrt{2+\lambda_r}\int_{\S^1}\phi(s)\dd s=0$.  Differentiating $\Ll(\gamma(t,\cdot))=L$ yields
    \begin{equation}
        0 = \partial_{t,0}^2\Ll(\gamma(t,\cdot)) = \partial_{t,0}\mathcal L'(\gamma(t,\cdot)).\partial_t\gamma = \mathcal L''(\gamma_{m,r})(\vec\phi,\vec\phi) + \mathcal L'(\gamma_{m,r}).\partial_{t,0}^2\gamma.
    \end{equation}
    Therefore, using $\E'(\gamma_{m,r})= -\lambda_r \mathcal L'(\gamma_{m,r})$, we compute
    \begin{align}
        \partial_{t,0}^2 \E(\gamma(t,\cdot)) &= \E''(\gamma_{m,r})(\vec\phi,\vec\phi) + \E'(\gamma_{m,r}).\partial_{t,0}^2\gamma = \mathcal{E}''(\gamma_{m,r})(\vec\phi,\vec\phi)  - \lambda_r \mathcal{L}'(\gamma_{m,r}).\partial_{t,0}^2\gamma \\
        &= \E''(\gamma_{m,r})(    \vec\phi,\vec\phi) + \lambda_r \cdot \mathcal L''(\gamma_{m,r})(\vec\phi,\vec\phi).\label{eq:dt02e-length-constrained}
    \end{align}
    Similarly as \cite[Equation~(1.4)]{langersinger1984}, we have
    \begin{align}
        \E''(\gamma_{m,r})(\vec\phi,\vec\phi) &= \int_{\S^1} 2(\partial_s^2\phi)^2 - (6+5\lambda_r)(\partial_s\phi)^2 + (4+\lambda_r(5+2\lambda_r))\phi^2\dd s.
    \end{align}
    Thus, together with \eqref{eq:sec-var-length} and \eqref{eq:dt02e-length-constrained}, we obtain
    \begin{equation}
        \partial_{t,0}^2 \E(\gamma(t,\cdot)) = \int_{\S^1} 2(\partial_s^2\phi)^2 - (6+4\lambda_r)(\partial_s\phi)^2 + (4+\lambda_r(6+2\lambda_r))\phi^2\dd s.
    \end{equation}
    Using $\int_{\S^1}\cos^2\big(\sqrt{1+\lambda_r} \cdot \frac{m+1}{m} s\big)\dd s=\int_{\S^1}\sin^2\big(\sqrt{1+\lambda_r} \cdot \frac{m+1}{m} s\big)\dd s = \frac{m\pi}{\sqrt{1+\lambda_r}}$, we get
    \begin{align}
        \partial_{t,0}^2 \E(\gamma(t,\cdot)) &= \frac{m\pi}{\sqrt{1+\lambda_r}}\Big(2(1+\lambda_r)^2\frac{(m+1)^4}{m^4}-(6+4\lambda_r)(1+\lambda_r)\frac{(m+1)^2}{m^2}+4+\lambda_r(6+2\lambda_r)\Big)\\
        &= \frac{2\pi}{\sqrt{1+\lambda_r}}\frac{(\lambda_r+1) (2 m+1) \left( \lambda_r (2m+1)-m^2+2 m+1\right)}{m^3} < 0
    \end{align}
    since $-1<\lambda_r < \frac{m^2-2m-1}{2m+1}$.
\end{proof}

As claimed in \Cref{ex:circle}, we now prove that the Willmore flow does not preserve circles.

\begin{lemma}\label{lem:wf-and-round-circles}
    Consider a Willmore flow $\gamma\colon[0,T)\times\S^1\to\H^2$. If there exists a non-trivial interval $I\subseteq [0,T)$ such that $\gamma(t,\cdot)$ parametrizes a round circle for each $t\in I$, then $\gamma$ is stationary.
\end{lemma}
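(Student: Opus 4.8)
If a Willmore flow $\gamma\colon[0,T)\times\S^1\to\H^2$ parametrizes a round circle for each $t$ in a nontrivial interval $I$, then $\gamma$ is stationary.

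The plan is to show that, on the interval $I$, the Willmore flow equation forces the moving circle to be the profile curve of a Clifford torus, and then to propagate this conclusion to all of $[0,T)$.

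First I would set up coordinates. For $t\in I$ the image $C(t):=\gamma(t,\S^1)$ is a round circle in $\H^2\subset\R^2$; write $m(t)=(m_1(t),m_2(t))$ for its Euclidean center, $\rho(t)>0$ for its Euclidean radius and $\psi(t,\cdot)$ for the angular parameter, so that $\gamma(t,x)=m(t)+\rho(t)(\cos\psi(t,x),\sin\psi(t,x))$, $\gamma^{(2)}(t,x)=m_2(t)+\rho(t)\sin\psi(t,x)$, and, as $x$ runs over $\S^1$, $\psi(t,x)$ runs over $\R/2\pi\Z$. Since $\gamma$ is smooth and the center and radius of a circle depend smoothly on three of its points, $m,\rho$ and $\psi$ are smooth on the interior $I^\circ$. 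Up to a hyperbolic isometry $C(t)$ is the circle $\gamma_{r(t)}$ of Appendix~\ref{app:cmc-tori} with $r(t)=\rho(t)/m_2(t)\in(0,1)$, so by \cite[Lemma~3.5]{dallacquaspener2018} and the computations in Appendix~\ref{app:cmc-tori} (case $m=1$) one has $|\curv_{\gamma(t,\cdot)}|_g\equiv 1/r(t)$ and $\nabla\E(\gamma(t,\cdot))=(r(t)^{-2}-2)\curv_{\gamma(t,\cdot)}$ for $t\in I^\circ$.

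The key is then to compute the normal velocity $\langle\partial_t\gamma,N\rangle$, with $N(t,x):=(\cos\psi(t,x),\sin\psi(t,x))$, in two ways. Since $\curv_{\gamma(t,\cdot)}$ is $g$-orthogonal to $\partial_s\gamma$ and $g$ is conformal to the Euclidean metric, $\curv_{\gamma(t,\cdot)}$ is a sign-definite multiple of $N$ of Euclidean length $\gamma^{(2)}|\curv|_g=\gamma^{(2)}/r$; inserting this into \eqref{eq:wf-eq} gives $\langle\partial_t\gamma,N\rangle=\pm\,\frac{r^{-2}-2}{4\,r\,(m_2+\rho\sin\psi)^{3}}$ on $I^\circ$. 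On the other hand, differentiating $\gamma(t,x)=m(t)+\rho(t)(\cos\psi,\sin\psi)$ in $t$ (the $\partial_t\psi$-term being tangential) yields $\langle\partial_t\gamma,N\rangle=\dot m_1\cos\psi+\dot m_2\sin\psi+\dot\rho$. Equating the two and replacing $\psi$ by $\pi-\psi$ (which fixes $\sin\psi$ and the left-hand side but flips $\cos\psi$) forces $\dot m_1=0$; with $u:=\sin\psi\in[-1,1]$ what remains is $(m_2+\rho u)^3(\dot m_2u+\dot\rho)=\pm\tfrac14 r^{-1}(r^{-2}-2)$ for all $u\in[-1,1]$. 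As $\rho>0$, the left-hand side is a polynomial in $u$ of degree $\ge3$ unless $\dot m_2=\dot\rho=0$, and in that case it is constant only if $r^{-2}-2=0$. Hence $r(t)\equiv 1/\sqrt2$ on $I^\circ$, so $\nabla\E(\gamma(t,\cdot))=0$ and $\partial_t\gamma(t,\cdot)=0$ there and, by continuity, on all of $I$; in particular $\E(\gamma(t,\cdot))\equiv\E(\gamma_{1/\sqrt2})=4\pi$ on $I$.

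Finally I would propagate: $t\mapsto\E(\gamma(t,\cdot))$ is real-analytic on $(0,T)$ (solutions of the Willmore flow are analytic in time by parabolic smoothing) and constant on the nontrivial subinterval $I\cap(0,T)$, hence constant and equal to $4\pi$ on $(0,T)$, and by continuity on $[0,T)$. By \Cref{rem:min-e} this forces $\gamma(t,\cdot)$ to be the profile curve of a Clifford torus for every $t$, hence a free elastica with $\nabla\E(\gamma(t,\cdot))=0$, so $\partial_t\gamma=-\tfrac1{4(\gamma^{(2)})^4}\nabla\E(\gamma)\equiv 0$: the flow $\gamma$ is stationary. The main obstacle is the computation in the third paragraph — matching the Willmore normal speed $\propto(\gamma^{(2)})^{-3}$ against the affine-in-position normal speed of a genuine deformation through round circles, and keeping track that the reparametrization enters only tangentially and that $\sin\psi$ exhausts $[-1,1]$; the propagation step is routine once time-analyticity of the flow is granted.
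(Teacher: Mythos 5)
Your proposal is correct and, at its core, follows the same strategy as the paper: parametrize the moving round circle by its center and radius, express the normal velocity two ways (once directly from the parametrization, once from the Willmore flow equation using that circles satisfy $\nabla\E(\gamma_r)=(r^{-2}-2)\curv_{\gamma_r}$), and conclude that the circle must be the Clifford profile. The technical difference lies in how each proof extracts $\dot m_1=\dot m_2=\dot\rho=0$ and $r^{-2}=2$ from the resulting identity. The paper multiplies by a power of $\gamma^{(2)}$, differentiates in $x$, and evaluates at $x=\pi/2$ and $x=0$; you instead substitute $u=\sin\psi$ and use a polynomial degree count in $u\in[-1,1]$: since $(m_2+\rho u)^3(\dot m_2 u+\dot\rho)$ has degree $\ge 3$ unless both $\dot m_2$ and $\dot\rho$ vanish, and the right-hand side is constant in $u$, one obtains the desired conclusion. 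Both approaches are valid and of essentially the same length. One genuine difference in care: the paper's proof works with the circle parametrization on all of $[0,T)$ even though the hypothesis only provides it on $I$, and is silent on how stationarity on $I$ propagates to $[0,T)$; your explicit use of time-analyticity of the flow (combined with \Cref{rem:min-e} and uniqueness) to complete this propagation step fills what is, strictly speaking, a gap in the paper's write-up.
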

\begin{proof}
    There is a smooth family of diffeomorphisms $\phi\colon[0,T)\times\S^1\to\S^1$ of $\S^1$ such that
    \begin{equation}
        \tilde\gamma(t,x)\vcentcolon=\gamma(t,\phi(t,x))=a(t) + r(t)\begin{pmatrix}
            \cos x\\\sin x
        \end{pmatrix}
    \end{equation}
    where $a(t)=(a^{(1)}(t),a^{(2)}(t))\colon [0,T)\to \H^2$ and $r\colon[0,T)\to (0,\infty)$ are smooth with $r<a^{(2)}$. By direct computations and \cite[Lemma~3.1]{dallacquaspener2018},
    \begin{align}
        \vec n_{\tilde\gamma(t)}(x) &= - (a^{(2)}(t)+r(t)\sin(x)) \begin{pmatrix}
            \cos x\\\sin x
        \end{pmatrix},\quad\curv_{\tilde\gamma(t)}(x) = \frac{a^{(2)}(t)}{r(t)}\vec n_{\tilde\gamma(t)}(x),\\
        \nabla\E(\tilde\gamma(t,\cdot))(x) &= \Big(\frac{(a^{(2)}(t))^2}{r(t)^2}-2\Big)\curv_{\tilde\gamma(t)}.
    \end{align}
    Since $\gamma$ is a Willmore flow, see \Cref{def:will-flow}, we therefore have
    \begin{align}
        - & \frac{\partial_ta^{(1)}(t)\cos x + \partial_ta^{(2)}(t)\sin x + \partial_tr(t)}{a^{(2)}(t)+r(t)\sin x} = \langle \partial_t\tilde\gamma(t,x),\vec n_{\tilde\gamma(t)}(x)\rangle_g \\
        &\quad= - \frac{1}{4(a^{(2)}(t)+r(t)\sin(x))^4} \langle \nabla\E(\tilde\gamma(t,\cdot))(x), n_{\tilde\gamma(t)}(x)\rangle_g \\
        &\quad= -\frac{1}{4(a^{(2)}(t)+r(t)\sin(x))^4}\frac{a^{(2)}(t)}{r(t)}\Big(\frac{(a^{(2)}(t))^2}{r(t)^2}-2\Big)
    \end{align}
    for all $0\leq t<T$ and $x\in\S^1$. Multiplying by $-(a^{(2)}(t)+r(t)\sin x)$ and differentiating in $x$ yields
    \begin{equation}
        -\partial_ta^{(1)}(t)\sin x + \partial_ta^{(2)}(t)\cos x = \frac{-3 a^{(2)}(t) \cos(x) }{4(a^{(2)}(t)+r(t)\sin(x))^4} \Big(\frac{(a^{(2)}(t))^2}{r(t)^2}-2\Big).
    \end{equation}
    With $x=\frac12\pi$, we get $\partial_ta^{(1)}\equiv 0$. So, for $x\in\S^1\setminus\{\pm\frac12\pi\}$,
    \begin{equation}\label{eq:app-circ-1}
        \partial_ta^{(2)}(t) (a^{(2)}(t)+r(t)\sin x)^4  = -\frac{3 a^{(2)}(t)}{4} \Big(\frac{(a^{(2)}(t))^2}{r(t)^2}-2\Big)
    \end{equation}   
    and differentiating in $x$ and evaluating in $x=0$ yields $
        0=\partial_ta^{(2)}\cdot( a^{(2)})^3\cdot r$, i.e., $\partial_ta^{(2)}\equiv 0.$  
    Revisiting \eqref{eq:app-circ-1}, we get $\frac{(a^{(2)})^2}{r^2}\equiv 2$,
    i.e.,  $\tilde\gamma(t,\cdot)$ is a Clifford torus for each time $t$. Thus, $\nabla\E(\gamma(t,\cdot))=0$ for all $t\in[0,T)$ so that $\gamma$ is stationary by \eqref{eq:wf-eq}.
\end{proof}

\section*{Acknowledgements}
The authors would like to thank Reiner Schätzle for helpful discussions.
This research was funded in whole, or in part, by the Austrian Science Fund (FWF), grant number \href{https://doi.org/10.55776/ESP557}{10.55776/ESP557}.

\bibliographystyle{abbrv}
\bibliography{biblio}

\end{document}